\begin{document}

\makeatletter
\def\section{\@startsection {section}{1}{\z@}{-3.5ex plus -1ex minus -.2ex}{2.3 ex plus .2ex}{\large\bf}}
\numberwithin{equation}{subsection}
\makeatother

\theoremstyle{definition}
\newtheorem{dfn}{Definition}[subsection]
\newtheorem{thm}[dfn]{Theorem}
\newtheorem*{th*}{Theorem}
\newtheorem{lem}[dfn]{Lemma}
\newtheorem{prop}[dfn]{Proposition}
\newtheorem{rem}[dfn]{Remark}
\newtheorem{cor}[dfn]{Corollary}
\newtheorem*{cor*}{Corollary}
\newtheorem*{prop*}{Proposition}
\newtheorem{quest}[dfn]{Question}
\newtheorem{conj}[dfn]{Conjecture}

\newcommand{\bbA}{\mathbb{A}}
\newcommand{\bbC}{\mathbb{C}}
\newcommand{\bbR}{\mathbb{R}}
\newcommand{\bbQ}{\mathbb{Q}}
\newcommand{\bbZ}{\mathbb{Z}}
\newcommand{\bbH}{\mathbb{H}}

\newcommand{\calA}{\mathcal{A}}
\newcommand{\calC}{\mathcal{C}}
\newcommand{\calD}{\mathcal{D}}
\newcommand{\calE}{\mathcal{E}}
\newcommand{\calF}{\mathcal{F}}
\newcommand{\calH}{\mathcal{H}}
\newcommand{\calI}{\mathcal{I}}
\newcommand{\calN}{\mathcal{N}}
\newcommand{\calO}{\mathcal{O}}
\newcommand{\calP}{\mathcal{P}}
\newcommand{\calS}{\mathcal{S}}
\newcommand{\calU}{\mathcal{U}}
\newcommand{\calV}{\mathcal{V}}
\newcommand{\calZ}{\mathcal{Z}}

\newcommand{\fraka}{\mathfrak{a}}
\newcommand{\frakA}{\mathfrak{A}}
\newcommand{\frakg}{\mathfrak{g}}
\newcommand{\frakH}{\mathfrak{H}}
\newcommand{\frakh}{\mathfrak{h}}
\newcommand{\fraki}{\mathfrak{i}}
\newcommand{\frakk}{\mathfrak{k}}
\newcommand{\frakl}{\mathfrak{l}}
\newcommand{\frakn}{\mathfrak{n}}
\newcommand{\frakN}{\mathfrak{N}}
\newcommand{\frakp}{\mathfrak{p}}
\newcommand{\frakX}{\mathfrak{X}}

\newcommand{\bfs}{\mathbf{s}}
\newcommand{\bfe}{\mathbf{e}}
\newcommand{\bfa}{\mathbf{a}}

\newcommand{\rank}{\mathop{\mathrm{rank}}}
\newcommand{\corank}{\mathop{\mathrm{Corank}}}
\newcommand{\im}{\mathop{\mathrm{Im}}}
\newcommand{\Hom}{\mathop{\mathrm{Hom}}}
\newcommand{\op}{\mathrm{op}}

\newcommand{\GL}{\mathrm{GL}}
\newcommand{\Sym}{\mathrm{Sym}}
\newcommand{\Sp}{\mathrm{Sp}}
\newcommand{\Mat}{\mathrm{Mat}}
\newcommand{\Mp}{\mathrm{Mp}}
\newcommand{\SL}{\mathrm{SL}}

\newcommand{\automforms}{\mathcal{A}(\Gamma)}
\newcommand{\NHMFonG}{\mathcal{N}(\Gamma)}
\newcommand{\NHMFonGwtlam}{\mathcal{N}_\lambda(\Gamma)}
\newcommand{\gk}{(\frakg, K_\infty)}
\newcommand{\NHMFonGcharchi}{\mathcal{N}(\Gamma,\chi)}
\newcommand{\NHAFonPG}{\mathcal{N}(P \backslash G)}
\newcommand{\NHAFonG}{\mathcal{N}(G)}
\newcommand{\NHAFonBG}{\mathcal{N}(B \backslash G)}

\newcommand{\ul}{\underline}
\newcommand{\pr}{\mathop{\mathrm{pr}}}
\newcommand{\Ext}{\mathop{\mathrm{Ext}}}
\newcommand{\fin}{{\rm \mathchar`- fin}}
\newcommand{\fini}{\mathrm{fin}}
\newcommand{\Ind}{\mathrm{Ind}}
\newcommand{\ind}{\mathrm{ind}}
\newcommand{\Lie}{\mathrm{Lie}}
\newcommand{\Res}{\mathop{\mathrm{Res}}}
\newcommand{\bs}{\backslash}
\newcommand{\tr}{\mathrm{tr}}
\newcommand{\bole}{\bold{e}}
\newcommand{\supp}{\mathop{\mathrm{supp}}}
\newcommand{\sgn}{\mathrm{sgn}}
\newcommand{\bfi}{\mathbf{i}}
\newcommand{\Wh}{\mathrm{Wh}}
\newcommand{\cusp}{\mathrm{cusp}}
\newcommand{\bfm}{\mathbf{m}}
\newcommand{\Aut}{\mathrm{Aut}}
\newcommand{\unit}{\mathrm{unit}}
\newcommand{\End}{\mathrm{End}}
\newcommand{\vep}{\varepsilon}
\newcommand{\abcd}{\begin{pmatrix}a & b \\ c & d \end{pmatrix}
}


\title{On the classification of $(\mathfrak{g},K)$-modules generated by nearly holomorphic Hilbert-Siegel modular forms and projection operators}
\author{Shuji Horinaga}

\begin{abstract}
	We classify the $(\mathfrak{g},K)$-modules generated by nearly holomorphic Hilbert-Siegel modular forms by the global method.
	As an application, we study the image of projection operators on the space of nearly holomorphic Hilbert-Siegel modular forms with respect to infinitesimal characters in terms of $(\frakg,K)$-modules.
\end{abstract}

\maketitle
\markboth{$(\frakg,K)$-modules generated by nearly holomorphic modular forms}{$(\frakg,K)$-modules generated by nearly holomorphic modular forms}

\section{Introduction}
	
	\subsection{Algebraicity of special $L$ values}
	The arithmeticity of special $L$ values is a central problem in modern number theory.
	In the motivic setting, Deligne \cite{Del} conjectured the algebraicity of critical $L$ values up to the period.
	For the critical values attached to scalar valued Hilbert-Siegel modular forms and Hermitian modular forms, Shimura proved the arithmeticity of them up to suitable periods in \cite{00_Shimura} by using of nearly holomorphic modular forms.
	The period can be expressed by Petersson inner product times some power of $\pi$.
	Recently, in \cite{HPSS}, Pitale, Saha, Schmidt and the author prove the arithmeticity of them attached to vector valued Siegel modular forms under the parity condition of weights.
	The purpose of this paper is to prepare to remove the parity condition by investigating the $(\frakg,K)$-modules generated by nearly holomorphic Hilbert-Siegel modular forms.
	
	\subsection{$(\frakg,K)$-modules generated by nearly holomorphic Siegel modular forms}
	Let $F$ be a totally real field with degree $d$ and $\bfa$ the set of embeddings of $F$ into $\bbR$.
	Put $G_n = \Res_{F/\bbQ}\Sp_{2n}$.
	Here $\Res$ is the Weil restriction and $\Sp_{2n}$ is the symplectic group of rank $n$.
	Let $\frakH_n$ be the Siegel upper half space of degree $n$.
	Put $\frakg_n = \Lie(G_n(\bbR)) \otimes _\bbR \bbC$.
	We denote by $K_{n,\infty}$ and $\calZ_n$ the stabilizer of $\mathbf{i} = (\sqrt{-1}\,\mathbf{1}_n,\ldots,\sqrt{-1}\,\mathbf{1}_n) \in \frakH_n^d$ and the center of the universal enveloping algebra $\calU(\frakg_n)$, respectively.
	Let $K_{n,\bbC}$ be the complexification of $K_{n,\infty}$.
	Set $\frakk_n = \Lie(K_{n,\infty}) \otimes_\bbR\bbC$.
	We then have the well-known decomposition:
	\[
	\frakg_n = \frakk_n \oplus \frakp_{n,+} \oplus \frakp_{n,-}.
	\]
	Here $\frakp_{n,+}$ (resp.~$\frakp_{n,-}$) is the Lie subalgebra of $\frakg_n$ corresponding to the holomorphic tangent space (resp.~anti-holomorphic tangent space) of $\frakH_n^d$ at $\mathbf{i}$.
	We take a Cartan subalgebra of $\frakk_n$.
	Then it is a Cartan subalgebra of $\frakg_n$.
	The root system $\Phi$ of $\mathfrak{sp}_{2n}(\bbC)$ is 
	\[
	\Phi = \{ \; \pm (e_i + e_j), \; \pm(e_k - e_\ell), \quad 1\leq i \leq j \leq n, 1 \leq k < \ell \leq n \; \}.
	\] 
	We consider the set
	\[
	\Phi^+ = \{ \; - (e_i + e_j), \; e_k - e_\ell, \quad 1\leq i \leq j \leq n, 1 \leq k < \ell \leq n \; \}
	\]
	to be a positive root system.
	Let $\rho$ be half the sum of positive roots.
	Note that $\frakg_n = \bigoplus_{v \in \bfa} \mathfrak{sp}_{2n}(\bbC)$.
	We say that a weight $\lambda = (\lambda_{1,v},\ldots,\lambda_{n,v})_{v\in \bfa}$ which lies in $\bigoplus_{v \in \bfa}\bbC^{n}$ is $\frakk_n$-dominant if $\lambda_{i,v}-\lambda_{i+1,v} \in \bbZ_{\geq 0}$ for any $1 \leq i \leq n-1$ and $v \in \bfa$.
	We also say that a $\frakk_n$-dominant integral weight $\lambda = (\lambda_{1,v},\ldots,\lambda_{n,v})_{v\in \bfa}$ is anti-dominant if $\lambda_n \geq n$.
	For any $\frakk_n$-dominant integral weight $\lambda$, there exist the (parabolic) Verma module $N(\lambda)$ with respect to a parabolic subalgebra $\frakp_{n,-} \oplus \frakk_n$ and a unique irreducible highest weight $(\frakg_n,K_{n,\infty})$-module $L(\lambda)$ of highest weight $\lambda$.
	Then, $L(\lambda)$ is the unique irreducible quotient of $N(\lambda)$.
	For a $(\frakg_n, K_{n,\infty})$-module $\pi$, the symbol $\pi^\vee$ denotes the contragredient of $\pi$ in the sense of \cite{cat_o}.
	
	For an automorphic form $\varphi$ on $G_n(\bbA_\bbQ)$, we say that $\varphi$ is nearly holomorphic if $\varphi$ is $\frakp_{n,-}$-finite, i.e., $\calU(\frakp_{n,-}) \cdot \varphi$ is finite-dimensional.
	The goal of this paper is to classify the indecomposable $(\frakg_n,K_{n,\infty})$-modules generated by nearly holomorphic automorphic forms.
	
	\begin{thm}[Theorem \ref{classification_th}]
	Let $\pi$ be an indecomposable $(\frakg_n,K_{n,\infty})$-module generated by a nearly holomorphic automorphic form on $G_n(\bbA_\bbQ)$.
	If $F \neq \bbQ$, $\pi$ is irreducible.
	If $F = \bbQ$, the length of $\pi$ is at most two.
	More precisely, if $\pi$ is reducible, there exists an odd integer $i$ and $(\lambda_1, \ldots, \lambda_{n-i}) \in \bbZ^{n-i}$ with $\lambda_1 \geq \cdots \geq \lambda_{n-i} \geq n-(i-3)/2$ such that $\pi \cong N(\lambda_1,\ldots,\lambda_{n-i},n-(i-3)/2,\ldots,n-(i-3)/2)^\vee$.
	\end{thm}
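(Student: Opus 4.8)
\emph{Step 1: reduction to parabolic category $\mathcal{O}$.} Let $\varphi$ be a nearly holomorphic automorphic form and $\pi=\calU(\frakg_n)\varphi$. As an automorphic form $\varphi$ is $K_{n,\infty}$-finite and $\calZ_n$-finite, and near holomorphy means it is $\frakp_{n,-}$-finite; since the space of nearly holomorphic forms is stable under $(\frakg_n,K_{n,\infty})$ (here one uses that $\frakk_n$ normalizes $\frakp_{n,-}$ and $[\frakp_{n,+},\frakp_{n,-}]\subseteq\frakk_n$) and $\frakn^{+}\subseteq\frakp_{n,-}\oplus(\frakn^{+}\cap\frakk_n)$, the module $\pi$ is a finitely generated, $\frakk_n$-finite, locally $\frakn^{+}$-finite weight module, i.e. it lies in the parabolic category $\mathcal{O}$ attached to $\frakp=\frakp_{n,-}\oplus\frakk_n$. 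Hence $\pi$ has finite length and its irreducible subquotients are among the $L(\mu)$ with $\mu$ running over a single linkage class of $\frakk_n$-dominant integral weights sharing the infinitesimal character of $\varphi$. The problem is thereby reduced to determining which indecomposable objects of this category, with which socle and cosocle, are realized inside the space of automorphic forms $\calA(G_n)$.

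\emph{Step 2: the global input.} I would then fix an embedding $\pi\hookrightarrow\calA(G_n)$ and analyze the cuspidal support of the underlying automorphic representation through its constant terms. Near holomorphy at the archimedean places forces this support to be of holomorphic type: it lies on a parabolic $P_i\subset G_n$ with Levi $\GL_i\times\Sp_{2(n-i)}$, the inducing datum being an algebraic Hecke character of $\GL_i$ (composed with the determinant) tensored with a holomorphic Siegel cusp form of weight $(\lambda_1,\ldots,\lambda_{n-i})$ on $\Sp_{2(n-i)}$. By Langlands' theory together with Franke's description of $\calA(G_n)$, the module $\pi$ then occurs as a $\frakp_{n,-}$-finite constituent of the associated (degenerate/Klingen) Eisenstein series $E(\,\cdot\,,s)$, evaluated or with its residue taken at a half-integral point $s_0$.

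\emph{Step 3: identifying $\pi$ as a costandard module.} The heart of the argument is to pin down this constituent. At the archimedean place one is looking at a $\frakp_{n,-}$-finite constituent of the degenerate principal series $I_\infty(s_0)$ of $\Sp_{2n}(\bbR)$; using the generalized Verma module structure of holomorphic-type degenerate principal series (in the spirit of Enright--Howe--Wallach and Enright--Shelton) one shows the only such constituent is the costandard module $N(\lambda)^\vee$, with $\lambda$ read off from $(\lambda_1,\ldots,\lambda_{n-i})$, the Hecke-character exponent and the relevant $\rho$-shift; this produces $\lambda=(\lambda_1,\ldots,\lambda_{n-i},n-(i-3)/2,\ldots,n-(i-3)/2)$, whose last $i$ entries are forced constant because the $\GL_i$-datum is abelian. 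Moreover the realization inside $\calA(G_n)$ forces the \emph{costandard} (dual-Verma) shape rather than the standard one: the socle $L(\lambda)$ is the leading term of the Eisenstein series, the constituent ``closest'' to a classical holomorphic form, whereas a length-$2$ parabolic Verma would carry the wrong $P_i$-Jacquet module to embed in $\calA(G_n)$. Finally $N(\lambda)$ is reducible (then necessarily of length $2$ in this Hermitian setting) exactly when $s_0$ is a reducibility point of $I_\infty(s_0)$, which translates into the parity constraint ``$i$ odd'' together with $\lambda_{n-i}\ge n-(i-3)/2$.

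\emph{Step 4: the dichotomy, and the main obstacle.} Whether $\pi$ is genuinely reducible depends on whether the completed Eisenstein series acquires the relevant pole at $s_0$, which is governed by the poles of the normalizing factors $\zeta_F(s),\zeta_F(2s),\ldots$ (and of the $L$-functions of the Hecke character). For $F=\bbQ$ and $i$ odd these produce the pole, and the Laurent expansion of $E(\,\cdot\,,s)$ at $s_0$ assembles the residue and the next term into the non-split extension $N(\lambda)^\vee$ --- the representation-theoretic shadow of the classical passage from $E_2$ to its nearly holomorphic completion. For $F\neq\bbQ$ the corresponding residue either vanishes or yields a semisimple automorphic module, so an indecomposable $\pi$ must be irreducible, $\pi\cong L(\lambda)=N(\lambda)^\vee$. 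I expect Step 3 to be the main obstacle: rigorously matching the archimedean constituent of the (residual) Eisenstein series with the costandard module and ruling out the standard one requires a precise computation of the $(\frakg_n,K_{n,\infty})$-module structure of holomorphic-type degenerate principal series, and the pole bookkeeping of the normalized Eisenstein series over a general totally real field must be carried out finely enough to isolate the $F=\bbQ$-specific parity contribution --- precisely the phenomenon the paper ultimately aims to remove.
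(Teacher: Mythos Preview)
Your high-level architecture---reduce to the parabolic category $\calO^{\frakp}$, pass to the cuspidal support (which by the paper's earlier work is of the form $\mu\boxtimes\pi$ on $M_{Q_{i,n}}$ with $\pi$ holomorphic), and analyze via Eisenstein series---agrees with the paper. The mechanisms you invoke in Steps~3--4, however, are not those of the paper and leave real gaps. The paper does \emph{not} argue through poles of $\zeta_F$ or Laurent coefficients of $E(\cdot,s)$. Instead, the constant term along $P_{i,n}$ embeds $\calN(G_n,\chi_{s_0})_{(M_{Q_{i,n}},\mu\boxtimes\pi)}$ into $I_{i,n}(-s_0,\mu,\pi)\oplus I_{i,n}(s_0,\mu,\pi)$; the square-integrable subspace (necessarily semisimple) lands in the first summand, so irreducibility of an indecomposable reduces to building a $G_n(\bbA_\bbQ)$-equivariant \emph{section} of the projection to the second. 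That section is constructed by realizing every $\frakp_{n,-}$-finite vector of $I_{i,n}(s_0,\mu,\pi)$ as a Klingen Eisenstein series, obtained via the \emph{pullback formula} from Siegel Eisenstein series on a larger group, whose near holomorphy is in turn proved with the \emph{Siegel--Weil formula}. The field $F$ enters because one must cover $I_{n}(s_0,\mu)_{\frakp_{n,-}\fin}$ by theta images $R_n(V)$ with $V/F$ satisfying Weil's convergence condition: when $[F:\bbQ]\geq 2$ one can always adjust signatures at the several archimedean places to make $V$ anisotropic, whereas for $F=\bbQ$ at the single value $s_0=1$ the incoherent piece escapes, and one checks directly that no splitting exists. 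Your zeta-pole heuristic does not substitute for this, and you never isolate $s_0=1$ (from which ``$i$ odd'' is just the congruence $1\in\bbZ+\rho_{i,n}$) as the unique exceptional parameter.

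Nor does the paper identify the reducible module via Enright--Shelton-type local structure theory of degenerate principal series. Once $F=\bbQ$ and $s_0=1$ are fixed, the two-term embedding above globally pins down the socle and cosocle of an indecomposable $M$ as $L(\lambda_1,\ldots,\lambda_{n-i},\rho_{i,n}-1,\ldots,\rho_{i,n}-1)$ and $L(\lambda_1,\ldots,\lambda_{n-i},\rho_{i,n}+1,\ldots,\rho_{i,n}+1)$ respectively; a direct computation in $\calO^{\frakp}$ then shows that $\dim\Ext^1$ between these two simples equals~$1$ and that the unique nontrivial extension is a parabolic Verma module, whence $M\cong N(\lambda')^\vee$. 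A purely local argument of the kind you sketch in Step~3 cannot by itself determine which extension is realized automorphically, and your ``costandard shape forced by the $P_i$-Jacquet module'' assertion is not made precise enough to replace the paper's global-then-$\Ext$ route.
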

	
	This result is a generalization of \cite{PSS_21}.
	The key idea of proof is the harmonic analysis of the space of nearly holomorphic automorphic forms on $G_n(\bbA_\bbQ)$, which is investigated in \cite{Horinaga_2}.
	
	\subsection{Projection operators}
	Fix a weight $\rho$ and a congruence subgroup $\Gamma$.
	Let $N_\rho(\Gamma)$ be the space of nearly holomorphic Hilbert-Siegel modular forms of weight $\rho$ with respect to $\Gamma$.
	For an infinitesimal character $\chi$ of $\calZ_n$, we can define the projection operator $\frakp_{\chi} \in \mathrm{End}(N_\rho(\Gamma))$ associated to $\chi$.
	Then, the projection operator $\frakp_{\chi}$ commutes with the $\Aut(\bbC)$ action as follows:
	
	\begin{thm}[Theorem \ref{proj_comm}]
	For any $f \in N_\rho(\Gamma)$ and $\sigma \in \Aut(\bbC)$, we have
	\[
	\frakp_{\chi}({^\sigma f}) = {^\sigma \frakp_\chi(f)}.
	\]
	\end{thm}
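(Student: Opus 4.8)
The plan is to isolate two inputs — the $\Aut(\bbC)$-rationality of the action of $\calZ_n$ on $N_\rho(\Gamma)$, and the rationality of the infinitesimal characters that actually occur — and then conclude by elementary linear algebra over $\bbQ$. Since $N_\rho(\Gamma)$ is finite-dimensional, $\calZ_n$ acts on it through a finite-dimensional commutative quotient, giving a decomposition $N_\rho(\Gamma)=\bigoplus_{i=1}^r N_\rho(\Gamma)[\chi_i]$ into generalized eigenspaces indexed by the finitely many infinitesimal characters $\chi_1,\dots,\chi_r$ occurring; by construction $\frakp_\chi$ is the projection onto $N_\rho(\Gamma)[\chi]$ (and $\frakp_\chi=0$, so there is nothing to prove, if $\chi\notin\{\chi_1,\dots,\chi_r\}$). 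The $\Aut(\bbC)$-action on $N_\rho(\Gamma)$ is additive and $\bbC$-semilinear, so it is enough to show that each $N_\rho(\Gamma)[\chi_i]$ is $\Aut(\bbC)$-stable; equivalently, that the idempotents $\frakp_{\chi_i}$ already lie in the $\bbQ$-subalgebra of $\End(N_\rho(\Gamma))$ generated by the operators coming from a $\bbQ$-form $\calZ_{n,\bbQ}$ of $\calZ_n$ (the image of the center of $\calU(\frakg_{n,\bbQ})$ for $\frakg_{n,\bbQ}=\Lie(\Res_{F/\bbQ}\Sp_{2n})$), because such operators commute with the semilinear $\Aut(\bbC)$-action.

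For the first input I would invoke the arithmeticity of the weight-raising (Maass--Shimura) and weight-lowering operators on nearly holomorphic Hilbert--Siegel modular forms, as in \cite{00_Shimura} and in the form used in \cite{HPSS}, \cite{Horinaga_2}: after the standard normalization by powers of $2\pi i$ these operators preserve nearly holomorphic forms with $\bbQ$-rational Fourier coefficients of the polynomial representatives and commute with $\Aut(\bbC)$. In particular $N_\rho(\Gamma)$ carries a $\calZ_{n,\bbQ}$-stable $\bbQ$-structure $V_\bbQ$ with $N_\rho(\Gamma)=V_\bbQ\otimes_\bbQ\bbC$ on which $\Aut(\bbC)$ acts semilinearly, and each $z\in\calZ_{n,\bbQ}$ acts on $V_\bbQ$ by a $\bbQ$-linear operator $T_z$; consequently the commutative $\bbC$-algebra $A\subseteq\End(N_\rho(\Gamma))$ generated by $\calZ_n$ is $A=A_\bbQ\otimes_\bbQ\bbC$, with $A_\bbQ$ the $\bbQ$-algebra generated by the $T_z$. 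This is the only nonformal point, and it is where the arithmetic theory of nearly holomorphic forms is genuinely used.

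For the second input I would appeal to Theorem~\ref{classification_th}: every irreducible constituent of $N_\rho(\Gamma)$ is a highest weight module $L(\lambda)$, or occurs inside some $N(\lambda)^\vee$, for an integral weight $\lambda$ lying in a finite set determined by $\rho$ (and $\Gamma$); its infinitesimal character is $\chi_{\lambda+\rho}$, and under the Harish--Chandra isomorphism $z\in\calZ_{n,\bbQ}$ corresponds to a polynomial with rational coefficients, which evaluated at the (half-)integral point $\lambda+\rho$ gives $\chi_{\lambda+\rho}(z)\in\bbQ$. Thus each $\chi_i$ takes values in $\bbQ$, so $A_\bbQ/\mathrm{rad}(A_\bbQ)\cong\bbQ^r$ and the orthogonal idempotents of $A$ cutting out the spaces $N_\rho(\Gamma)[\chi_i]$ already lie in $A_\bbQ$; concretely, choosing $z\in\calZ_{n,\bbQ}$ with $\chi_1(z),\dots,\chi_r(z)$ pairwise distinct we may write $\frakp_\chi=p(T_z)$ with $p\in\bbQ[X]$ the interpolation polynomial attached to the root $\chi(z)$. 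Since $p$ has rational coefficients, $T_z$ is $\Aut(\bbC)$-equivariant and $\Aut(\bbC)$ acts semilinearly, we get $\frakp_\chi({}^\sigma f)=p(T_z)({}^\sigma f)={}^\sigma(p(T_z)f)={}^\sigma\frakp_\chi(f)$ for all $f\in N_\rho(\Gamma)$ and $\sigma\in\Aut(\bbC)$. (Alternatively one may run the argument globally: $\frakp_\chi$ picks out the sum of the automorphic pieces with archimedean infinitesimal character $\chi$, and $\Aut(\bbC)$ permutes automorphic representations while fixing the archimedean component up to isomorphism — by Theorem~\ref{classification_th} a highest weight module pinned down by $\rho$ — hence preserves this sum.) The main obstacle is the first input, the $\Aut(\bbC)$-rationality of the differential operators generating $\calZ_n$ on nearly holomorphic forms; granting this together with the rationality of infinitesimal characters furnished by Theorem~\ref{classification_th}, the theorem follows.
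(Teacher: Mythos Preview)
Your approach is essentially the one the paper defers to (it cites \cite[Proposition 3.16]{HPSS} for $F=\bbQ$ and asserts ``the general case is similar'' without details): exhibit $\bbQ$-rational generators of $\calZ_n$ acting by $\Aut(\bbC)$-equivariant operators on nearly holomorphic forms, observe that the occurring infinitesimal characters take rational values on these generators, and conclude that the projection idempotents lie in the $\bbQ$-span. The paper's explicit generators $D_{2r}$ play exactly the role of your $T_z$.

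Two points deserve adjustment. First, for $F\neq\bbQ$ the map $f\mapsto{}^\sigma f$ sends $N_\rho(\Gamma)$ to $N_{{}^\sigma\rho}(\Gamma)$, not to itself (see the paper's Theorem on ${}^\sigma f\in N_{{}^\sigma\rho}(\Gamma)$), so your assertions that ``$N_\rho(\Gamma)$ carries a $\bbQ$-structure on which $\Aut(\bbC)$ acts semilinearly'' and that ``it is enough to show each $N_\rho(\Gamma)[\chi_i]$ is $\Aut(\bbC)$-stable'' are not literally correct. The fix is routine: run the argument on the finite direct sum $\bigoplus_{\rho'}N_{\rho'}(\Gamma)$ over the $\Aut(\bbC)$-orbit of $\rho$, which does carry such a $\bbQ$-structure and on which the various $\frakp_\chi$ assemble; one must also track how $\sigma$ permutes the archimedean factors of $\calZ_n$ and hence the local components of $\chi$, which is why the paper insists on generators $D_{2r}$ defined over the $\bbQ$-form $\Lie(\Res_{F/\bbQ}\Sp_{2n})$.

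Second, invoking Theorem \ref{classification_th} for the rationality of the $\chi_i$ is heavier than necessary. It suffices that a nearly holomorphic form generates an object of $\calO^\frakp$ whose $K_{n,\infty}$-types are integral, so every constituent is some $L(\lambda)$ with $\lambda$ integral; evaluating the Harish--Chandra image of a $\bbQ$-rational generator at $\lambda+\rho$ already gives a rational number. This avoids importing the full structure theorem into what is a purely representation-theoretic lemma.
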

	
	For a $\frakk_n$-dominant integral weight $\lambda$ and $v \in \bfa$, put $j_v(\lambda) = \#\{j \mid \lambda_{1,v} \equiv \lambda_{j,v} \, (\mathrm{mod}\, 2)\}$.
	Set
	\[
	\wedge^{j_v(\lambda)} = \wedge^{j_v(\lambda)}\mathrm{std}_{\GL_n(\bbC)}, \quad \rho_v = \mathrm{det}^{\lambda_{1,v}-1} \otimes \wedge^{j_v}\mathrm{std}_{\GL_n(\bbC)}, \quad \text{and} \quad  \rho = \bigotimes_{v\in\bfa} \rho_v,
	\]
	where $\mathrm{std}_{\GL_n(\bbC)}$ is the standard representation of $\GL_n(\bbC)$ and $\wedge^{j_v(\lambda)}\mathrm{std}_{\GL_n(\bbC)}$ is the $j_v(\lambda)$-th exterior product of $\mathrm{std}_{\GL_n(\bbC)}$.
	
	\begin{thm}[Theorem \ref{proj}]
	Let $\lambda = (\lambda_{1,v},\ldots,\lambda_{n,v})_v$ be a regular anti-dominant integral weight.
	Put $\rho = \bigotimes_{v \in \bfa} (\det^{\lambda_{1,v}-1} \otimes \wedge^{j_v(\lambda)})$ and $N_\rho(\Gamma,\chi_\lambda) = \frakp_{\chi_\lambda}(N_\rho(\Gamma))$.
	If $F=\bbQ$ and $\lambda_{n,v} = n+1$, any modular form in $N_\rho(\Gamma,\chi_\lambda)$ generates $L(\lambda)$ or $N(\lambda_1,\ldots,\lambda_{n-1},n-1)^\vee$.
	If not, any modular form in $N_\rho(\Gamma,\chi_\lambda)$ generates $L(\lambda)$.
	\end{thm}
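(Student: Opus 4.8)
The plan is to deduce the theorem from the classification in Theorem~\ref{classification_th} by pinning down, among all $(\frakg_n,K_{n,\infty})$-modules of infinitesimal character $\chi_\lambda$, those containing the $K_{n,\infty}$-type of highest weight $\rho$. So let $f\in N_\rho(\Gamma,\chi_\lambda)$ and let $\varphi_f$ be the associated automorphic form on $G_n(\bbA_\bbQ)$; it is nearly holomorphic of weight $\rho$, and it has infinitesimal character $\chi_\lambda$ by construction of $\frakp_{\chi_\lambda}$ (which is given by a polynomial in $\calZ_n$ and hence commutes with $\calU(\frakg_n)$). Write $\pi_f=\calU(\frakg_n)\varphi_f$; it has finite length, so $\pi_f=\bigoplus_t\pi_t$ with each $\pi_t$ indecomposable. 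Each $\pi_t$ is a direct summand, hence a quotient, of $\pi_f$, so it is cyclic, generated by the image $\varphi_t$ of $\varphi_f$, which is again a nearly holomorphic automorphic form; thus Theorem~\ref{classification_th} describes $\pi_t$, and $\pi_t$ has infinitesimal character $\chi_\lambda$. Since $\varphi_f$ lies in the $\rho$-isotypic subspace of $\pi_f$, which is the sum of the $\rho$-isotypic subspaces of the $\pi_t$, the whole problem reduces to determining which modules from Theorem~\ref{classification_th} with infinitesimal character $\chi_\lambda$ contain the $K_{n,\infty}$-type $\rho$, and with what multiplicity.

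For the enumeration by infinitesimal character: by the Harish-Chandra isomorphism $L(\mu)$ has infinitesimal character $\chi_\lambda$ exactly when $\mu$ and $\lambda$ lie in one orbit under the dot-action of the Weyl group of $\mathfrak{sp}_{2n}(\bbC)$, which together with $\frakk_n$-dominance of $\mu$ leaves finitely many $\mu$; among them $\lambda$ is the unique anti-dominant one, using that $\lambda$ is regular (so that $\lambda$ plus half the sum of the positive roots is regular dominant). When $F\neq\bbQ$ every $\pi_t$ is irreducible, so every candidate is such an $L(\mu)$. When $F=\bbQ$ one must in addition allow the reducible modules $N(\nu)^\vee$ of Theorem~\ref{classification_th}; requiring that they too have the regular infinitesimal character $\chi_\lambda$ leaves at most one of them, and it exists precisely when $\lambda_{n,v}=n+1$, namely $N(\lambda_1,\ldots,\lambda_{n-1},n-1)^\vee$, which has length two with composition factors $L(\lambda_1,\ldots,\lambda_{n-1},n-1)$ and $L(\lambda)$, the latter being a quotient.

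For the $K_{n,\infty}$-type analysis --- the heart of the proof --- one uses that, $\lambda$ being anti-dominant, $L(\lambda)=N(\lambda)$, whose restriction to $K_{n,\infty}$ is $\bigotimes_{v\in\bfa}\bigl(V_{\lambda_v}\otimes\Sym(\frakp_{n,+})\bigr)$, where $\frakp_{n,+}\cong\Sym^2(\mathrm{std}_{\GL_n(\bbC)})$ and $\Sym(\frakp_{n,+})\cong\bigoplus_{\mu}V_\mu$ over partitions $\mu$ all of whose parts are even (multiplicity free). A Littlewood--Richardson computation then shows that $j_v(\lambda)=\#\{j:\lambda_{1,v}\equiv\lambda_{j,v}\ (\mathrm{mod}\ 2)\}$ is the unique integer $j$ for which $\det^{\lambda_{1,v}-1}\otimes\wedge^{j}\mathrm{std}_{\GL_n(\bbC)}$ occurs in the $v$-component of $L(\lambda)$, and that it occurs with multiplicity one; hence $\rho$ occurs in $L(\lambda)$ with multiplicity one. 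For any other candidate $L(\mu)$ one shows $\rho$ does not occur: if $N(\mu)$ is reducible then $[N(\mu):V_\rho]=[N(\lambda):V_\rho]=1$ and $L(\lambda)$ is the composition factor of $N(\mu)$ bearing the $V_\rho$-type, so $\rho$ is absent from $L(\mu)$; if $N(\mu)$ is irreducible then $\mu$ is anti-dominant, forcing $\mu=\lambda$. Finally, in the exceptional case $N(\lambda_1,\ldots,\lambda_{n-1},n-1)^\vee$ has the same $K_{n,\infty}$-types as $N(\lambda_1,\ldots,\lambda_{n-1},n-1)$, the parity condition still holds for $\rho-(\lambda_1,\ldots,\lambda_{n-1},n-1)$, so $V_\rho$ occurs there with multiplicity one, whereas $V_\rho$ is disjoint from the submodule $\cong L(\lambda_1,\ldots,\lambda_{n-1},n-1)$.

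Putting these together: each $\pi_t$ into which $\varphi_f$ has nonzero image contains $V_\rho$, hence is isomorphic to $L(\lambda)$, or --- only if $F=\bbQ$ and $\lambda_{n,v}=n+1$ --- to $N(\lambda_1,\ldots,\lambda_{n-1},n-1)^\vee$; since $V_\rho$ occurs with multiplicity one in each and $L(\lambda)$ is a quotient of $N(\lambda_1,\ldots,\lambda_{n-1},n-1)^\vee$, a fibre-product argument shows that $\varphi_f$ generates a single module, isomorphic to $N(\lambda_1,\ldots,\lambda_{n-1},n-1)^\vee$ if $\varphi_f$ has a nonzero component in such a summand and to $L(\lambda)$ otherwise; in particular, if $F\neq\bbQ$ or $\lambda_{n,v}\neq n+1$, only $L(\lambda)$ can occur. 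The main obstacle is the third step: the $K_{n,\infty}$-type bookkeeping showing that the specific exterior power $\wedge^{j_v(\lambda)}$ distinguishes $L(\lambda)$ from all other modules with infinitesimal character $\chi_\lambda$, and the multiplicity-one assertions --- which is exactly where the regularity of $\lambda$ and the precise recipe for $j_v(\lambda)$ enter.
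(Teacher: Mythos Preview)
Your overall strategy coincides with the paper's: invoke Theorem~\ref{classification_th} to list the possible indecomposable summands of $\pi_f$, then use the specific $K_{n,\infty}$-type $\rho=\det^{\lambda_1-1}\otimes\wedge^{j(\lambda)}$ to single out $L(\lambda)$ (and, in the boundary case, $N(\lambda^{(1)})^\vee$). The paper packages exactly this as Corollary~\ref{classif_with_K-type}, which is what its one-line proof of Theorem~\ref{proj} cites. Your closing fibre-product argument, showing that the cyclic module generated by $\varphi_f$ is itself indecomposable rather than merely a direct sum of allowed pieces, is in fact more careful than the paper's proof.

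The gap is in your $K$-type step. First, you enumerate the irreducible candidates as all $\frakk_n$-dominant $\mu$ in the dot-orbit of $\lambda$; but Theorem~\ref{classification_th} alone does not cut the list down that far. What actually restricts the irreducible $\pi_t$ to $\calO_\lambda^{\unit}=\{\lambda^{(0)},\dots,\lambda^{(p(\lambda))}\}$ is the unitarizability built into the structure theorems behind the classification, together with Corollary~\ref{orbit}, and you never invoke either. Second, your dichotomy on $N(\mu)$ is not justified as stated: irreducibility of a parabolic Verma module does not force $\mu_n\geq n$ (the paper's ``anti-dominant''), and you give no argument for the equality $[N(\mu):V_\rho]=[N(\lambda):V_\rho]=1$ nor for $L(\lambda)$ being a composition factor of $N(\mu)$ for an arbitrary $\frakk_n$-dominant $\mu$ in the orbit. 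Without first restricting to $\calO_\lambda^{\unit}$, you would have to treat weights such as $\mu$ with $\mu_n\ll 0$, where these claims are far from clear.

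The paper avoids all of this composition-series bookkeeping. Having restricted to $\mu\in\calO_\lambda^{\unit}$ via Corollary~\ref{orbit}, it embeds every such $L(\mu)$ as a constituent of one and the same principal series (Theorem~\ref{emb_ht_wt}) and checks by Frobenius reciprocity that $\wedge^{j(\lambda)}$ occurs there with multiplicity at most one; since Lemma~\ref{occur} already places this $K$-type in $N(\lambda)=L(\lambda)$, it cannot occur in any other $L(\lambda^{(j)})$ (Proposition~\ref{vanish_K_type}, Corollary~\ref{distinguish_mod}). This gives both the vanishing for $j\geq 1$ and the multiplicity-one in $L(\lambda)$ in a single stroke, with no need to analyse $N(\mu)$ for $\mu\neq\lambda$. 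Your Littlewood--Richardson route could in principle be completed, but it would require separate proofs that $V_\rho$ has multiplicity one in each $N(\lambda^{(j)})$ and that $L(\lambda)$ occurs in each such Verma, neither of which is immediate.
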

	
	The following is the analogue of holomorphic projection.
	
	\begin{cor}
	Let $\lambda = (\lambda_{1,v},\ldots,\lambda_{n,v})_v$ be an anti-dominant $\frakk_n$-dominant integral weight and $\rho$ the irreducible highest weight representation of $K_{n,\bbC}$ with highest weight $\lambda$.
	Suppose $\lambda_{1,v}-\lambda_{n,v} \leq 1$ and $\lambda_{n,v} \geq n+1$ for any $v \in \bfa$.
	If $F \neq \bbQ$ or $\lambda_{n,v} \neq n+1$ for some $v \in \bfa$, the projection $\frakp_\chi$ defines a projection onto $M_\rho(\Gamma)$, the subspace of holomorphic modular forms.
	\end{cor}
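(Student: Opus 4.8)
The plan is to read the corollary off from Theorem \ref{proj}, once one checks its hypotheses, identifies the two representations written $\rho$, and uses the structure of the bottom $K$-type of $L(\lambda)$. First I would verify the hypotheses of Theorem \ref{proj}. Since $\lambda$ is $\frakk_n$-dominant, in each factor $v$ the half-sum of the positive roots has $m$-th coordinate $-m$, so the $m$-th coordinate of $\lambda+\rho$ equals $\lambda_{m,v}-m$; by $\frakk_n$-dominance these are strictly decreasing in $m$, and by the hypothesis $\lambda_{n,v}\geq n+1$ they are all $\geq 1$, hence pairwise distinct and nonzero, so $\lambda$ is regular; it is anti-dominant by assumption, so Theorem \ref{proj} applies. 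Moreover $\lambda_{1,v}-\lambda_{n,v}\leq 1$ forces $\lambda_{i,v}\in\{\lambda_{1,v},\lambda_{1,v}-1\}$ for all $i$, and by monotonicity the indices $i$ with $\lambda_{i,v}=\lambda_{1,v}$ are exactly $1,\ldots,j_v(\lambda)$; hence $\det^{\lambda_{1,v}-1}\otimes\wedge^{j_v(\lambda)}\mathrm{std}_{\GL_n(\bbC)}$ has highest weight $(\lambda_{1,v},\ldots,\lambda_{n,v})$, so the representation $\rho$ of Theorem \ref{proj} coincides with the $\rho$ of the corollary and $N_\rho(\Gamma,\chi_\lambda)=\frakp_{\chi_\lambda}(N_\rho(\Gamma))$.

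The second ingredient is the structure of the bottom $K$-type of $L(\lambda)$: I claim the $K_{n,\bbC}$-type $\rho$ occurs in $L(\lambda)$ with multiplicity one and is annihilated by $\frakp_{n,-}$. Writing $V_\lambda$ for the irreducible $\frakk_n$-module of highest weight $\lambda$ (so $V_\lambda\cong\rho$) and using the PBW isomorphism of $\frakk_n$-modules $N(\lambda)\cong\calU(\frakp_{n,+})\otimes V_\lambda$, one notes that every nonzero weight of $\calU(\frakp_{n,+})$ has strictly positive coordinate-sum (each root $e_i+e_j$ spanning $\frakp_{n,+}$ has coordinate-sum $2$), while passing between weights of $V_\lambda$ preserves coordinate-sums; hence the $V_\lambda$-isotypic component of $N(\lambda)$ is exactly $1\otimes V_\lambda$, which is killed by $\frakp_{n,-}$ by the construction of the parabolic Verma module. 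Passing to the quotient $L(\lambda)$, its $V_\lambda$-isotypic component is the image of $1\otimes V_\lambda$, hence still of multiplicity one and killed by $\frakp_{n,-}$.

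With these in hand I would establish the two inclusions. For $\frakp_{\chi_\lambda}(N_\rho(\Gamma))\subseteq M_\rho(\Gamma)$: under the hypothesis $F\neq\bbQ$ or $\lambda_{n,v}\neq n+1$ for some $v$, Theorem \ref{proj} says every $f\in N_\rho(\Gamma,\chi_\lambda)$ generates $L(\lambda)$; since $f$ has weight $\rho$, it corresponds to the $V_\lambda$-isotypic component of $L(\lambda)$, so $\frakp_{n,-}f=0$ by the previous paragraph, i.e.\ $f$ is holomorphic. For $M_\rho(\Gamma)\subseteq\frakp_{\chi_\lambda}(N_\rho(\Gamma))$: any $f\in M_\rho(\Gamma)$ satisfies $\frakp_{n,-}f=0$, so the $(\frakg_n,K_{n,\infty})$-module it generates equals $\calU(\frakp_{n,+})\cdot\calU(\frakk_n)f$, a quotient of $N(\lambda)$, and hence has infinitesimal character $\chi_\lambda$; therefore $\frakp_{\chi_\lambda}f=f$, so $f\in\frakp_{\chi_\lambda}(N_\rho(\Gamma))$. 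Since $\frakp_{\chi_\lambda}$ is idempotent, these inclusions say exactly that $\frakp_{\chi_\lambda}$ is a projection onto $M_\rho(\Gamma)$.

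The only step that is more than bookkeeping around Theorem \ref{proj} is the multiplicity-one and $\frakp_{n,-}$-annihilation statement of the second paragraph — essentially the familiar identification of the bottom $K$-type of a holomorphic-type irreducible highest weight module — and I do not anticipate a genuine obstacle there; the remaining steps amount to matching notation and to the fact that $\frakp_{\chi_\lambda}$ is the projection onto the part of infinitesimal character $\chi_\lambda$.
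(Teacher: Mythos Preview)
Your proposal is correct and follows essentially the same route as the paper: apply Theorem \ref{proj} to see that every $f\in N_\rho(\Gamma,\chi_\lambda)$ generates $L(\lambda)$, then observe that a weight-$\lambda$ vector in $L(\lambda)$ is a highest weight vector and hence annihilated by $\frakp_{n,-}$. The paper's own proof is two lines and leaves implicit the three points you spell out---the regularity check from $\lambda_{n,v}\geq n+1$, the identification of the two representations called $\rho$, and the reverse inclusion $M_\rho(\Gamma)\subseteq N_\rho(\Gamma,\chi_\lambda)$---so your version is simply a more careful unpacking of the same argument.
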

	
	We then characterize the nearly holomorphic Hilbert-Siegel modular forms which generate a holomorphic discrete series representation in terms of projections $\frakp_\chi$ under a mild assumption.
	This gives a generalization of Shimura's holomorphic projection.
	
	\subsection*{Notation}
	
	We denote by $\mathrm{Mat}_{m,n}$ the set of $m \times n$-matrices.
	Put $\Mat_{n} = \Mat_{n,n}$ with the unit $\mathbf{1}_n$.
	Let $\GL_n$ and $\Sp_{2n}$ be the algebraic groups defined by
	\[
	\GL_n(R) = \{g \in \Mat_n \mid \det g \in R^\times \}
	\]
	and
	\[
	\Sp_{2n} (R) = \left\{ g \in \GL_{2n}(R) \, \middle| \, {^t{g}}\begin{pmatrix}0_n & -\mathbf{1}_n \\ \mathbf{1}_n & 0_n \end{pmatrix} g = \begin{pmatrix}0_n & -\mathbf{1}_n \\ \mathbf{1}_n & 0_n \end{pmatrix}\right\}
	\]
	for a ring $R$, respectively.
	Set $\Sym_n = \{g \in \Mat_n \mid {^tg}=g\}$.
	Let $B_n$ be the subgroup of $\Sp_{2n}$ defined by
	\[
	B_n = \left\{ \begin{pmatrix}a & * \\ 0 & {^ta^{-1}} \end{pmatrix} \,\middle|\, \text{$a$ is a upper triangular matrix.}\right\}.
	\]
	The group $B_n$ is a Borel subgroup of $\Sp_{2n}$ with the Levi decomposition $B_n = T_nN_n$.
	Here $T_n \subset B_n$ is the maximal diagonal torus of $\Sp_{2n}$.
	A parabolic subgroup $P$ of $\Sp_{2n}$ is called standard if $P$ contains $B_n$.
	Let $A_P$ be the split component of $P$ and $A_P^\infty$ the identity component of $A_P(\bbR)$.
	We denote by $P_{i,n}$ and $Q_{i,n}$ the standard parabolic groups of $\Sp_{2n}$ with the Levi subgroups $\GL_i \times \Sp_{2(n-i)}$ and $(\GL_1)^i \times \Sp_{2(n-i)}$, respectively.
	Set $P_n = P_{n,n}$.
	For a parabolic subgroup $P$, let $\delta_P$ be the modulus character of $P$.

	For $n \in \bbZ_{\geq 1}$, set
	\[
	\frakH_n = \{z \in \Sym_n(\bbC) \mid \text{$\mathrm{Im}(z)$ is positive definite}\}.
	\]
	The space $\frakH_n$ is called the Siegel upper half space of degree $n$.
	The Lie group $\Sp_{2n}(\bbR)$ acts on $\frakH_n$ by the rule
	\[
	\begin{pmatrix}a & b \\ c& d\end{pmatrix}(z) = (az+b)(cz+d)^{-1} , \qquad \begin{pmatrix}a & b \\ c& d\end{pmatrix} \in \Sp_{2n}(\bbR), \, z \in \frakH_n.
	\]
	Put
	\[
	K_{n,\infty} = \left\{g = \begin{pmatrix} a & b \\ c & d \end{pmatrix} \in \Sp_{2n}(\bbR) \, \middle| \, a = d, \, c = -b\right\}.
	\]
	Then $K_{n,\infty}$ is the group of stabilizers of $\bfi = \sqrt{-1} \, \mathbf{1}_n \in \frakH_n$.
	For simplicity the notation, the symbol $\bfi$ also denotes the element $(\sqrt{-1}\,\mathbf{1}_n, \ldots, \sqrt{-1} \, \mathbf{1}_n) \in \frakH_n^d$.
	Since the action of $\Sp_{2n}(\bbR)$ on $\frakH_n$ is transitive, we have $\frakH_n \cong \Sp_{2n}(\bbR)/ K_{n,\infty}$.
	
	Let $F$ be a totally real field with degree $d$.
	Let $\bfa = \{\infty_1,\ldots,\infty_d\}$ be the set of embeddings of $F$ into $\bbR$.
	We denote by $\bbA_F$ and $\bbA_{F,\fini}$ the adele ring of $F$ and the finite part of $\bbA_F$, respectively.
	For a place $v$, let $F_v$ be the $v$-completion of $F$.
	Put $F_\infty = \prod_{v \in \bfa} F_v$.
	For a non-archimedean place $v$, let $\calO_{F_v}$ be the ring of integers of $F_v$.

	Set $G_n = \Res_{F/\bbQ}\Sp_{2n}$ where $\Res$ is the Weil restriction.
	We define the standard parabolic subgroups $P_{i,n}, Q_{i,n}$ and $B_n$ of $G_n$ by the Weil restriction of parabolic subgroups $P_{i,n}, Q_{i,n}$ and $B_n$ of $\Sp_{2n}$, respectively.
	Let $W_n$ be the Weyl group of $\Sp_{2n}$.
	For an archimedean place $v$, set $K_{n,v} = K_{n,\infty}$.
	For the sake of simplicity, the symbol $K_{n,\infty}$ denotes the maximal compact subgroup $\prod_{v \in \bfa}K_{n,v}$ of $G_n(\bbR)$.
	Let $K_{n,\bbC}$ be the complexification of $\prod_{v \in \bfa} K_{n,v}$.
	Put $\frakg_n = \mathrm{Lie}(G_n(\bbR)) \otimes_\bbR \bbC$ and $\frakk_n = \mathrm{Lie}(\prod_{v \in \bfa}K_{n,v})\otimes_\bbR\bbC$.
	Set $K_v = \Sp_{2n}(\calO_{F_v})$ for a non-archimedean place $v$.
	We denote by $\calZ_n$ the center of the universal enveloping algebra $\calU(\frakg_n)$.
	We then obtain the well-known decomposition
	\[
	\frakg_n =  \frakk_n \oplus \frakp_{n,+} \oplus \frakp_{n,-}
	\]
	where $\frakp_{n,+}$ (resp.~$\frakp_{n,-}$) is the Lie subalgebra of $\frakg_n$ corresponding to the holomorphic tangent space (resp.~anti-holomorphic tangent space) of $\frakH_n^d$ at $\bfi$.
	It is well-known that the Lie algebras $\frakg_n$ and $\frakk_n$ have the same Cartan subalgebra.
	We fix such a Cartan subalgebra.
	Then the root system of $\mathfrak{sp}_{2n}(\bbC)$ is 
	\[
	\Phi = \{ \; \pm (e_i + e_j), \; \pm(e_k - e_\ell), \quad 1\leq i \leq j \leq n, 1 \leq k < \ell \leq n \; \}.
	\]
	We consider the set
	\begin{align*}
	\Phi^+ = \{ \; - (e_i + e_j), \; e_k - e_\ell, \quad 1\leq i \leq j \leq n, 1 \leq k < \ell \leq n \; \}
	\end{align*}
	to be a positive root system.
	Let $\rho$ be half the sum of positive roots.
	Put $\rho_{i,n} = n - (i-1)/2$ and $\rho_n = \rho_{n,n}$.
	This corresponds to half the sum of roots in the unipotent subgroup of $P_{i,n}$.
	For $\lambda = (\lambda_{1,v},\ldots,\lambda_{n,v})_v \in \bigoplus_{v \in \bfa} \bbC^n$, we say that $\lambda$ is a weight if $\lambda_{i,v}-\lambda_{i+1,v} \in \bbZ$ for any $v$ and $1 \leq i \leq n-1$.
	The weight $\lambda$ is $\frakk_n$-dominant if $\lambda_{i,v}-\lambda_{i+1,v} \geq 0$ for any $v$ and $1 \leq i \leq n-1$.
	For a $\frakk_n$-dominant weight $\lambda$, let $\rho_\lambda$ be an irreducible finite-dimensional representation of $\frakk_n$.
	When $\lambda$ is integral, i.e., any entry of $\lambda$ is an integer, we identify $\rho_\lambda$ as the derivative of an irreducible finite-dimensional representation of $K_{n,\bbC}$ with highest weight $\lambda$.
	We then write the representation of $K_{n,\bbC}$ by the same $\rho_{\lambda}$.
	
	We fix a non-trivial additive character $\psi = \bigotimes_v \psi_v$ of $F \bs \bbA_F$ as follows:
	If $F=\bbQ$, let
	\begin{align*}
	\psi_p(x) &= \exp(-2\pi\sqrt{-1}\, y), \qquad x \in \bbQ_p,  \\
	\psi_\infty(x) &= \exp(2\pi\sqrt{-1} \, x),\qquad x \in \bbR, 
	\end{align*}
	where $y \in \cup_{m=1}^\infty p^{-m}\bbZ$ such that $x-y \in \bbZ_p$.
	In general, for an archimedean place $v$ of $F$, put $\psi_v = \psi_\infty$ and for a non-archimedean place $v$ with the rational prime $p$ divisible by $v$, put $\psi_v(x) = \psi_p(\mathrm{Tr}_{F_v/\bbQ_p}(x))$.
	
	For a function $f$ on a group $G$, let $r$ be the right translation, i.e., $r(g)f(h) = f(hg)$ for any $g,h \in G$.
	For a subset $H$ of $G$, we denote by $f|_H$ the restriction of $f$ to $H$.
	Let $G$ be a Lie group with the Lie algebra $\frakg$.
	For a smooth function $f$ on $G$ and $X \in \frakg$, put
	\[
	X\cdot f(g) = \left.\frac{d}{dt} \right|_{t=0}f(g\exp(tX)), \qquad g \in G.
	\]
	For the action of $G_n(\bbA_\bbQ)$, we mean the $G_n(\bbA_{\bbQ,\fini}) \times (\frakg_n,K_{n,\infty})$-action.

	\section{Nearly holomorphic Hilbert-Siegel modular forms and automorphic forms}
	
	In this section, we review the definition and arithmeticity of nearly holomorphic Hilbert-Siegel modular forms.
	We also recall some properties of nearly holomorphic automorphic forms on $G_n(\bbA_\bbQ)$ and basic terminologies of automorphic forms.
	
	\subsection{Differential operators on the Siegel upper half space}
	
	We recall the differential operators on $\frakH_n$.
	For details, see \cite[\S 12]{00_Shimura}.
	Fix a basis on $\Sym_n(\bbC)$ by $\{(1+\delta_{i,j})^{-1}(e_{i,j}+e_{j,i}) \mid 1 \leq i \leq j \leq n\}$.
	We denote the basis by $\{\vep_\nu\}$.
	For $u \in \Sym_n(\bbC)$, write $u = \sum_\nu u_\nu \vep_\nu$ with $u_\nu \in \bbC$ and for $z \in \frakH_n$, write $z = \sum_\nu z_\nu \vep_\nu$ with $z_\nu \in \bbC$.
	For a non-negative integer $e$ and a finite-dimensional vector space $V$, let $S_e(\Sym_n(\bbC),V)$ be the space of $V$-valued homogeneous polynomial maps of degree $e$ on $\Sym_n(\bbC)$ and $\mathrm{Ml}_e(\Sym_n(\bbC),V)$ the space of $e$-multilinear maps on $\Sym_n(\bbC)^e$ to $V$.
	Note that $S_e(\Sym_n(\bbC),V)$ can be viewed as the space of symmetric elements of $\mathrm{Ml}_e(\Sym_n(\bbC),V)$.
	For a representation $\rho$ of $\GL_n(\bbC)$ on $V$, we define representations $\rho \otimes \tau^e$ and $\rho\otimes\sigma^e$ on $\mathrm{Ml}_e(\Sym_n(\bbC),V)$ by
	\[
	((\rho \otimes \tau^e)(a)h)(u_1,\ldots,u_e) = \rho(a)h({^ta}u_1a,\ldots,{^ta}u_ea)
	\]
	and
	\[
	((\rho \otimes \sigma^e)(a)h)(u_1,\ldots,u_e) = \rho(a)h({a}^{-1}u_1{^ta^{-1}},\ldots,{a}^{-1}u_e{^ta^{-1}}),
	\]
	respectively.
	Here, $h \in \mathrm{Ml}_e(\Sym_n(\bbC),V)$, $a \in \GL_n(\bbC)$ and $(u_1,\ldots,u_e) \in \Sym_n(\bbC)^e$.
	The symbols $\rho \otimes \tau^e$ and $\rho \otimes \sigma^e$ also denote the restrictions to the representations space $S_e(\Sym_n(\bbC),V)$.
	
	For $f \in C^\infty(\frakH_n,V)$, we define functions $Df,\overline{D}f, Cf, E,f$ on $C^\infty(\frakH_n,S_1(\Sym_n(\bbC), V))$ by
	\begin{align*}
	((Df)(z))(u) = \sum_\nu u_\nu \frac{\partial f}{\partial z_\nu}(z)&, \qquad ((\overline{D}f)(z))(u) = \sum_\nu u_\nu \frac{\partial f}{\partial \overline{z_\nu}}(z),\\
	((Cf)(z))(u) = 4 ((Df)(z))(yuy)&,\qquad ((Ef)(z))(u) = 4((\overline{D}f)(z))(yuy).
	\end{align*}
	Here, $u = \sum_\nu u_\nu \vep_\nu \in \Sym_n(\bbC), z = \sum_\nu z_\nu\vep_\nu \in \frakH_n$ and $y = \mathrm{Im}(z)$.
	For $f \in C^\infty(\frakH_n,V)$, we say that $f$ is nearly holomorphic if there exists $e$ such that $E^ef = 0$.

	\subsection{Definition}
		
	Let $F$ be the fixed totally real field.
	For an integral ideal $\frakn$ of $F$, set
	\[
	\Gamma(\frakn) = \left\{ \gamma \in \Sp_{2n}(\calO_F)\, \middle|\, \gamma - \mathbf{1}_{2n} \in \Mat_{2n}(\frakn)\right\}.
	\]
	The group $\Gamma(\frakn)$ is called the principal congruence subgroup of $G_{n}(\bbQ)$ of level $\frakn$.
	We say that a subgroup $\Gamma$ of $G_{n}(\bbQ)$ is a congruence subgroup if there exists an integral ideal $\frakn$ such that $\Gamma$ contains $\Gamma(\frakn)$ and $[\Gamma \colon \Gamma(\frakn)] < \infty$.
	In this subsection, we regard $G_n(\bbQ)$ as a subgroup of $G_n(\bbR) = \prod_{v \in \bfa} \Sp_{2n}(F_v)$ by $\gamma \longmapsto (\infty_1(\gamma), \ldots, \infty_d(\gamma))$.
	Similarly, we regard a congruence subgroup $\Gamma$ of $G_{n}(\bbQ)$ as a subgroup of $G_{n}(\bbR)$.
	
	We define the factor of automorphy $j \colon G_n(\bbR) \times \frakH_n^d \longrightarrow \GL_n(\bbC)^d$ by
	\[
	j(g,z) = (c_vz_v+d_v)_v \in \prod_{v \in \bfa}\GL_n(\bbC) = \GL_n(\bbC)^d, \qquad g = \left(\begin{pmatrix}a_v&b_v\\c_v&d_v\end{pmatrix}\right)_v \in G_n(\bbR), \quad z = (z_v)_v \in \frakH_n^d.
	\]
	For a representation $\rho$ of $K_{n,\bbC}$ on $V$, set $j_\rho = \rho \circ j$.
	For $g \in G_n(\bbR)$, we define the slash operator $|_\rho g$ on $C^\infty(\frakH_n^d,V)$ by
	\[
	(f|_\rho g) (z_1,\ldots,z_d) = j_\rho(g, z)^{-1} f(\gamma(z_1,\ldots,z_d)),
	\]
	for $f \in C^\infty(\frakH_n^d , V)$ and $(z_1,\ldots,z_d) \in \frakH^d_n$.
	Let $\Gamma$ be a congruence subgroup of $G_n(\bbQ)$.
	Suppose that a function $f \in C^\infty(\frakH_n^d, V)$ satisfies the automorphy $f|_\rho\gamma = f$ for any $\gamma \in \Gamma$.
	Then, $f$ has the Fourier expansion
	\[
	(f|_{\rho}\gamma)(z) = \sum_{h \in \Sym_n(F)} c_f(h,y,\gamma) \mathbf{e}(\tr({hz})), \qquad z \in \frakH_n^d, \, y = \mathrm{Im}(z)
	\]
	where $\mathbf{e}(\tr(hz)) = \exp(2\pi\sqrt{-1}\,\sum_{j=1}^h \tr(\infty_j(h)z_j))$ for $(z_1,\ldots,z_d) \in \frakH_n^d$ and $h \in \Sym_n(F)$.
	We consider the following condition:
	If $c_f(h,y,\gamma) \neq 0$, the matrix $h$ is positive semi-definite.
	We call this condition the cusp condition.
	We say that a $V$-valued $C^\infty$-function $f$ on $\frakH_n^d$ is a nearly holomorphic Hilbert-Siegel modular form of weight $\rho$ with respect to $\Gamma$ if $f$ satisfies the following conditions:
		\begin{itemize}
			\item $f$ is a nearly holomorphic function.
			\item $f|_\rho \gamma = f$ for all $\gamma \in \Gamma$.
			\item $f$ satisfies the cusp condition.
		\end{itemize}

	We denote by $N_\rho(\Gamma)$ the space of nearly holomorphic Hilbert-Siegel modular forms of weight $\rho$ with respect to $\Gamma$.
	In the following, for modular forms, we mean a (nearly holomorphic) Hilbert-Siegel modular forms.
	By K\"oecher principle, we can remove the cusp condition if $n >1$ or $F \neq \bbQ$. 
	For the proof, see \cite[Proposition 4.1]{RC_Horinaga} for $n>1$.
	We can give the same proof for the case of $F \neq \bbQ$.
	For simplicity, if $\rho = \det^k$, we say that a modular form of weight $\det^k$ is a modular form of weight $k$.
	
	\subsection{$\Aut(\bbC)$ action for nearly holomorphic Hilbert-Siegel modular forms and the holomorphic projection}
	
	Let $f$ be a nearly holomorphic modular form of weight $\rho$ with respect to $\Gamma$.
	Take a model $V$ of $\rho$ and fix a rational structure of $V$.
	Then, Shimura introduced the $\Aut(\bbC)$-action on $f$.
	For details, see \cite[\S 14.11]{00_Shimura} and \cite[\S 3.3]{HPSS}.
	For $\sigma \in \Aut(\bbC)$, we denote by ${^\sigma f}$ the action of $\sigma$ on $f$.
	For a weight $\rho = \bigotimes_{v\in\bfa} \rho_v$, put ${^\sigma \rho} = \bigotimes_{v\in\bfa} \rho_{\sigma \circ v}$.
	The following theorem is proved in \cite[Theorem 14.12]{00_Shimura}.
	
	\begin{thm}
	For $f \in N_{\rho}(\Gamma)$ and $\sigma \in \Aut(\bbC)$, one has ${^\sigma f} \in N_{^\sigma \rho}(\Gamma)$.
	\end{thm}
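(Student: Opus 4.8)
The plan is to verify directly that ${}^\sigma f$ satisfies the three conditions defining $N_{{}^\sigma\rho}(\Gamma)$: near-holomorphy, automorphy of weight ${}^\sigma\rho$ with respect to $\Gamma$, and the cusp condition. Throughout I use Shimura's description of the action from \cite[\S14.11]{00_Shimura} (see also \cite[\S3.3]{HPSS}): after fixing a rational structure on the model $V$ of $\rho$ compatible with the $\GL_n(\bbC)^d$-structure of $K_{n,\bbC}$, the automorphism $\sigma$ acts on a nearly holomorphic form by applying $\sigma$ to its suitably normalized Fourier coefficients while simultaneously permuting the archimedean places of $F$ via $v \mapsto \sigma \circ v$. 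The place-permutation is precisely what forces the weight twist ${}^\sigma\rho = \bigotimes_{v\in\bfa}\rho_{\sigma\circ v}$: the $v$-component of the automorphy cocycle of ${}^\sigma f$ corresponds, after applying $\sigma$, to the $(\sigma^{-1}\circ v)$-component of that of $f$, so that $\rho_v$ is replaced by $\rho_{\sigma\circ v}$.

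The first two conditions are formal. In the Fourier expansion $f(z) = \sum_h P_h(y)\,\mathbf{e}(\tr(hz))$ with $y = \mathrm{Im}(z)$, near-holomorphy ($E^e f = 0$ for some $e$) is reflected by each $P_h$ being a polynomial of degree at most $e$, uniformly in $h$, in the entries of $(4\pi y)^{-1}$, whose coefficients are the arithmetic Fourier coefficients on which $\sigma$ acts. Applying $\sigma$ to those coefficients and permuting the archimedean places changes neither this polynomial shape nor the degree bound, so ${}^\sigma f$ has an expansion of the same type and $E^e({}^\sigma f) = 0$; hence ${}^\sigma f$ is nearly holomorphic. For the cusp condition, the coefficient $c_{{}^\sigma f}(h,y,\gamma)$ vanishes exactly when the corresponding coefficient of $f$ does (up to the place permutation), and since positive semidefiniteness of $h \in \Sym_n(F)$ is a condition on all the real conjugates of $h$ and is therefore Galois-stable, the cusp condition for $f$ passes to ${}^\sigma f$. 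When $n>1$ or $F \neq \bbQ$ this step is in any case vacuous by the Koecher principle.

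The substantive point is the automorphy $({}^\sigma f)|_{{}^\sigma\rho}\gamma = {}^\sigma f$ for $\gamma \in \Gamma$. It suffices to prove the twisted $\sigma$-equivariance of translation by rational elements, ${}^\sigma(f|_\rho\gamma) = ({}^\sigma f)|_{{}^\sigma\rho}\gamma$, since then $f|_\rho\gamma = f$ gives the assertion. Reading $f|_\rho\gamma = f$ off the Fourier expansion turns it into a system of linear relations among the coefficients $c_f(h,y,\gamma)$ whose structure constants are built from the entries of $\gamma$ (which lie in $F$), roots of unity arising from $\mathbf{e}(\tr(hz))$, and matrix entries of $j_\rho(\gamma,z)$; applying the field automorphism $\sigma$ sends these to their $\sigma$-conjugates and permutes the archimedean places. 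Using the cocycle identity for $j$ together with the identification of the $v$-component of $j$ for ${}^\sigma f$ with the $(\sigma^{-1}\circ v)$-component of $j$ for $f$, one checks that the transformed system is exactly the one encoding $({}^\sigma f)|_{{}^\sigma\rho}\gamma = {}^\sigma f$.

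The main obstacle is precisely this last verification: showing that Shimura's action---a priori only an operation on Fourier expansions---is genuinely $\Gamma$-equivariant for the twisted weight, i.e. that permuting the places $v \mapsto \sigma\circ v$ and transporting the $\GL_n(\bbC)^d$-structure of $K_{n,\bbC}$ introduces no spurious constants in the automorphy law. This is secured by Shimura's normalization of the $\Aut(\bbC)$-action through CM points and the reciprocity law carried out in \cite[\S14]{00_Shimura}; granting that normalization, the verifications above are routine, and one concludes ${}^\sigma f \in N_{{}^\sigma\rho}(\Gamma)$.
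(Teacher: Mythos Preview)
The paper does not give its own proof of this statement: it simply records the theorem and cites \cite[Theorem~14.12]{00_Shimura}. Your proposal is therefore not being compared against an argument in the paper but against Shimura's original proof, and what you have written is a reasonable high-level sketch of that proof: you correctly isolate the three defining conditions, observe that near-holomorphy and the cusp condition are preserved for essentially formal reasons (polynomial shape in $y^{-1}$, Galois-stability of total positive semidefiniteness), and identify the automorphy under the twisted weight ${}^\sigma\rho$ as the only point with real content, which you then defer to Shimura's normalization via CM points and the reciprocity law in \cite[\S14]{00_Shimura}. Since the paper itself defers entirely to Shimura, your write-up is in fact more detailed than what the paper provides, and is consistent with Shimura's approach.
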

	
	Let $M_\rho(\Gamma)$ be the space of holomorphic functions in $N_\rho(\Gamma)$.
	Set $N^p_\rho(\Gamma) = N_\rho^{(p_v)_v}(\Gamma) = \{f \in N_\rho(\Gamma) \mid \text{$E_v^{p_v+1}f = 0$ for any $v \in \bfa$}\}$.
	The, $N^0_\rho(\Gamma) = M_\rho(\Gamma)$.
	Let $\rho = \bigotimes_v \rho_v$ be a character of $K_{n,\bbC}$ with the weight $(k_v)_{v \in \bfa}$.
	Take non-negative integers $p_v$ satisfies $k_v > n+p_v$ or $k_v < n+(3-p_v)/2$ for any $v \in \bfa$.
	Put $p = (p_v)_v$.
	Then, in \cite[\S 15.3]{00_Shimura}, Shimura introduced a projection $\frakA \colon N_\rho^p(\Gamma) \longrightarrow M_\rho(\Gamma)$.
	The projection $\frakA$ is called the holomorphic projection.
	By Shimura \cite[Proposition 15.3]{00_Shimura}, it commutes with the $\Aut(\bbC)$ actions as follows:
	
	\begin{thm}
	With the above notation, for any $\sigma \in \Aut(\bbC)$ and $f \in N_\rho(\Gamma)$, one has $\frakA({^\sigma f}) = {^\sigma \frakA(f)}$.
	\end{thm}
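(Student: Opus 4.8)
The statement coincides with \cite[Proposition 15.3]{00_Shimura}, and the plan is to recover its proof by exhibiting $\frakA$ as an explicit arithmetic operator. First I would recall from \cite[\S 14]{00_Shimura} the structure theorem for nearly holomorphic forms: for $\rho = \bigotimes_{v\in\bfa}\rho_v$ of weight $(k_v)_v$ and integers $p_v$ with $k_v > n+p_v$ or $k_v < n+(3-p_v)/2$ for every $v$, the space $N_\rho^p(\Gamma)$ decomposes as a direct sum, indexed by multi-indices $\bfm = (m_v)_v$,
\[
N_\rho^p(\Gamma) = \bigoplus_{\bfm} D^{\bfm}\bigl(M_{\rho^{(\bfm)}}(\Gamma)\bigr),
\]
where the $D^{\bfm}$ are explicit weight-raising operators obtained by iterating the operators $C$ of \S 2.1 (equivalently the Maass--Shimura operators), the $\rho^{(\bfm)}$ are the correspondingly lowered weights, and the $\bfm = 0$ summand is $M_\rho(\Gamma)$ itself; by definition $\frakA$ is the projection onto this $\bfm = 0$ summand. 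The role of the weight inequalities is to guarantee that this really is a direct sum with no resonances: the scalar constants produced when the lowering operator $E$ is commuted past $D^{\bfm}$ are products of linear forms in $k_v$, $n$, $p_v$, and these do not vanish in the stated ranges, so one can solve recursively for each component and obtain a universal formula presenting $\frakA$ as a polynomial in the operators $C$ and $E$ with coefficients in $\bbQ$ depending only on $(k_v, n, p_v)$ and the branching data of the $\rho_v$.

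Next I would invoke the $\Aut(\bbC)$-equivariance of the operators in play --- this is precisely the arithmetic content accompanying the theorem recalled just above, namely that $f \in N_\rho(\Gamma)$ implies ${^\sigma f} \in N_{^\sigma\rho}(\Gamma)$. Concretely, for $\sigma \in \Aut(\bbC)$ the action $f \mapsto {^\sigma f}$ sends a $\bbQ$-rational model of $\rho$ to one of ${^\sigma\rho} = \bigotimes_{v}\rho_{\sigma\circ v}$ and intertwines the weight-raising operators: the operator $D^{\bfm}$ attached to $\rho$ transports to $D^{\sigma(\bfm)}$ attached to ${^\sigma\rho}$, where $\sigma$ simultaneously permutes the archimedean places $\bfa$, and $\rho^{(\bfm)}$ transports to $({^\sigma\rho})^{(\sigma(\bfm))}$. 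Since the constants occurring in the formula for $\frakA$ lie in $\bbQ$ and depend only on data that $\sigma$ leaves unchanged up to this permutation, the map ${^\sigma(\cdot)}$ carries the decomposition attached to $\rho$ to the one attached to ${^\sigma\rho}$, matching the $\bfm = 0$ summands. Combining the two observations, $\frakA$ for $\rho$ and $\frakA$ for ${^\sigma\rho}$ are projections onto corresponding summands of decompositions that ${^\sigma(\cdot)}$ identifies, whence $\frakA({^\sigma f}) = {^\sigma \frakA(f)}$ for all $f \in N_\rho(\Gamma)$.

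The main obstacle is making the algebraic description of $\frakA$ fully rigorous: one has to (i) establish the direct-sum structure theorem and check that it, together with the operators $D^{\bfm}$, is defined over $\bbQ$ under exactly the weight hypotheses stated --- here is where one verifies that the relevant products of linear forms in $k_v, n, p_v$ are nonzero, so that no denominators in the recursion blow up --- and (ii) keep careful track of the place permutation $v \mapsto \sigma\circ v$ so that the two instances of $\frakA$ are given by literally the same universal formula. Both points are carried out in \cite[\S\S 14--15]{00_Shimura} (see also \cite[\S 3.3]{HPSS}); in the representation-theoretic language of the present paper one could alternatively read off the decomposition of $N_\rho^p(\Gamma)$ from \cite{Horinaga_2}, but the rationality statement itself still rests on the explicit shape of the projection $\frakA$.
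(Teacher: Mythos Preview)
The paper does not supply its own proof of this statement: it is recorded there as a result of Shimura, with a bare citation to \cite[Proposition 15.3]{00_Shimura} and no argument given. Your proposal correctly identifies the source and accurately sketches Shimura's method --- the direct-sum decomposition $N_\rho^p(\Gamma) = \bigoplus_{\bfm} D^{\bfm}\bigl(M_{\rho^{(\bfm)}}(\Gamma)\bigr)$ under the stated weight inequalities, the description of $\frakA$ as the projection onto the $\bfm=0$ summand realized by a rational polynomial in the raising and lowering operators, and the compatibility of these operators with the $\Aut(\bbC)$-action up to the permutation $v\mapsto\sigma\circ v$ of archimedean places. There is nothing to compare against in the paper itself; your outline is a faithful summary of the cited proof.
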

	
%
	
	In \cite[\S 3.4]{HPSS}, we define other projection operators $\frakp_\chi$ associated to infinitesimal characters $\chi$ of $\calZ_n$.
	This can be viewed as a generalization of the holomorphic projection $\frakA$.
	In this paper, we study the image of $\frakp_{\chi}$ in terms of $(\frakg_n,K_{n,\infty})$-modules.
	
	\subsection{Automorphic forms on $G_n(\bbA_\bbQ)$}
	
	Let $P=MN$ be a standard parabolic subgroup of $G_n$.
	For a smooth function $\phi: N(\bbA_\bbQ)M(\bbQ) \backslash G_n(\bbA_\bbQ) \longrightarrow \bbC$, we say that $\phi$ is automorphic if it satisfies the following conditions:
	\begin{itemize}
	\item $\phi$ is right $K_n$-finite.
	\item $\phi$ is $\mathcal{Z}_n$-finite.
	\item $\phi$ is slowly increasing.
	\end{itemize}
	We denote by $\mathcal{A}(P \backslash G_n)$ the space of automorphic forms on $N(\bbA_\bbQ)M(\bbQ) \bs G_n(\bbA_\bbQ)$.
	For simplicity, we write $\calA(G_n)$ when $P=G_n$.
	The space $\mathcal{A}(P \backslash G_n)$ is stable under the action of $G_n(\bbA_\bbQ)$.
	
	For parabolic subgroups $P$ and $Q$ of $G_n$, we say that $P$ and $Q$ are associate if the split components $A_P$ and $A_Q$ are $G_n(\bbQ)$-conjugate. 
	We denote by $\{P\}$ the associated class of the parabolic subgroup $P$.
	For a locally integrable function $\varphi$ on $N_P(\bbQ) \bs G_n(\bbA_\bbQ)$, set
	\[
	\varphi_{P}(g) = \int_{N_P(\bbQ) \bs N_P(\bbA_\bbQ)} \varphi(ng) \, dn
	\]
	where $P = M_PN_P$ is the Levi decomposition of $P$ and the Haar measure $dn$ is normalized by
	\[
	\int_{N_P(\bbQ) \bs N_P(\bbA_\bbQ)} \, dn = 1.
	\]
	The function $\varphi_P$ is called the constant term of $\varphi$ along $P$.
	If $\varphi$ lies in $\calA(P \bs G_n)$, $\varphi_Q$ is an automorphic form on $N_Q(\bbA_\bbQ)M_Q(\bbQ) \bs G_n(\bbA_\bbQ)$ for a parabolic subgroup $Q \subset P$.
	We call $\varphi$ cuspidal if $\varphi_Q$ is zero for any standard parabolic subgroup $Q$ of $G$ with $Q \subsetneq P$.
	We denote by $\calA_{\cusp}(P \bs G_n)$ the space of cusp forms in $\calA(P \bs G_n)$.
	For a character $\xi$ of the split component $A_P^\infty$, put
	\[
	\calA(P \bs G)_\xi = \{\varphi \in \calA(P \bs G_n) \mid \text{$\varphi(ag) = a^{\xi + \rho_P} \varphi(g)$ for any $g \in G_n(\bbA_\bbQ)$ and $a \in A_P^\infty$}\}.
	\]
	Here, $\rho_P$ is the character of $A_P^\infty$ corresponding to half the sum of roots of $N_P$ relative to $A_P$.
	We define $\calA_\cusp(P \bs G)_\xi$ similarly.
	Set
	\[
	\calA(P \bs G_n)_Z = \bigoplus_\xi \calA(P \bs G_n)_\xi, \qquad \calA_\cusp(P \bs G_n)_Z = \bigoplus_\xi \calA_\cusp(P \bs G_n)_\xi.
	\]
	Here, $\xi$ runs over all the characters of $A_P^\infty$.
	Let $\fraka_P$ be the real vector space generated by coroots associated to the root system of $G_n$ relative to $A_P$.
	Then, by \cite[Lemma I.3.2]{MW}, there exist canonical isomorphisms
	\begin{align}\label{finite_function}
	\bbC[\fraka_P] \otimes \calA(P \bs G_n)_Z \cong \calA(P \bs G), \qquad \bbC[\fraka_P] \otimes \calA_\cusp(P \bs G_n)_Z \cong \calA_\cusp(P \bs G_n).
	\end{align}
	
	For a standard Levi subgroup $M$, set
	\[
	M(\bbA_\bbQ)^1 = \bigcap_{\chi \in \Hom_{\mathrm{conti}}(M(\bbA_\bbQ),\bbC^\times)} \mathrm{Ker}(|\chi|).
	\]
	For a function $f$ on $G_n(\bbA_\bbQ)$ and $g \in G_n(\bbA_\bbQ)$, let $f_g$ be the function on $M_P(\bbA_\bbQ)^1$ defined by $m \longmapsto m^{-\rho_P}f(mg)$.
	Put
	\[
	\calA(G_n)_{\{P\}} = \left\{\varphi \in \calA(G) \,\middle|\, \begin{matrix}\text{$\varphi_{Q,ak}$ is orthogonal to all cusp forms on $M_Q(\bbA_Q)^1$}\\ \text{for any $a \in A_Q, k\in K_n$, and $Q \not \in \{P\}$}\end{matrix}\right\}.
	\] 
	By \cite[Lemma I.3.4]{MW}, $\calA(G_n)_{\{G\}}$ is equal to $\calA_\cusp(G_n)$.
	More precisely, Langlands \cite{langlands} had proven the following result:
	\begin{thm}
	With the above notation, we have
	\[
	\calA(G_n) = \bigoplus_{\{P\}}\calA(G_n)_{\{P\}},
	\]
	where $\{P\}$ runs through all associated classes of parabolic subgroups.
	\end{thm}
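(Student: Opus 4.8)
My proposal is to reprove this the way Langlands did, in the streamlined form of \cite[Ch.~I]{MW}, by induction on the semisimple rank of $G_n$ (equivalently on $n$, the field $F$ being fixed); the base case is the tautology $\calA(G_n)=\calA_\cusp(G_n)=\calA(G_n)_{\{G_n\}}$ already recorded above via \cite[Lemma I.3.4]{MW}. For the inductive step, fix $\varphi\in\calA(G_n)$. For each standard parabolic $P=M_PN_P$ one has the constant-term map $\varphi\mapsto\varphi_P$, which is transitive, $(\varphi_P)_Q=\varphi_Q$ for $Q\subset P$, and $\varphi_P|_{M_P(\bbA_\bbQ)}$ is an automorphic form on $M_P$; composing with the orthogonal projection of $\calA(M_P)$ onto $\calA_\cusp(M_P)$ --- a well-defined direct sum of irreducibles with finite multiplicities by Gelfand--Piatetski-Shapiro --- attaches to $\varphi$ a \emph{cuspidal datum along $P$}. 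The inductive hypothesis supplies the analogous decomposition of $\calA(M_P)$, which is what makes the bookkeeping of these data along a chain of parabolics consistent.

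For exhaustion: among the standard $P$ whose cuspidal datum for $\varphi$ is nonzero, pick one with $\dim A_P$ minimal, and from that datum form the pseudo-Eisenstein series $\theta$, i.e.\ the adjoint, paired against a Schwartz function on $\fraka_P^*$, of the constant term along $P$. Then $\theta\in\calA(G_n)$, and the standard computation of its constant terms via the Bruhat decomposition of $P\backslash G_n/Q$ shows two things: the cuspidal datum of $\theta$ along $Q$ vanishes unless $\{Q\}=\{P\}$, so $\theta\in\calA(G_n)_{\{P\}}$; and along $P$ itself it recovers the chosen datum up to an invertible operator. Hence $\varphi-\theta$ carries strictly fewer nonzero cuspidal data at the minimal level, and iterating terminates with a $\varphi'$ all of whose cuspidal data vanish except possibly along $G_n$, i.e.\ a cusp form, by Langlands' characterization of cuspidality through vanishing of constant terms. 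Grouping the subtracted $\theta$'s by associate class writes $\varphi$ as a sum of elements of the $\calA(G_n)_{\{P\}}$.

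For directness, suppose $\sum_{\{P\}}\varphi_{\{P\}}=0$ with $\varphi_{\{P\}}\in\calA(G_n)_{\{P\}}$. Applying, for each standard $R$, the constant-term-then-cuspidal-projection map, the summands $\varphi_{\{Q\}}$ with $\{Q\}\neq\{R\}$ contribute nothing by definition of $\calA(G_n)_{\{Q\}}$; so the cuspidal datum of $\varphi_{\{P\}}$ along every $R\in\{P\}$ vanishes, and along every other $R$ it vanishes by definition, whence every cuspidal datum of $\varphi_{\{P\}}$ is zero. One then concludes $\varphi_{\{P\}}=0$ from the fact --- part of the same pseudo-Eisenstein analysis, run inside $\calA(G_n)_{\{P\}}$ --- that this subspace is spanned by pseudo-Eisenstein series attached to $\calA_\cusp(M_P)$-data, and that such a series with nonzero datum has nonzero cuspidal datum along $P$.

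The entire substance lies in the exhaustion step: the verification that the pseudo-Eisenstein descent terminates and that the constant terms of these series are governed by associate classes. This is where the theory of (pseudo-)Eisenstein series and the Gelfand--Piatetski-Shapiro discreteness of the cuspidal spectrum are used essentially; note that for this particular statement, in contrast to the finer $L^2$-spectral decomposition, no analytic continuation of Eisenstein series is needed. As all of this is classical and orthogonal to the purposes of the present paper, in the text I merely cite Langlands \cite{langlands} (see also \cite[Ch.~I]{MW}), having already isolated the two lemmas \cite[Lemma I.3.2]{MW} and \cite[Lemma I.3.4]{MW} that are invoked in the sequel.
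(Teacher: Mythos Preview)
Your proposal is correct and aligns with the paper's treatment: the paper does not prove this theorem at all but simply attributes it to Langlands \cite{langlands}, and you yourself conclude by doing the same. The sketch you supply is a reasonable outline of the classical argument in \cite[Ch.~I]{MW}, but none of it appears in the paper, which records the statement purely for later use.
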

	
	Let $M$ be a standard Levi subgroup of $G_n$ and $\tau$ an irreducible cuspidal automorphic representation of $M(\bbA_\bbQ)$.
	We say that a cuspidal datum is a pair $(M, \tau)$ such that $M$ is a Levi subgroup of $G_n$ and that $\tau$ is an irreducible cuspidal automorphic representation of $M(\bbA_\bbQ)$.
	Take $w \in W_n$.
	Put $M^w=wMw^{-1}$ and let $P^w=M^wN^w$ be the standard parabolic subgroup with Levi subgroup $M^w$.
	The irreducible cuspidal automorphic representation $\tau^w$ of $M^w(\bbA_\bbQ)$ is defined by $\tau^w(m')=\tau(w^{-1}m'w)$ for $m'\in M^w(\bbA_\bbQ)$.
	Two cuspidal data $(M, \tau)$ and $(M', \tau')$ are called equivalent if there exists $w\in W(M)$ such that $M' = M^w$ and that $\tau'=\tau^w$.
	Here we put
	\[
	W(M) = \left\{ w \in W \,\middle|\, 
	\begin{matrix}
	\text{$w M w^{-1}$ is a standard Levi subgroup of $G_n$}\\
	\text{and $w$ has a minimal length in $wW_M$}
	\end{matrix}
	\right\}
	\]
	where $W_M$ is the Weyl group of $M$.
	
	Let $\mathcal{A}(G_n)_{(M, \tau)}$ is the subspace of automorphic forms in $\mathcal{A}(G_n)$ with the cuspidal support $(M, \tau)$.
	For the definition, see \cite[\S III.2.6]{MW}.
	Then the following result is well-known.
	For example, see \cite[Theorem III.2.6]{MW}.
	
	\begin{thm}\label{cusp_supp_decomp}
	The space $\mathcal{A}(G_n)$ is decomposed as
	\[
	\mathcal{A}(G_n)=\bigoplus_{(M, \tau)} \mathcal{A}(G_n)_{(M, \tau)}.
	\]
	Here, $(M, \tau)$ runs through all equivalence classes of cuspidal data.
	\end{thm}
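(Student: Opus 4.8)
The plan is to refine the Langlands decomposition $\calA(G_n) = \bigoplus_{\{P\}} \calA(G_n)_{\{P\}}$ stated above by splitting each associate-class summand according to the cuspidal representations that occur in constant terms. So I would fix an associate class $\{P\}$ and, for a Levi subgroup $M$ conjugate to $M_P$ together with an irreducible cuspidal automorphic representation $\tau$ of $M(\bbA_\bbQ)$, take $\calA(G_n)_{(M,\tau)}$ to be the subspace defined in \cite[\S III.2.6]{MW}: concretely, the span of the values, derivatives, and iterated residues of the Eisenstein series attached to sections in the $\tau$-isotypic part of $\calA_\cusp(P\bs G_n)$. The two things to establish are that these subspaces span all of $\calA(G_n)_{\{P\}}$ and that the sum is direct; summing over $\{P\}$, and identifying via the functional equations of Eisenstein series the summand for $(M,\tau)$ with the one for any conjugate datum $(M^w,\tau^w)$, $w \in W(M)$, then yields the stated decomposition indexed by equivalence classes of cuspidal data.

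For the construction of the pieces I would work through the constant term. Given $\varphi \in \calA(G_n)_{\{P\}}$, the function $m \mapsto m^{-\rho_P}\varphi_P(mg)$ on $M_P(\bbA_\bbQ)^1$ is, by the very definition of $\calA(G_n)_{\{P\}}$, a cusp form for every $g$, so after the $\bbC[\fraka_P]$-twist recorded in \eqref{finite_function} we may regard $\varphi_P$ as an element of $\calA_\cusp(P\bs G_n)$. Now $\calA_\cusp(P\bs G_n)_Z$ decomposes as $\bigoplus_\tau \calA_\cusp(P\bs G_n)_{Z,\tau}$ over cuspidal representations $\tau$ of $M_P(\bbA_\bbQ)$, since $L^2_\cusp$ of $M_P$ is a discrete direct sum with finite multiplicities; tensoring with $\bbC[\fraka_P]$, the constant term $\varphi_P$ acquires a canonical finite decomposition $\varphi_P = \sum_\tau \varphi_{P,\tau}$. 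Applying the inverse procedure of Langlands' theory of Eisenstein series, which reconstructs an automorphic form from the cuspidal data of its constant terms, each $\varphi_{P,\tau}$ lifts to an element $p_{(M_P,\tau)}(\varphi) \in \calA(G_n)_{(M_P,\tau)}$; one then checks that $p_{(M_P,\tau)}$ is idempotent and that inequivalent data give subspaces with trivial pairwise intersection.

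It remains to see that $\sum_\tau p_{(M_P,\tau)}(\varphi) = \varphi$ for every $\varphi \in \calA(G_n)_{\{P\}}$. The sum on the left is finite: $\varphi$ is $\calZ_n$-finite and $K_n$-finite, so only those $\tau$ whose archimedean infinitesimal character and $K_n$-type are compatible with those of $\varphi$ can occur, and there are finitely many such. Both $\varphi$ and $\sum_\tau p_{(M_P,\tau)}(\varphi)$ then have, by construction, the same cuspidal component of the constant term along $P$, hence along every $Q \in \{P\}$ (via the relation between $\varphi_Q$ and $\varphi_P$), while along $Q \notin \{P\}$ both have vanishing cuspidal constant term by the definition of $\calA(G_n)_{\{P\}}$. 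So the difference is an automorphic form all of whose constant terms have zero cuspidal part, which forces it to vanish; this injectivity statement is precisely the content of Langlands' spectral decomposition, and is what I would cite from \cite[Theorem III.2.6]{MW} rather than reprove.

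The main obstacle is exactly this last input: that the candidate projections $p_{(M,\tau)}$ are well defined (land back inside $\calA(G_n)$), idempotent, and exhaustive rests on the full strength of Langlands' spectral theory of Eisenstein series — meromorphic continuation, functional equations, and the square-integrable residual spectrum. I would therefore not attempt an independent proof; the only genuine verification needed for $G_n = \Res_{F/\bbQ}\Sp_{2n}$ is that the finiteness conditions built into the present definition of an automorphic form ($\calZ_n$-finite, $K_n$-finite, slowly increasing) match those under which \cite[Theorem III.2.6]{MW} is stated, after which the decomposition applies verbatim.
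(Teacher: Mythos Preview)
The paper does not supply its own proof of this statement; it is recorded as well known and attributed to \cite[Theorem III.2.6]{MW}, which is exactly the reference your sketch ultimately defers to. So you and the paper take the same route.

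One small slip in your outline: for $\varphi \in \calA(G_n)_{\{P\}}$, the function $m \mapsto m^{-\rho_P}\varphi_P(mg)$ on $M_P(\bbA_\bbQ)^1$ need not be a cusp form --- the defining condition only says that $\varphi_{Q,ak}$ is orthogonal to cusp forms for $Q \notin \{P\}$, which does not force $\varphi_P$ itself to be cuspidal on $M_P$. What one actually decomposes according to $\tau$ is the cuspidal component $\varphi_P^{\cusp}$, as the paper does immediately after the theorem; your later paragraph (``both have vanishing cuspidal constant term'') shows you have the right picture, so this is a wording issue rather than a structural one.
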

	
	Let $P$ be a standard parabolic subgroup of $G_n$ with standard Levi subgroup $M$ and $\pi$ an irreducible cuspidal automorphic representation of $M(\bbA_\bbQ)$.
	Put
	\[
	\calA_\cusp(P\bs G_n)_\pi = \{\varphi \in \calA(P \bs G_n) \mid \text{$\varphi_k \in \calA_\cusp(M)_\pi$ for any $k \in K_n$}\}.
	\]
	Here, $\calA_\cusp(M)_\pi$ is the $\pi$-isotypic component of $\calA_\cusp(M)$.
	For an automorphic form $\varphi$, there exists a finite correction of cuspidal data $(M,\tau)$ such that
	\[
	\varphi \in \bigoplus_{(M,\tau)} \calA(G_n)_{(M,\tau)}
	\]
	by Theorem \ref{cusp_supp_decomp}.
	Let $\varphi_P^\cusp$ be the cuspidal part of $\varphi_P$.
	Then, there exists a finite number of irreducible cuspidal automorphic representations $\pi_1,\ldots,\pi_\ell$ of $M_P(\bbA_\bbQ)$ such that
	\[
	\varphi_P^\cusp \in \bigoplus_{j=1}^\ell \bbC[\fraka_P] \otimes \calA_\cusp(P \bs G_n)_{\pi_j}.
	\]
	We say that a set $\cup_M \{\chi_{\pi_1},\ldots,\chi_{\pi_\ell}\}$ is the set of cuspidal exponents of $\varphi$.
	Here, $\chi_{\pi_j}$ is the central character of $\pi_j$.
	For a character $\chi$ of the center of $M_P(\bbA_\bbQ)$, we call the restriction of $\chi$ to $A_P^\infty$ the real part of $\chi$.
	
	Let us now introduce the notion for some induced representations on $G_n(\bbA_\bbQ)$ and $\Sp_{2n}(F_v)$.
	For a character $\mu$ of $\GL_n(\bbA_F)$, we mean an automorphic character, i.e., $\GL_n(F)$ is contained in the kernel of $\mu$.
	Let $\mu$ be a character of $\GL_i(\bbA_F)$ and an irreducible cuspidal automorphic representation $\pi$ of $G_{n-i}(\bbA_\bbQ)$.
	We define the space $\Ind_{P_{i,n}(\bbA_\bbQ)}^{G_n(\bbA_\bbQ)}(\mu|\cdot|^s \boxtimes \pi)$ by the space of smooth functions $\varphi$ on $N_{P_{i,n}}(\bbA_\bbQ)P_{i,n}(\bbQ) \bs G_n(\bbA_\bbQ)$ such that
	\begin{itemize}
	\item $\varphi$ is an automorphic form.
	\item For any $k \in K_n$, the function $\varphi_k$ lies in the $\mu|\cdot|^s\boxtimes\pi$-isotypic component of $L^2_{\mathrm{disc}}(M_{P_{i,n}}(\bbA_\bbQ))$.
	\end{itemize}
	We write 
	\[
	I_{i,n}(s,\mu,\pi) = \Ind_{P_{i,n}(\bbA_\bbQ)}^{G_n(\bbA_\bbQ)} \left(\mu |\cdot|^s \boxtimes \pi\right) \quad \mathrm{and}\quad I_n(s,\mu) = \Ind_{P_{n}(\bbA_\bbQ)}^{G_n(\bbA_\bbQ)} \mu |\cdot|^s.
	\]
	For a place $v$ of $F$, we similarly write
	\[
	I_{i,n,v}(s,\mu_v,\pi_v) = \Ind_{P_{i,n}(F_v)}^{G_n(F_v)} \left(\mu_v |\cdot|^s \boxtimes \pi_v\right) \quad \mathrm{and}\quad I_{n,v}(s,\mu_v) = \Ind_{P_{n}(F_v)}^{G_n(F_v)} \mu_v |\cdot|^s.
	\]
	Here, $\mu_v$ is a character of $\GL_i(F_v)$ and $\pi_v$ is an irreducible representation of $\Sp_{2(n-i)}(F_v)$.
	
	\subsection{Nearly holomorphic automorphic forms}\label{NHAF}

	For an automorphic form $\varphi$ on $G_n(\bbA_\bbQ)$, we say that $\varphi$ is nearly holomorphic if $\varphi$ is $\frakp_{n,-}$-finite.
	The symbol $\calN(G_n)$ denotes the space of nearly holomorphic automorphic forms on $G_n(\bbA_\bbQ)$.
	Put $\calN(G_n)_{(M, \tau)}=\calN(G_n)\cap \mathcal{A}(G_n)_{(M, \tau)}$.
	We say that an irreducible cuspidal automorphic representation $\pi = \bigotimes_v \pi_v$ of $G_n(\bbA_\bbQ)$ is holomorphic if $\pi_v$ is an irreducible unitary highest weight representation of $\Sp_{2n}(F_v)$ for any $v \in \bfa$.
	In \cite[Theorem 1.2]{Horinaga_2}, we determine the cuspidal components of nearly holomorphic automorphic forms as follows:
	
	\begin{prop}\label{decomp_NHAF_cusp_supp}
	Let $P$ be a standard parabolic subgroup of $G_n$ with the standard Levi subgroup $M$.
	\begin{enumerate}
	\item With the above notation, the space $\calN(G_n)_{(M,\pi)}$ is non-zero only if $P$ is associated to $Q_{i,n}$ for some $i$.
	\item Let $\Pi =\mu_1 \boxtimes \cdots \boxtimes\mu_i \boxtimes \pi$ be an irreducible cuspidal automorphic representation of $M_{Q_{i,n}}(\bbA_\bbQ) = (\Res_{F/\bbQ}\GL_1)(\bbA_F)^i \times G_{n-i}(\bbA_\bbQ)$.
	If the space $\calN(G_n)_{(Q_{i,n},\Pi)}$ is non-zero, we have
	\begin{itemize}
	\item $\mu_1 = \cdots = \mu_{i}$.
	\item $\pi$ is a holomorphic cuspidal automorphic representation of $G_{n-i}(\bbA_\bbQ)$.
	\end{itemize}
	\end{enumerate}
	\end{prop}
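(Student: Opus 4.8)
The strategy is to transfer near holomorphy to constant terms, thereby reducing to a purely archimedean question about which cuspidal data can occur, and then to run the local analysis block by block on the Levi.

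\emph{Reduction to constant terms.} First note that near holomorphy passes to constant terms: for a standard parabolic $P = M_PN_P$ and $X \in \frakp_{n,-} \subset \frakg_n$, differentiating under the integral sign gives $X \cdot \varphi_P = (X \cdot \varphi)_P$, since the $\frakg_n$-action is by right translation and commutes with integration over $N_P(\bbQ) \bs N_P(\bbA_\bbQ)$; hence $\calU(\frakp_{n,-}) \cdot \varphi_P$ is a quotient of $\calU(\frakp_{n,-}) \cdot \varphi$, so $\varphi_P$ is again $\frakp_{n,-}$-finite whenever $\varphi$ is. The same applies to the cuspidal part $\varphi_P^\cusp$, which is cut out by vanishing of further constant terms and so is preserved by the $\frakg_n$-action. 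Now take $0 \neq \varphi \in \calN(G_n)_{(M,\tau)}$. If $M = G_n$ this says $\tau$ is realized by a nearly holomorphic cusp form, i.e.\ $\tau$ is a holomorphic cuspidal automorphic representation (the case $i = 0$ of $Q_{i,n}$). If $M \neq G_n$, then for the standard parabolic $P$ with Levi $M$ the form $\varphi$ is not cuspidal, so $\varphi_P^\cusp$ is a nonzero, $\frakp_{n,-}$-finite element of $\bbC[\fraka_P] \otimes \calA_\cusp(P \bs G_n)_\tau$ (using \eqref{finite_function} and Theorem \ref{cusp_supp_decomp}). Thus it suffices to decide for which pairs $(M,\tau)$ the space $\calA_\cusp(P \bs G_n)_\tau$ contains a nonzero $\frakp_{n,-}$-finite vector.

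\emph{Determining the Levi.} Fix $v \in \bfa$ and look at the $(\mathfrak{sp}_{2n}(\bbC), K_{n,v})$-module $V$ generated by such a vector. Its generator is $K$-finite and $\calZ_n$-finite, and $\frakp_{n,-}$ acts locally finitely, so $V$ has finite length and every composition factor is a highest weight module $L(\mu)$, a quotient of the parabolic Verma module $N(\mu)$; equivalently $V$ lies in the parabolic category $\mathcal{O}$ attached to $\frakk_n \oplus \frakp_{n,-}$. On the other hand $V$ is a constituent of $\Ind_{P(F_v)}^{\Sp_{2n}(F_v)}\sigma_v$ with $\sigma_v$ the archimedean component of $\tau$, a cuspidal (hence, on each $\GL$-block, generic and infinite-dimensional) representation of $M(F_v)$. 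Computing the $N_P$-Jacquet module of $V$ — which is again $\frakp_{n,-}$-finite and, for a highest weight module, is a highest weight module for the Levi — forces $\sigma_v$ to be, on each $\GL_{n_j}$-block, a finite-dimensional (hence one-dimensional) representation, and on the $\Sp$-block a highest weight module. Since a cuspidal automorphic representation of $(\Res_{F/\bbQ}\GL_m)(\bbA_F)$ whose archimedean components are all characters exists only for $m = 1$, every $\GL$-block of $M$ has size $1$, i.e.\ $M$ is $G_n(\bbQ)$-conjugate to $M_{Q_{i,n}} = (\Res_{F/\bbQ}\GL_1)(\bbA_F)^i \times G_{n-i}(\bbA_\bbQ)$ for some $i$. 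This proves (1).

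\emph{Determining the cuspidal datum.} Now $P$ is associate to $Q_{i,n}$ and $\Pi = \mu_1 \boxtimes \cdots \boxtimes \mu_i \boxtimes \pi$. The analysis of the preceding step applied to the $\Sp_{2(n-i)}(F_v)$-block shows the archimedean component of $\pi$ is a highest weight module for every $v \in \bfa$; combined with cuspidality these are holomorphic discrete series, so $\pi$ is a holomorphic cuspidal automorphic representation of $G_{n-i}(\bbA_\bbQ)$. For $\mu_1 = \cdots = \mu_i$: the degenerate principal series $\Ind_{Q_{i,n}(F_v)}^{\Sp_{2n}(F_v)}(\mu_{1,v} \boxtimes \cdots \boxtimes \mu_{i,v} \boxtimes \pi_v)$ can have a highest-weight-type constituent only if the inducing character of $(\GL_1)^i$ extends to $\GL_i$ (so that a minimal $K_{n,v}$-type of the form $\det^{k}\otimes(\cdots)$ can occur), which forces $\mu_{1,v} = \cdots = \mu_{i,v}$ at every $v \in \bfa$; a global rigidity argument — the near-holomorphic constituent is a subrepresentation or residue of an Eisenstein series built from $\Pi$, and existence of such a constituent pins the inducing data down to being induced effectively from $P_{i,n}$ — then upgrades this to $\mu_1 = \cdots = \mu_i$ globally. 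This gives (2).

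\emph{Main obstacle.} The delicate step is the local archimedean classification underlying the last two paragraphs: determining exactly which $(\mathfrak{sp}_{2n}(\bbC), K_{n,v})$-modules occurring as local components of automorphic forms are $\frakp_{n,-}$-finite, and propagating this constraint through the Jacquet-module / constant-term functor so as to pin down both $M$ and the inducing character. This rests on the structure theory of the parabolic category $\mathcal{O}$ for $(\frakg_n, \frakk_n \oplus \frakp_{n,-})$ — finiteness of length, the composition factors of $N(\lambda)$, and the behaviour of highest weight modules under Jacquet functors — matched against the constant-term formula for Eisenstein series; the global-to-local subtlety in $\mu_1 = \cdots = \mu_i$ (near holomorphy being only an archimedean condition) is the subtlest point. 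An alternative organization is to induct on $n$ via constant terms along maximal standard parabolics, peeling off one $\GL_1$- or $\GL_m$-block at a time; the inductive step still relies on the same archimedean input.
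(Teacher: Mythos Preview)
The paper does not prove this proposition here; it is quoted from \cite[Theorem~1.2]{Horinaga_2}. Your outline for part~(1) and for the holomorphy of $\pi$ in part~(2) follows the right strategy --- passing near holomorphy to constant terms and reading off archimedean constraints on the blocks of the Levi --- and this is essentially the shape of the argument in \cite{Horinaga_2}, though the Jacquet-module step you sketch requires more care about how $\frakp_{n,-}$ interacts with the Levi decomposition (the $\mathfrak{gl}$-part of the Levi sits inside $\frakk_n$, not $\frakp_{n,\pm}$, so $\frakp_{n,-}$-finiteness does not constrain it in the naive way).

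There is, however, a genuine gap in your treatment of $\mu_1 = \cdots = \mu_i$. The archimedean claim you make is not correct: the induced representation $\Ind_{Q_{i,n}(F_v)}^{\Sp_{2n}(F_v)}(\mu_{1,v}\boxtimes\cdots\boxtimes\mu_{i,v}\boxtimes\pi_v)$ can contain a highest weight constituent with the $\mu_{j,v}$ distinct. Theorem~\ref{emb_ht_wt} says a principal series contains $L(\lambda_1,\ldots,\lambda_n)$ exactly when $\mu_j = \sgn^{\lambda_{n-j+1}}$ and $s_j = \lambda_{n-j+1}-n+j-1$; since the $\lambda_j$ may have different parities, the signs need not agree, and nothing about the minimal $K_{n,v}$-type forces the torus character to extend to $\GL_i$. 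Your fallback ``global rigidity'' sentence is then too vague to carry the conclusion, and its implicit premise --- that the nearly holomorphic form is already known to arise as a value or residue of an Eisenstein series built from $\Pi$ --- is not something you have established at this point.

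The mechanism actually used is different and is global from the outset: one shows (this is \cite[Lemma~5.7]{Horinaga_2}, invoked later in the present paper in the proof of Lemma~\ref{lem_const_term_Klingen}) that for $\varphi \in \calN(G_n)$ the constant term $\varphi_{P_{i,n}}$ is left $\SL_i(\bbA_F)$-invariant on the $\GL_i$-block of the Levi. Consequently $\varphi_{P_{i,n}}$, and therefore $\varphi_{Q_{i,n}}$, depends on the $\GL_i$-variable only through the determinant, which forces the torus character appearing in the cuspidal support to be of the form $\mu\boxtimes\cdots\boxtimes\mu$ for a single Hecke character $\mu$. This is an honestly global statement about constant terms rather than a purely archimedean one, and it is the missing ingredient in your sketch.
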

	
	Let $\mu$ be a character of $\GL_1(\bbA_F)$.
	For simplicity the notation, we denote by $\mu$ the character $\mu \boxtimes \cdots \boxtimes \mu$ of $\GL_1(\bbA_F)^i$.
	In \cite{Horinaga_2}, we determine the structure of the space $\calN(G_n)_{(M,\tau)}$ explicitly under several assumptions.

	\subsection{Modular forms and automorphic forms}
	
	We recall the correspondence of modular forms on the Siegel upper half space and automorphic forms on $G_n(\bbA_\bbQ)$.
	Fix a weight $\rho$ and a congruence subgroup $\Gamma$.
	We embed $\Gamma$ into $G_n(\bbA_{\bbQ,\fini})$ diagonally.
	Let $K_\Gamma$ be the closure of $\Gamma$ in $G_n(\bbA_{\bbQ,\fini})$.
	Then, $K_\Gamma$ is an open compact subgroup of $G_n(\bbA_{\bbQ,\fini})$.
	
	By the strong approximation, one has $G_n(\bbA_\bbQ) = G_n(\bbQ)G_n(\bbR)K_\Gamma$.
	For $f \in N_\rho(\Gamma)$ and $v^* \in \rho^*$, the dual of $\rho$, put
	\[
	\varphi_{f,v^*}(\gamma g_\infty k) = \langle(f|_\rho g_\infty)(\bfi), v^*\rangle, \qquad \gamma g_\infty k \in G_n(\bbQ)G_n(\bbR)K_\Gamma = G_n(\bbA_\bbQ).
	\]
	This is well-defined.
	The map $f \otimes v^* \longmapsto \varphi_{f,v^*}$ induces the inclusion
	\begin{align}\label{corresp_MF_AF_1}
	N_\rho(\Gamma) \otimes \rho^* \longrightarrow \calN(G_n).
	\end{align}
	Put
	\[
	\calN(G_n)_{\rho}^{K_\Gamma} = 
	\left\{
	\varphi \in \calN(G_n) \,\middle|\, 
	\begin{matrix}
	\text{$\varphi$ generates $\rho$ under the action of $K_{n,\infty}$ and}\\
	\text{$\varphi(gk) = \varphi(g)$ for any $g \in G_n(\bbA_\bbQ)$ and $k \in K_\Gamma$}
	\end{matrix}
	\right\}.
	\]
	By the choice of embedding $\mathrm{U}(n) \xhookrightarrow{\quad} \GL_n(\bbC)$, the map (\ref{corresp_MF_AF_1}) induces the isomorphism
	\begin{align}\label{corresp_MF_AF}
	N_\rho(\Gamma) \otimes \rho^* \xrightarrow{\,\,\sim\,\,} \calN(G_n)_\rho^{K_\Gamma}.
	\end{align}
	For details, see \cite[\S 3.2]{HPSS}.
	For a representation generated by $f \in N_\rho(\Gamma)$, we mean the representation generated by $\varphi_{f,v^*}$ with $0 \neq v^*\in \rho^*$.
	Note that the representation is independent of the choice of $v^* \neq 0$.
	
	\section{Computations of unitary highest weight modules with a regular integral infinitesimal character}
	
	In this section, we introduce the parabolic BGG category $\calO^\frakp$ and unitarizable modules in this category.
	For later use, we compute extensions of certain modules and multiplicities of $K_{n,\infty}$-types.
	
	\subsection{parabolic BGG category}
	
	For simplicity the notation, throughout this section, we assume $F = \bbQ$.
	Let $\frakn$ be a nilpotent subalgebra of $\frakg_n$.
	For a $\frakg_n$-module $M$, we say that $M$ is locally $\frakn$-finite if $\calU(\frakn) \cdot v$ is finite-dimensional for any $v \in M$.
	
	We consider the parabolic subalgebra $\frakp = \frakk_n \oplus \frakp_{n,-}$.
	We define the full subcategory $\calO^\frakp$ of the category of $\frakg_n$-modules whose objects $M$  satisfy the following three conditions:
	\begin{itemize}
	\item $M$ is finitely generated.
	\item $M$ decomposes as a direct sum of irreducible finite-dimensional representations of $\frakk_n$.
	\item $M$ is locally $\frakp_{n,-}$-finite.
	\end{itemize}
	The category $\calO^\frakp$ is called the parabolic BGG category $\calO^\frakp$ with respect to $\frakp$.
	For further properties of the BGG category $\calO$ and a parabolic BGG category $\calO^\frakp$, see \cite{cat_o}.
	
	Let us introduce the Verma modules.
	For a $\frakk_n$-dominant weight $\lambda$, let $V_\lambda$ be a model of $\rho_\lambda$.
	We regard $V_\lambda$ as a $\frakp$-module by letting $\frakp_{n,-}$ act trivially.
	Put
	\[
	N(\lambda) = \calU(\frakg_n) \otimes_{\calU(\frakp)} V_\lambda.
	\]
	Then, $N(\lambda)$ has the canonical left $\frakg_n$-module structure.
	The module $N(\lambda)$ is called the (parabolic) Verma module of weight $\lambda$.
	Since $N(\lambda)$ is generated by a highest weight vector, $N(\lambda)$ has the unique irreducible quotient $L(\lambda)$.
	Note that $N(\lambda)$ and $L(\lambda)$ are objects in $\calO^\frakp$. 
	
	For a $\frakg_n$-module $M$, we say that $M$ is a highest weight module if there exists a highest weight vector $v \in M$ such that $v$ generates $M$.
	By definition, Verma modules are highest weight modules.
	Moreover, $N(\lambda)$ has the following universality:
	For a highest weight module $M$ with the highest weight $\lambda$, there exists a surjective homomorphism $N(\lambda) \twoheadrightarrow M$.
	
	For a weight $\lambda$, let $\chi_\lambda$ be the infinitesimal character with the Harish-Chandra parameter $\lambda+\rho$.
	Then, the Verma module $N(\lambda)$ has the infinitesimal character $\chi_\lambda$.
	Note that for $\chi_0$, we mean the infinitesimal character of the trivial representation.
	The infinitesimal characters $\chi_\lambda$ and $\chi_\mu$ are the same if and only if there exists $w \in W_n$ such that $\lambda = w \cdot \mu$.
	Here $\cdot$ is the dot action defined by $w \cdot \mu = w(\mu + \rho) - \rho$.
	For a weight $\lambda$, put $\calO_\lambda = \{w \cdot \lambda\mid w \in W_n\}$.
	We say that $\lambda$ is (dot-)regular if $\#\calO_\lambda = \# W_n$.
	If $\lambda$ is not of (dot-)regular, we say that $\lambda$ is (dot-)singular.
	
	For a nearly holomorphic automorphic form $\varphi$, we consider the $\frakg_n$-module $M$ generated  by $\varphi$ under the right translation.
	Then, $M$ is a $(\frakg_n,K_{n,\infty})$-module.
	By the definition of the parabolic BGG category $\calO^\frakp$, the $\frakg_n$-module $M$ is an object in $\calO^\frakp$.
	
	\subsection{First reduction point and unitarizability}
	
	We recall the definition of the first reduction point in the sense of \cite{EHW_83}.
	Let $\lambda=(\lambda_1,\ldots,\lambda_n)$ be a $\frakk_n$-dominant weight with $\lambda_n=n$.
	We say that a real number $r_0 = r_0(\lambda)$ is the first reduction point if the module $N(\lambda+r_0(-1,\ldots,-1))$ is reducible and $N(\lambda+r(-1,\ldots,-1))$ is irreducible for $r<r_0$.
	Set $p(\lambda)=\#\{i\mid\lambda_i=\lambda_n\}$ and $q(\lambda)  = \#\{i\mid \lambda_i = \lambda_n+1\}$.
	One can compute the first reduction point explicitly by the result of Enright-Howe-Wallach \cite[Theorem 2.10]{EHW_83}.
	
	\begin{thm}\label{first_red_pt}
	Let $\lambda=(\lambda_1,\ldots,\lambda_n)$ be a $\frakk_n$-dominant weight with $\lambda_n=n$.
	Then, the first reduction point $r_0$ equals to $(p(\lambda)+q(\lambda)+1)/2$.
	\end{thm}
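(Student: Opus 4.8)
The plan is to obtain this as a special case of the Enright--Howe--Wallach classification of unitary highest weight modules \cite[Theorem 2.10]{EHW_83}: the genuine content is theirs, and the task is to set up the dictionary and re-express their formula for type $C_n$ in terms of $p(\lambda)$ and $q(\lambda)$.

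I would begin with the dictionary. The parabolic $\frakp=\frakk_n\oplus\frakp_{n,-}$ is the maximal parabolic subalgebra of $\mathfrak{sp}_{2n}(\bbC)$ with abelian nilradical attached to the Hermitian symmetric pair $(\mathfrak{sp}_{2n}(\bbR),\mathfrak{u}(n))$ of type $C_n$; its Levi $\frakk_n\cong\mathfrak{gl}_n$ acts on $\frakp_{n,+}$ as on $\Sym^2$ of the standard representation, so $N(\lambda)\cong\calU(\frakp_{n,+})\otimes V_\lambda$ as $\frakk_n$-modules and the $\frakk_n$-types occurring are controlled by the even-partition plethysm of $\Sym^\bullet(\Sym^2)$. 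With $\Phi^+$ as above, $\rho=(-1,-2,\ldots,-n)$, the noncompact simple root is $-2e_1$, and $(-1,\ldots,-1)$ is exactly the generator of the center of $\frakk_n$ pairing to $1$ with its coroot; hence $r\mapsto\lambda+r(-1,\ldots,-1)$ is the one-parameter family in the central direction along which EHW parametrize reducibility, and the hypothesis $\lambda_n=n$ only fixes the origin of the parameter $r$ (EHW instead normalize the weight to lie on the wall, i.e.\ with first coordinate $1$; the two choices differ by a translation of $r$). Since $p(\lambda)$ and $q(\lambda)$ are unchanged under central translation, they are invariants of the line, and I would then quote \cite[Theorem 2.10]{EHW_83} for type $C_n$: it gives all the reduction points on this line, and the first of them, re-expressed in our normalization, is $(p(\lambda)+q(\lambda)+1)/2$.

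As consistency checks I would verify the two extremes by hand. For $\lambda=(n,\ldots,n)$, so $p(\lambda)=n$ and $q(\lambda)=0$, the formula gives $(n+1)/2$, agreeing with the classical discrete Wallach set $\{0,\tfrac12,1,\ldots,\tfrac{n-1}{2}\}$ of the Siegel upper half space under the substitution ``scalar weight $=n-r$''. And for $p(\lambda)=1$, $q(\lambda)=0$ it gives $r_0=1$: this is the one case where reducibility already occurs at first order, the degree-one singular vector of weight $\lambda+(-1,\ldots,-1)+2e_n$ (associated to the long noncompact root $-2e_n$) being nonzero because its weight is $\frakk_n$-dominant regular and occurs among the $\frakk_n$-types of $\calU(\frakp_{n,+})\otimes V_{\lambda+(-1,\ldots,-1)}$ — one checks this directly using $\lambda_{n-1}\ge n+2$.

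The point that cannot be shortcut — and the reason the result is a theorem of EHW rather than an exercise — is that, as soon as $p(\lambda)+q(\lambda)\ge 2$, no single noncompact-root reflection produces a submodule: already for $\lambda=(1,1,1)$, $n=3$ (so $\mu+\rho=(0,-1,-2)$), each of the five positive noncompact roots with positive integral pairing either sends $\mu$ out of the $\frakk_n$-dominant chamber or yields a singular vector that vanishes, yet $N((1,1,1))$ is reducible, the first submodule being generated by a higher-order singular vector. Establishing the existence and nonvanishing of these higher-order singular vectors, uniformly in the weight, is precisely the work done in \cite[\S2]{EHW_83}, which I would invoke; what is left to us is the convention-matching above and the translation of their $C_n$ formula into the invariants $p(\lambda)$ and $q(\lambda)$, which I expect to be the only slightly delicate step.
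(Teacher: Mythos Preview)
Your proposal is correct and is essentially the paper's own approach: the theorem is stated with attribution to Enright--Howe--Wallach \cite[Theorem 2.10]{EHW_83} and no independent proof is given. Your writeup is more explicit than the paper about the dictionary between the EHW normalization and the one used here, and about the consistency checks in the extreme cases, but the substance---quote EHW for type $C_n$ and translate---is identical.
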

	
	Let $r_0$ be the first reduction point.
	Then for $r < r_0$, the irreducible representation $L(\lambda+r(-1,\ldots,-1))$ is unitarizable.
	More precisely, we have the following by \cite[Theorem 2.8]{EHW_83}:
	
	\begin{thm}\label{unitary}
	With the same notation as in Theorem \ref{first_red_pt}, $L(\lambda + r(-1,\ldots,-1))$ is unitarizable if and only if either of the following conditions holds:
	\begin{itemize}
	\item $r \leq (p(\lambda)+q(\lambda)+1)/2$.
	\item $\lambda \in (1/2)\bbZ^n$ and $r \leq p(\lambda)+q(\lambda)/2$.
	\end{itemize}
	\end{thm}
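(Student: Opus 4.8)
\textbf{Proof proposal for Theorem \ref{unitary}.}
The statement is a transcription of the Enright--Howe--Wallach classification of unitarizable highest weight modules \cite[Theorem 2.8]{EHW_83} into the coordinates fixed in this section, so the plan is to match parametrizations rather than to re-derive unitarity from scratch. First I would set up the Hermitian datum explicitly: under the standing assumption $F=\bbQ$ the pair $(\frakg_n,\frakk_n)$ is $(\mathfrak{sp}_{2n}(\bbC),\mathfrak{gl}_n(\bbC))$, with one-dimensional centre of $\frakk_n$, and $\frakp_{n,+}$ is spanned by the root vectors for the noncompact positive roots $-(e_i+e_j)$, $1\le i\le j\le n$. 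The element of $\frakh^*$ along which Enright--Howe--Wallach vary the highest weight is, up to a positive scalar, the vector $(-1,\ldots,-1)$, so the family $r\mapsto \lambda+r(-1,\ldots,-1)$ is, after a linear reparametrization, exactly their line $z\mapsto \lambda_0+z\zeta$. I would pin down the proportionality constant between $r$ and their $z$ by evaluating $\langle \lambda+r(-1,\ldots,-1)+\rho,\beta^\vee\rangle$ on the noncompact roots $\beta$, using the value $\rho=-(1,2,\ldots,n)$ associated to the positive system $\Phi^+$ chosen above; in particular this records that the positive system in use is the ``anti-holomorphic'' one, so that ``highest weight'' here plays the role of a lowest weight in the usual Hermitian normalization.

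Next I would identify the numerical invariants. Enright--Howe--Wallach express the reduction points of such a line in terms of two integers attached to $\lambda_0$, and under the normalization $\lambda_n=n$ of Theorem \ref{first_red_pt} these are exactly $p(\lambda)=\#\{i\mid\lambda_i=\lambda_n\}$ and $q(\lambda)=\#\{i\mid\lambda_i=\lambda_n+1\}$. Theorem \ref{first_red_pt} already extracts from this that the first reduction point is $r_0=(p(\lambda)+q(\lambda)+1)/2$: for $r<r_0$ the parabolic Verma module $N(\lambda+r(-1,\ldots,-1))$ is irreducible and equals $L(\lambda+r(-1,\ldots,-1))$, while at $r=r_0$ the Verma module is reducible but its irreducible quotient is still unitarizable. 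This yields the first bulleted condition $r\le(p(\lambda)+q(\lambda)+1)/2$.

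The third step is to account for the unitarizable points lying beyond $r_0$. For a Hermitian algebra of type $C_n$ the Enright--Howe--Wallach list of the remaining unitary points is an arithmetic progression of spacing $1/2$ starting at $r_0$ and terminating at the last point of unitarity, which in our normalization is $p(\lambda)+q(\lambda)/2$; at the real values of $r$ strictly between two consecutive points of this progression the Verma module is reducible and its irreducible quotient is non-unitarizable. Now $\lambda_n=n$ forces $\lambda\in\bbZ^n$, in particular $\lambda\in(1/2)\bbZ^n$, and one only considers $r$ for which $\lambda+r(-1,\ldots,-1)$ is again a weight, i.e.\ $r\in(1/2)\bbZ$; intersecting the half-line $\{r\le r_0\}$ with the progression $\{r_0,r_0+\tfrac12,\ldots,p(\lambda)+q(\lambda)/2\}$ inside $(1/2)\bbZ$ gives precisely $\{r\in(1/2)\bbZ \mid r\le p(\lambda)+q(\lambda)/2\}$, which is the second bulleted condition. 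When $p(\lambda)=1$ the two bounds $(p+q+1)/2$ and $p+q/2$ coincide and there is no tail; when $p(\lambda)=n$, $q(\lambda)=0$ one recovers the classical Wallach set of $\Sp_{2n}(\bbR)$, a useful consistency check. The ``only if'' direction is then the complement of this description, again read off from \cite[Theorem 2.8]{EHW_83}.

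The main obstacle is bookkeeping rather than mathematics: one must be scrupulous about the nonstandard positive system $\Phi^+$, about Enright--Howe--Wallach's choices of $\zeta$ and of the $\rho$-shift, and --- most importantly --- about the fact that the discrete tail of the unitary set collapses onto an interval only after intersecting with the lattice of weights; it is this passage from real $r$ to the half-integral weights that actually occur that turns the Enright--Howe--Wallach ``half-line plus arithmetic progression'' into the clean pair of inequalities stated above. Once these identifications are fixed, Theorem \ref{unitary} is a direct restatement of \cite[Theorem 2.8]{EHW_83}.
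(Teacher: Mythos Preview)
Your proposal is correct and matches the paper's approach: the paper simply cites \cite[Theorem 2.8]{EHW_83} without further argument, and your plan to translate the Enright--Howe--Wallach classification into the coordinates $\lambda_n=n$, $p(\lambda)$, $q(\lambda)$ is exactly the bookkeeping that citation encodes. There is nothing to add.
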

	
	\subsection{Dot-orbits of regular integral weights and unitary highest weight modules}
	
	Let $\lambda = (\lambda_1,\ldots,\lambda_n) \in \bbZ^n$ be a $\frakk_n$-dominant integral weight.
	Let $|\lambda|$ be a multiset $\{|\lambda_1-1|,|\lambda_2-2|,\ldots,|\lambda_n-n|\}$.
	Then, the multiset is invariant under the dot-action, i.e., $|\lambda| = |w \cdot \lambda|$ for any $w \in W_{n}$.
	We then say that $\lambda$ is anti-dominant if $\lambda_n \geq n$.
	We compute the dot-orbits of regular anti-dominant integral weights.
	Note that for any regular integral weight $\lambda$, there exists $\sigma \in W$ such that $\sigma \cdot \lambda$ is anti-dominant.
	Moreover, such an anti-dominant weight is unique in the dot-orbit $\calO_\lambda$.

	\begin{lem}\label{dot_orbit_lem}
	Let $\lambda = (\lambda_1,\ldots,\lambda_n)$ be a regular anti-dominant integral weight and $\sigma$ an element of the Weyl group $W_n$.
	Suppose that the weight $\sigma \cdot \lambda$ is $\frakk_n$-dominant and $L(\sigma\cdot\lambda)$ is unitarizable.
	If $\sigma\cdot \lambda \neq \lambda$, one has $\lambda_n = n+1$.
	\end{lem}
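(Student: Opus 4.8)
\emph{Proof plan.} The idea is to make everything explicit at the level of Harish--Chandra parameters. For the positive system of the paper one computes $\rho=(-1,-2,\dots,-n)$ (transport the standard $\rho=(n,n-1,\dots,1)$ by the element $e_i\mapsto-e_{n+1-i}$, which carries the standard positive system onto the one used here). Hence for a $\frakk_n$-dominant integral weight $\lambda$ the coordinates of $\lambda+\rho$ are $a_i:=\lambda_i-i$; $\frakk_n$-dominance forces $a_1>a_2>\dots>a_n$, anti-dominance gives $a_n\ge0$, and regularity of $\lambda$ is then \emph{equivalent} to $a_n\ge1$, i.e.\ $\lambda_n\ge n+1$. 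So the statement to prove is: if $\mu:=\sigma\cdot\lambda$ is $\frakk_n$-dominant, $L(\mu)$ is unitarizable, and $\mu\neq\lambda$, then $a_n=1$.

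First I would parametrize the $\frakk_n$-dominant members of $\calO_\lambda$. Since $W_n$ acts on $\lambda+\rho$ by signed permutations and $\frakk_n$-dominance of $\mu=\sigma(\lambda+\rho)-\rho$ forces $\sigma(\lambda+\rho)$ to be strictly decreasing, $\sigma(\lambda+\rho)$ must be the decreasing rearrangement of $(\vep_1a_1,\dots,\vep_na_n)$ for a unique sign vector $\vep\in\{\pm1\}^n$, with $\mu=\lambda$ precisely when $\vep=(+1,\dots,+1)$. Assuming $\mu\neq\lambda$, put $m:=\min\{j:\vep_j=-1\}$; as the $a_j$ are decreasing, the least coordinate of $\sigma(\lambda+\rho)$ is $-a_m$, whence $\mu_n=n-a_m$ and $n-\mu_n=a_m\ge a_n\ge1$. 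Next I would convert the unitarity hypothesis into a numerical inequality through Theorems \ref{first_red_pt} and \ref{unitary}: setting $\mu':=\mu+(n-\mu_n)(1,\dots,1)$, which is $\frakk_n$-dominant with $\mu'_n=n$ and satisfies $\mu=\mu'+r(-1,\dots,-1)$ for $r=a_m>0$, and noting that for the integral weight $\mu'$ the second bound $p(\mu')+q(\mu')/2$ subsumes $(p(\mu')+q(\mu')+1)/2$ (as $p(\mu')\ge1$), one finds that $L(\mu)$ is unitarizable iff
\[
a_m\ \le\ p_\mu+\tfrac12 q_\mu,\qquad p_\mu:=\#\{i:\mu_i=\mu_n\},\quad q_\mu:=\#\{i:\mu_i=\mu_n+1\}.
\]

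Now the combinatorial core. Because $\mu$ is $\frakk_n$-dominant, as $i$ runs over the indices with $\mu_i\in\{\mu_n,\mu_n+1\}$ the coordinates $(\mu+\rho)_i=\mu_i-i$ run through the consecutive integers from $-a_m$ to $-a_m+p_\mu+q_\mu$ with the single value $-a_m+p_\mu$ omitted; each of these is a coordinate of $\mu+\rho$, hence lies in $\{\pm a_1,\dots,\pm a_n\}$ and in particular is nonzero. As $-a_m<0$, this leaves only two possibilities: either $a_m>p_\mu+q_\mu$ (all these coordinates are negative), or $a_m=p_\mu$ (the omitted value is $0$). In the first case each such coordinate equals $-a_j$ for a distinct index $j$, the smallest $a_j$ so occurring being $a_m-p_\mu+1$ if $q_\mu=0$ and $a_m-p_\mu-q_\mu$ if $q_\mu\ge1$; since this value is $\ge a_n$, combining with the displayed inequality rules out $q_\mu\ge1$ and gives $a_n\le a_m-p_\mu+1\le1$ when $q_\mu=0$. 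In the second case the $p_\mu$ coordinates $-a_m,-a_m+1,\dots,-1$ coming from $\{i:\mu_i=\mu_n\}$ already exhibit $\{1,\dots,a_m\}\subseteq\{a_1,\dots,a_n\}$, so $a_n=1$. In every case $a_n=1$, i.e.\ $\lambda_n=n+1$.

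The step I expect to be the main obstacle is the last paragraph: extracting the clean dichotomy $a_m>p_\mu+q_\mu$ versus $a_m=p_\mu$ from the fact that $0$ never occurs as a coordinate of $\mu+\rho$, and then discarding the parasitic branch $q_\mu\ge1$ by playing the unitarity inequality $a_m\le p_\mu+\tfrac12 q_\mu$ against the elementary bounds $a_j\ge a_n$. The remaining ingredients — the computation of $\rho$, the parametrization of the $\frakk_n$-dominant weights in $\calO_\lambda$ by sign vectors, and the check that Theorem \ref{unitary} genuinely applies to $\mu'$ (immediate once $r=a_m>0$ is noted) — are routine.
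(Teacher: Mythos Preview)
Your argument is correct and follows essentially the same route as the paper: translate unitarizability via Theorem~\ref{unitary} into the bound $n-\omega_n\le p(\omega)+q(\omega)/2$, use regularity to force $n-\omega_n=p(\omega)$ (your $a_m=p_\mu$), and then read off $1\in|\lambda|$, i.e.\ $\lambda_n=n+1$. Your presentation is more explicit than the paper's (the computation $\rho=(-1,\dots,-n)$, the sign-vector parametrization of $\frakk_n$-dominant elements of $\calO_\lambda$, and the observation that regularity together with anti-dominance is already equivalent to $\lambda_n\ge n+1$), but the skeleton is the same.

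One cosmetic point: your ``first case'' $a_m>p_\mu+q_\mu$ is in fact empty, since unitarity gives $a_m\le p_\mu+q_\mu/2\le p_\mu+q_\mu$. In particular, the sub-branch $q_\mu=0$ there simultaneously assumes $a_m>p_\mu$ (the case hypothesis) and $a_m\le p_\mu$ (unitarity with $q_\mu=0$), so the chain $a_n\le a_m-p_\mu+1\le1$ is vacuous rather than a genuine bound. This does not affect correctness---the surviving case $a_m=p_\mu$ already gives $a_n=1$---but you can streamline by dispatching the first case in one line.
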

	\begin{proof}
	Put $\omega = \sigma \cdot \lambda = (\omega_1,\ldots,\omega_n)$.
	Suppose that $\omega \neq \lambda$ and $L(\omega)$ is unitarizable.
	By $\omega \neq \lambda$ and the uniqueness of anti-dominant weights in $\calO_\lambda$, one has $\omega_n < n$.
	Set $p = p(\omega)$ and $q = q(\omega)$.
	Since $L(\omega)$ is unitarizable, one has
	\begin{align}\label{ineq}
	n-p-q/2 \leq \omega_n < n.
	\end{align}
	If $\omega_n > n-p$, there exists $n-p+1 \leq j \leq n$ such that $\omega_j - j = 0$.
	Then, $\omega$ is singular.
	This is contradiction.
	Similarly, if $\omega_n < n-p$, one has $q > 0$ by (\ref{ineq}).
	By (\ref{ineq}) and the unitarizability of $L(\omega)$, either of the following statements holds:
	\begin{itemize}
	\item There exists $j$ such that $\omega_j = j$.
	\item There exists $i < j$ such that $\omega_i - i = j - \omega_j$.
	\end{itemize}
	Thus, $\omega$ is singular.
	This is contradiction.
	Hence, one has $\omega_n = n-p$ and in particular $1 \in |\omega| = |\lambda|$.
	Indeed, $|\omega| \ni |\omega_{n-p+1}-(n-p+1)| = |n-p-(n-p+1)| = 1$.
	Since $\lambda$ is anti-dominant, we obtain $\lambda_n = n+1$.
	This completes the proof.
	\end{proof}
	
	For a $\frakk_n$-dominant integral weight $\lambda$, we put 
	\[
	\calO_\lambda^{\mathrm{unit}} = \{\mu \in \calO_\lambda \mid \text{$\mu$ is $\frakk_n$-dominant and $L(\mu)$ is unitarizable}\}.
	\]
	By the proof of the above lemma, we obtain the following corollary:
	
	\begin{cor}\label{orbit}
	Let $\lambda$ be a regular anti-dominant integral weight.
	\begin{enumerate}
	\item If $\lambda_n > n+1$, one has $\calO_\lambda^{\mathrm{unit}} = \{\lambda\}$.
	\item If $\lambda_n = n+1$, one has $\calO_\lambda^{\mathrm{unit}} = \{\lambda^{(0)},\ldots,\lambda^{(p(\lambda))}\}$, where
	\[
	\lambda^{(j)} = (\lambda_1,\ldots,\lambda_{n-j}, n-j,\ldots,n-j).
	\]
	\end{enumerate}
	\end{cor}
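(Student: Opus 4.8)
The plan is to combine Lemma~\ref{dot_orbit_lem} with an explicit description of the dot-orbit $\calO_\lambda$ through the invariant multiset $|\lambda|$. I use throughout that $W_n$ acts on weights by signed permutations and that $\rho = (-1,-2,\ldots,-n)$ for the chosen positive system, so that $\mu \in \calO_\lambda$ if and only if the multisets $|\mu|$ and $|\lambda|$ coincide; for regular $\lambda$ these consist of pairwise distinct nonzero integers. I also use that any anti-dominant integral weight $\mu$ has $L(\mu)$ unitarizable, since writing $\mu = \nu + r(-1,\ldots,-1)$ with $\nu_n = n$ forces $r = n - \mu_n \leq 0$, so Theorem~\ref{unitary} applies; in particular $\lambda \in \calO_\lambda^{\mathrm{unit}}$. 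Part~(1) follows at once: if $\lambda_n > n+1$ and $\mu \in \calO_\lambda^{\mathrm{unit}}$, then $\mu$ is a $\frakk_n$-dominant element of $\calO_\lambda$ with $L(\mu)$ unitarizable, so Lemma~\ref{dot_orbit_lem} (whose conclusion would otherwise give $\lambda_n = n+1$) forces $\mu = \lambda$.

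For part~(2) assume $\lambda_n = n+1$ and put $p_0 = p(\lambda)$; thus $\lambda_{n-p_0+1} = \cdots = \lambda_n = n+1$ and, if $p_0 < n$, $\lambda_{n-p_0} \geq n+2$. I would first prove $\{\lambda^{(0)},\ldots,\lambda^{(p_0)}\} \subseteq \calO_\lambda^{\mathrm{unit}}$. Fix $0 \leq j \leq p_0$. Then $\lambda^{(j)}$ is $\frakk_n$-dominant (for $j < n$ because $\lambda_{n-j} \geq n+1 > n-j$, and $\lambda^{(n)} = (0,\ldots,0)$); it lies in $\calO_\lambda$ because its first $n-j$ entries agree with those of $\lambda$ while its last $j$ entries (all equal to $n-j$) and the last $j$ entries of $\lambda$ (all equal to $n+1$) contribute the same multiset $\{1,\ldots,j\}$ to $|\cdot|$, so $|\lambda^{(j)}| = |\lambda|$; and $L(\lambda^{(j)})$ is unitarizable: for $j = 0$ this is the anti-dominant case above, and for $j \geq 1$ the base point $\nu = \lambda^{(j)} + j(1,\ldots,1)$ has $\nu_n = n$, $p(\nu) = j$, $q(\nu) = 0$, and with $r = j$ the unitarity bound $r \leq p(\nu) + q(\nu)/2$ of Theorem~\ref{unitary} holds (with equality, using that $\nu$ is integral).

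The reverse inclusion is the crux. Take $\mu \in \calO_\lambda^{\mathrm{unit}}$ with $\mu \neq \lambda$ and set $p = p(\mu) \geq 1$. By the argument in the proof of Lemma~\ref{dot_orbit_lem} one has $\mu_n = n - p$, so the last $p$ entries of $\mu$ equal $n-p$; since $\mu$ is $\frakk_n$-dominant and regular, $k \mapsto \mu_k - k$ is strictly decreasing, equal to $-1,-2,\ldots,-p$ on the last $p$ slots and strictly positive before, so $|\mu| = \{\mu_k - k : k \leq n-p\} \sqcup \{1,\ldots,p\}$ with $\mu_k - k \geq p+1$ for $k \leq n-p$. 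Since $|\lambda| = |\mu|$ contains $\{1,\ldots,p_0\}$ but not $p_0+1$ (its remaining elements being $\geq p_0+2$), we get $p \leq p_0$, and the values $p+1,\ldots,p_0$ must occur among the $\mu_k - k$ with $k \leq n-p$; being the $p_0 - p$ smallest such values, by monotonicity they occur at $k = n-p,\,n-p-1,\,\ldots,\,n-p_0+1$, which forces $\mu_k = n+1$ for $n-p_0 < k \leq n-p$. Finally $\{\mu_k - k : k \leq n-p_0\} = |\lambda| \setminus \{1,\ldots,p_0\} = \{\lambda_k - k : k \leq n-p_0\}$, two strictly decreasing sequences with the same value set, hence $\mu_k = \lambda_k$ for $k \leq n-p_0$. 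Therefore $\mu = (\lambda_1,\ldots,\lambda_{n-p_0},\,n+1,\ldots,n+1,\,n-p,\ldots,n-p) = \lambda^{(p)}$, and the weights $\lambda^{(0)},\ldots,\lambda^{(p_0)}$ are pairwise distinct since their last entries $n+1,\,n-1,\,n-2,\,\ldots,\,n-p_0$ are.

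The main obstacle is precisely this reverse inclusion: the only constraint supplied by Lemma~\ref{dot_orbit_lem} is the single numerical identity $\mu_n = n - p(\mu)$, and one must see that this together with $|\mu| = |\lambda|$ determines $\mu$ completely. The leverage is that $\frakk_n$-dominance makes $k \mapsto \mu_k - k$ strictly monotone, so the unordered datum $|\mu|$ actually encodes this ordered sequence up to the sign change on the last $p$ coordinates; matching the initial run $1,2,3,\ldots$ of $|\mu|$ and $|\lambda|$ then simultaneously bounds $p(\mu)$ by $p(\lambda)$ and reconstructs the intermediate block of $(n+1)$'s. I would also take care with the boundary case $p_0 = n$ (where the entry $\lambda_{n-p_0}$ is absent), and would record at the outset the standing facts that $|\cdot|$ is a dot-orbit invariant and that for a regular anti-dominant $\lambda$ with $\lambda_n = n+1$ the smallest $p(\lambda)$ elements of $|\lambda|$ are exactly $1,2,\ldots,p(\lambda)$.
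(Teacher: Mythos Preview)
Your proof is correct and follows essentially the same approach as the paper's: both invoke the identity $\mu_n = n - p(\mu)$ from the proof of Lemma~\ref{dot_orbit_lem}, then use the dot-invariance of the multiset $|\cdot|$ together with the strict monotonicity of $k \mapsto \mu_k - k$ (from $\frakk_n$-dominance) to pin down $\mu$ as $\lambda^{(p(\mu))}$. Your version is more explicit---you spell out the bound $p(\mu) \leq p(\lambda)$ and the intermediate block of $(n+1)$'s that the paper absorbs into the single line ``$\lambda_i - i = \mu_i - i$ for $1 \leq i \leq n-p(\mu)$''---but the argument is the same.
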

	\begin{proof}
	If $\#\calO_\lambda^\unit > 1$, one has $\lambda_n = n+1$ by Lemma \ref{dot_orbit_lem}.
	We may assume $\lambda_{n} = n+1$.
	In this case, the representation $L(\lambda^{(\ell)})$ is unitary for any $1 \leq \ell \leq p(\lambda)$.
	Thus, $\{\lambda^{(0)}, \ldots,\lambda^{p(\lambda)}\} \subset \calO_\lambda^\unit$.
	We prove the converse.
	Take $\mu = (\mu_1,\ldots,\mu_n) \in \calO_\lambda^\unit$.
	By the proof of Lemma \ref{dot_orbit_lem}, we obtain $\mu_n = n - p(\mu)$.
	Since $\lambda$ is regular, the multiset $|\lambda|$ is a set.
	Note that $\{|\lambda_1-1|,\ldots,|\lambda_n-n|\} = \{|\mu_1-1|,\ldots,|\mu_n-n|\}$.
	By the $\frakk_n$-dominance of $\lambda$ and $\mu$, we obtain $\lambda_i - i = \mu_i - i$ for $1 \leq i \leq n-p(\mu)$.
	Thus, one has $\lambda^{(p(\mu))} = \mu$.
	This completes the proof.
	\end{proof}

	\subsection{Multiplicities of certain $K$-types}
	
	In this subsection,  we distinguish $L(\lambda)$ in terms of $K_{n,\infty}$-types in the orbit $\calO_\lambda^\unit$.
	For this, we first recall the embeddings of highest weight modules into principal series representations.
	\begin{thm}[\cite{1989_Yamashita}]\label{emb_ht_wt}
	For a principal series representation
	\[
	\Ind_{B_n(\bbR)}^{G_n(\bbR)} \left( \mu_1|\cdot|^{s_1} \boxtimes \cdots \boxtimes \mu_n|\cdot|^{s_n}\right)
	\]
	with unitary characters $\mu_i$ of $\bbR^\times$,
	the induced representation contains a highest weight representation of weight $(\lambda_1,\ldots\lambda_n)$ if and only if we have
	\[
	s_i = \lambda_{n-i+1}-n+i-1, \mu_i = \mathrm{sgn}^{\lambda_{n-i+1}}
	\]
	for any $1 \leq i \leq n$.
	\end{thm}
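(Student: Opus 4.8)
\emph{Proof strategy.} Since $F=\bbQ$, write $G=\Sp_{2n}(\bbR)$ and $I(\chi)=\Ind_{B_n(\bbR)}^{G}(\mu_1|\cdot|^{s_1}\boxtimes\cdots\boxtimes\mu_n|\cdot|^{s_n})$ (normalized induction). The plan is first to turn the embedding statement into the computation of one $K$-type multiplicity, and then to carry out that computation with the canonical automorphy factor.

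The first step is the observation that ``$I(\chi)$ contains a highest weight representation of weight $\lambda$'' is the same as $\Hom_{(\frakg_n,K_{n,\infty})}(N(\lambda),I(\chi))\neq 0$: a nonzero such homomorphism has image a nonzero quotient of $N(\lambda)$, hence a highest weight representation of weight $\lambda$ inside $I(\chi)$, and conversely every such representation is a quotient of $N(\lambda)$ by the universality of $N(\lambda)$. Applying the tensor identity to $N(\lambda)=\calU(\frakg_n)\otimes_{\calU(\frakp)}V_\lambda$ with $\frakp=\frakk_n\oplus\frakp_{n,-}$ and $\frakp_{n,-}$ acting by zero on $V_\lambda$, this $\Hom$-space equals $\Hom_{\frakk_n}\!\big(V_\lambda,\,I(\chi)^{\frakp_{n,-}}\big)$; that is, one must decide when the minimal $K$-type $\rho_\lambda$ occurs in the space of $\frakp_{n,-}$-invariant (``holomorphic'') vectors of $I(\chi)$.

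The second step pins down $\chi$ by three compatible constraints. Comparing infinitesimal characters, a highest weight representation of weight $\lambda$ has Harish-Chandra parameter $\lambda+\rho=(\lambda_1-1,\dots,\lambda_n-n)$ while $I(\chi)$ has parameter $(s_1,\dots,s_n)$, so $(s_1,\dots,s_n)\in W_n\cdot(\lambda+\rho)$ and $\{|s_i|\}=\{|\lambda_j-j|\}$ as multisets. Next, $\rho_\lambda$ can occur in $I(\chi)|_{K_{n,\infty}}=\Ind_{M_0}^{K_{n,\infty}}(\chi|_{M_0})$ only if $\chi|_{M_0}$ is the character of the component group $M_0\cong\{\pm1\}^n$ of $T_n(\bbR)$ attached to some weight of $\rho_\lambda$. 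Finally, and this is the decisive point, using the Harish-Chandra decomposition $g=p_+(g)\,k(g)\,p_-(g)$ one checks that a $\frakp_{n,-}$-invariant $\frakk_n$-highest weight vector of weight $\lambda$ in $I(\chi)$, if it exists, is unique up to scalar and is the function built from the canonical automorphy factor $j(g,\bfi)$ together with an extreme weight vector of $\rho_\lambda$; its left $B_n(\bbR)$-transformation law can be written down explicitly, and on $\mathrm{diag}(t_1,\dots,t_n,t_1^{-1},\dots,t_n^{-1})$ it is $\prod_i t_i^{\lambda_{n-i+1}}$, the coordinate reversal being the longest element of the Weyl group of $\frakk_n$ carrying the highest weight of $\rho_\lambda$ to its lowest weight $(\lambda_n,\dots,\lambda_1)$. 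Since membership in $I(\chi)$ forces this character to equal $(\mu_1|\cdot|^{s_1}\boxtimes\cdots)\cdot\delta_{B_n}^{1/2}$, and $\delta_{B_n}^{1/2}$ has differential equal to half the sum of the roots of $\mathrm{Lie}(N_n)_\bbC$, i.e.\ $(n,n-1,\dots,1)$, matching differentials gives $s_i=\lambda_{n-i+1}-(n-i+1)=\lambda_{n-i+1}-n+i-1$ and $\mu_i=\mathrm{sgn}^{\lambda_{n-i+1}}$; one-dimensionality of the space of such vectors delivers both implications at once. As a consistency check, the resulting $(s_1,\dots,s_n)=(\lambda_n-n,\dots,\lambda_1-1)$ is $w_0(\lambda+\rho)$, and for $n=1$ one recovers the classical embedding of the weight-$k$ holomorphic discrete series of $\SL_2(\bbR)$ into $\Ind_{B_1(\bbR)}^{\SL_2(\bbR)}(\mathrm{sgn}^k|\cdot|^{k-1})$.

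The main obstacle is precisely this last computation: reconciling the compact Cartan subalgebra of $\frakk_n$, with respect to which $\lambda$ is a highest weight, with the split torus $T_n(\bbR)$ of the principal series, and tracking the normalization of induction carefully enough that the reversal $\lambda\mapsto(\lambda_n,\dots,\lambda_1)$ and the shift by $\rho_{B_n}$ combine into the displayed formula. The reduction to a single $K$-type multiplicity and the infinitesimal-character and $M_0$-type constraints are formal; the content is in the explicit shape of the holomorphic highest weight vector.
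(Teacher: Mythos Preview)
The paper does not give its own proof of this theorem; it is quoted from \cite{1989_Yamashita} and used as a black box. There is therefore nothing in the paper to compare your argument against.

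That said, your sketch is the standard route and is correct in outline. The reduction
\[
\Hom_{(\frakg_n,K_{n,\infty})}(N(\lambda),I(\chi))\;\cong\;\Hom_{\frakk_n}\!\big(V_\lambda,\,I(\chi)^{\frakp_{n,-}}\big)
\]
via Frobenius reciprocity for $N(\lambda)=\calU(\frakg_n)\otimes_{\calU(\frakp)}V_\lambda$ is exactly how one sets this up, and the infinitesimal-character and $M_0$-type constraints you list are the obvious necessary conditions. The substantive step, as you correctly identify, is the explicit description of the one-dimensional space of $\frakp_{n,-}$-annihilated highest weight vectors via the canonical automorphy factor $j(g,\bfi)$ and the resulting left $T_n(\bbR)$-character; this is precisely what Yamashita computes. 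Your assertion that the character on $\mathrm{diag}(t_1,\dots,t_n,t_1^{-1},\dots,t_n^{-1})$ is $\prod_i t_i^{\lambda_{n-i+1}}$, combined with $\rho_{B_n}=(n,n-1,\dots,1)$, does yield the stated formula, and the $n=1$ sanity check is right. What remains a genuine exercise rather than a formality is the passage from the compact Cartan (where $\lambda$ lives) to the split torus $T_n$ through the Cayley transform and the automorphy factor; you have asserted the outcome of that computation rather than performed it, so as written this is a correct proof \emph{strategy} rather than a complete proof.
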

	
	For $0 \leq j \leq n$, let $\wedge^j$ be the $j$-th exterior product of the standard representation of $\frakk_n$.
	This is an irreducible representation of $\frakk_n$ with highest weight $(\overbrace{1,\ldots,1}^j,0,\ldots,0)$.
	Put
	\[
	j(\lambda) = \#\{\ell \mid \lambda_1 \equiv \lambda_\ell \mod 2\}.
	\]
	The following statement follows from the Littlewood-Richardson rule.
	
	\begin{lem}\label{occur}
	For a $\frakk_n$-dominant integral weight $\lambda$, one has
	\[
	\mathrm{Hom}_{\frakk_n}(\wedge^{j(\lambda)}\otimes\mathrm{det}^{\lambda_1-1}, N(\lambda)|_{\frakk_n}) \neq 0.
	\]
	\end{lem}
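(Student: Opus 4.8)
The plan is to compute the $\frakk_n$-module structure of $N(\lambda)$ and reduce the assertion to a single explicit instance of the Littlewood--Richardson rule.

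\emph{Step 1: reduction to a plethysm.} By the Poincar\'e--Birkhoff--Witt theorem, $N(\lambda)=\calU(\frakg_n)\otimes_{\calU(\frakp)}V_\lambda\cong\calU(\frakp_{n,+})\otimes V_\lambda$ as $\frakk_n$-modules, and since $\frakp_{n,+}$ is abelian this is $S(\frakp_{n,+})\otimes V_\lambda$. As a $\frakk_n$-module one has $\frakp_{n,+}\cong\Sym^2(\bbC^n)$, with $\GL_n(\bbC)$ (hence $\frakk_n$) acting on $\bbC^n$ by the standard representation; so Littlewood's plethysm identity
\[
S\bigl(\Sym^2\bbC^n\bigr)\cong\bigoplus_{\mu}S_\mu(\bbC^n),
\]
the sum over partitions $\mu$ of length $\leq n$ all of whose parts are even, gives $N(\lambda)|_{\frakk_n}\cong\bigoplus_\mu S_\mu(\bbC^n)\otimes V_\lambda$. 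It thus suffices to exhibit one even partition $\mu$ of length $\leq n$ for which $\wedge^{j(\lambda)}\otimes\mathrm{det}^{\lambda_1-1}$ is a constituent of $S_\mu(\bbC^n)\otimes V_\lambda$.

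\emph{Step 2: dualization.} The representation $\wedge^{j(\lambda)}\otimes\mathrm{det}^{\lambda_1-1}$ occurs in $S_\mu(\bbC^n)\otimes V_\lambda$ if and only if $S_\mu(\bbC^n)$ occurs in $\wedge^{j(\lambda)}\otimes\mathrm{det}^{\lambda_1-1}\otimes V_\lambda^\vee$. Set $\bar\lambda=(\lambda_1-\lambda_n,\lambda_1-\lambda_{n-1},\dots,\lambda_1-\lambda_1)$, a partition of length $\leq n$; then $\mathrm{det}^{\lambda_1-1}\otimes V_\lambda^\vee\cong\mathrm{det}^{-1}\otimes V_{\bar\lambda}$, so the condition becomes that $V_{\mu+(1,\dots,1)}$ be a constituent of $\wedge^{j(\lambda)}(\bbC^n)\otimes V_{\bar\lambda}$. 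By the dual Pieri rule (a special case of the Littlewood--Richardson rule),
\[
\wedge^{j(\lambda)}(\bbC^n)\otimes V_{\bar\lambda}\cong\bigoplus_\nu V_\nu,
\]
$\nu$ running over the partitions obtained from $\bar\lambda$ by adding a vertical strip of size $j(\lambda)$.

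\emph{Step 3: the right strip.} I would take $S=\{\,i\mid\bar\lambda_i\text{ is even}\,\}$ and $\nu=\bar\lambda+\sum_{i\in S}e_i$. On each maximal block of consecutive indices where $\bar\lambda$ is constant the entries share one parity, so $S$ contains all or none of that block; hence $\nu$ is again a partition and $\nu/\bar\lambda$ a vertical strip, of size $\#\{\,i\mid\lambda_{n+1-i}\equiv\lambda_1\bmod 2\,\}=j(\lambda)$. Every entry of $\nu$ is odd by construction, so $\mu:=\nu-(1,\dots,1)$ has all parts even; and $\bar\lambda_n=0\in S$ forces $\nu_n=1$, so $\mu$ is a genuine partition of length $\leq n$. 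Then $V_{\mu+(1,\dots,1)}=V_\nu$ occurs in $\wedge^{j(\lambda)}(\bbC^n)\otimes V_{\bar\lambda}$, and unwinding Steps 1--2 yields $\Hom_{\frakk_n}(\wedge^{j(\lambda)}\otimes\mathrm{det}^{\lambda_1-1},N(\lambda)|_{\frakk_n})\neq 0$. The one genuinely non-formal point is the choice of $\mu$ in Step 3: a direct assault via Littlewood--Richardson tableaux of the skew shape $\kappa/\lambda$ with $\kappa=(\lambda_1^{\,j(\lambda)},(\lambda_1-1)^{\,n-j(\lambda)})$, trying to force the content to be an even partition, is clumsy, whereas after passing to $\wedge^{j(\lambda)}\otimes V_{\bar\lambda}$ the prescription ``add a box precisely in the rows where $\bar\lambda_i$ is even'' automatically produces a legal vertical strip of exactly the required size $j(\lambda)$; once that is noticed the argument is short.
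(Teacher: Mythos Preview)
Your argument is correct. Both your proof and the paper's rest on the identification $N(\lambda)|_{\frakk_n}\cong\bigoplus_{\mu\text{ even}}V_\mu\otimes V_\lambda$ and the Littlewood--Richardson rule, but the execution differs. The paper builds an iterative two-step ``smoothing'' operator $g$ on weights and applies it $n-1$ times to $\lambda$, checking at each stage via Littlewood--Richardson that $\rho_{g^{\ell+1}(\lambda)}$ occurs in $\rho_{g^{\ell}(\lambda)}\otimes\rho_{a^{(\ell)}}$ for some even partition $a^{(\ell)}$, and that the process terminates at $\det^{\lambda_1-1}\otimes\wedge^{j(\lambda)}$; the required even $\mu$ is then the sum of the $a^{(\ell)}$. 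You instead dualize once, reducing the question to finding a single constituent of $\wedge^{j(\lambda)}\otimes V_{\bar\lambda}$ with all parts odd, and the dual Pieri rule together with the parity prescription $S=\{i:\bar\lambda_i\text{ even}\}$ hands you that constituent directly. Your route is shorter and avoids the somewhat delicate bookkeeping of the iterated construction (in particular the invariance $j(g(\omega))=j(\omega)$, which the paper has to verify); the paper's route, on the other hand, exhibits a concrete chain of intermediate $K$-types inside $N(\lambda)$, which may be of independent use.
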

	\begin{proof}
	For an integral weight $\omega = (\omega_1,\ldots,\omega_n)$, we consider the following two step operation:
	\begin{itemize}
	\item[Step 1.] Put $\omega'_1 = \omega_1$.
	For $2 \leq i \leq n$, set
	\[
	\omega_i' =
	\begin{cases}
	\omega_{i-1} &\text{if $\omega_{i-1} - \omega_i$ is even}\\
	\omega_{i-1} -1 & \text{if $\omega_{i-1} - \omega_i$ is odd}.
	\end{cases}
	\]
	\item[Step 2.]
	Consider the set $X = X(\omega) = \{i \mid 2 \leq i \leq n, \omega_{i-1} \neq \omega_i'\}$.
	Let $a$ be the maximal element in $X$.
	We define a new set $X' = X'(\omega)$ by $X' = X$ if $\# X$ is even and by $X' = X \setminus \{a\}$ if $\# X$ is odd.
	Put
	\[
	\begin{cases}
	\omega_{i}'' = \omega_{i}' & \text{if $i \not\in X'$}\\
	\omega_{i}'' = \omega_{i}'+1 & \text{if $i \in X'$}.
	\end{cases}
	\]
	\end{itemize}
	We define a map $g \colon \bbZ^n \longrightarrow \bbZ^n$ by $g((\omega_1,\ldots,\omega_n)) = (\omega_1'',\ldots,\omega_n'')$.
	Note that the image of $\frakk_n$-dominant weight is $\frakk_n$-dominant.
	We denote by $g^\ell$ the $\ell$-th composite of $g$.
	Set $g^\ell(\lambda) = (\lambda_{1,\ell},\ldots,\lambda_{n,\ell})$ and $a_\ell = \sum_{i=1}^n (\lambda_{i,\ell}-\lambda_{i,\ell+1})$.
	Then, by definition, $a_i \in 2 \bbZ$.
	By the well-known correspondence of young diagrams and irreducible finite-dimensional representations of $\frakk_n$, one can show that $(a_1,\ldots,a_n)$ is $\frakk_n$-dominant.
	By the definition of $g$ and the Littlewood-Richardson rule, the irreducible representation of $\frakk_n$ with highest weight $g^{n-1}(\lambda)$ occurs in the tensor product representation $\rho_\lambda \otimes \rho_{(a_1,\ldots, a_n)}$ of $\frakk_n$.
	
	We next compute the weight $g^{n-1}(\lambda) = (\lambda_{1,n-1},\ldots,\lambda_{n,n-1})$.
	By the construction, $g^{n-1}(\lambda)$ is of the form $(\lambda_1,\ldots,\lambda_1,\lambda_1-1,\ldots,\lambda_1-1)$.
	Indeed, by induction on $\ell$, one has $\lambda_{1}-\lambda_{1+\ell,\ell} \leq 1$ for any $1 \leq \ell$.
	Thus, $\lambda_{1}-\lambda_{n,n-1} \leq 1$.
	We claim $j(\omega) = j(g(\omega))$ for any $\frakk_n$-dominant weight $\omega = (\omega_1,\ldots,\omega_n)$.
	Set $g(\omega) = (\omega_1'',\ldots,\omega_n'')$.
	We write $X'(\omega) = \{x_1,\ldots,x_{2m}\}$ with $x_1 < x_2 < \cdots < x_{2m}$.
	Then, for any $x_i \leq \ell \leq x_{i+1}$ with $0 \leq i \leq 2m-1$, one has $\omega_1 \equiv \omega_\ell +(1+(-1)^{i+1})/2$ mod $2$.
	Here, $x_0 = 1$.
	In particular, for any $1 \leq \ell \leq m$, we have $\omega_{x_{2\ell - 1}} \not \equiv \omega_{x_{2\ell}}$ mod $2$.
	By $\omega_x \equiv \omega_x'+1$ mod $2$ for any $x \in X'(\omega)$, we have $j(\omega) = j(g(\omega))$.
	Hence we obtain
	\[
	g^{n-1}(\lambda) = (\overbrace{\lambda_1,\ldots,\lambda_1}^{j(\lambda)},\lambda_1-1,\ldots,\lambda_1-1).
	\]

	By the claim, we see that $\rho_{g^{n-1}(\lambda)} = \wedge^{j(\lambda)} \otimes \det^{\lambda_1-1}$ occurs in $\rho_\lambda \otimes \rho_{(a_1,\ldots,a_n)}$.
	The space $\calU(\frakp_{n,+})$ decomposes as
	\[
	\calU(\frakp_{n,+}) = \bigoplus_{(b_1,\ldots,b_n) \in (2\bbZ)^n, b_1 \geq \cdots \geq b_n} \rho_{(b_1,\ldots,b_n)}
	\]
	as a representation of $\frakk_n$.
	Thus, $\rho_{(a_1,\ldots,a_n)}$ occurs in $\calU(\frakp_{n,+})$.
	Note that the restriction of $N(\lambda)$ to $\frakk_n$ is semisimple and $N(\lambda)|_{\frakk_n} = \calU(\frakp_{n,+})|_{\frakk_n} \otimes_\bbC \rho_\lambda$.
	We then have
	\[
	\mathrm{Hom}_{\frakk_n}(\wedge^{j(\lambda)}\otimes\mathrm{det}^{\lambda_1-1}, N(\lambda)|_{\frakk_n}) \neq 0.
	\]
	This completes the proof.
	\end{proof}
	
	Since a weight of $\wedge^\ell$ is a permutation of $(\overbrace{1,\ldots,1}^j,0,\ldots,0)$, one has the following:
	
	\begin{prop}\label{vanish_K_type}
	For a regular anti-dominant integral weight $\lambda$ and $1 \leq j \leq p(\lambda)$, one has
	\[
	\dim_\bbC \mathrm{Hom}_{\frakk_n}(\wedge^{j(\lambda)} \otimes \mathrm{det}^{\lambda_1-1}, L(\lambda)|_{\frakk_n}) = 1
	\]
	and
	\[
	\mathrm{Hom}_{\frakk_n}(\wedge^{j(\lambda)}\otimes\mathrm{det}^{\lambda_1-1}, L(\lambda^{(j)})|_{\frakk_n}) = 0.
	\]
	\end{prop}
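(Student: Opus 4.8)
Write $\tau=\wedge^{j(\lambda)}\otimes\det^{\lambda_1-1}$, the irreducible $K_{n,\infty}$-representation of highest weight $\nu:=(\lambda_1,\dots,\lambda_1,\lambda_1-1,\dots,\lambda_1-1)$ with $j(\lambda)$ entries equal to $\lambda_1$. The plan is to bound the multiplicities from below by Lemma~\ref{occur} and from above by embedding the relevant highest weight modules into principal series via Theorem~\ref{emb_ht_wt}. For a $\frakk_n$-dominant weight $\mu$, Theorem~\ref{emb_ht_wt} gives $L(\mu)\hookrightarrow I_\mu$ for a suitable principal series $I_\mu$ of $G_n(\bbR)=\Sp_{2n}(\bbR)$; since $K_{n,\infty}\cap B_n(\bbR)$ equals the finite group $M:=K_{n,\infty}\cap T_n(\bbR)\cong(\bbZ/2)^n$ (by the Cartan involution), restriction to $K_{n,\infty}$ gives $I_\mu|_{K_{n,\infty}}\cong\Ind_M^{K_{n,\infty}}\varepsilon_\mu$ with $\varepsilon_\mu$ the character $t\mapsto\prod_{k=1}^n\sgn(t_k)^{\mu_{n+1-k}}$, whence by Frobenius reciprocity
\[
\dim\Hom_{\frakk_n}(\tau,L(\mu)|_{\frakk_n})\ \le\ \dim\Hom_M(\tau|_M,\varepsilon_\mu).
\]
Decomposing $\wedge^{j(\lambda)}|_M=\bigoplus_{|S|=j(\lambda)}\chi_S$ with $\chi_S(t)=\prod_{k\in S}\sgn(t_k)$, the right-hand side counts subsets $S\subseteq\{1,\dots,n\}$ with $|S|=j(\lambda)$ and $[\,k\in S\,]\equiv\mu_{n+1-k}-\lambda_1+1\pmod{2}$ for every $k$; there is at most one such $S$, and it exists exactly when $\#\{k:\mu_k\equiv\lambda_1\pmod{2}\}=j(\lambda)$.

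For the first assertion, I would first observe that $N(\lambda)$ is irreducible: writing $\lambda=\lambda'+r(-1,\dots,-1)$ with $\lambda'_n=n$ forces $r=n-\lambda_n\le0$, which lies below the first reduction point $(p(\lambda')+q(\lambda')+1)/2\ge1$ of Theorem~\ref{first_red_pt}, so $N(\lambda)=L(\lambda)$. Then Lemma~\ref{occur} gives $\Hom_{\frakk_n}(\tau,L(\lambda)|_{\frakk_n})\neq0$, while the bound above with $\mu=\lambda$ gives dimension $\le1$ (here $\#\{k:\lambda_k\equiv\lambda_1\}=j(\lambda)$ by definition of $j(\lambda)$); hence the dimension is $1$.

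For the second assertion, fix $1\le j\le p(\lambda)$, so that $\lambda_{n-j+1}=\cdots=\lambda_n$ and $\lambda^{(j)}=(\lambda_1,\dots,\lambda_{n-j},n-j,\dots,n-j)$; a short count gives $\#\{k:\lambda^{(j)}_k\equiv\lambda_1\}=j(\lambda)+j\big([\,n-j\equiv\lambda_1\,]-[\,\lambda_n\equiv\lambda_1\,]\big)$. When $n-j\not\equiv\lambda_n\pmod{2}$ --- which, in the case $\lambda_n=n+1$ relevant to $\calO_\lambda^\unit$, means $j$ even --- this differs from $j(\lambda)$ by $\pm j\ne0$, so the bound above with $\mu=\lambda^{(j)}$ already gives $\Hom_{\frakk_n}(\tau,L(\lambda^{(j)})|_{\frakk_n})=0$.

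The remaining case $n-j\equiv\lambda_n\pmod{2}$ --- for $\lambda_n=n+1$ this is $j$ odd, including $j=1$, the case used in Theorem~\ref{proj} --- is where the real work lies, since there the bound only gives $\le1$. Here I would descend through the composition series of $N(\lambda^{(j)})$: by Theorem~\ref{first_red_pt} its first reduction point is $(j+1)/2\le j$, so $N(\lambda^{(j)})$ is reducible, and its composition factors are among the $L(\lambda^{(i)})$ with $0\le i\le j$ (the members of $\calO_\lambda^\unit$ below $\lambda^{(j)}$ in the order of the block) together with certain non-unitarizable simples. Applying the bound of the first paragraph to the latter, the case just treated to the $L(\lambda^{(i)})$ with $i$ even, and induction on $j$ to those with $i$ odd and $i<j$, the only composition factor carrying $\tau$ is the holomorphic module $L(\lambda)=L(\lambda^{(0)})$, which carries it with multiplicity $1$ at each occurrence. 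Comparing this with $\dim\Hom_{\frakk_n}(\tau,N(\lambda^{(j)})|_{\frakk_n})$ --- computed from $N(\lambda^{(j)})|_{\frakk_n}=\calU(\frakp_{n,+})\otimes\rho_{\lambda^{(j)}}$ by the Littlewood--Richardson rule together with the decomposition of $\calU(\frakp_{n,+})$ recalled above --- then forces the multiplicity of $\tau$ in the irreducible quotient $L(\lambda^{(j)})$ to vanish. The hardest step is exactly this comparison: proving the identity $\dim\Hom_{\frakk_n}(\tau,N(\lambda^{(j)})|_{\frakk_n})=[N(\lambda^{(j)}):L(\lambda)]$ and ruling out $\tau$ as a $K_{n,\infty}$-type of the non-unitarizable composition factors of $N(\lambda^{(j)})$; both depend on the fine structure of the parabolic Verma modules of this Hermitian-symmetric-type block along the unitarity line, accessible through the results underlying Theorems~\ref{first_red_pt}--\ref{unitary} and Corollary~\ref{orbit}.
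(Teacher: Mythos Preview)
Your treatment of the first assertion and of the ``parity-mismatch'' case of the second assertion (that is, $j$ even when $\lambda_n=n+1$) is correct and matches the paper's method: embed into the appropriate principal series and count via Frobenius reciprocity, combined with Lemma~\ref{occur}.

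For the remaining case you miss the idea that collapses the problem. Your own parity computation shows that when $n-j\equiv\lambda_n\pmod 2$ the sign characters $\mu_i$ in the inducing data of $I_{\lambda^{(j)}}$ coincide with those of $I_\lambda$, and since $\lambda^{(j)}\in\calO_\lambda$ the continuous parameters $s_i$ differ only by an element of $W_n$; hence the two inducing characters of $T_n(\bbR)$ lie in the same $W_n$-orbit, so $I_{\lambda^{(j)}}$ and $I_\lambda$ have identical composition factors. In particular $L(\lambda)$ is itself a constituent of $I_{\lambda^{(j)}}$. Since $\tau=\wedge^{j(\lambda)}\otimes\det^{\lambda_1-1}$ has multiplicity exactly $1$ in $I_{\lambda^{(j)}}$ (your Frobenius count) and multiplicity $\ge 1$ in the constituent $L(\lambda)$ (Lemma~\ref{occur} together with $N(\lambda)=L(\lambda)$), it has multiplicity $0$ in every other constituent, including $L(\lambda^{(j)})$. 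The paper states this uniformly: every $L(\omega)$ with $\omega\in\calO_\lambda^{\unit}$ occurs among the constituents of the single principal series attached to $\lambda$, whence the multiplicity-one of $\tau$ there settles both assertions at once.

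This renders your proposed induction through the composition series of $N(\lambda^{(j)})$ unnecessary. That route is not only longer but genuinely incomplete as you present it: you would need the full list of composition factors of $N(\lambda^{(j)})$ with their multiplicities, the identity $\dim\Hom_{\frakk_n}(\tau,N(\lambda^{(j)})|_{\frakk_n})=[N(\lambda^{(j)}):L(\lambda)]$, and the vanishing of $\tau$ in each non-unitarizable factor --- none of which you establish, and each of which is substantially harder than the one-line principal-series argument above.
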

	\begin{proof}
	Let $I_n(\mu_1,\ldots,\mu_n)$ be the principal series representation
	\[
	\Ind_{B_n(\bbR)}^{G_{n}(\bbR)} (\mu_1\boxtimes \cdots \boxtimes \mu_n).
	\]
	Here, $\mu_i$ are real valued characters of $\bbR^\times$.
	Take $\varepsilon_j \in \{0,1\}$ such that $\mu_j(-1)=(-1)^{\varepsilon_j}$. 
	Through the weight structure of $\wedge^j$, one can find that the Hom space
	\[
	\mathrm{Hom}_{\frakk_n}(\wedge^j, I_n(\mu_1,\ldots,\mu_n)|_{\frakk_n})
	\]
	is non-zero if and only if $\sum_{\ell=1}^n\varepsilon_\ell=j$ by the Frobenius reciprocity.
	By the Frobenius reciprocity, one has the multiplicity free, i.e., $\dim_\bbC \mathrm{Hom}_{\frakk_n}(\wedge^j, I_n(\mu_1,\ldots,\mu_n)|_{\frakk_n}) \leq 1$.
	
	For any $\omega \in \calO^{\mathrm{unit}}_\lambda$, the highest weight module $L(\omega)$ occurs in constituents of the induced representation $I_n(\mathrm{sgn}^{\lambda_n}|\cdot|^{\lambda_n-n}, \ldots,\mathrm{sgn}^{\lambda_1}|\cdot|^{\lambda_1-1})$ by Theorem \ref{emb_ht_wt}.
	Then, the statement follows from Lemma \ref{occur} and the above multiplicity free.
	This completes the proof.
	\end{proof}
	
	For a $\frakk_n$-type $\sigma$, put
	\[
	\calO_{\lambda}^{\mathrm{unit}}(\sigma) = \{\pi \in \calO_{\lambda}^{\mathrm{unit}} \mid \mathrm{Hom}_{\frakk_n}(\sigma, \pi|_{\frakk_n}) \neq 0\}.
	\]
	\begin{cor}\label{distinguish_mod}
	For a regular anti-dominant integral weight $\lambda$, one has
	\[
	\calO_{\lambda}^{\mathrm{unit}}(\mathrm{det}^{\lambda_1-1}\otimes\wedge^{j(\lambda)}) = \{L(\lambda)\}.
	\]
	\end{cor}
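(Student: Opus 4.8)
The plan is to deduce this directly from Corollary \ref{orbit} and Proposition \ref{vanish_K_type}; the argument is essentially a bookkeeping step once those are in hand. First I would unwind the definitions: under the usual identification of a $\frakk_n$-dominant weight $\mu$ with the irreducible highest weight module $L(\mu)$ (distinct $\frakk_n$-dominant weights giving non-isomorphic modules), the set $\calO_{\lambda}^{\unit}(\mathrm{det}^{\lambda_1-1}\otimes\wedge^{j(\lambda)})$ consists of those $\mu \in \calO_\lambda^{\unit}$ with $\mathrm{Hom}_{\frakk_n}(\mathrm{det}^{\lambda_1-1}\otimes\wedge^{j(\lambda)}, L(\mu)|_{\frakk_n}) \neq 0$. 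I would also note that $\mathrm{det}^{\lambda_1-1}\otimes\wedge^{j(\lambda)}$ is a genuine $K_{n,\bbC}$-type since $\lambda_1 \in \bbZ$, and that $j(\lambda) \geq 1$ and $p(\lambda) \geq 1$ always hold (as $n$ itself lies in $\{i \mid \lambda_i = \lambda_n\}$ and $\lambda_1 \equiv \lambda_1$).

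Next I would split according to Corollary \ref{orbit}. If $\lambda_n > n+1$, then $\calO_\lambda^{\unit} = \{\lambda\}$, and by the first assertion of Proposition \ref{vanish_K_type} the $K$-type $\mathrm{det}^{\lambda_1-1}\otimes\wedge^{j(\lambda)}$ occurs in $L(\lambda)|_{\frakk_n}$ (indeed with multiplicity one), so the claimed equality is immediate. If $\lambda_n = n+1$, then $\calO_\lambda^{\unit} = \{\lambda^{(0)},\ldots,\lambda^{(p(\lambda))}\}$ with $\lambda^{(0)} = \lambda$ and $\lambda^{(j)} \neq \lambda$ for $j \geq 1$ (their last entries $n-j$ differ from $n+1$); again the first assertion of Proposition \ref{vanish_K_type} places $\lambda = \lambda^{(0)}$ in $\calO_\lambda^{\unit}(\mathrm{det}^{\lambda_1-1}\otimes\wedge^{j(\lambda)})$, while the second assertion gives $\mathrm{Hom}_{\frakk_n}(\mathrm{det}^{\lambda_1-1}\otimes\wedge^{j(\lambda)}, L(\lambda^{(j)})|_{\frakk_n}) = 0$ for every $1 \leq j \leq p(\lambda)$, so no other element of the orbit survives. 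In both cases $\calO_\lambda^{\unit}(\mathrm{det}^{\lambda_1-1}\otimes\wedge^{j(\lambda)}) = \{L(\lambda)\}$.

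There is no real obstacle here: all of the content has already been isolated, on one side in the orbit computation of Corollary \ref{orbit}, and on the other in Proposition \ref{vanish_K_type} (which itself rests on Yamashita's embedding Theorem \ref{emb_ht_wt}, Lemma \ref{occur}, and the multiplicity-one property of the relevant principal series). The only point requiring a little care is that the distinguishing $K$-type must be written in terms of the invariant $j(\lambda)$ attached to the anti-dominant representative $\lambda$, not in terms of $j(\lambda^{(j)})$; this is exactly how Proposition \ref{vanish_K_type} is phrased, so the two statements dovetail with no modification.
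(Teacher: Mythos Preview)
Your proposal is correct and follows exactly the approach of the paper, which simply states that the result follows immediately from Corollary \ref{orbit} and Proposition \ref{vanish_K_type}. Your case split on $\lambda_n > n+1$ versus $\lambda_n = n+1$ (which is exhaustive since regularity forces $\lambda_n \neq n$) and the invocation of the two assertions of Proposition \ref{vanish_K_type} merely spell out what the paper leaves implicit.
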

	\begin{proof}
	The statement follows immediately from Corollary \ref{orbit} and Proposition \ref{vanish_K_type}.
	\end{proof}

	\subsection{Extensions of certain modules}
	
	Fix an odd integer $i$.
	Let $\lambda = (\lambda_1,\ldots,\lambda_n)$ be a $\frakk_n$-dominant integral weight such that $\lambda_{n-i+1} = \cdots = \lambda_{n} = n-(i-3)/2$.
	Put $\lambda' = (\lambda_1',\ldots,\lambda_n') = (\lambda_1,\ldots,\lambda_{n-i}, n-(i+1)/2,\ldots,n-(i+1)/2)$.
	By $|\lambda| = |\lambda'|$ as the multisets, the weights $\lambda$ and $\lambda'$ have the same dot-orbit.
	
	\begin{lem}\label{Ext}
	One has
	\[
	\dim_\bbC \mathrm{Ext}_{\calO^\frakp}(L(\lambda'), L(\lambda)) =1.
	\]
	Moreover an indecomposable module $M$ with a non-trivial exact sequence
	\[
	0 \longrightarrow L(\lambda) \longrightarrow M \longrightarrow L(\lambda') \longrightarrow 0
	\]
	is isomorphic to $N(\lambda')$.
	\end{lem}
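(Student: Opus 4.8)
The plan is to reduce the computation of $\mathrm{Ext}^1_{\calO^\frakp}(L(\lambda'),L(\lambda))$ to the socle structure of the parabolic Verma module $N(\lambda')$, and then to pin down that structure using the first reduction point (Theorem \ref{first_red_pt}).

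\emph{Homological reduction.} Note that $\lambda'-\lambda=-2(e_{n-i+1}+\cdots+e_n)$ is the sum of the positive roots $-(e_j+e_j)$, $n-i+1\le j\le n$, so $\lambda<\lambda'$. Consequently $\mathrm{Hom}_{\calO^\frakp}(N(\lambda'),L(\lambda))=0$ (the unique simple quotient of $N(\lambda')$ is $L(\lambda')\neq L(\lambda)$) and $\mathrm{Ext}^1_{\calO^\frakp}(N(\lambda'),L(\lambda))=0$ (since $\lambda$ is strictly below $\lambda'$), by the standard theory of the category $\calO$ \cite{cat_o}. Applying $\mathrm{Hom}_{\calO^\frakp}(-,L(\lambda))$ to $0\to\mathrm{rad}\,N(\lambda')\to N(\lambda')\to L(\lambda')\to 0$, the long exact sequence collapses to
\[
\mathrm{Ext}^1_{\calO^\frakp}(L(\lambda'),L(\lambda))\;\cong\;\mathrm{Hom}_{\calO^\frakp}\big(\mathrm{rad}\,N(\lambda'),\,L(\lambda)\big).
\]
Thus it suffices to prove $\mathrm{rad}\,N(\lambda')\cong L(\lambda)$. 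Granting this, the right-hand side is $\mathrm{End}_{\calO^\frakp}(L(\lambda))=\bbC$, which gives the first assertion; moreover $N(\lambda')$ is then an indecomposable module with socle $L(\lambda)$ and head $L(\lambda')$, i.e.\ a non-split extension of $L(\lambda')$ by $L(\lambda)$, and since the $\mathrm{Ext}$ group is one-dimensional every indecomposable such extension is isomorphic to it, so any $M$ as in the statement satisfies $M\cong N(\lambda')$.

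\emph{Structure of $N(\lambda')$.} Put $\mu=\lambda'+\tfrac{i+1}{2}(1,\ldots,1)$, so $\mu_n=n$ and $\lambda'=\mu+\tfrac{i+1}{2}(-1,\ldots,-1)$. Since $\lambda$ is $\frakk_n$-dominant, $\mu_{n-i}=\lambda_{n-i}+\tfrac{i+1}{2}\ge\big(n-\tfrac{i-3}{2}\big)+\tfrac{i+1}{2}=n+2$, so every one of the first $n-i$ entries of $\mu$ exceeds $n+1$, while the last $i$ entries equal $n$; hence $p(\mu)=i$ and $q(\mu)=0$. By Theorem \ref{first_red_pt} the first reduction point along $r\mapsto\mu+r(-1,\ldots,-1)$ is $r_0=\tfrac{i+1}{2}$, so $N(\lambda')$ sits exactly at the first reduction point. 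By the structure theory of generalized Verma modules at the first reduction point for the Hermitian symmetric pair $(\mathfrak{sp}_{2n},\mathfrak{gl}_n)$ \cite{EHW_83}, $N(\lambda')$ has length two, so $\mathrm{rad}\,N(\lambda')=\mathrm{soc}\,N(\lambda')$ is irreducible, say $L(\nu)$, and the weight of the corresponding singular vector is obtained from $\lambda'$ by adding $2$ to its last $p(\mu)+q(\mu)=i$ coordinates, namely $\lambda'+2(e_{n-i+1}+\cdots+e_n)=\lambda$. Hence $\nu=\lambda$ and $\mathrm{rad}\,N(\lambda')\cong L(\lambda)$, completing the proof.

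\emph{Main obstacle and an alternative.} The delicate point is the last paragraph's use of the precise structure of $N(\lambda')$ at the first reduction point — that it has length exactly two with socle $L(\lambda)$. One can replace the appeal to \cite{EHW_83} as follows. First $N(\lambda)$ is irreducible: applying Theorem \ref{first_red_pt} to the line through $\lambda+\tfrac{i-3}{2}(1,\ldots,1)$ (again of the form $\mu''+r(-1,\ldots,-1)$ with $\mu''_n=n$), its first reduction point is $\ge\tfrac{i+1}{2}>\tfrac{i-3}{2}$, so $N(\lambda)=L(\lambda)$. It then remains to see $\mathrm{Hom}_{\frakg_n}(N(\lambda),N(\lambda'))\neq 0$ with cokernel $L(\lambda')$; this can be extracted from the Jantzen sum formula for $N(\lambda')$, whose right-hand side collapses at the first reduction point to $\mathrm{ch}\,N(\lambda)=\mathrm{ch}\,L(\lambda)$ with coefficient one, forcing the Jantzen filtration to satisfy $N(\lambda')^1=L(\lambda)$ and $N(\lambda')^2=0$, and hence (as $N(\lambda')/N(\lambda')^1\cong L(\lambda')$) $\mathrm{rad}\,N(\lambda')=L(\lambda)$. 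Either way, the crux is showing that the reducibility at the first point contributes exactly the single extra composition factor $L(\lambda)$.
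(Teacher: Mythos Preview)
Your overall strategy coincides with the paper's: both arguments reduce the $\mathrm{Ext}^1$ computation to the short exact sequence $0\to L(\lambda)\to N(\lambda')\to L(\lambda')\to 0$ and then run the long exact sequence for $\mathrm{Hom}_{\calO^\frakp}(-,L(\lambda))$, using $\mathrm{Hom}(N(\lambda'),L(\lambda))=0$ and $\mathrm{Ext}^1(N(\lambda'),L(\lambda))=0$ (the paper proves the latter by the same splitting argument you invoke as ``standard''). The paper also identifies $\lambda'$ with the first reduction point exactly as you do.

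The one substantive difference is how the length-two structure of $N(\lambda')$ with socle $L(\lambda)$ is established. The paper does this by hand: it lists the $\frakk_n$-dominant weights $\omega$ in the dot-orbit of $\lambda'$ with $\omega$ a possible constituent, finds four candidates, eliminates two by checking that the corresponding $\frakk_n$-types $\rho_\omega$ do not occur in $N(\lambda')|_{\frakk_n}$, and then uses that $\rho_\lambda$ and $\rho_{\lambda'}$ each occur with multiplicity one in $N(\lambda')|_{\frakk_n}$ to bound the composition multiplicities by one. Your argument instead invokes \cite{EHW_83} for ``length two at the first reduction point with socle weight $\lambda'+2(e_{n-i+1}+\cdots+e_n)$''; this is the soft spot you yourself flag, since \cite{EHW_83} gives the first reduction point and the unitarity classification but does not state the composition series or identify the socle weight in this form. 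Your Jantzen alternative is a viable substitute, but the parabolic Jantzen sum formula and the claim that its right-hand side collapses to a single $\mathrm{ch}\,N(\lambda)$ would need to be written out. In short: same proof architecture, but the paper's verification of $\mathrm{rad}\,N(\lambda')\cong L(\lambda)$ is self-contained where yours relies on an external structural fact that is true but not directly citable from the reference you give.
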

	\begin{proof}
	Set $\lambda'' = \lambda' + ((i+1)/2,\ldots,(i+1)/2)$.
	Then, $\lambda''$ satisfies the condition as in Theorem \ref{first_red_pt}, i.e., the $n$-th entry of $\lambda''$ is $n$.
	By $p(\lambda'') = i$ and $q(\lambda'') = 0$, the weight $\lambda'$ corresponds to the first reduction point $r_0(\lambda'')$.
	Thus, the Verma module $N(\lambda')$ is reducible and $N(\lambda)$ is irreducible.
	Let $\omega = (\omega_1,\ldots,\omega_n)$ be a $\frakk_n$-dominant integral weight such that $L(\omega)$ is a constituent of $N(\lambda')$.
	Then, there exists $w \in W_{n}$ such that $\omega = w \cdot \lambda'$ and $\omega \leq \lambda'$.
	We then have $\lambda'_j \leq \omega_j$ for any $j$.
	The multiplicity of $|\lambda_j-j|$ in the multiset $|\lambda|$ is one if and only if $1 \leq j \leq n-i+2$ or $j = n - (i-3)/2$.
	Thus, $\omega$ satisfies the following conditions:
	\begin{itemize}
	\item $\omega_j = \lambda_j$ for $1 \leq j \leq n-i$.
	\item For $j \geq n-i+3$, there exist $k$ and $\ell$ such that $\omega_k-k=\ell-\omega_\ell = |\lambda_j - j|$.
	\item For $j = n-i+1, n-i+2$, there exists $k$ such that $|\omega_k-k| = |\lambda_j-j|$.
	\end{itemize}
	Indeed, it suffices to check the first condition.
	Suppose that there exist $j \leq n-i$ and $k$ such that $\omega_k - k = - (\lambda_j - j)$.
	Then, $\omega_k = k + j - \lambda_j \leq k + n-i - (n - (i-3)/2) = k - (i+3)/2 < n - (i+1)/2 \leq \lambda_n'$.
	This contradicts to $\lambda_k' \leq \omega_k$.
	We note that $\lambda$ is a minimal element in $\calO_\lambda^\unit$.
	The candidates of $\omega$ are
	\[
	\lambda, \qquad \lambda', \qquad (\lambda_1,\ldots,\lambda_{n-i}, n-(i-1)/2, n-(i+1)/2, \ldots, n-(i+1)/2)
	\]
	and
	\[
	(\lambda_1,\ldots,\lambda_{n-i}, n-(i+1)/2,\ldots,n-(i+1)/2,n-(i-1)/2).
	\]
	Since $L(\omega)$ occurs in a constituent of $N(\lambda')$, the representation $\rho_\omega$ occur in the restriction of $N(\lambda')$ to $\frakk_n$.
	However, the last two candidates of $\omega$ do not occur in the restriction $N(\lambda)|_{\frakk_n}$.
	Thus, any constituent of $N(\lambda)$ is of the form $L(\lambda)$ and $L(\lambda')$.
	The irreducible representations $\rho_\lambda$ and $\rho_{\lambda'}$ of $\frakk_n$ have the multiplicity one in $N(\lambda')$.
	Hence the multiplicities of $L(\lambda)$ and $L(\lambda')$ in the constituent of $N(\lambda')$ are at most one.
	Since $N(\lambda')$ is reducible and $L(\lambda')$ is the irreducible quotient of $N(\lambda')$, we obtain a non-split exact sequence
	\[
	0 \longrightarrow L(\lambda) \longrightarrow N(\lambda') \longrightarrow L(\lambda') \longrightarrow 0.
	\]
	By applying $\mathrm{Hom}_{\calO^\frakp}(\,\cdot\,,L(\lambda))$ to this short exact sequence, we obtain the following long exact sequence
	\begin{align*}
	&0 \longrightarrow \mathrm{Hom}_{\calO^\frakp}(L(\lambda'),L(\lambda)) \longrightarrow \mathrm{Hom}_{\calO^\frakp}(N(\lambda'),L(\lambda)) \longrightarrow \mathrm{Hom}_{\calO^\frakp}(L(\lambda),L(\lambda))\\ 
	&\qquad \longrightarrow \mathrm{Ext}^1_{\calO^\frakp}(L(\lambda'),L(\lambda)) \longrightarrow \mathrm{Ext}^1_{\calO^\frakp}(N(\lambda'),L(\lambda)) \longrightarrow \mathrm{Ext}^1_{\calO^\frakp}(L(\lambda),L(\lambda)) \longrightarrow \cdots.
	\end{align*}
	By definition, we have $\mathrm{Hom}_{\calO^\frakp}(N(\lambda'),L(\lambda)) = 0$.
	Consider an extension
	\[
	0 \longrightarrow L(\lambda) \longrightarrow M \longrightarrow N(\lambda') \longrightarrow 0.
	\]
	Let $v$ be a weight vector in $M$ of weight $\lambda'$ such that the image of $v$ in $N(\lambda')$ is a non-zero highest weight vector.
	Since the weight of $v$ is highest in $M$, there exists a splitting $N(\lambda') \longrightarrow M$ by the universality of $N(\lambda')$.
	Thus, the short exact sequence splits, i.e., $\mathrm{Ext}^1_{\calO^\frakp}(N(\lambda'),L(\lambda))=0$.
	We then obtain $\mathrm{Ext}^1_{\calO^\frakp}(L(\lambda'),L(\lambda)) \cong \mathrm{Hom}_{\calO^\frakp}(L(\lambda),L(\lambda))$.
	This is of dimension one.
	This completes the proof.
	\end{proof}

\section{Siegel Eisenstein series}
	In this section, we compute the cuspidal components and exponents of Siegel Eisenstein series via the Siegel-Weil formula.
	We also show the near holomorphy of certain Siegel Eisenstein series.
	
	\subsection{Siegel-Weil formula}
	
	Let $m$ be a positive even integer.
	Set $s_0 = (m-n-1)/2$.
	Let $V$ be a $m$-dimensional quadratic space over $F$ and $\calS(V(\bbA_F)^n)$ the space of Schwartz functions on $V(\bbA_F)^n$.
	We denote by $\omega(V) = \omega_\psi(V) = \bigotimes_{v}\omega_{\psi, v}(V_v)$ the Weil representation of $G_{n}(\bbA_\bbQ) \times \mathrm{O}(V)(\bbA_F)$ on $\calS(V(\bbA_F)^n)$.
	Here $V_v$ is the $v$-completion of $V$ for a place $v$ of $F$.
	For $\varphi \in \calS(V(\bbA_F)^n)$, set
	\[
	\theta(g,h;\varphi) = \sum_{v \in V(F)^n} \omega(g)\varphi(v \cdot h), \qquad g \in G_n(\bbA_\bbQ), h \in \mathrm{O}(V)(\bbA_F).
	\]
	The function $\theta$ is called the theta function.
	Put
	\[
	I(g,\varphi) = \int_{\mathrm{O}(V)(F) \bs \mathrm{O}(V)(\bbA_F)} \theta(g,h;\varphi) \, dh, \qquad g \in G_n(\bbA_\bbQ).
	\]
	The following condition (W) is called the Weil's convergence condition:
	\begin{align}\label{W}
	\tag{W}
	\begin{cases}
	\text{$V$ is anisotropic} \\
	\text{$m-r > n+1$},
	\end{cases}
	\end{align}
	where $r$ is the Witt index of $V$.
	If $V$ satisfies the condition \eqref{W}, the theta integral $I(\,\cdot\,,\varphi)$ converges absolutely.

	For $\varphi \in \calS(V(\bbA_F)^n)$, set
	\[
	f_\varphi (g) = \omega_{\psi}(g)\varphi(0).
	\]
	Let $\chi_V$ be the quadratic character associated to $V$.
	Then $f_\varphi$ is an element of $I_n(s_0,\chi_V)$.
	We denote by $f_{s,\varphi}$ the standard section of $I_n(s,\chi_V)$ such that
	\[
	f_{s_0,\varphi} = f_\varphi.
	\]
	For a standard section $f_s$ of $I_n(s,\mu)$, put
	\[
	E(g,s,f) = \sum_{\gamma \in P_n(\bbQ) \bs G_n(\bbQ)} f_{s}(\gamma g), \qquad E(g,s,\varphi) = E(g,s,f_\varphi).
	\]
	The Siegel-Weil formula states the relationship between $E(g,s_0,\varphi)$ and $I(g,\varphi)$.
	In this paper, we use the following Siegel-Weil formula due to Kudla-Rallis \cite{1988_Kudla-Rallis}.
	
	\begin{thm}\label{SW_formula}
	Suppose that $V$ satisfies the condition \eqref{W}.
	One has
	\[
	E(s_0,\varphi) = cI(g,\varphi)
	\]
	and
	\[
	c = 
	\begin{cases}
	1 & \text{If $m > n+1$} \\
	2 & \text{If $m \leq n+1$}.
	\end{cases}
	\]
	\end{thm}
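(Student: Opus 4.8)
This is the Siegel--Weil formula of Kudla--Rallis \cite{1988_Kudla-Rallis}; since their arguments are carried out over an arbitrary number field and $G_n = \Res_{F/\bbQ}\Sp_{2n}$ only amounts to replacing $\bbQ$-points by $F$-points, the plan is to transcribe their proof. The first point is that both sides, as functions of $g \in G_n(\bbA_\bbQ)$, are slowly increasing automorphic forms. For $I(\,\cdot\,,\varphi)$ this is Weil's theorem, whose hypothesis is precisely \eqref{W}: either $V$ is anisotropic, so that $\mathrm{O}(V)(F)\bs\mathrm{O}(V)(\bbA_F)$ is compact and absolute convergence is automatic, or $m-r>n+1$, so that Weil's majorant estimate applies. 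For $E(g,s,f_\varphi)$ one invokes Langlands' meromorphic continuation together with the assertion --- which must be proved as part of the argument --- that $E(g,s,f_\varphi)$ is holomorphic at $s=s_0=(m-n-1)/2$ whenever $V$ satisfies \eqref{W}; note that in the range $m\le n+1$ one has $s_0\le 0$ and $V$ is forced to be anisotropic, so the Eisenstein series genuinely has to be continued to $s_0$.

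Next I would put $\mathcal{E}(g)=E(g,s_0,\varphi)-c\,I(g,\varphi)$ and prove $\mathcal{E}\equiv 0$ by induction on $n$. The inductive step is a comparison of the constant terms of both sides along every standard parabolic of $G_n$. Along the Siegel parabolic $P_n=M_nN_n$, with $N_n$ abelian, the constant term of $E(g,s,f_\varphi)$ is $f_{s,\varphi}+M(s)f_{s,\varphi}$ for the standard intertwining operator $M(s)$, while the constant term of $I(g,\varphi)$ unfolds, by the tower property of theta integrals, into theta integrals attached to quadratic spaces and symplectic groups of strictly smaller rank. Matching the two uses the lower-rank Siegel--Weil identities furnished by the induction hypothesis together with a local identity expressing $M_v(s_0)f_{v}$ through the Weil representation data $\omega_{\psi,v}$; this is the step that produces the constant $c$, and the dichotomy $m>n+1$ versus $m\le n+1$ corresponds to $s_0>0$ versus $s_0\le0$: at and below $s_0=0$ the two terms of the constant term contribute on an equal footing, so the leading term of the Eisenstein series at $s_0$ is $2I(g,\varphi)$ rather than $I(g,\varphi)$. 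Once the constant terms agree along all proper parabolics, $\mathcal{E}$ is a cusp form; to conclude I would compute its Whittaker--Fourier coefficients. For nondegenerate $T\in\Sym_n(F)$ the $T$-th coefficient of $E(g,s_0,\varphi)$ factors as a product $\prod_v W_{T,v}(s_0,f_{v})$ of local Whittaker integrals, which by the local density computations of Kudla--Rallis equals (up to the same $c$) the product of local representation densities of $T$ by $V_v$, that is, the $T$-th Fourier coefficient of $I(g,\varphi)$; degenerate $T$ are controlled by the constant-term analysis. Hence every Fourier coefficient of $\mathcal{E}$ vanishes, so $\mathcal{E}=0$.

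The main obstacle is exactly the part one cites rather than reproves: establishing holomorphy of $E(g,s,f_\varphi)$ at $s=s_0$ in the range \eqref{W}, fixing the normalizations of the local intertwining operators $M_v(s)$ so that the local Whittaker-versus-density identities come out with matching constants, and tracking the Witt-tower bookkeeping that makes $c$ jump from $1$ to $2$ across $m=n+1$. All of this is the technical core of \cite{1988_Kudla-Rallis}, and in the present paper one only needs to record that its hypotheses are met in the situation at hand.
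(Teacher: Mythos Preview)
The paper does not prove this theorem at all: it is stated as a result ``due to Kudla--Rallis \cite{1988_Kudla-Rallis}'' and used as a black box, with no argument given. Your proposal correctly identifies this and goes further by sketching the Kudla--Rallis strategy (convergence under \eqref{W}, holomorphy of the Eisenstein series at $s_0$, comparison of constant terms and Fourier coefficients, and the origin of the constant $c$); that sketch is broadly faithful to the cited source, but for the purposes of matching the paper it suffices to say that the statement is quoted from \cite{1988_Kudla-Rallis} without proof.
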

	
	
	\subsection{The representation $R_n$}
	
	Let $V_v$ be a $m$-dimensional quadratic space over $F_v$ with $m \in 2\bbZ_{\geq 0}$.
	The character $\chi_V$ denotes the quadratic character associated to $V$.
	The map $\varphi \longmapsto f_\varphi$ induces a $\Sp_{2n}(F_v)$-intertwining map
	\[
	\omega_{\psi,v}(V_v) \longrightarrow I_n(s_0, \chi).
	\]
	We denote by $R_n(V_v)$ the image of the intertwining map.
	Then, $R_n(V_v)$ can be viewed as the $\mathrm{O}(V_v)(F_v)$-coinvariants of $\omega_{\psi,v}(V_v)$.
	
	\begin{prop}\label{local_ind_rep}
	With the above notation, we obtain the following:
	\begin{enumerate}
	\item For a non-archimedean place $v$, if $\chi^2 = \mathbf{1}$ and $s_0 \geq 0$, one has
	\[
	I_{n,v}(s_0,\chi) = R_n(V_1) + R_n(V_2).
	\]
	Here $V_1$ and $V_2$ are the $m$-dimensional inequivalent quadratic spaces over $F_v$ with $\chi = \chi_{V_1} = \chi_{V_2}$.
	\item For an archimedean place $v$ of $F$, let $V$ be a $m$-dimensional quadratic space over $F_v = \bbR$ with the signature $(m,0)$ or $(m-2,2)$.
	If $s_0 >0$, the representation $R_n(V_v)$ contains $L(m,\ldots,m)$.
	\item For $v \in \bfa$ and $s_0 > 0$, the space of $\frakp_{n,-}$-finite vectors in $I_{n,v}(s_0,\chi)$ forms
	\[
	\begin{dcases}
	0&\text{if $\chi = \mathrm{sgn}^{m+1}$}\\
	L(m,\ldots,m) & \text{if $\chi = \mathrm{sgn}^m$}.
	\end{dcases}
	\]
	\end{enumerate}
	\end{prop}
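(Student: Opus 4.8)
The three assertions rest on quite different inputs. Part (1) is a purely non-archimedean statement about the degenerate principal series of $\Sp_{2n}(F_v)$, while parts (2) and (3) are archimedean and rely on the Fock model of the Weil representation together with Yamashita's embedding theorem (Theorem~\ref{emb_ht_wt}) and the first reduction point (Theorem~\ref{first_red_pt}). For part (1): since $s_0=(m-n-1)/2\ge 0$, this is a local result of Kudla--Rallis \cite{1988_Kudla-Rallis}. At these values one has $R_n(V_1),R_n(V_2)\subseteq I_{n,v}(s_0,\chi)$ as submodules, and the proof amounts to comparing these two submodules with the (known) composition series of $I_{n,v}(s_0,\chi)$: every irreducible constituent occurs as a quotient of $\omega_{\psi,v}(V_1)$ or of $\omega_{\psi,v}(V_2)$, so the two images already exhaust $I_{n,v}(s_0,\chi)$. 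The only point to check is that our normalization of $s_0$ agrees with theirs.

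For part (2): the positive-definite Gaussian $\varphi_0(x)=\exp(-\pi\,\tr({}^{t}xx))$ lies in $\calS(V_v^n)$ for every signature, so $f_{\varphi_0}(g)=\omega_{\psi,v}(g)\varphi_0(0)$ is a nonzero vector of $R_n(V_v)\subseteq I_{n,v}(s_0,\chi_{V_v})$. In the Fock model, $\varphi_0$ corresponds to the constant polynomial, $\frakp_{n,+}$ acts by multiplication by quadratic polynomials and $\frakp_{n,-}$ by constant-coefficient second-order differential operators; hence $\frakp_{n,-}\cdot\varphi_0=0$ and $\varphi_0$ is a $K_{n,\infty}$-eigenvector of weight $(m,\ldots,m)$. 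Thus the $(\frakg_n,K_{n,\infty})$-submodule of $R_n(V_v)$ generated by $f_{\varphi_0}$ is a highest weight module of highest weight $(m,\ldots,m)$, i.e.\ a quotient of $N(m,\ldots,m)$. Write $(m,\ldots,m)=(n,\ldots,n)+r(-1,\ldots,-1)$ with $r=n-m$; since $s_0>0$ gives $m>n+1$ we have $r<-1$, and the base weight $(n,\ldots,n)$ has $p=n$, $q=0$, so by Theorem~\ref{first_red_pt} its first reduction point is $r_0=(n+1)/2>r$. Hence $N(m,\ldots,m)$ is irreducible, so $N(m,\ldots,m)=L(m,\ldots,m)\subseteq R_n(V_v)$.

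For part (3): I would realize $I_{n,v}(s_0,\chi)$ as a submodule of a full principal series. Inducing in stages through $B_n\subset P_n$ and embedding the character $\chi\circ\det$ of $\GL_n(\bbR)$ into the appropriate normalized principal series of $\GL_n(\bbR)$ gives an embedding
\[
I_{n,v}(s_0,\chi)\ \hookrightarrow\ \Ind_{B_n(\bbR)}^{G_n(\bbR)}\bigl(\chi|\cdot|^{t_1}\boxtimes\cdots\boxtimes\chi|\cdot|^{t_n}\bigr),\qquad t_{i+1}-t_i=1.
\]
A $K_{n,\infty}$-finite, $\frakp_{n,-}$-finite vector of $I_{n,v}(s_0,\chi)$ generates an object of $\calO^\frakp$, hence a sum of highest weight modules; applying Theorem~\ref{emb_ht_wt} to any irreducible highest weight submodule $L(\lambda)$ of $I_{n,v}(s_0,\chi)$ (composed with the embedding above) forces, since the $t_i$ are consecutive, $\lambda_1=\cdots=\lambda_n=m$, and then the character condition reads $\chi=\sgn^m$. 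Therefore, when $\chi=\sgn^{m+1}$ the $\frakp_{n,-}$-finite part has zero socle, hence is $0$. When $\chi=\sgn^m$, the $K_{n,\infty}$-type $\det^m$ occurs with multiplicity one in $I_{n,v}(s_0,\chi)$ by Frobenius reciprocity, so $L(m,\ldots,m)$ occurs in it with multiplicity one; combined with part (2) and the explicit composition series of this degenerate principal series --- in which $L(m,\ldots,m)$ is the only highest weight constituent --- the $\frakp_{n,-}$-finite part is precisely one copy of $L(m,\ldots,m)$. In particular part (2) for $\chi=\sgn^m$ is recovered, and its nonvanishing yields the nonzero case of part (3).

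The delicate points are the bookkeeping in part (3) --- pinning down the induction-in-stages exponents and sign characters, and supplying the multiplicity-one and composition-series input that forces the $\frakp_{n,-}$-finite part to be a single irreducible summand --- together with, in part (1), invoking and correctly normalizing the Kudla--Rallis structure theorem for $I_{n,v}(s_0,\chi)$ at $s_0\ge 0$. Part (2) is comparatively routine once the Fock-model computation of the weight of $\varphi_0$ is in hand.
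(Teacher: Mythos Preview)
The paper does not prove this proposition directly: it simply cites \cite[Proposition~5.3]{1994_Kudla-Rallis} for (1), \cite[Proposition~2.1]{1990_Kudla-Rallis} for (2), and \cite[Corollary~6.5]{Horinaga_2} for (3). Your attempt to supply actual arguments is therefore a different route, and for (1) and the positive-definite half of (2) your sketch is essentially sound (modulo the citation for (1): the relevant structure theorem is in the 1994 Kudla--Rallis paper, not the 1988 one).

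There is, however, a real gap in your treatment of (2) for signature $(m-2,2)$. Your claim that in the Fock model $\frakp_{n,-}$ acts by constant-coefficient second-order differential operators, so that the Gaussian is annihilated, is only correct in the \emph{positive definite} case. For a real quadratic space of signature $(p,q)$ with $q>0$, the Fock realization of $\frakp_{n,-}$ mixes differentiation in the $p$ ``positive'' variables with multiplication by quadratics in the $q$ ``negative'' variables; the Euclidean Gaussian $\varphi_0(x)=\exp(-\pi\,\tr({}^t x x))$ is Schwartz but is \emph{not} annihilated by $\frakp_{n,-}$, and its $K_{n,\infty}$-weight is $((p-q)/2,\ldots,(p-q)/2)$, not $((p+q)/2,\ldots,(p+q)/2)$. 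Producing the highest weight submodule inside $R_n(V)$ when $V$ has signature $(m-2,2)$ requires the joint-harmonic analysis for the dual pair $\mathrm{O}(m-2,2)\times\Sp_{2n}$ (pluriharmonic polynomials), which is precisely the content of the Kudla--Rallis reference the paper invokes; the Gaussian alone does not do the job.

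Two smaller remarks. First, throughout (2) and (3) the highest weight should be $(m/2,\ldots,m/2)$ and the sign character $\sgn^{m/2}$ --- this is how the proposition is actually used in Lemma~\ref{s>1} (see (\ref{finite_vectors})) and in Proposition~\ref{s=1}, where $m=n+3$ gives weight $(n+3)/2$. The printed statement has $m$ where $m/2$ is meant, and you have propagated this; your first-reduction-point and Yamashita bookkeeping go through unchanged with $m/2$ in place of $m$. Second, in (3) your induction-in-stages step asserts an \emph{embedding} of $\chi|\det|^{s_0}$ into a $\GL_n(\bbR)$ principal series; a one-dimensional representation is sometimes only a quotient, not a submodule, of a given principal series, so you must choose the ordering of the exponents $t_i$ so that the character sits as a sub (or argue via socle filtrations). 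This is fixable, but as written it is an unjustified step.
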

	\begin{proof}
	The statement (1) and (2) are proved in \cite[Proposition 5.3]{1994_Kudla-Rallis} and \cite[Proposition 2.1]{1990_Kudla-Rallis}, respectively.
	For the last statement, see \cite[Corollary 6.5]{Horinaga_2}.
	\end{proof}
	
	For a quadratic space $V$ over $F$ and a place $v$ of $F$, let $V_v$ be the $v$-completion of $F$.
	Put
	\[
	R_n(V) = \bigotimes_v R_n(V_v)
	\]
	where $v$ runs over all places of $F$.
	
	\subsection{Holomorphy of Siegel Eisenstein series for $s_0 > 1$ or $F \neq \bbQ$}
	
	Let $f_s = \bigotimes_v f_{v,s}$ be a standard section of $I_n(s,\mu)$.
	For a representation $M$ of $\frakg_n$, we denote by $M_{\frakp_{n,-}\fin}$ the space of $\frakp_{n,-}$-finite vectors in $M$.
	Put $s_0 = (m-n-1)/2$ with non-negative even integer $m$.
	
	\begin{lem}\label{s>1}
	For $s_0 > 1$, the map
	\[
	I_n(s_0,\mu)_{\frakp_{n,-}\fin} \longrightarrow \calA(G_n)
	\]
	defined by
	\[
	f_s \longmapsto E(\,\cdot\,,s_0,f)
	\]
	is injective and intertwining under the action of $G_n(\bbA_\bbQ)$.
	If $F \neq \bbQ$ or $\mu^2 \neq \mathbf{1}$, the same statement holds for $s_0 > 0$.
	\end{lem}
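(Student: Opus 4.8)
The plan is to separate the statement into two essentially independent assertions — regularity of the analytically continued Eisenstein series at $s=s_0$ on the space of $\frakp_{n,-}$-finite sections, and injectivity — after which the intertwining property and the containment of the image in $\calN(G_n)\subseteq\calA(G_n)$ are formal. Indeed, for each fixed $s$ the assignment $f_s\mapsto E(\,\cdot\,,s,f)=\sum_{\gamma\in P_n(\bbQ)\bs G_n(\bbQ)}f_s(\gamma\,\cdot\,)$ commutes with right translation by $G_n(\bbA_{\bbQ,\fini})$ and, via the standard section, with the $(\frakg_n,K_{n,\infty})$-action, so $X\cdot E(\,\cdot\,,s_0,f)=E(\,\cdot\,,s_0,Xf)$ for every $X\in\frakg_n$; hence $\calU(\frakp_{n,-})\cdot E(\,\cdot\,,s_0,f)$ is finite dimensional whenever $\calU(\frakp_{n,-})\cdot f$ is, and slow increase and $\calZ_n$-finiteness pass to the continuation. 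So everything reduces to: (i) regularity of $s\mapsto E(\,\cdot\,,s,f)$ at $s_0$ for $\frakp_{n,-}$-finite $f$, and (ii) injectivity of $f\mapsto E(\,\cdot\,,s_0,f)$.

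For (ii) I would compute the constant term along the Siegel parabolic $P_n$. As $P_n$ is maximal, Langlands' formula gives, as meromorphic functions of $s$, $E(\,\cdot\,,s,f)_{P_n}=f_s+M(w_0,s)f_s$, with $M(w_0,s)$ the standard intertwining operator attached to the nontrivial Weyl element normalizing $A_{P_n}$; since $f_s$ is entire in $s$ and, by (i), $E(\,\cdot\,,s,f)$ is regular at $s_0$, so is $M(w_0,s)f_s$ there. The two summands lie in the distinct $A_{P_n}^\infty$-isotypic spaces governed by the characters of exponent $s_0$ and $-s_0$ (twisted by $\mu$ and $\mu^{-1}$ respectively); since $s_0\neq 0$ these characters differ, so $E(\,\cdot\,,s_0,f)=0$ forces $f_{s_0}=0$, and a standard section is determined by its value at $s_0$. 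This argument uses only $s_0>0$, which holds throughout the lemma.

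The heart is therefore (i). If $s_0>\rho_n=(n+1)/2$ the defining series converges absolutely, so I may assume $0<s_0\le\rho_n$. If $\mu^2\neq\mathbf 1$, the poles of a Siegel Eisenstein series in the range $\mathrm{Re}(s)>0$ are governed by Hecke $L$-functions attached to $\mu^2$, which are entire; together with the fact that a $\frakp_{n,-}$-finite section has archimedean component inside the highest weight module pinned down by Proposition \ref{local_ind_rep}(3) and Theorem \ref{emb_ht_wt} — on which the archimedean intertwining operators contribute no compensating pole at $s_0$ — this gives regularity at $s_0$. If $\mu^2=\mathbf 1$, write $\chi=\mu$: by Proposition \ref{local_ind_rep}(3) a nonzero $\frakp_{n,-}$-finite section requires $\chi_v=\sgn^m$ at every archimedean place, and then, combining Proposition \ref{local_ind_rep}(1) at the finite places with parts (2)--(3) at the archimedean places, $f$ lies in the span of sections $f_\varphi$ attached to global quadratic spaces $V/F$ with $\chi_V=\chi$ and prescribed archimedean signatures, together with pieces built from incoherent families of local spaces. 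For the coherent pieces I invoke the Siegel--Weil formula (Theorem \ref{SW_formula}): once $V$ can be taken anisotropic with $m-r>n+1$ — which is exactly condition \eqref{W} — the theta integral $I(\,\cdot\,,\varphi)$ converges and $E(g,s_0,f_\varphi)=c\,I(g,\varphi)\in\calN(G_n)$, so $E(\,\cdot\,,s,f)$ is regular at $s_0$. When $F\neq\bbQ$ a totally positive definite (hence anisotropic) $V$ of the required kind is available and \eqref{W} reduces to $m>n+1$, i.e.\ $s_0>0$; for the incoherent pieces, the Siegel Eisenstein series attached to an incoherent section vanishes at $s_0$, hence is regular there.

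The delicate case, and the source of the threshold $s_0>1$, is $F=\bbQ$ with $\mu^2=\mathbf 1$. Over $\bbQ$ there is a single archimedean place, so the archimedean sign constraint from Proposition \ref{local_ind_rep}(3) together with the product formula can obstruct the existence of an anisotropic (let alone positive definite) $V$ with $\chi_V=\chi$; one is then pushed either onto quadratic spaces that are isotropic at $\infty$ — which over $\bbQ$ exist only for small $m$ and have bounded Witt index — or, for large $m$, onto incoherent families with no global $V$ at all. In the first subcase I would check, using that an anisotropic quadratic form over $\bbQ$ has rank at most four together with Hasse--Minkowski, that \eqref{W} is met precisely when $s_0>1$, so that Theorem \ref{SW_formula} still applies; in the second subcase the incoherent Eisenstein series again vanishes at $s_0$. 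Assembling these cases, $E(\,\cdot\,,s,f)$ is regular at every $s_0>1$, whereas for $0<s_0\le 1$ over $\bbQ$ with $\mu^2=\mathbf 1$ the continued series can acquire a genuine pole — precisely the range the lemma excludes. The principal obstacle is exactly this bookkeeping: matching the arithmetic of quadratic forms over $\bbQ$ realizing prescribed local invariants against the convergence range of the theta integral, together with the input that incoherent Siegel Eisenstein series vanish at $s_0$; everything else is formal or immediate from the quoted results.
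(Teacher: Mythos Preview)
Your overall architecture --- reduce to regularity at $s_0$ plus injectivity, then handle regularity via Siegel--Weil for sections coming from global quadratic spaces satisfying \eqref{W} --- matches the paper's. The injectivity argument via the $A_{P_n}^\infty$-exponents is fine and essentially equivalent to what the paper does through the constant term of the theta integral. But the regularity argument has a real gap.

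The problem is your treatment of ``incoherent pieces''. You assert that for such pieces the Siegel Eisenstein series vanishes at $s_0$; that vanishing (Theorem \ref{s=0}(2)) is a phenomenon specific to the center $s_0=0$ and does not persist for $s_0>0$. The paper never invokes it here. Instead, the paper's key point is that for $\frakp_{n,-}$-finite sections \emph{there are no incoherent pieces at all}: by Proposition \ref{local_ind_rep}(2) the archimedean highest weight module $L(m/2,\ldots,m/2)$ sits inside $R_n(V_v)$ for \emph{both} signatures $(m,0)$ and $(m-2,2)$. This archimedean flexibility lets one adjust the Hasse invariant at a single chosen archimedean place $w$ so that the product of all local Hasse invariants is $+1$, hence the local data always globalize to some $W/F$ with $f\in R_n(W)$. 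Your claim that for $F\neq\bbQ$ one may take $V$ totally positive definite is therefore wrong in general: when $\prod_{v<\infty}\varepsilon_v=-1$ one must take $W_w$ of signature $(m-2,2)$ at one archimedean place; anisotropy of $W$ then follows because some \emph{other} archimedean completion is positive definite (this is exactly where $F\neq\bbQ$ is used).

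Similarly, your bookkeeping for $F=\bbQ$ is off. The rank-$\leq 4$ bound for anisotropic forms over $\bbQ$ is not the relevant input. Over $\bbQ$ there is only one archimedean place, so when $\prod_{v<\infty}\varepsilon_v=-1$ one is forced to take $W_\infty$ of signature $(m-2,2)$; then the global Witt index satisfies $r\leq 2$ (bounded by the real Witt index), and condition \eqref{W} becomes $m-2>n+1$, i.e.\ $s_0>1$. That is the source of the threshold. Once you replace the spurious incoherent-vanishing step by this archimedean-flexibility globalization argument, and correct the Witt-index bookkeeping, your outline becomes the paper's proof.
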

	\begin{proof}
	Suppose $\mu^2 \neq \mathbf{1}$.
	This case is clear by the holomorphy of intertwining operators as in \cite{1992_Ikeda}.
	
	Next, we suppose $\mu^2 = \mathbf{1}$ and $s_0 > 1$.
	By Proposition \ref{local_ind_rep} (3), we have
	\begin{align}\label{finite_vectors}
	I_n(s_0, \mu)_{\frakp_{n,-}\fin} = \left(\mathop{\bigotimes}_{v < \infty} I_{n,v}(s_0, \mu_v)\right) \otimes L(m/2,\ldots,m/2).
	\end{align}
	We claim that the representation $I_n(s_0,\mu)_{\frakp_{n,-}\fin}$ is contained in $\sum_V R_n(V)$ where $V$ runs through the $m$-dimensional quadratic spaces over $F$ such that $V$ satisfies the Weil's convergence condition \eqref{W} and $\chi_V = \mu$.
	Take a function $f = \bigotimes_v f_v$ in $I_n(s_0,\mu)_{\frakp_{n,-}\fin}$.
	We may assume that each local functions $f_v$ lie in $R_n(V_v)$ for some quadratic space $V_v$ over $F_v$ by Proposition \ref{local_ind_rep} (1).
	Let $\varepsilon_v$ be the Hasse invariant of $V_v$.
	We denote by $V(a,b)$ the non-degenerate real quadratic space with the signature $(a,b)$.
	The Hasse invariants of $V(m,0)$ and $V(m-2,2)$ are $1$ and $-1$, respectively.
	Fix an archimedean place $w$.
	Then, there exists the quadratic space $W = \bigotimes_v W_v$ over $F$ such that $W_v \cong V_v$ for any non-archimedean place $v$, $W_v \cong V(m,0)$ for any archimedean place $v \neq w$ and
	\[
	W_w = 
	\begin{cases}
	V(m,0) & \text{if $\prod_{v < \infty} \vep_v = 1$}\\ 
	V(m-2,2) & \text{if $\prod_{v < \infty} \vep_v = -1$}.
	\end{cases}
	\]
	By Proposition \ref{local_ind_rep} (2) and $m>n+3$, the quadratic space $W$ satisfies the condition \eqref{W} and $f \in R_n(W)$.
	Hence the claim holds.
	By the claim, the theta integral converges absolutely.
	This states that the theta integral is an intertwining map under the action of $G_n(\bbA_\bbQ)$.
	Hence we obtain the following diagram:
	\[
	\begin{CD}
	\calS(W(\bbA_F)^n) @>>> \calN(G_n)\\
	@VVV @AAA\\
	R_n(W) @>>> I_n(s_0,\chi_W)_{\frakp_{n,-}\fin}.
	\end{CD}
	\]
	Here the upper horizontal line is given by $\varphi \longmapsto I(\,\cdot\,,\varphi)$, the left vertical line is the canonical surjective morphism and the right vertical line is given by $f \longmapsto E(\,\cdot\,,s_0,f)$.
	By the definition of the theta integral, it factors through the $\mathrm{O}(W)(\bbA_F)$-coinvariants $R_n(W)$ of $\omega_{\psi}$.
	By the Siegel-Weil formula Theorem \ref{SW_formula}, the diagram is commutative.
	Hence the right vertical map is intertwining under the action of $G_n(\bbA_\bbQ)$.
	For the injectivity, we consider the constant term of $I(\,\cdot\,,\varphi)$ along $P_{n}$.
	By the straightforward computation, one has $I(\,\cdot\,,\varphi)_{P_{n}}(g) = \omega(g)\varphi(0)$.
	Thus the right vertical line is injective.
	
	For the case $F \neq \bbQ$, it suffices to show that the space of induced representations (\ref{finite_vectors}) are contained in $\sum_{V} R_n(V)$ where $V$ runs over all quadratic spaces over $F$ with dimension $m$ such that $V$ satisfies the condition \eqref{W}.
	The proof is similar.
	Take $f = \bigotimes f_v$ in the induced representation (\ref{finite_vectors}).
	We may assume $f_v \in R_n(V_v)$ for any place $v$.
	Let $\vep_v$ be the Hasse invariant of $V_v$.
	Fix an archimedean place $w$.
	If $\prod_{v < \infty} \vep_v =1$, we can find a positive definite quadratic space $W$ over $F$ such that $f_v \in R_n(W_v)$.
	If $\prod_{v < \infty} \vep_v = -1$, we can find a quadratic space $W$ over $F$ such that $W_v \cong V_v$ for any non-archimedean place $v$, $W_v$ is positive definite for any archimedean place $v \neq w$, $W_w$ is of signature $(m-2,2)$ and $f \in R_n(W)$.
	Then, $W$ is anisotropic.
	We obtain the claim.
	This completes the proof.
	\end{proof}
	
	
%
%

	In the following of this section, we assume $F = \bbQ$.
	
	\subsection{Near holomorphy of Siegel Eisenstein series for $s=1$}
	
	\begin{prop}\label{s=1}
	Let $f_s = \bigotimes_v f_{v,s}$ be a standard section of $I_n(s,\mu)$ such that $f_1 \in I_n(1, \mu)_{\frakp_{n,-}\fin}$.
	Then the Eisenstein series $E(\,\cdot\,,1,f)$ is nearly holomorphic, i.e., there exists $\ell$ such that $\frakp_{n,-}^\ell \cdot E(\,\cdot\,,1,f) = 0$.
	\end{prop}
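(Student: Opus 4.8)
The statement to be established is the existence of an integer $\ell$ with $\frakp_{n,-}^{\ell}\cdot E(\,\cdot\,,1,f) = 0$. The plan is to reduce this to the holomorphy of the Siegel Eisenstein series at $s=1$ on $\frakp_{n,-}$-finite sections, and then to obtain that holomorphy from the Siegel--Weil formula. Since $f_1 \in I_n(1,\mu)_{\frakp_{n,-}\fin}$, it lies, by $(\ref{finite_vectors})$, in the highest weight module $L(m/2,\ldots,m/2)$ at the archimedean place (with $m = n+3$), and $\frakp_{n,-}$ acts locally nilpotently on such a module; hence there is $\ell$ with $\frakp_{n,-}^{\ell}\cdot f_1 = 0$ already inside $I_n(1,\mu)$, and $\frakp_{n,-}^{\ell}$ kills every element of the finite-dimensional $\frakp_{n,-}$-stable space $\calU(\frakp_{n,-})\cdot f_1$, which involves only finitely many $K_{n,\infty}$-types. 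For $\mathrm{Re}(s)$ large one has $X\cdot E(g,s,f_s) = E(g,s,r(X)f_s)$ for $X \in \frakg_n$, where $r(X)f_s$ denotes the right translate (a $K_{n,\infty}$-finite, generally non-standard, section); by meromorphic continuation this identity persists at any $s$ where $E(\,\cdot\,,s,\cdot)$ is holomorphic on the flat sections through $\calU(\frakp_{n,-})\cdot f_1$. Granting such holomorphy at $s=1$, one concludes $\frakp_{n,-}^{\ell}\cdot E(\,\cdot\,,1,f) = E(\,\cdot\,,1,\frakp_{n,-}^{\ell}f_1) = 0$.

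It remains to show that $E(\,\cdot\,,s,g)$ is holomorphic at $s=1$ for every $g$ in the $\frakp_{n,-}$-finite part of $I_n(1,\mu)$; note this part is nonzero only when $m = n+3$ is even, i.e.\ $n$ is odd, and $\mu_\infty = \mathrm{sgn}^{m}$. Writing $g = \bigotimes_v g_v$ with each $g_v \in R_n(V_v)$ by Proposition \ref{local_ind_rep}(1), and assembling a global quadratic space $V$ over $\bbQ$ of dimension $m = n+3$ with $\chi_V = \mu$ from the local data exactly as in the Hasse-invariant bookkeeping in the proof of Lemma \ref{s>1} --- now with $V$ isotropic as soon as $n \geq 2$, since anisotropic quadratic spaces over $\bbQ$ have dimension at most $4$ --- one realizes $g = f_\varphi$ for a Schwartz function $\varphi \in \calS(V(\bbA_F)^n)$ whose archimedean component generates $L(m/2,\ldots,m/2)$ under the Weil representation (Proposition \ref{local_ind_rep}(2)). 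The regularized Siegel--Weil formula of Kudla--Rallis in this range then gives that $E(\,\cdot\,,s,f_\varphi)$ is holomorphic at $s = 1$ and equals, up to a nonzero constant, the regularized theta integral $I_{\mathrm{reg}}(\,\cdot\,,\varphi)$. This not only supplies the missing holomorphy but also gives the conclusion directly: since $\varphi_\infty$ generates $L(m/2,\ldots,m/2) \in \calO^{\frakp}$, one has $\frakp_{n,-}^{\ell}\cdot\varphi_\infty = 0$ in the Weil representation, hence $\frakp_{n,-}^{\ell}$ annihilates the theta function $\theta(\,\cdot\,,h;\varphi)$ in the $G_n$-variable, and the regularization operator, built from the action on the orthogonal group, commutes with the $G_n(\bbA_\bbQ)$-action and preserves this; therefore $\frakp_{n,-}^{\ell}\cdot I_{\mathrm{reg}}(\,\cdot\,,\varphi) = 0$ and $\frakp_{n,-}^{\ell}\cdot E(\,\cdot\,,1,f) = 0$.

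The main obstacle is the analytic behavior at the boundary value $s=1$. Because $n$ must be odd here, $s=1$ lies on the list of possible poles of the Siegel Eisenstein series, and Theorem \ref{SW_formula} does not apply directly: for $n \geq 2$ there is no anisotropic quadratic space over $\bbQ$ of dimension $n+3$, so Weil's convergence condition \eqref{W} fails and the ordinary theta integral diverges. One therefore needs the regularized Siegel--Weil formula, together with the verification that the $\frakp_{n,-}$-finite sections --- equivalently, the theta-distinguished sections coming from a space $V$ of dimension $n+3$ --- avoid the pole at $s = 1$ (its residue being governed by the complementary space of dimension $2n+2-m = n-1$, whose associated automorphic representation is incompatible with $L(m/2,\ldots,m/2)$ at the archimedean place) and produce a convergent regularized theta integral. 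A secondary, purely combinatorial point is the existence of the global quadratic space $V$ with the prescribed quadratic character and local Hasse invariants, which over $\bbQ$ presents no difficulty once the anisotropy requirement from Lemma \ref{s>1} is dropped.
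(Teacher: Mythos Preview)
Your reduction in the first paragraph is sound: once one knows that $E(\,\cdot\,,s,f)$ is holomorphic at $s=1$ on the flat sections through the finitely many $K_{n,\infty}$-types of $\calU(\frakp_{n,-})\cdot f_1$, the evaluation map is $\frakp_{n,-}$-intertwining and $\frakp_{n,-}^\ell f_1 = 0$ gives the conclusion. This is also how the paper handles the coherent case $\prod_v\varepsilon_v = 1$, where one may take $V$ positive definite at the archimedean place, hence anisotropic over $\bbQ$, and Theorem~\ref{SW_formula} applies verbatim.

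The gap is in the incoherent case $\prod_v\varepsilon_v = -1$. Two points fail. First, the global $V$ you assemble must then have signature $(n+1,2)$ at infinity; for such $V_\infty$ the Weil representation on $\calS(V_\infty^n)$ is \emph{not} a highest weight $(\frakg_n,K_{n,\infty})$-module, so $\frakp_{n,-}^\ell\cdot\varphi_\infty \neq 0$ in general. What is $\frakp_{n,-}$-finite is only the image $f_{\varphi_\infty}$ in the $\mathrm{O}(V_\infty)$-coinvariants $R_n(V_\infty)$, and the regularized theta integral does not factor through these coinvariants in any obvious way. Second, for $m = n+3$ one is in the second-term range of Kudla--Rallis, where the regularized Siegel--Weil identity does not read $E(\,\cdot\,,s_0,f_\varphi) = c\,I_{\mathrm{reg}}(\,\cdot\,,\varphi)$; there is an additional complementary term, and holomorphy of $E$ at $s_0=1$ on your section is not a direct consequence. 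Your heuristic about the residue being carried by the complementary space of dimension $n-1$ is the right intuition, but it has to be turned into an actual vanishing statement for the residue on the archimedean $K$-type $((n+3)/2,\ldots,(n+3)/2)$.

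The paper bypasses the regularized formula entirely in this case. It computes the constant term of $E(\,\cdot\,,1,f)$ along $P_{1,n}$ via the standard two-term formula~(\ref{P_1_const_term}), shows that the archimedean factor of the partial intertwining operator $U_v(s,\mu)$ acquires a simple zero at $s=1$ on the weight-$(n+3)/2$ section (using the explicit Gamma-factor~(\ref{U_wt_k})), identifies the surviving contribution with a residue of a Siegel Eisenstein series on $G_{n-1}$, and then invokes \cite[Corollary~6.3]{1994_Kudla-Rallis} to place the resulting constant term along $B_n$ inside $I_n(1,\chi_V)\oplus I_n(-1,\chi_V)$. Near holomorphy is then read off from the $\frakp_{n,-}$-finiteness of the weight-$(n+3)/2$ vector in that sum. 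If you want to salvage your approach, the cleanest route is to prove directly that the residue of $E(\,\cdot\,,s,f)$ at $s=1$ vanishes whenever $f_{1,\infty}$ lies in $L((n+3)/2,\ldots,(n+3)/2)$; this amounts to the archimedean zero of the global intertwining operator, which is essentially what the paper extracts from~(\ref{U_wt_k}).
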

	\begin{proof}
	We may assume that there exists a $(n+3)/2$-dimensional quadratic space $W_v$ over $F_v$ such that $f_{v,1} \in R(W_v)$ for any place $v$ of $F$.
	Here $W_v$ is positive definite for the archimedean place $v$.
	Let $\varepsilon_v$ be the Hasse invariant of $W_v$.
	If $\prod_{v} \varepsilon_v = 1$, the corresponding Eisenstein series $E(\,\cdot\,,1,f)$ generates the representation $\bigotimes_v R(W_v)$, by the same method as in the proof of Lemma \ref{s>1}.
	Then, the archimedean component is a highest weight representation.
	In particular, $E(\,\cdot\,,1,f)$ is nearly holomorphic.
	
	Suppose $\prod_v\varepsilon_v = -1$.
	Let $V$ be the quadratic space over $F=\bbQ$ with dimension $(n+3)/2$ such that $V_v \cong W_v$ for non-archimedean place $v$.
	Then, for the archimedean place $v$, we may assume that the quadratic space $V_v$ has the signature $(n+1,2)$.
	We consider the constant term of $E(\,\cdot\,,1,f)$ along $P_{1,n}$.
	For $(t,g) \in \GL_1(\bbA_F) \times G_{n-1}(\bbA_\bbQ)$ and $k \in K_n$, one has
	\begin{align}\label{P_1_const_term}
	E((t,g)k,s,f)_{P_{1,n}} 
	&= \mu(t)|t|^{s+(n+1)/2} E\left(g,s+\frac{1}{2},\iota^*r(k)f\right)\\
	\nonumber&\qquad + \mu(t)^{-1}|t|^{-s+(n+1)/2}E \left(g,s-\frac{1}{2},\iota^*U(s,\mu)r(k)f\right)
	\end{align}
	where $\iota$ is the embedding $G_{n-1} \xhookrightarrow{\quad} P_{1,n} \xhookrightarrow{\quad} G_n$ and $U(s,\mu)$ is the intertwining integral defined by
	\[
	U(s,\mu)f_s = \int_{U_1(\bbA_F)} f_s(w_1ug)\, du
	\]
	for
	\[
	w_1 = \begin{pmatrix} 0 & 0 & -1 & 0 \\ 0 & \mathbf{1}_{n-1} & 0 & 0 \\ 1 & 0 & 0 & 0 \\ 0 & 0 & 0 & \mathbf{1}_{n-1} \end{pmatrix}, \qquad U_1 = \left\{u = \begin{pmatrix}1 & x & y & 0 \\ 0 & \mathbf{1}_{n-1} & 0 & 0 \\ 0 & 0 & 1 & 0 \\ 0 & 0 & -{^tx} & \mathbf{1}_{n-1} \end{pmatrix} \,\middle|\, x \in \bbA_F^{n-1}, y \in \bbA_F\right\}.
	\]
	We denote by $U_v(s,\mu)f_{v,s}$ the local intertwining integral so that $\bigotimes_v U_v(s,\mu)f_{v,s} = U(s,\mu)f_s$.
	Note that the local intertwining integral $U_v(s,\mu)$ converges absolutely for $\mathrm{Re}(s) \gg 0$.
	Moreover, it is holomorphic and non-zero for $\mathrm{Re}(s) > 0$.
	See \cite[pp.\,91]{1987_PS-R}.
	Let $\infty$ be the archimedean place of $\bbQ$.
	Let $F_s$ be the standard section of $I_{n-1}(s,\mu)$ such that $F_{s_0'} = \iota^* U(s,\mu) r(k)f_s|_{s=s_0'}$ for some $s_0'$ with $\mathrm{Re}(s_0') \gg 0$.
	We claim that there exists a non-zero constant $c$ such that
	\[
	\mathrm{Res}_{s = 1/2} E(g,s,F) = c E(g,1/2,\iota^*U(s,\mu)r(k)f).
	\]
	For a non-zero standard section $h_s$ of $I_\infty(s,\mu)$ of weight $k$, the integral $U_\infty(s,\mu)h_s(1)$ is a non-zero multiple of
	\begin{align}\label{U_wt_k}
	\frac{\Gamma(s)}{\Gamma((s+k+(n+1)/2)/s)\Gamma((s+(n+1)/2-k)/2)}
	\end{align}
	by \cite[(1.22)]{1994_Kudla-Rallis} and \cite[Lemma 4.6]{1988_Kudla-Rallis}.
	Substitute $k = (n+3)/2$.
	Then, $U_\infty(s,\mu)h_s$ has a simple zero at $s=1/2$.
	Hence the integral $U_\infty(s,\mu)f_{s,\infty}$ has a simple zero at $s=1$.
	Indeed, at $s = s_0$, $f_s$ can be written as a sum of right translations of a non-zero function of weight $(n+3)/2$.
	Put
	\[
	U^*_v(s,\mu)f_s =
	\begin{dcases}
	\iota^*U_v(s,\mu)f_{v,s} & \text{if $v$ is non-archimedean.}\\
	\frac{\Gamma((s+n+4)/2)\Gamma((s-1)/2)}{\Gamma(s)} \iota^* U_v(s,\mu)f_{v,s} & \text{if $v$ is archimedean.}
	 \end{dcases}
	\]
	For an unramified place $v$, by \cite[(1.23)]{1994_Kudla-Rallis}, one has
	\begin{align}\label{unram_U}
	\iota^*U(s,\mu)f^\circ_{v,s}(1) = \frac{L_v(s+(n-1)/2,\mu)L_v(2s,\mu^2)}{L_v(s+(n+1)/2,\mu)L_v(2s+n-1,\mu^2)}f_{v,-s}^\circ(1)
	\end{align}
	where $f_{v,s}^\circ$ is the unramified section of $I_{n,v}(s,\mu)$.
	Thus, the meromorphic section $U^*(s,\mu)f_s$ is holomorphic for $s=1$.
	By Lemma \ref{s>1} and \cite[Theorem 1.1]{1994_Kudla-Rallis}, $E(\,\cdot\,,s - 1/2, \iota^*U(s,\mu)r(k)f)$ has at most simple pole at $s=1$.
	We then have
	\begin{align*}
	\lim_{s \rightarrow 1} E(g,s-1/2,\iota^*U(s,\mu)f) 
	&= \lim_{s \rightarrow 1} \frac{\Gamma(s)(s-1)}{\Gamma((s+n+4)/2)\Gamma((s-1)/2)}(s-1)^{-1}E(g,s-1/2,U^*(s,\mu)f)\\
	&= \frac{c}{2 \cdot \Gamma((n+5)/2)} \mathrm{Res}_{s=1} E(g,s-1/2,F)
	\end{align*}
	with some non-zero constant $c$.
	Hence the claim holds.
	Let $V_0$ be the complementary space of $V$ in the sense of \cite[pp.\,34]{1994_Kudla-Rallis}.
	By \cite[Corollary 6.3]{1994_Kudla-Rallis}, the constant term of $\mathrm{Res}_{s=1} E(g,s-1/2,F)$ along $P_{n-1}$ lies in $R_{n-1}(V_0) \subset I_{n-1}(-1/2,\mu)$.
	Thus, the constant term of $E(g,s,f)$ along the Borel subgroup is an element of weight $k$ in a direct sum of principal series representations.
	Comparing the scalar $K_{n,\infty}$-types of principal series representations and degenerate principal series representations, the constant term lies in
	\[
	I_n(1,\chi_V) \oplus I_n(-1,\chi_V)
	\]
	of weight $(n+3)/2$.
	Note that the $K_{n,\infty}$-type with highest weight $((n+3)/2,\ldots,(n+3)/2)$ occur in $I_{n,\infty}(-1,\mu)_{\frakp_{n,-}\fin}$ by \cite[Lemma 3.5]{Horinaga_2}.
	We also note that $E(\,\cdot\,,s,f)$ concentrates on the Borel subgroup.
	Hence the Eisenstein series $E(\,\cdot\,,1,f)$ is nearly holomorphic.
	This completes the proof.
	\end{proof}
	
	\begin{rem}
	In the above proof, we use the formula of $d_{n,v}(s,\ell)$ as in \cite[Lemma 4.6]{1988_Kudla-Rallis}.
	We should note that there is a typo in this formula.
	The correct one is
	\[
	d_{n,v}(s,\ell) = (\sqrt{-1})^{nk}2^{-ns}(2\pi)^n\pi^{n(n-1)/2}	 \frac{\Gamma_n(s)}{\Gamma_n((s+\rho_n+\ell)/2)\Gamma_n((s+\rho_n-\ell)/2)}.
	\]
	Indeed, by the straightforward computation, $d_{n,v}(s,\ell)$ equals to a non-zero constant multiple of a confluent hypergeometric function $\xi(1,0;(s+\rho_n+\ell)/2,(s+\rho_n-\ell)/2)$.
	For the explicit formulas of $\xi$, see \cite{82_Shimura} and \cite[pp.\,140]{00_Shimura}.
	\end{rem}
	
	\subsection{Cuspidal components of Siegel Eisenstein series at $s=0$}
	
	We recall the properties of Siegel Eisenstein series at $s=0$.
	If the rank $n$ is odd and $\mu$ is quadratic, one has
	\[
	I_n(0, \mu) = \bigoplus_{V} R_n(V) \oplus \bigoplus_{\calC} R_n(\calC),
	\]
	where $V$ runs over all the quadratic spaces of dimension $n+1$ over $F$ such that $\mu = \chi_V$ and $\calC = \{W_v\}_v$ runs through all incoherent families such that $\mu_v = \chi_{W_v}$ for any place $v$ of $F$.
	For the definition of incoherent family, see \cite[pp.\,7]{1994_Kudla-Rallis}.
	By \cite[Theorem 4.10]{1994_Kudla-Rallis}, one can identify a certain subspace of automorphic forms as the space of Eisenstein series at $s=0$ as follows:
	
	\begin{thm}\label{s=0}
	The following statements hold.
	\begin{enumerate}
	\item For a quadratic space $V$ of dimension $n+1$ over $F$, one has
	\[
	\dim \mathrm{Hom}_{G_n(\bbA_\bbQ)} (R_n(V), \calA(G_n)) = 1.
	\]
	Moreover, the normalized Eisenstein series at $s = 0$ gives the non-trivial intertwining map $R_n(V) \longrightarrow \calA(G_n(\bbA_\bbQ))$.
	\item For an incoherent family $\calC$, one has
	\[
	\dim \mathrm{Hom}_{G_n(\bbA_\bbQ)} (R_n(\calC), \calA(G_n)) = 0.
	\]
	Moreover, for a standard section $f_s$ with $f_0 \in R_n(\calC)$, one has $E(g,0,f) = 0$.
	\end{enumerate} 
	\end{thm}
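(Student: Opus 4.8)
The plan is to read off both statements from the behavior of the Siegel Eisenstein series $E(g,s,f)$ at the center $s=0$ of its functional equation, using the decomposition $I_n(0,\mu)=\bigoplus_V R_n(V)\oplus\bigoplus_{\calC}R_n(\calC)$ recalled above (valid since $n$ is odd and $\mu=\chi_V$ is quadratic) together with the regularized Siegel--Weil machinery of \cite{1994_Kudla-Rallis}; since the assertion is precisely \cite[Theorem 4.10]{1994_Kudla-Rallis}, I will only indicate how the pieces fit together. First I would normalize, replacing $E(g,s,f)$ by $E^{*}(g,s,f)=c_n(s,\mu)^{-1}E(g,s,f)$ for the usual ratio $c_n(s,\mu)$ of abelian $L$-functions, so that $E^{*}(\,\cdot\,,s,f)$ is holomorphic at $s=0$ for every standard section with $f_0\in I_n(0,\mu)$; holomorphy uses the location of the poles of $E(g,s,f)$ as in \cite[Theorem 1.1]{1994_Kudla-Rallis}, and $c_n(s,\mu)$ is finite and nonzero near $s=0$, so one may pass freely between $E$ and $E^{*}$. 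I would then record the functional equation $E^{*}(g,s,f)=E^{*}(g,-s,M^{*}(s,\mu)f)$ for the normalized standard intertwining operator $M^{*}(s,\mu)\colon I_n(s,\mu)\to I_n(-s,\mu)$, and note that $M^{*}(0,\mu)$ is an involution acting on each local summand $R_n(V_v)$ of $I_{n,v}(0,\mu_v)$ by a scalar $\varepsilon_v\in\{\pm1\}$; by definition, $\prod_v\varepsilon_v=+1$ in the coherent case (when $(V_v)_v$ comes from a global space $V$) and $\prod_v\varepsilon_v=-1$ in the incoherent case.

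For part (1): on the coherent summand $R_n(V)$ the operator $M^{*}(0,\mu)$ acts by $+1$, so the functional equation imposes no parity obstruction on $E^{*}(\,\cdot\,,0,f)$. To see that this value is genuinely nonzero and to identify it, I would invoke the first-term identity of the regularized Siegel--Weil formula in the case $m=n+1$ --- where Weil's condition \eqref{W} fails, so Theorem \ref{SW_formula} does not apply and the theta integral must itself be regularized: it expresses $E^{*}(\,\cdot\,,0,f_\varphi)$ as a nonzero multiple of the regularized theta integral attached to $V$ evaluated at $\varphi$, which is a nonzero automorphic form, exhibiting a nonzero element of $\mathrm{Hom}_{G_n(\bbA_\bbQ)}(R_n(V),\calA(G_n))$ and giving $\dim\geq1$. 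For the upper bound I would compose an arbitrary $T$ in that $\mathrm{Hom}$-space with the constant term along the Siegel parabolic $P_n$, obtaining a $G_n(\bbA_\bbQ)$-map $R_n(V)\to\calA(P_n\bs G_n)$ whose image sits at the exponents $s$ and $-s$ specialized at $s=0$, hence inside two copies of $I_n(0,\mu)$; because $R_n(V)=\bigotimes_v R_n(V_v)$ and the relevant local Hom-spaces are lines, this constant-term map is determined up to a scalar, and a $T$ with vanishing $P_n$-constant term would have cuspidal image, which is impossible since the local constituents of $R_n(V)$ are nowhere tempered. Therefore $\dim=1$ and $E^{*}(\,\cdot\,,0,\cdot\,)$ is a generator.

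For part (2): on an incoherent summand $R_n(\calC)$ we have $\prod_v\varepsilon_v=-1$, so $M^{*}(0,\mu)$ acts by $-1$ and the functional equation forces $E^{*}(g,0,f)=-E^{*}(g,0,f)$, i.e.\ $E^{*}(\,\cdot\,,0,f)\equiv0$, and hence $E(g,0,f)=0$ since $c_n(\,\cdot\,,\mu)$ is finite and nonzero at $s=0$. To conclude $\mathrm{Hom}_{G_n(\bbA_\bbQ)}(R_n(\calC),\calA(G_n))=0$ I would run the same constant-term argument as in part (1): a hypothetical nonzero $T$ there cannot be cuspidal, so it has nonzero constant term along $P_n$ and must therefore be proportional to the Eisenstein-series intertwining attached to $R_n(\calC)$, namely the map $f\mapsto E(\,\cdot\,,0,f)=0$ --- a contradiction.

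The main obstacle is precisely the boundary first-term Siegel--Weil identity at $m=n+1$, where Weil's condition \eqref{W} is violated and the theta integral must be regularized, together with the rigidity of $T$ in terms of its $P_n$-constant term; these are the genuinely hard inputs, and both are supplied by \cite{1994_Kudla-Rallis}. Granting them, what remains is the parity bookkeeping through the functional equation, so in the write-up I would simply cite \cite[Theorem 4.10]{1994_Kudla-Rallis}, specialized to $m=n+1$, $n$ odd and $\mu$ quadratic.
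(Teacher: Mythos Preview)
Your proposal is correct and matches the paper's approach exactly: the paper does not give its own proof but simply cites \cite[Theorem 4.10]{1994_Kudla-Rallis}, and you likewise recognize the statement as precisely that theorem and propose citing it after sketching the underlying mechanism (sign of $M^{*}(0,\mu)$ on coherent versus incoherent summands, first-term Siegel--Weil identity, and the constant-term rigidity argument). There is nothing to add.
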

	
	The following statement follows from the theorem immediately.
	
	\begin{cor}\label{s=0_exp}
	Let $f_s$ be a standard section of $I_n(s,\mu)$.
	The candidates of real parts of non-zero cuspidal exponents of $E(\,\cdot\,,0,f)$ are only $((n-1)/2,(n-3)/2,\ldots,(1-n)/2)$.
	\end{cor}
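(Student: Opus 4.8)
The plan is to read the answer off from Theorem~\ref{s=0} together with the cuspidal support of the degenerate principal series $I_n(0,\mu)$.

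First I would split $f_0$ along the decomposition $I_n(0,\mu)=\bigoplus_V R_n(V)\oplus\bigoplus_{\calC} R_n(\calC)$ recalled above, writing $f_0=\sum_V f_{V,0}+\sum_{\calC} f_{\calC,0}$. By Theorem~\ref{s=0}(2) the Eisenstein series attached to each incoherent family $\calC$ vanishes at $s=0$, so $E(\,\cdot\,,0,f)=\sum_V E(\,\cdot\,,0,f_V)$, the sum running over quadratic spaces $V$ of dimension $n+1$ over $F$ with $\chi_V=\mu$. By Theorem~\ref{s=0}(1), $f_{V,0}\mapsto E(\,\cdot\,,0,f_V)$ is a $G_n(\bbA_\bbQ)$-equivariant map $R_n(V)\to\calA(G_n)$; hence every irreducible subquotient of the automorphic representation generated by $E(\,\cdot\,,0,f)$ is an irreducible subquotient of some $R_n(V)$, and in particular of $I_n(0,\mu)=\Ind_{P_n(\bbA_\bbQ)}^{G_n(\bbA_\bbQ)}\mu$.

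Next I would compute the cuspidal support of $I_n(0,\mu)$. The character $\mu\circ\det$ of the Siegel Levi $\GL_n$ is a constituent of the principal series of $\GL_n(\bbA_F)$ induced from the diagonal character $\mu|\cdot|^{(n-1)/2}\boxtimes\mu|\cdot|^{(n-3)/2}\boxtimes\cdots\boxtimes\mu|\cdot|^{(1-n)/2}$, as one reads off from its Jacquet module, so this diagonal datum is its cuspidal support. Transitivity of cuspidal support under parabolic induction then shows that every irreducible subquotient of $I_n(0,\mu)$ has cuspidal support in the $W_n$-orbit of $\bigl(T_n,\ \mu|\cdot|^{(n-1)/2}\boxtimes\cdots\boxtimes\mu|\cdot|^{(1-n)/2}\bigr)$. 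Since $n$ is odd, the multiset $\{0,\pm 1,\ldots,\pm(n-1)/2\}$ of real parts of this exponent is stable under sign changes, so this is a single $W_n$-orbit, whose dominant representative is the tuple $((n-1)/2,(n-3)/2,\ldots,(1-n)/2)$. Finally, a cuspidal automorphic representation of a standard Levi subgroup of $G_n$ other than the maximal torus $T_n$ cannot have cuspidal support on $T_n$---its symplectic factor would then be a cuspidal representation with torus cuspidal support, which forces that factor to be trivial---so the only parabolic contributing a cuspidal exponent to $E(\,\cdot\,,0,f)$ is the Borel $B_n=Q_{n,n}$, along which the cuspidal exponents therefore have real part $((n-1)/2,(n-3)/2,\ldots,(1-n)/2)$.

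The step carrying the real content is the first one: Theorem~\ref{s=0} is needed both to discard the incoherent families (whose normalized Eisenstein series are identically zero at $s=0$) and to present $E(\,\cdot\,,0,f_V)$ through an honest intertwining operator from $R_n(V)$, so that its cuspidal support is controlled by the abstract module $R_n(V)\subset I_n(0,\mu)$ rather than by the more delicate constant-term formula for an arbitrary section. The remaining steps are the formal compatibility of the decomposition in Theorem~\ref{cusp_supp_decomp} with induction in stages.
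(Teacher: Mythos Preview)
Your proof is correct and follows essentially the same approach as the paper's own argument. Both use Theorem~\ref{s=0} to realize $E(\,\cdot\,,0,f)$ as the image of an intertwining map from $R_n(V)\subset I_n(0,\mu)$, observe that the Eisenstein series lies in $\calA(G_n)_{\{B\}}$, and then read off the exponents from the Jacquet module of $I_n(0,\mu)$ along the Borel; the paper compresses these steps into two sentences, while you spell out the decomposition into coherent and incoherent pieces and the transitivity-of-induction computation of the cuspidal support explicitly.
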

	\begin{proof}
	By Theorem \ref{s=0}, the constant term of Eisenstein series along $B_n$ lies in the direct sum of induced representations of the form $I_n(0,\mu)$.
	The lemma then follows from $E(\,\cdot\, , s,f) \in \calA(G_n)_{\{B\}}$ and the definition of cuspidal exponents.
	\end{proof}

	\section{Pullback formula}
	
	In this section, we compute the pullback formulas of Siegel Eisenstein series.
	As an application, we show the holomorphy and non-vanishing of Klingen Eisenstein series.
	
	\subsection{The formal identity and meromorphic sections}
	
	For $m \leq n$, we define the embeddings $\iota_{m,n}^\uparrow$ and $\iota_{m,n}^\downarrow$ of $G_m$ into $G_n$ by
	\[
	\iota_{m,n}^\uparrow \left(\abcd \right) = \left(\begin{array}{cc|cc}a&&b&\\&\mathbf{1}_{n-m}&&\\\hline c&&d&\\&&&\mathbf{1}_{n-m} \end{array}\right), \qquad \iota_{m,n}^\downarrow \left(\abcd \right) = \left(\begin{array}{cc|cc}\mathbf{1}_{n-m}&&&\\&a&&b\\\hline &&\mathbf{1}_{n-m}&\\&c&&d \end{array}\right).
	\]
	Put $G_m^\uparrow = \iota_{m,n}^\uparrow (G_m)$ and $G_m^\downarrow = \iota_{m,n}^\downarrow (G_m)$.
	Take $n,r \in \bbZ_{>0}$.
	For $g \in G_{n+r} (\bbA_\bbQ)$ and $h \in G_{n}(\bbA_\bbQ)$, put
	\[
	g \times h =\begin{pmatrix}a_g & b_g \\ c_g & d_g \end{pmatrix} \times \begin{pmatrix}a_h & b_h \\ c_h & d_h \end{pmatrix} = \iota_{n+r,2n+r}^\uparrow(g)\cdot\iota_{n,2n+r}^\downarrow(h) = \left(\begin{array}{cc|cc} a_g & & b_g & \\ & a_h & & b_h \\ \hline c_g & & d_g & \\ & c_h & & d_h\end{array}\right) \in G_{2n+r}(\bbA_\bbQ).
	\]
	Set
	\[
	H = G_{n+r}^\uparrow \times G_{n}^\downarrow \subset G_{2n+r}, \qquad 
	\widehat{g} = \begin{pmatrix} 0 & \mathbf{1}_n \\ \mathbf{1}_n & 0 \end{pmatrix} g \begin{pmatrix} 0 & \mathbf{1}_n \\ \mathbf{1}_n & 0 \end{pmatrix}, \qquad g \in G_{n}.
	\]
	Let $f_s$ be a standard section of $I_{2n+r}(s,\mu)$.
	For a cusp form $\varphi$ on $G_{n}(\bbA_\bbQ)$ and $g \in G_{n+r}(\bbA_\bbQ)$, we consider the zeta integral
	\[
	E(g,s; f, \varphi) = \int_{G_{n}(\bbQ) \bs G_{n}(\bbA_\bbQ)} E((g \times \widehat{h}),s,f)\varphi(h) \, dh.
	\]
	
	Put
	\[
	f_j = \begin{pmatrix} 0 & 0 \\ 0 & \mathbf{1}_j\end{pmatrix} \in \Mat_{n+r,n}, \qquad \tau_j = \left(\begin{array}{cc|cc} \mathbf{1}_{n+r} & & & \\ & \mathbf{1}_n & & \\ \hline & f_j & \mathbf{1}_{n+r} & \\ {^t f_j} & & & \mathbf{1}_{n}\end{array}\right)
	\]
	for $0 \leq j \leq n$.
	Note that for any $g \in G_j(\bbA_\bbQ)$ and $h \in G_{2n+r}(\bbA_\bbQ)$, one has
	\[
	f_s\left(\tau_j((\mathbf{1}_{2(n+r-j)}\times g) \times (\mathbf{1}_{2(n-j)} \times \widehat{g})) h \right) = f_s(\tau_j h).
	\]
	The following double coset decomposition is well-known.
	For example, see \cite[Lemma 24.1]{00_Shimura}.
	
	\begin{lem}
	One has the decomposition
	\[
	G_{2n+r}(\bbQ) = \bigsqcup_{0 \leq j \leq n} P_{2n+r}(\bbQ) \tau_j H(\bbQ).
	\]
	Moreover, $P_{2n+r}(\bbQ)\tau_j H(\bbQ) = \coprod_{\xi, \beta, \gamma} P_{2n+r}(\bbQ)\tau_j(((\mathbf{1}_{2(n+r-j)} \times \xi) \times \mathbf{1}_{2n}) \cdot (\beta \times \gamma))$, where $\xi$ runs over $G_{j}(\bbQ)$, $\beta$ over $P_{n+r-j,n+r}(\bbQ) \bs G_{n+r}(\bbQ)$, and $\gamma$ over $P_{n-j,n}(\bbQ) \bs G_n(\bbQ)$.
\end{lem}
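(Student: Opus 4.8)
The plan is to read off the double cosets geometrically. Since $G_m=\Res_{F/\bbQ}\Sp_{2m}$ we have $G_m(\bbQ)=\Sp_{2m}(F)$, so the whole statement is a piece of symplectic linear algebra over the field $F$. Let $(V,\langle\,,\,\rangle)$ be the $2(2n+r)$-dimensional symplectic $F$-space on which $G_{2n+r}(\bbQ)$ acts, write $P=P_{2n+r}(\bbQ)$, and let $L_0$ be the standard Lagrangian with $\mathrm{Stab}(L_0)=P$; then $g\mapsto gL_0$ identifies $P\bs G_{2n+r}(\bbQ)$ with the set of Lagrangian subspaces of $V$. Under the embedding $H=G_{n+r}^\uparrow\times G_n^\downarrow$ the space $V$ decomposes as an orthogonal direct sum $V=W_1\perp W_2$ of symplectic subspaces with $\dim W_1=2(n+r)$ and $\dim W_2=2n$, and $H(\bbQ)=\Sp(W_1)\times\Sp(W_2)$. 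So $P\bs G_{2n+r}(\bbQ)/H(\bbQ)$ is the set of $\Sp(W_1)\times\Sp(W_2)$-orbits on Lagrangians of $W_1\perp W_2$, and the first assertion amounts to saying that these orbits are parametrized by an integer $0\le j\le n$ with $\tau_j$ a representative.

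To classify the orbits I would argue as follows. Let $p_i\colon V\to W_i$ be the projections, and for a Lagrangian $\Lambda$ put $A=p_1(\Lambda)$, $B=p_2(\Lambda)$, $A_0=\Lambda\cap W_1$, $B_0=\Lambda\cap W_2$. A direct check using that $\Lambda$ is Lagrangian and $W_1\perp W_2$ gives $A_0=A^{\perp_{W_1}}$ and $B_0=B^{\perp_{W_2}}$, hence $\dim A=2(n+r)-\dim A_0$ and $\dim B=2n-\dim B_0$; comparing with $\dim\Lambda=2n+r=\dim A+\dim B_0=\dim B+\dim A_0$ forces $\dim A_0=\dim B_0+r$. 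Thus the single invariant $j:=\dim(\Lambda\cap W_2)$ runs over $\{0,1,\dots,n\}$ (it is $\ge 0$ and $\le n$ because $B_0$ is isotropic in $W_2$, and then $\dim A_0=j+r\le n+r$ is automatic). Moreover $\Lambda$ is the graph of an anti-isometry (isometry up to sign) between the symplectic reductions $A_0^{\perp}/A_0$ and $B_0^{\perp}/B_0$. Hence, given two Lagrangians with the same $j$: Witt's theorem lets $\Sp(W_1)$ (resp.\ $\Sp(W_2)$) match their isotropic subspaces $A_0$ (resp.\ $B_0$); and since the Levi maps $\mathrm{Stab}_{\Sp(W_1)}(A_0)\to\Sp(A_0^{\perp}/A_0)$ and $\mathrm{Stab}_{\Sp(W_2)}(B_0)\to\Sp(B_0^{\perp}/B_0)$ are onto while the anti-isometries form a torsor under either factor, the remaining data can be matched too. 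This proves transitivity on each $j$-stratum. A short computation with the explicit matrices $f_j,\tau_j$ then gives $\dim(\tau_jL_0\cap W_2)=j$, so $\{\tau_j\}_{0\le j\le n}$ is a complete, irredundant set of representatives; since $\tau_0=\mathbf{1}$, the stratum $j=0$ is the open orbit $P\,H(\bbQ)$.

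For the ``moreover'' statement I would pass from $P\bs P\tau_jH(\bbQ)$ to $\bigl(\tau_j^{-1}P_{2n+r}\tau_j\cap H\bigr)\bs H$ and compute this stabilizer: it preserves, in the two symplectic factors, the isotropic subspaces of dimensions $n+r-j$ and $n-j$ underlying the orbit datum, and in addition contains the diagonally embedded $G_j$ identifying the two symplectic reductions. Concretely $\tau_j^{-1}P\tau_j\cap H$ is built from the Klingen-type parabolics $P_{n+r-j,n+r}\subset G_{n+r}$ and $P_{n-j,n}\subset G_n$ together with this $G_j$, and the identity $f_s\bigl(\tau_j((\mathbf{1}_{2(n+r-j)}\times g)\times(\mathbf{1}_{2(n-j)}\times\widehat g))h\bigr)=f_s(\tau_jh)$ noted above is precisely the statement that the $G_j$-part is absorbed. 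Writing out a set of representatives for this coset space then yields the stated parametrization by triples $(\xi,\beta,\gamma)$ with $\xi\in G_j(\bbQ)$, $\beta\in P_{n+r-j,n+r}(\bbQ)\bs G_{n+r}(\bbQ)$, $\gamma\in P_{n-j,n}(\bbQ)\bs G_n(\bbQ)$. I expect the main obstacle to be exactly this last bookkeeping — pinning down the stabilizer $\tau_j^{-1}P\tau_j\cap H$ and hence the correct index set — rather than the orbit count, which is the clean linear-algebra part; alternatively one may simply invoke \cite[Lemma 24.1]{00_Shimura}, where the decomposition is carried out in coordinates.
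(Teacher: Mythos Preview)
The paper does not prove this lemma at all: it simply declares the decomposition ``well-known'' and cites \cite[Lemma~24.1]{00_Shimura}. Your proposal therefore goes well beyond the paper by supplying an actual argument.

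Your geometric approach via Lagrangians in $V=W_1\perp W_2$ is correct and is the conceptual explanation behind Shimura's coordinate computation. The identification $P\backslash G_{2n+r}(\bbQ)\cong\{\text{Lagrangians in }V\}$, the invariant $j=\dim(\Lambda\cap W_2)$, the dimension relations forcing $\dim A_0=\dim B_0+r$, and the Witt-theorem transitivity on each stratum are all sound. This buys you a structural understanding that the bare citation does not: one sees immediately \emph{why} the orbits are indexed by $0\le j\le n$ and why the refined decomposition involves the parabolics $P_{n+r-j,n+r}$ and $P_{n-j,n}$.

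Your own caveat is accurate: the only place where real care is needed is the explicit description of $\tau_j^{-1}P\tau_j\cap H$ and the verification that the triples $(\xi,\beta,\gamma)$ give a \emph{disjoint} set of representatives, not merely a spanning set. Concretely, one must check that the stabilizer is exactly
\[
\bigl\{(p_1,p_2)\in P_{n+r-j,n+r}(\bbQ)\times P_{n-j,n}(\bbQ)\ \bigm|\ \text{the }\Sp_{2j}\text{-components of }p_1\text{ and }\widehat{p_2}\text{ coincide}\bigr\},
\]
so that factoring out first by the diagonal $G_j$ and then by the two parabolics gives precisely the stated index set. You have the right picture; writing it out is routine but should be done if you intend this as a self-contained proof rather than a gloss on the Shimura reference.
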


	By the lemma, we compute the integral $E(g,s;f,\varphi)$ as follows:
	\begin{align*}
	&\int_{G_n(\bbQ) \bs G_n(\bbA_\bbQ)} E((g \times \widehat{h}),s,f) \varphi(h) \, dh \\
	&= \int_{G_n(\bbQ) \bs G_n(\bbA_\bbQ)} \sum_{\gamma \in P_{2n+r}(\bbQ) \bs G_{2n+r}(\bbQ)} f_s(\gamma(g \times \widehat{h})) \varphi(h)\, dh \\
	&= \int_{G_n(\bbQ) \bs G_n(\bbA_\bbQ)} \sum_{0 \leq j \leq n} \sum_{\gamma \in P_{2n+r}(F) \bs P_{2n+r}(\bbQ) \tau_j H(\bbQ)} f_s(\gamma(g \times \widehat{h})) \varphi(h)\, dh\\
	&= \sum_{0 \leq j \leq n}  \int_{G_n(\bbQ) \bs G_n(\bbA_\bbQ)} \sum_{\xi \in G_j(\bbA_\bbQ)}\sum_{\beta \in P_{n+r-j,n+r}(\bbQ) \bs G_{n+r}(\bbQ)} \sum_{\gamma \in P_{n-j,n}(\bbQ) \bs G_{n}(\bbQ)} \\
	&\hspace{70mm} f_s(\tau_j ((\mathbf{1}_{2(n+r-j)} \times \xi) \times \mathbf{1}_{2n}) \cdot(\beta g \times \gamma \widehat{h})) \varphi(h) \, dh\\
	&= \sum_{0 \leq j \leq n} \sum_{\xi \in G_j(\bbA_\bbQ)}\sum_{\beta \in P_{n+r-j,n+r}(\bbQ) \bs G_{n+r}(\bbQ)} \int_{G_n(\bbQ) \bs G_n(\bbA_\bbQ)} \sum_{\gamma \in P_{n-j,n}(\bbQ) \bs G_n(\bbQ)}\\
	&\hspace{70mm} f_s(\tau_j((\mathbf{1}_{2(n+r-j)} \times \xi) \times \mathbf{1}_{2n}) \cdot (\beta g \times \gamma \widehat{h})) \varphi(h) \, dh.
	\end{align*}
	If $j < n$, we claim that the integral
	\[
	\int_{G_n(\bbQ) \bs G_n(\bbA_\bbQ)} \sum_{\gamma \in P_{n-j,n}(\bbQ) \bs G_n(\bbQ)}f_s(\tau_j((\mathbf{1}_{2(n+r-j)} \times \xi) \times \mathbf{1}_{2n}) \cdot (\beta g \times \gamma \widehat{h})) \varphi(h) \, dh
	\]
	vanishes.
	Put $P_{n-j,n}' = \{\widehat{p}\mid p \in P_{n-j,n}\}$.
	We write $\mathbf{1}_{2(n+r-j)} \times \xi$ by $\xi$ for simplicity.
	Then, it equals to
	\begin{align*}
	&\int_{P_{n-j,n}'(\bbQ) \bs G_n(\bbA_\bbQ)} f_s(\tau_j(\xi\beta g \times \widehat{h})) \varphi(h) \, dh\\
	&= \int_{P_{n-j,n}'(\bbQ)N_{P_{n-j,n}'}(\bbA_\bbQ) \bs G_n(\bbA_\bbQ)}\int_{N_{P'_{n-j,n}}(\bbQ) \bs N_{P'_{n-j,n}}(\bbA_\bbQ)} f_s(\tau_j(\xi\beta g \times \widehat{nh})) \varphi(nh) \,dn dh\\
	&= \int_{P_{n-j,n}'(\bbQ)N_{P_{n-j,n}'}(\bbA_\bbQ) \bs G_n(\bbA_\bbQ)}\int_{N_{P'_{n-j,n}}(\bbQ) \bs N_{P'_{n-j,n}}(\bbA_\bbQ)} f_s(\tau_j(\xi\beta g \times \widehat{h})) \varphi(nh) \, dndh\\
	&= \int_{P_{n-j,n}'(\bbQ)N_{P_{n-j,n}'}(\bbA_\bbQ) \bs G_n(\bbA_\bbQ)} f_s(\tau_j(\xi\beta g \times \gamma \widehat{h})) \left(\int_{N_{P'_{n-j,n}}(\bbQ) \bs N_{P'_{n-j,n}}(\bbA_\bbQ)} \varphi(nh) \, dn\right)dh\\
	&= 0.
	\end{align*}
	Hence, we obtain
	\begin{align*}
	E(g,s;f,\varphi) 
	&= \sum_{\xi \in G_{n}(\bbQ)}\sum_{\beta \in P_{r,n+r}(\bbQ) \bs G_{n+r}(\bbQ)} \int_{G_n(\bbQ) \bs G_n(\bbA_\bbQ)} f_s(\tau_n ( \mathbf{1}_{2(n+r)} \times \widehat{\xi}) \cdot(\beta g \times \widehat{h})) \varphi(h) \, dh\\
	&=  \sum_{\beta \in P_{r,n+r}(\bbQ) \bs G_{n+r}(\bbQ)} \int_{G_n(\bbQ) \bs G_n(\bbA_\bbQ)} \sum_{\xi \in G_n(\bbA_\bbQ)} f_s(\tau_n (\beta g \times \widehat{\xi h})) \varphi(h) \, dh\\
	&=  \sum_{\beta \in P_{r,n+r}(\bbQ) \bs G_{n+r}(\bbQ)} \int_{G_n(\bbA_\bbQ)} f_s(\tau_n (\beta g \times \widehat{h})) \varphi(h) \, dh.
	\end{align*}
	Put
	\[
	Z(g,s;f,\varphi) = \int_{G_{n}(\bbA_\bbQ)} f_s(\tau_n(g\times \widehat{h}))\varphi(h)\,dh, \qquad g \in G_{n+r}(\bbA_\bbQ).
	\]
	We then have 
	\[
	E(g,s;f,\varphi) = \sum_{\gamma \in P_{r,n+r}(\bbQ) \bs G_{n+r}(\bbQ)} Z(\gamma g,s;f,\varphi).
	\]
	
	\begin{lem}
	The integral $Z(g,s;f,\varphi)$ converges absolutely for $s \in \bbC$ with $\mathrm{Re}(s) \gg 0$ and can be meromorphically continued to whole $s$-plane.
	\end{lem}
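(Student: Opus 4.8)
The plan is to get absolute convergence by dominating $Z(g,s;f,\varphi)$ by a positive Siegel Eisenstein series paired against the cusp form, and to get the meromorphic continuation by factoring $Z$ into local zeta integrals whose analytic continuation is classical.

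\emph{Absolute convergence for $\mathrm{Re}(s)\gg 0$.} Since $f_s$ is a standard section, one has $|f_s|\le C\,f^{(0)}_{\mathrm{Re}(s)}$ for some constant $C$ and a positive standard section $f^{(0)}$ of $I_{2n+r}(s,\mathbf{1})$ (the fixed power $|\mu|$ being absorbed into the parameter). For $\sigma\gg 0$ the Eisenstein series $E(g\times\widehat h,\sigma,f^{(0)})$ on $G_{2n+r}(\bbA_\bbQ)$ converges absolutely and is of moderate growth in $h$ by the classical theory \cite{langlands}; since $\varphi$ is a cusp form on the semisimple group $G_n$, it is rapidly decreasing and $G_n(\bbQ)\bs G_n(\bbA_\bbQ)$ has finite volume, so $\int_{G_n(\bbQ)\bs G_n(\bbA_\bbQ)}E(g\times\widehat h,\sigma,f^{(0)})\,|\varphi(h)|\,dh<\infty$. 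First I would expand $E(g\times\widehat h,\sigma,f^{(0)})$ by the double-coset decomposition above: every resulting interchange of sum and integral is an instance of Tonelli's theorem, since all terms are nonnegative, so the $j=n$ contribution — which by the same manipulation as in the computation preceding the lemma equals $C\sum_{\beta\in P_{r,n+r}(\bbQ)\bs G_{n+r}(\bbQ)}\int_{G_n(\bbA_\bbQ)} f^{(0)}_\sigma(\tau_n(\beta g\times\widehat h))\,|\varphi(h)|\,dh$ — is one nonnegative piece of this finite total, hence finite. In particular each summand is finite, which is exactly absolute convergence of $Z(g,s;f,\varphi)$ for $\mathrm{Re}(s)\gg 0$, locally uniformly in $s$; dominated convergence then also legitimizes the identity $E(g,s;f,\varphi)=\sum_\beta Z(\beta g,s;f,\varphi)$ in that range. (One could instead argue place by place via an Iwasawa decomposition of $\tau_n(g_v\times\widehat{h_v})$ with respect to the Siegel parabolic together with the boundedness of $\varphi$, but the above global domination is cleaner.)

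\emph{Meromorphic continuation.} The integral $Z(g,s;f,\varphi)$ is the (one-sided) doubling zeta integral attached to the pullback of a Siegel Eisenstein series, and I would establish its continuation by the standard Eulerian analysis. Writing $\varphi$ as a finite sum of factorizable vectors lying in irreducible cuspidal representations of $G_n(\bbA_\bbQ)$ — possible since $\varphi$ is $K_n$-finite and $\calZ_n$-finite — and taking $f_s=\bigotimes_v f_{s,v}$ and $g=(g_v)_v$ factorizable, the integral factors as an Euler product $Z(g,s;f,\varphi)=\prod_v Z_v(g_v,s;f_{s,v},\varphi_v)$ with $Z_v(g_v,s;f_{s,v},\varphi_v)=\int_{\Sp_{2n}(F_v)}f_{s,v}(\tau_n(g_v\times\widehat{h_v}))\varphi_v(h_v)\,dh_v$; see \cite[\S\S 22--24]{00_Shimura} and \cite{1987_PS-R}. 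At non-archimedean $v$ the local integral is a rational function of $q_v^{-s}$, equal at almost all $v$ (unramified data) to the explicit ratio of local $L$-factors recorded in \cite{1994_Kudla-Rallis}; at $v\in\bfa$ it extends meromorphically in $s$ by the theory of Shimura's confluent hypergeometric integrals, the same input used in the proof of Proposition \ref{s=1} (cf. \cite{82_Shimura}, \cite[\S\S 12--15]{00_Shimura}, \cite[Lemma 4.6]{1988_Kudla-Rallis}). The partial Euler product over a cofinite set $S$ of places is then a ratio of partial standard $L$-functions of the cuspidal representations occurring in $\varphi$, twisted by $\mu$, hence meromorphic on all of $\bbC$; multiplying by the finitely many meromorphic factors $Z_v$ with $v\in S$ gives the meromorphic continuation of $Z(g,s;f,\varphi)$ to the whole $s$-plane.

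\emph{Main obstacle.} The only genuinely non-formal point is the archimedean local zeta integral $Z_v$, $v\in\bfa$: its meromorphic continuation in $s$ and its identification, up to gamma factors, with the archimedean standard $L$-factor. This is precisely the confluent-hypergeometric computation of \cite{00_Shimura}, \cite{82_Shimura} and \cite{1988_Kudla-Rallis}, so I would invoke those references rather than reproduce the calculation; everything else is Tonelli/Fubini together with the classical analytic theory of Eisenstein series and of standard $L$-functions.
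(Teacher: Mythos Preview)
Your absolute-convergence argument is the same as the paper's, only spelled out in more detail: both dominate by the absolutely convergent Siegel Eisenstein series on $G_{2n+r}$ integrated against the rapidly decreasing cusp form.

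For the meromorphic continuation you take a genuinely different route. The paper does not go Eulerian at all. It observes that for $r=0$ one has $P_{r,n+r}=G_n$, so the sum over $\beta$ collapses and $Z(g,s;f,\varphi)=E(g,s;f,\varphi)$; the right-hand side is the integral of the Siegel Eisenstein series $E((g\times\widehat h),s,f)$ against a cusp form over $G_n(\bbQ)\backslash G_n(\bbA_\bbQ)$, so its meromorphic continuation is inherited directly from Langlands' meromorphic continuation of $E(\,\cdot\,,s,f)$. For general $r$ the paper writes $g=n(t,m)k$ with $(t,m)\in\GL_r(\bbA_F)\times G_n(\bbA_\bbQ)$ and $k\in K_{n+r}$, and computes
\[
Z(g,s;f,\varphi)=\mu(t)\,|\det t|^{s+(2n+r+1)/2}\,Z\bigl(m,\,s+r/2;\,\iota_{2n,2n+r}^{\downarrow,*}r(k)f,\,\varphi\bigr),
\]
thereby reducing to $r=0$. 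This is shorter and avoids any local analysis.

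Your Eulerian argument is correct in outline, but it is heavier than necessary and carries a mild circularity risk: the meromorphic continuation of the partial standard $L$-function $L^S(s,\pi,\mu)$ for cuspidal $\pi$ on $\Sp_{2n}$ is, in the references you cite (\cite{1987_PS-R}, \cite{1994_Kudla-Rallis}), established precisely through this doubling integral, so invoking it to continue $Z$ is at best a forward reference to the completed theory. The paper sidesteps this entirely by staying global.
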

	\begin{proof}
	Since $E(g,s,f)$ converges absolutely for $s$ with $\mathrm{Re}(s) \gg 0$, the integral also converges absolutely for such $s$.
	When $r=0$, the meromorphic continuation follows from the meromorphic continuation of $E(g,s,f)$.
	In general, we write $g = n(t,m)k$ for $n \in N_{P_{r,n+r}}(\bbA_\bbQ), (t,m) \in \GL_r(\bbA_F) \times G_n(\bbA_\bbQ)$ and $k \in K_{n+r}$.
	Then, one has
	\begin{align*}
	Z(g,s;f,\varphi) 
	&= \mu(t)|\det t|^{s+(2n+r+1)/2}Z(m,s;r(k)f,\varphi)\\
	&= \mu(t)|\det t|^{s+(2n+r+1)/2}Z(m,s+r/2;\iota_{2n,2n+r}^{\downarrow,*}r(k)f,\varphi).
	\end{align*}
	Thus, the meromorphic continuation follows from the case $r=0$.
	\end{proof}
	
	The section $Z(\,\cdot\,,s;f,\varphi)$ is then a meromorphic section of 
	\[
	I_{r,n+r}\left(s, \mu, \calA_{\mathrm{cusp}}(G_n)\right).
	\]
	Indeed, let $P$ be a parabolic subgroup of $G_{n}$ with the unipotent radical $N$.
	It suffices to prove that the constant term of $Z(\,\cdot\,,s;f,\varphi)$ along $P$ is zero.
	It equals to
	\begin{align*}
	Z(g,s;f,\varphi)_P
	&= \int_{N(\bbQ) \bs N(\bbA_\bbQ)} Z(ng,s;f,\varphi)\, dn\\
	&= \int_{N(\bbQ) \bs N(\bbA_\bbQ)}\int_{G_{n}(\bbA_\bbQ)} f_s(\tau_n(ng\times \widehat{h}))\varphi(h)\,dhdn\\
	&= \int_{G_{n}(\bbA_\bbQ)}\int_{N(\bbQ) \bs N(\bbA_\bbQ)} f_s(\tau_n(g\times \widehat{n^{-1}h}))\varphi(h)\,dndh\\
	&= \int_{G_{n}(\bbA_\bbQ)}f_s(\tau_n(g\times \widehat{h})) \left(\int_{N(\bbQ) \bs N(\bbA_\bbQ)} \varphi(nh)\,dn\right)dh\\
	&= 0,
	\end{align*}
	by the cuspidality of $\varphi$.
	Take a cusp form $\phi$ on $G_n(\bbA_\bbQ)$.
	For any $k \in K_{n+r}$, one has
	\begin{align*}
	\langle Z(k,s;f,\varphi), \phi\rangle
	&= \int_{G_n(\bbQ) \bs G_{n}(\bbA_\bbQ)} Z((\mathbf{1}_{r} \times x)k,s;f,\varphi) \overline{\phi(x)} \, dx\\
	&= \int_{G_n(\bbQ) \bs G_n(\bbA_\bbQ)} \int_{G_n(\bbA_\bbQ)} f_s(\tau_n (k \times \widehat{x^{-1}h})) \varphi(h) \, dh \, \overline{\phi(x)} \, dx\\
	&= \int_{G_n(\bbA_\bbQ)} f_s(\tau_n (k \times \widehat{h})) \left(\int_{G_n(\bbQ) \bs G_n(\bbA_\bbQ)} \varphi(xh) \overline{\phi(x)} \, dx\right) dh\\
	&= \int_{G_n(\bbA_\bbQ)} f_s(\tau_n (k \times \widehat{h})) \langle r(h)\varphi, \phi \rangle \, dh.
	\end{align*}
	The pairing $\langle Z(g,s;f,\varphi), \phi \rangle$ is zero unless $\phi$ lies in the $\pi_{\varphi}$-isotypic component of $\calA_{\mathrm{cusp}}(G_n)$.
	Here the representation $\pi_\varphi$ is the representation of $G_n(\bbA_\bbQ)$ generated by $\varphi$.
	For any $k \in K_{n+r}$, the function $m \longmapsto Z(mk,s;f,\varphi)$ on $G_n(\bbA_\bbQ)$ lies  in the $\pi_{\varphi}$-isotypic component.
	Hence, the section $Z(\,\cdot\,,s;f,\varphi)$ is a section of $I_{r,n+r}(s,\mu,\pi_\varphi)$.
	
	Let $\pi = \bigotimes_v\pi_v$ be an irreducible cuspidal automorphic representation of $G_n(\bbA_\bbQ)$.
	By the above computations, we define a meromorphic section $Z(\,\cdot\,,s;f,\varphi)$ of $I_{r,n+r}(s,\mu,\pi)$ for $\varphi \in \pi = \bigotimes_v\pi_v$.
	For $f_s = \bigotimes_v f_{v,s}$ and $\varphi = \bigotimes_v \varphi_v \in \bigotimes_v\pi_v$, set
	\[
	Z_v(g,s;f_v,\varphi_v) = \int_{\Sp_{2n}(F_v)} f_{v,s} (\tau_n(g \times \widehat{h})) \pi_v(h)\varphi_v\, dh.
	\]
	Then,
	\[
	Z(g,s;f,\varphi) = \prod_v Z_v(g,s;f_{v},\varphi_v).
	\]
	In the following, we first consider the relationship between the constant terms of $E(\,\cdot\,,s;f,\varphi)$ and the global section $Z(\,\cdot\,,s;f,\varphi)$.
	After that, we compute the local sections $Z(\,\cdot\,,s;f_v,\varphi_v)$.
	
	\subsection{Near holomorphy of Klingen Eisenstein series}
	
	We prove the near holomorphy of Eisenstein series $E(\,\cdot\,,s_0;f,\varphi)$ on $G_{n+r}(\bbA_\bbQ)$ as follows:
	
	\begin{prop}\label{NH_Klingen}
	Fix $r,n$ with $1 \leq r \leq n$ and $s_0 \geq 0$ with $s_0 \in \bbZ + (2n+r+1)/2$.
	For a character $\mu$ of $\GL_{2n+r}(\bbA_F)$, let $f_s$ be a standard section of $I_{2n+r}(s,\mu)$.
	We assume
	\begin{itemize}
	\item $f_{s_0}$ is $\frakp_{2n+r,-}$-finite.
	\item If $F = \bbQ$ and $s_0 = 1/2$, there exists a quadratic space $V$ over $F$ with dimension $(n+2)/2$ such that $W$ satisfies the condition \eqref{W} and $f_{s_0} \in R_n(V)$.
	\end{itemize}
	Then, for a cusp form $\varphi$ on $G_n(\bbA_\bbQ)$, the Eisenstein series $E(\,\cdot\,,s_0;f,\varphi)$ on $G_{n+r}(\bbA_\bbQ)$ is nearly holomorphic.
	\end{prop}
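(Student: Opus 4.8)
The plan is to realize $E(\,\cdot\,,s_0;f,\varphi)$ as the integral over $G_n(\bbQ)\bs G_n(\bbA_\bbQ)$, against the rapidly decreasing cusp form $\varphi$, of the pullback along the holomorphic embedding $\iota_{n+r,2n+r}^\uparrow$ of the Siegel Eisenstein series $E(\,\cdot\,,s_0,f)$ on $G_{2n+r}(\bbA_\bbQ)$, and then to transport $\frakp_{-}$-finiteness through this operation. Since $E(\,\cdot\,,s_0;f,\varphi)=\sum_{\gamma}Z(\gamma g,s_0;f,\varphi)$ is the Eisenstein series on $G_{n+r}$ attached to a cuspidal datum on $\GL_r\times G_n$, it is automorphic, so the only point to prove is that it is $\frakp_{n+r,-}$-finite.

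First I would fix $s=s_0$ and record that the Siegel Eisenstein series $E(\,\cdot\,,s_0,f)$ on $G_{2n+r}(\bbA_\bbQ)$ is holomorphic at $s=s_0$ and, there, is a \emph{nearly holomorphic} automorphic form. When $s_0>1$, or $s_0>0$ and $F\neq\bbQ$, this is Lemma \ref{s>1} applied with $2n+r$ in place of $n$: the map $f_s\mapsto E(\,\cdot\,,s_0,f)$ is $G_{2n+r}(\bbA_\bbQ)$-intertwining, so it carries the $\frakp_{2n+r,-}$-finite vector $f_{s_0}$ to a $\frakp_{2n+r,-}$-finite automorphic form. When $F=\bbQ$ and $s_0=1$ it is Proposition \ref{s=1}. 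In the remaining boundary cases ($s_0\in\{0,1/2\}$) it follows from the Siegel--Weil formula (Theorems \ref{SW_formula} and, for $s_0=0$, \ref{s=0}), together with the $\frakp_{2n+r,-}$-finiteness of $f_{s_0}$ and, when $s_0=1/2$, the anisotropic quadratic space $V$ of the second hypothesis --- which via Theorem \ref{SW_formula} identifies $E(\,\cdot\,,s_0,f)$ with a convergent theta integral whose archimedean component is a highest weight vector. Since $E(\,\cdot\,,s,f)$ is holomorphic near $s_0$ and $\varphi$ is rapidly decreasing, the integral $\int_{G_n(\bbQ)\bs G_n(\bbA_\bbQ)}E((g\times\widehat h),s,f)\,\varphi(h)\,dh$ is holomorphic near $s_0$ and agrees with $E(g,s;f,\varphi)$ for $\mathrm{Re}(s)\gg0$; by analytic continuation, $E(g,s_0;f,\varphi)$ equals this integral at $s=s_0$.

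Next I would use that $\iota_{n+r,2n+r}^\uparrow$ induces a holomorphic embedding of $\frakH_{n+r}^d$ into $\frakH_{2n+r}^d$ (sending $z$ to $\mathrm{diag}(z,\bfi_n)$), hence respects the Cartan decomposition; in particular $\iota_{n+r,2n+r}^\uparrow(\frakp_{n+r,-})\subseteq\frakp_{2n+r,-}$, so $\iota_{n+r,2n+r}^\uparrow(\calU(\frakp_{n+r,-}))\subseteq\calU(\frakp_{2n+r,-})$. Using $g\exp(tX)\times\widehat h=(g\times\widehat h)\exp\!\big(t\,\iota_{n+r,2n+r}^\uparrow(X)\big)$ and differentiating under the integral sign (legitimate by the rapid decay of $\varphi$ and the moderate growth of $E(\,\cdot\,,s_0,f)$ and its derivatives), one gets, for every $u\in\calU(\frakp_{n+r,-})$,
\[
u\cdot E(g,s_0;f,\varphi)=\int_{G_n(\bbQ)\bs G_n(\bbA_\bbQ)}\big(r(\iota_{n+r,2n+r}^\uparrow(u))E(\,\cdot\,,s_0,f)\big)(g\times\widehat h)\,\varphi(h)\,dh.
\]
As $E(\,\cdot\,,s_0,f)$ is nearly holomorphic on $G_{2n+r}(\bbA_\bbQ)$, the space $r(\calU(\frakp_{2n+r,-}))E(\,\cdot\,,s_0,f)$ is finite-dimensional, spanned by automorphic forms $\phi_1,\dots,\phi_N$; hence $\calU(\frakp_{n+r,-})\cdot E(\,\cdot\,,s_0;f,\varphi)$ lies in the span of the finitely many functions $g\mapsto\int_{G_n(\bbQ)\bs G_n(\bbA_\bbQ)}\phi_i(g\times\widehat h)\,\varphi(h)\,dh$, so it is finite-dimensional. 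This is precisely the near holomorphy of $E(\,\cdot\,,s_0;f,\varphi)$.

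The main obstacle is the input invoked above: that $E(\,\cdot\,,s_0,f)$ on $G_{2n+r}(\bbA_\bbQ)$ is holomorphic and nearly holomorphic at the prescribed $s=s_0$. Away from the boundary this is Lemma \ref{s>1} and Proposition \ref{s=1}; at the boundary it requires the (regularized) Siegel--Weil analysis (Theorems \ref{SW_formula}, \ref{s=0}), identifying $E(\,\cdot\,,s_0,f)$ --- or its leading Laurent coefficient, if $s=s_0$ is a pole --- with a theta integral whose archimedean constituent is a highest weight module; this is where the explicit quadratic space $V$ for $s_0=1/2$ enters, beyond the $\frakp_{2n+r,-}$-finiteness of $f_{s_0}$ used throughout. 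Everything else --- the holomorphy of $\iota_{n+r,2n+r}^\uparrow$, differentiation under the integral, and the transfer of $\frakp_-$-finiteness from the Siegel Eisenstein series to its pullback --- is formal.
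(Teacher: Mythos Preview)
Your proposal is correct and follows essentially the same approach as the paper: establish that the Siegel Eisenstein series $E(\,\cdot\,,s_0,f)$ on $G_{2n+r}$ is nearly holomorphic (via Lemma \ref{s>1}, Proposition \ref{s=1}, and the Siegel--Weil input at the boundary), and then transfer $\frakp_{-}$-finiteness through the absolutely convergent integral against the cusp form $\varphi$. The paper's proof is terser---it simply notes that $\frakp_{2n+r,-}^\ell\cdot E(\,\cdot\,,s_0,f)=0$ for some $\ell$ and that the absolute convergence of the integral immediately yields $\frakp_{n+r,-}^\ell\cdot E(\,\cdot\,,s_0;f,\varphi)=0$---whereas you spell out the inclusion $\iota_{n+r,2n+r}^\uparrow(\frakp_{n+r,-})\subset\frakp_{2n+r,-}$ and the differentiation under the integral sign, and you treat the cases $s_0\in\{0,1/2\}$ more explicitly than the paper's brief reference to ``the proof of Lemma \ref{s>1}.''
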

	\begin{proof}
	Under the assumptions, Siegel Eisenstein series $E(\,\cdot\,,s,f)$ is nearly holomorphic at $s=s_0$ by the proof of Lemma \ref{s>1} and Proposition \ref{s=1}.
	Take an integer $\ell \gg 0$ so that $\frakp_{2n+r,-}^\ell \cdot E(\,\cdot\,,s_0,f) = 0$.
	Since the integral
	\[
	E(g,s_0;f,\varphi) = \int_{G_{n+r}(\bbQ) \bs G_{n+r}(\bbA_\bbQ)} E((g \times \widehat{h}),s_0,f) \varphi(h) \, dh, \qquad g \in G_{n}(\bbA_\bbQ)
	\]
	converges absolutely, one has $\frakp_{n+r,-}^\ell \cdot E(g,s_0;f,\varphi) = 0$.
	This completes the proof.
	\end{proof}
	
	We next compute the constant term of $E(\,\cdot\,,s_0;f,\varphi)$ along $P_{r,n+r}$.
	Let $U$ be the subgroup of $G_{2n+r}$ in which elements of the form
	\[
	\left(\begin{array}{ccc|ccc} \mathbf{1}_{r} &  & & * & & \\ & \mathbf{1}_{n} &&&& \\ && \mathbf{1}_{n} &&& \\\hline &&&\mathbf{1}_{r} && \\ &&&& \mathbf{1}_n & \\ &&&&&\mathbf{1}_n \end{array}\right).
	\]
	We may regard the group $U$ as a subgroup of $G_{n+r}^\uparrow$.
	Then, it is a subgroup of the unipotent radical of $P_{r,n+r}$.
	Set
	\[
	E(g,s_0,f)_U = \int_{U(\bbQ) \bs U(\bbA_\bbQ)} E(ug,s_0,f)\, dn.
	\]
	We compute $E(g,s_0,f)_U$ as follows:
	
	\begin{lem}\label{lem_const_term_Klingen}
	Let $f_s$ be a standard section of $I_{2n+r}(s,\mu)$.
	Suppose that $f_s$ satisfies the conditions as in Proposition \ref{NH_Klingen} and moreover if $F=\bbQ$, assume $s_ 0 > 1$.
	We then have
	\[
	E((t,m),s_0,f)_U = \mu(t) |\det t|^{s_0 + (2n+r+1)/2}E(m,s_0+r/2,\iota_{2n,2n+r}^{\downarrow,*}f)
	\]
	for $(t,m) \in \GL_r(\bbA_F) \times G_{2n}(\bbA_\bbQ) = M_{P_{r,2n+r}}(\bbA_\bbQ)$.
	\end{lem}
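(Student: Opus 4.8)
The plan is to unfold $E(\,\cdot\,,s,f)$ along $U$ and to analyze the resulting orbit integrals cell by cell. Writing $E(g,s,f)=\sum_{\gamma\in P_{2n+r}(\bbQ)\backslash G_{2n+r}(\bbQ)}f_s(\gamma g)$ and grouping the cosets first into $U(\bbQ)$-orbits and then these orbits by a geometric invariant, one gets, formally,
\[
E(g,s,f)_U=\sum_{j=0}^{r}\ \sum_{\gamma\in\mathcal R_j}\int_{U_\gamma(\bbQ)\backslash U(\bbA_\bbQ)}f_s(\gamma ug)\,du ,
\]
where $U_\gamma=U\cap\gamma^{-1}P_{2n+r}\gamma$ and $\mathcal R_j$ is a set of $U(\bbQ)$-orbit representatives lying in the Bruhat cell $P_{2n+r}(\bbQ)\sigma_jP_{r,2n+r}(\bbQ)$. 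This manipulation is legitimate for $\operatorname{Re}(s)\gg0$ (absolute convergence of $E$) and extends to $s=s_0$ by the holomorphy of $E(\,\cdot\,,s,f)$ there, which under the hypotheses (notably ``$s_0>1$ if $F=\bbQ$'') is furnished by Lemma \ref{s>1}. Identifying $P_{2n+r}(\bbQ)\backslash G_{2n+r}(\bbQ)$ with the Lagrangians of the standard symplectic space and $U\cong\Res_{F/\bbQ}\Sym_r$ with the transvection group attached to the isotropic $r$-plane $Y_0$ fixed by $P_{r,2n+r}$, one checks (using $W=W^\perp$) that the cells are indexed by $j=\dim(W\cap Y_0)\in\{0,\dots,r\}$, that $U_\gamma=U$ precisely when $j=r$ — i.e.\ when $W\supseteq Y_0$ — and that in that case $\mathcal R_r$ may be taken to be $\{\,\iota_{2n,2n+r}^\downarrow(\beta):\beta\in P_{2n}(\bbQ)\backslash G_{2n}(\bbQ)\,\}$, since $\iota_{2n,2n+r}^\downarrow(G_{2n})$ centralizes $U$.

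For the $j=r$ contribution take $g=(t,m)$ and factor $(t,m)=d(t)\,\iota_{2n,2n+r}^\downarrow(m)$ with $d(t)=\operatorname{diag}(t,\mathbf 1_{2n},{}^tt^{-1},\mathbf 1_{2n})\in P_{2n+r}$, noting that $\iota_{2n,2n+r}^\downarrow(\beta)$ commutes with $d(t)$ and with $U$, and that $d(t)$ normalizes $U$. Since $f_s$ is left $N_{P_{2n+r}}(\bbA_\bbQ)$-invariant, transforms by $\mu(\det)\,|\det|^{s+(2n+r+1)/2}$ under the $\GL$-Levi of $P_{2n+r}$, and $U(\bbQ)\backslash U(\bbA_\bbQ)$ has total mass one, the orbit integral attached to $\iota_{2n,2n+r}^\downarrow(\beta)$ collapses to $\mu(t)\,|\det t|^{s+(2n+r+1)/2}\,(\iota_{2n,2n+r}^{\downarrow,*}f)(\beta m)$, the shift of the inducing parameter by $r/2$ arising from the comparison of modulus characters along $\iota_{2n,2n+r}^\downarrow(P_{2n})\subseteq P_{2n+r}$. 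Summing over $\beta\in P_{2n}(\bbQ)\backslash G_{2n}(\bbQ)$ gives exactly $\mu(t)\,|\det t|^{s+(2n+r+1)/2}E(m,s+r/2,\iota_{2n,2n+r}^{\downarrow,*}f)$, the asserted right-hand side.

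The remaining point — and the main obstacle — is that each cell with $j<r$ contributes zero. After absorbing the stabilizer (which produces a unipotent prefactor on which $f_s$ is constant), every such orbit integral becomes a partial intertwining integral $\int_{(U_\gamma\backslash U)(\bbA_\bbQ)}f_s(\sigma_j u''g)\,du''$ over a vector group of type $\Res_{F/\bbQ}\Sym_{r-j}$ with $r-j>0$. I would show this vanishes archimedean place by archimedean place. If $\mu_v=\sgn^{m+1}$ for some $v\in\bfa$, where $m=2s_0+2n+r+1$, then by Proposition \ref{local_ind_rep}(3) the $\frakp_{2n+r,-}$-finite part of $I_{2n+r,v}(s_0,\mu_v)$ is zero, so $f_{s_0}=0$ and there is nothing to prove; otherwise $\mu_v=\sgn^m$ and $f_{s_0,v}$ lies in the scalar highest weight module $L(m/2,\dots,m/2)$, the $\frakp_{2n+r,-}$-finite part of $I_{2n+r,v}(s_0,\mu_v)$ as in the proof of Lemma \ref{s>1}. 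Composed with the standard embedding of $I_{2n+r,v}(s_0,\mu_v)$ into a minimal principal series, the partial integral above realizes a $(\frakg_{2n+r},K_{2n+r,\infty})$-intertwining operator into the principal series whose exponents are obtained from those of $I_{2n+r,v}(s_0,\mu_v)$ by flipping the sign of $r-j$ of them; by Yamashita's criterion (Theorem \ref{emb_ht_wt}), $L(m/2,\dots,m/2)$ embeds into a principal series only when its exponents form the single arithmetic progression prescribed there, a condition destroyed by any nontrivial sign flip. Hence the operator kills $L(m/2,\dots,m/2)$, so $f_{s_0,v}$, and therefore the whole $j$-th term, for every $j<r$. (When $F=\bbQ$ and $\mu^2=\mathbf 1$ this vanishing also follows from the Siegel--Weil formula, Theorem \ref{SW_formula}: by the argument proving Lemma \ref{s>1} one may take $f_{s_0}\in R_{2n+r}(W)$ for a quadratic space $W$ satisfying \eqref{W}, whence $E(\,\cdot\,,s_0,f)$ is a theta integral whose constant term along $U$, read off the theta kernel, produces only the $j=r$ term.) Combining the three steps yields the lemma.
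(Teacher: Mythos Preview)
Your approach differs substantially from the paper's. The paper does not unfold $E$ along $U$ directly. Instead it first invokes near holomorphy of $E(\,\cdot\,,s_0,f)$ together with \cite[Lemma~5.10]{Horinaga_2} to obtain $E(g,s_0,f)_U = E(g,s_0,f)_{Q_{r,2n+r}}$, reducing the partial constant term along the small abelian group $U\cong\Sym_r$ to a full constant term along the parabolic $Q_{r,2n+r}$. That constant term is then computed by iterating the $r=1$ formula \eqref{P_1_const_term}, which has exactly two terms; the unwanted term vanishes at $s=s_0$ because the archimedean factor $U_v(s,\mu)f_v$ acquires a simple zero there by the explicit Gamma-function formula \eqref{U_wt_k}, while the remaining Eisenstein series stays holomorphic under the standing hypotheses. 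Finally, the paper passes from diagonal $t$ to all of $\GL_r$ using left $\SL_r(\bbA_F)$-invariance of $E(\,\cdot\,,s_0,f)_U$, again a consequence of near holomorphy via \cite[Lemma~5.7]{Horinaga_2}.

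Your identification of the $j=r$ cell with the asserted right-hand side is correct, but the vanishing argument for the cells $j<r$ has a genuine gap. The partial integral $\int_{(U_\gamma\backslash U)(\bbA_\bbQ)}f_s(\sigma_j u''g)\,du''$ over a copy of $\Sym_{r-j}$ does not land in a \emph{minimal} principal series with ``$r-j$ exponents sign-flipped'': it is an operator between \emph{degenerate} (Siegel-type) induced representations, and embedding the source $I_{2n+r,v}(s_0,\mu_v)$ into a minimal principal series does not convert this partial integral into a composite of $r-j$ rank-one intertwiners with the sign-flip effect you describe. Consequently Theorem~\ref{emb_ht_wt} is not applicable in the form you invoke it. What actually kills the unwanted terms is precisely the archimedean zero of the relevant intertwining operator on the weight-$(m/2)$ vector, which the paper isolates cleanly by reducing to the $r=1$ step; your route would have to establish the analogous zero for the $\Sym_{r-j}$-integral directly, and that does not follow from Yamashita's embedding criterion. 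Your Siegel--Weil alternative only covers $F=\bbQ$ with $\mu^2=\mathbf 1$, so it does not close the gap either. A smaller issue: the sentence ``extends to $s=s_0$ by the holomorphy of $E$'' is too quick, since holomorphy of the sum $E_U$ at $s_0$ does not by itself yield holomorphy of each orbit integral $T_j(s)$ there, which you would need before arguing $T_j(s_0)=0$ termwise.
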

	\begin{proof}
	By the near holomorphy of $E(g,s_0,f)$ and \cite[Lemma 5.10]{Horinaga_2}, we have
	\[
	E(g,s_0,f)_U = E(g,s_0,f)_{Q_{r,2n+r}}.
	\]
	Thus, for $(t,m) = (t_1,\ldots,t_r,m) \in \GL_1(\bbA_F) \times \cdots \times \GL_1(\bbA_F) \times  G_{2n}^\downarrow(\bbA_\bbQ) = M_{Q_{r,2n+r}}$, by taking the constant terms successively, we obtain
	\[
	E((t,m),s_0,f)_U = \left(\cdots \left( \left(E((t,m),s_0,f\right)_{Q_{1,2n+r}}|_{G_{2n+r-1}^\downarrow(\bbA_\bbQ)}\right)_{Q_{1,2n+r-1}}|_{G_{n+r-2}^\downarrow(\bbA_\bbQ)}\cdots\right)_{Q_{1,2n+1}}|_{G_{2n}^\downarrow(\bbA_\bbQ)}.
	\]
	We tacitly assume $r=1$.
	By (\ref{P_1_const_term}), one has
	\[
	E((t,m),s_0,f)_U = \mu(t)|t|^{s_0 + n+(r+1)/2} E(m,s_0+1/2,\iota^{\downarrow,*}f) + \mu(t)^{-1}|t|^{-s_0+n+1}E(m,s_0-1/2,\iota^{\downarrow,*}U(s,\mu)f).
	\]
	Then, for $s=s_0$ and $v \in \bfa$, the archimedean component $U_v(s,\mu)f_v$ has at least simple zero.
	Hence, by assumptions, the Eisenstein series $E(m,s-1/2,\iota^*U(s,\mu)f)$ is zero at $s=s_0$.
	For general $r$, we thus have
	\[
	E((t,m),s_0,f)_U = \left(\prod_{j = 1}^r\mu(t_j)|t_j|^{s_0 + (2n+r+1)/2}\right)E(m,s_0+r/2,\iota^{\downarrow,*}f).
	\]
	Let $\SL_r$ be the derived subgroup of $\GL_r \subset M_{P_{r,2n+r}}$.
	It suffices to show that $E(\,\cdot\,,s_0,f)$ is left $\SL_r(\bbA_F)$ invariant.
	It follows from \cite[Lemma 5.7]{Horinaga_2} by the near holomorphy of Eisenstein series.
	This completes the proof.
	\end{proof}
	\begin{prop}\label{const_term_Klingen}
	With the notation as in Proposition \ref{NH_Klingen}, suppose $s_0 > 1$ if $F = \bbQ$.
	Then, the constant term of $E(g,s_0;f,\varphi)$ along $P_{r,n+r}$ equals to the zeta integral $Z(g,s_0;f,\varphi)$ for any $g \in G_{n+r}(\bbA_\bbQ)$.
	\end{prop}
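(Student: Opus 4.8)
\emph{Strategy.} By the unfolding already carried out, $E(\,\cdot\,,s;f,\varphi)$ is the Eisenstein series on $G_{n+r}$ attached to $P_{r,n+r}$ and the section $Z(\,\cdot\,,s;f,\varphi)$ of $I_{r,n+r}(s,\mu,\pi_\varphi)$, so that $E(g,s;f,\varphi)=\sum_{\gamma\in P_{r,n+r}(\bbQ)\bs G_{n+r}(\bbQ)}Z(\gamma g,s;f,\varphi)$. I would compute its constant term along $P_{r,n+r}$ directly from the pullback integral. Put $N=N_{P_{r,n+r}}$ and $\mathcal N=\iota_{n+r,2n+r}^{\uparrow}(N)\subset G_{2n+r}$. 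Since $\iota_{n+r,2n+r}^{\uparrow}(G_{n+r})$ and $\iota_{n,2n+r}^{\downarrow}(G_n)$ commute inside $G_{2n+r}$, one has $(ng)\times\widehat h=\iota_{n+r,2n+r}^{\uparrow}(n)\cdot(g\times\widehat h)$, hence
\[
E(g,s_0;f,\varphi)_{P_{r,n+r}}=\int_{N(\bbQ)\bs N(\bbA_\bbQ)}\int_{G_n(\bbQ)\bs G_n(\bbA_\bbQ)}E\bigl(\iota_{n+r,2n+r}^{\uparrow}(n)(g\times\widehat h),s_0,f\bigr)\,\varphi(h)\,dh\,dn .
\]
By Lemma \ref{s>1} and Proposition \ref{s=1} the Siegel Eisenstein series $E(\,\cdot\,,s_0,f)$ is a nearly holomorphic automorphic form on $G_{2n+r}$, hence of moderate growth; as $\varphi$ is rapidly decreasing and $N(\bbQ)\bs N(\bbA_\bbQ)$ is compact, the iterated integral converges absolutely and Fubini lets me move the $n$-integration inside, turning the inner integral into $E\bigl((g\times\widehat h),s_0,f\bigr)_{\mathcal N}$.

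\emph{Collapsing $\mathcal N$ to $U$.} A root-datum check gives the inclusions $U\subseteq\mathcal N\subseteq N_{P_{r,2n+r}}\subseteq N_{Q_{r,2n+r}}$ of unipotent subgroups of $G_{2n+r}$, with $[\mathcal N,\mathcal N]\subseteq U$, so $U$ is normal in $\mathcal N$. By the opening line of the proof of Lemma \ref{lem_const_term_Klingen} — which rests on the near holomorphy of $E(\,\cdot\,,s_0,f)$ and \cite[Lemma 5.10]{Horinaga_2} — one has $E(\,\cdot\,,s_0,f)_U=E(\,\cdot\,,s_0,f)_{Q_{r,2n+r}}$, and being a constant term along $Q_{r,2n+r}$ this function is left $N_{Q_{r,2n+r}}(\bbA_\bbQ)$-invariant, a fortiori left $\mathcal N(\bbA_\bbQ)$-invariant; therefore $E(\,\cdot\,,s_0,f)_{\mathcal N}=E(\,\cdot\,,s_0,f)_U$. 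Thus
\[
E(g,s_0;f,\varphi)_{P_{r,n+r}}=\int_{G_n(\bbQ)\bs G_n(\bbA_\bbQ)}E\bigl((g\times\widehat h),s_0,f\bigr)_U\,\varphi(h)\,dh .
\]

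\emph{Identifying with $Z$.} Finally I would plug in the explicit formula of Lemma \ref{lem_const_term_Klingen}. Taking an Iwasawa decomposition $g=n_g(t_g,m_g)k_g$ in $G_{n+r}$ relative to $P_{r,n+r}$ and using the commuting images once more, $g\times\widehat h$ has Iwasawa decomposition $\iota_{n+r,2n+r}^{\uparrow}(n_g)\cdot\bigl(\iota^{\uparrow}(t_g),\,m_g\times\widehat h\bigr)\cdot\iota_{n+r,2n+r}^{\uparrow}(k_g)$ relative to $P_{r,2n+r}$, the middle term lying in the Levi $\GL_r(\bbA_F)\times G_{2n}(\bbA_\bbQ)$. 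Using left $N_{P_{r,2n+r}}(\bbA_\bbQ)$-invariance of $E(\,\cdot\,,s_0,f)_U$, functoriality in the section ($E(xk,s_0,f)_U=E(x,s_0,r(k)f)_U$ for $k\in K_{2n+r}$), and Lemma \ref{lem_const_term_Klingen} applied to $r(\iota_{n+r,2n+r}^{\uparrow}(k_g))f$, the integrand becomes
\[
\mu(t_g)\,|\det t_g|^{s_0+(2n+r+1)/2}\,E\bigl(m_g\times\widehat h,\ s_0+\tfrac{r}{2},\ \iota_{2n,2n+r}^{\downarrow,*}r(\iota_{n+r,2n+r}^{\uparrow}(k_g))f\bigr),
\]
a Siegel Eisenstein series on $G_{2n}$; integrating it against $\varphi$ over $G_n(\bbQ)\bs G_n(\bbA_\bbQ)$ is exactly the $r=0$ instance of the unfolding that produced $Z$, so cuspidality of $\varphi$ annihilates all contributions with $j<n$ in the corresponding double-coset decomposition $G_{2n}(\bbQ)=\bigsqcup_j P_{2n}(\bbQ)\tau_jH(\bbQ)$ and the surviving $j=n$ term unwinds to $Z(g,s_0;f,\varphi)$. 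Combining the three displays gives $E(g,s_0;f,\varphi)_{P_{r,n+r}}=Z(g,s_0;f,\varphi)$.

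\emph{Main obstacle.} Equivalently, one can run the Langlands constant-term formula for $\sum_\gamma Z(\gamma g,s;f,\varphi)$: it outputs $Z(g,s_0;f,\varphi)$ together with standard intertwining terms $M(w,s_0)Z(\,\cdot\,,s_0;f,\varphi)$ over the nontrivial Weyl elements preserving $M_{P_{r,n+r}}$, and each vanishes at $s=s_0$ by the same mechanism as in Lemma \ref{lem_const_term_Klingen} — the archimedean factor of $M(w,s)$ contains the local intertwining integrals appearing in \eqref{P_1_const_term}, which by \eqref{U_wt_k} have a zero at $s_0$ on a section of the relevant weight. I expect the genuinely delicate part to be the bookkeeping of the two middle paragraphs: verifying $U\subseteq\mathcal N\subseteq N_{Q_{r,2n+r}}$ and $[\mathcal N,\mathcal N]\subseteq U$, tracking the Iwasawa decomposition of $g\times\widehat h$ through $G_n^{\downarrow}\subset G_{2n}\subset M_{P_{r,2n+r}}$, and matching $\tau_n$ for $G_{2n+r}$ with $\tau_n$ for $G_{2n}$; the underlying point, that near holomorphy forces the constant term to be ``pure'' with no surviving intertwining tail, is the easy part, and the interchange of integrations is routine given the near holomorphy and the rapid decay of $\varphi$.
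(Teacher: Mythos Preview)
Your proposal is correct and follows essentially the same route as the paper's proof: both compute the constant term by pushing the $N_{P_{r,n+r}}$-integration inside the pullback integral, invoke Lemma \ref{lem_const_term_Klingen} (together with the near holomorphy and \cite[Lemma 5.10]{Horinaga_2}) to collapse the unipotent integration to the $U$-constant term and obtain the explicit formula on the Levi, and then identify the resulting integral against $\varphi$ with $Z(g,s_0;f,\varphi)$ via the already-established relation $Z((t,m)k,s;f,\varphi)=\mu(t)|\det t|^{s+\rho_{2n+r}}Z(m,s+r/2;\iota_{2n,2n+r}^{\downarrow,*}r(k)f,\varphi)$. Your write-up is simply more explicit about the intermediate inclusion $U\subseteq\mathcal N\subseteq N_{Q_{r,2n+r}}$ and the Fubini justification, which the paper absorbs into the phrase ``for the first and second equality, we use Lemma \ref{lem_const_term_Klingen}.''
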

	\begin{proof}
	Since $K_{n+r,\infty}$ normalizes $\frakp_{n+r,-}$, the right translation $r(k)f_s$ satisfies the same condition as in Proposition \ref{NH_Klingen}.
	Thus, for any $(t,m) \in \GL_r(\bbA_\bbQ) \times G_{n} (\bbA_\bbQ) = M_{P_{r,n+r}}(\bbA_\bbQ)$ and $k \in K_{n+r}$, we have
	\begin{align*}
	E((t,m)k,s_0;f,\varphi)_{P_{r,n+r}} 
	&= \int_{U(\bbQ) \bs U(\bbA_\bbQ)}\int_{G_n(\bbQ) \bs G_n(\bbA_\bbQ)} E((u(t,m) \times \widehat{h}),s_0,r(k)f) \varphi(h) \, dhdu\\
	&= \mu(t)|\det t|^{s_0+\rho_{2n+r}}\int_{G_n(\bbQ) \bs G_n(\bbA_\bbQ)} E((m \times \widehat{h}),s_0 + r/2,\iota_{2n,2n+r}^{\downarrow,*}r(k)f) \varphi(h) \, dh \\
	&= \mu(t)|\det t|^{s_0+\rho_{2n+r}}Z(m,s_0 + r/2;\iota_{2n,2n+r}^{\downarrow,*}r(k)f,\varphi)\\
	&= Z((t,m),s_0;r(k)f,\varphi)\\
	&= Z((t,m)k,s_0;f,\varphi).
	\end{align*}
	For the first and second equality, we use Lemma \ref{lem_const_term_Klingen}.
	Hence we see $E(g,s_0;f,\varphi)_{P_{r,n+r}} = Z(g,s_0;f,\varphi)$.
	This completes the proof.
	\end{proof}
	
	\begin{cor}\label{hol_global_zeta_int}
	With the notation as in Proposition \ref{NH_Klingen}, suppose $s_0 > 1$ if $F=\bbQ$.
	Then, the zeta integral $Z(g,s;f,\varphi)$ is holomorphic at $s=s_0$.
	\end{cor}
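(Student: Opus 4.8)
The plan is to deduce the holomorphy of the meromorphic section $Z(\,\cdot\,,s;f,\varphi)$ at $s=s_0$ from the holomorphy at $s=s_0$ of the Klingen Eisenstein series $E(\,\cdot\,,s;f,\varphi)$, using the constant term computation of Proposition~\ref{const_term_Klingen}. First I would record that, under the running hypotheses and since $s_0>1$ when $F=\bbQ$, the Siegel Eisenstein series $E(\,\cdot\,,s,f)$ on $G_{2n+r}(\bbA_\bbQ)$ is holomorphic at $s=s_0$: this is precisely what Lemma~\ref{s>1} provides (see also the discussion in the proof of Proposition~\ref{NH_Klingen}), since $f_{s_0}\in I_{2n+r}(s_0,\mu)_{\frakp_{2n+r,-}\fin}$ is carried to a genuine, nearly holomorphic automorphic form with no pole at $s_0$.

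Next I would argue that $s\mapsto E(g,s;f,\varphi)$ is holomorphic at $s=s_0$. Recall
\[
E(g,s;f,\varphi)=\int_{G_n(\bbQ)\bs G_n(\bbA_\bbQ)}E\big((g\times\widehat{h}),s,f\big)\,\varphi(h)\,dh .
\]
On a small neighbourhood of $s_0$ the integrand is holomorphic in $s$ and, being a specialization of an Eisenstein series away from its poles, has moderate growth in $h$ with bounds locally uniform in $s$; since $\varphi$ is a cusp form and hence rapidly decreasing on $G_n(\bbQ)\bs G_n(\bbA_\bbQ)$, the integral converges absolutely and locally uniformly in $s$ near $s_0$ — this is the same convergence already exploited in the proof of Proposition~\ref{NH_Klingen}. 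By Morera's theorem (or by differentiating under the integral sign), $E(g,s;f,\varphi)$ is then holomorphic at $s=s_0$ for every $g\in G_{n+r}(\bbA_\bbQ)$, uniformly for $g$ in compact sets.

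Finally I would invoke the unfolding identity $E(g,s;f,\varphi)=\sum_{\gamma\in P_{r,n+r}(\bbQ)\bs G_{n+r}(\bbQ)}Z(\gamma g,s;f,\varphi)$ together with Proposition~\ref{const_term_Klingen}, which identifies the constant term of $E(\,\cdot\,,s;f,\varphi)$ along $P_{r,n+r}$ with $Z(\,\cdot\,,s;f,\varphi)$ up to the second Weyl term. By the analysis in the proof of Lemma~\ref{lem_const_term_Klingen} — and this is exactly where the hypothesis $s_0>1$ for $F=\bbQ$ is used — that remaining term is built from the intertwining integral $U(s,\mu)$ applied to $f$, whose archimedean component has a zero at $s=s_0$, so it is holomorphic (indeed vanishing) at $s=s_0$. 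Since the constant term along $P_{r,n+r}$ is an integral over the compact quotient $N_{P_{r,n+r}}(\bbQ)\bs N_{P_{r,n+r}}(\bbA_\bbQ)$, it inherits the holomorphy of $E(\,\cdot\,,s;f,\varphi)$ at $s_0$ established above; writing $Z(\,\cdot\,,s;f,\varphi)$ as the constant term minus the holomorphic second term then yields the assertion.

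The main obstacle — in fact the only non-formal point — is the holomorphy of $E(\,\cdot\,,s;f,\varphi)$ near $s_0$: one needs both that $E(\,\cdot\,,s,f)$ continues holomorphically across $s_0$ and that it satisfies a growth estimate in $h$ that is uniform for $s$ in a neighbourhood of $s_0$, so that the integral against the rapidly decreasing cusp form $\varphi$ converges locally uniformly in $s$. Both inputs are available from the Kudla--Rallis theory recalled in Lemma~\ref{s>1} and Proposition~\ref{s=1}; granted them, the remainder of the argument is routine bookkeeping with constant terms.
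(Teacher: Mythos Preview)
Your first two steps are exactly what the paper's one-line proof is pointing to, and in fact they already finish the argument once you combine them with the reduction formula established in the lemma on meromorphic continuation. Recall that for $g=n(t,m)k$ one has
\[
Z(g,s;f,\varphi)=\mu(t)\,|\det t|^{s+\rho_{2n+r}}\,Z\bigl(m,s+r/2;\iota_{2n,2n+r}^{\downarrow,*}r(k)f,\varphi\bigr),
\]
and the right-hand side is the $r=0$ zeta integral on $G_{2n}$, which \emph{equals} $\int_{G_n(\bbQ)\bs G_n(\bbA_\bbQ)}E\bigl((m\times\widehat h),s+r/2,\iota^{\downarrow,*}r(k)f\bigr)\varphi(h)\,dh$. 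Your step~2 (holomorphy of such an integral, using moderate growth of the Siegel Eisenstein series and rapid decay of $\varphi$) applies verbatim here, since $s_0+r/2>1$ when $F=\bbQ$. This is precisely the paper's ``immediate from the definition of the zeta integral and the holomorphy of $E(\,\cdot\,,s,f)$''.

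Your third paragraph takes an unnecessary detour and contains a misreading. Proposition~\ref{const_term_Klingen} asserts that the constant term of the Klingen Eisenstein series along $P_{r,n+r}$ is \emph{equal} to $Z$ at $s=s_0$; there is no ``second Weyl term'' in its statement. What you have in mind is the two-term formula \eqref{P_1_const_term} for the constant term of the \emph{Siegel} Eisenstein series on $G_{2n+r}$, which feeds into the proof of Lemma~\ref{lem_const_term_Klingen}; that analysis is what makes Proposition~\ref{const_term_Klingen} an exact identity rather than one ``up to'' a remainder. One could indeed extend Proposition~\ref{const_term_Klingen} to an identity of meromorphic functions of $s$ with an extra term coming from $U(s,\mu)$ and then argue that this extra term is holomorphic at $s_0$, but you would then have to control possible poles of the lower-rank Eisenstein series against the archimedean zero of $U_v(s,\mu)f_v$, which is more work than the direct route above and is not what Proposition~\ref{const_term_Klingen} provides. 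So drop the last paragraph: your steps~1--2, read through the reduction-to-$r=0$ formula, already match the paper's proof.
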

	\begin{proof}
	The statement follows immediately from the definition of zeta integral and the holomorphy of $E(\,\cdot\,,s,f)$ at $s=s_0$.
	\end{proof}
	
	We next consider the local zeta integrals $Z_v(\,\cdot\,,s;f_v,\varphi_v)$.
	
	\subsection{Unramified computations}
	
	We first compute $Z_v(g,s;f,\varphi)$ at unramified places.
	\begin{lem}\label{zeta_int_unram}
	Let $\mu_v$ be an unramified character of $\GL_{2n+r}(F_v)$, $f_{v,s}$ be an unramified standard section of $I_{2n+r,v}(s,\mu_v)$ with $f_{v,s}(1) = 1$ and $\pi_v$ be an irreducible unramified representation of $\Sp_{2n}(F_v)$ with an invariant inner product $\langle\,,\,\rangle$.
	Take an unramified vector $\varphi_v \in \pi_v$ so that $\langle\varphi_v,\varphi_v\rangle = 1$.
	We then have
	\[
	Z(1,s;f_v,\varphi_v) = \frac{L_v(s+(r+1)/2,\pi_v,\mu_v)}{L_v(s+n+(r+1)/2,\mu_v)\prod_{j=1}^nL_v(2s+2n+r+1-2j,\mu^2_v)} \times \varphi_v.
	\]
	\end{lem}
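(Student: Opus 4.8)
\emph{Proof idea.} The plan is to use that all the data are spherical, which forces $Z(1,s;f_v,\varphi_v)$ to be a scalar multiple of $\varphi_v$, and then to recognize that scalar as a classical unramified doubling integral. First I would observe that $g\longmapsto Z(g,s;f_v,\varphi_v)$ is a spherical section of $I_{r,n+r,v}(s,\mu_v,\pi_v)$ with values in $\pi_v$; since $\pi_v$ is unramified, the space of $\Sp_{2n}(\calO_{F_v})$-fixed vectors in $\pi_v$ is one-dimensional, so $Z(1,s;f_v,\varphi_v)=c_v(s)\,\varphi_v$ for a scalar $c_v(s)$, and it suffices to compute $c_v(s)$. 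Specializing the Iwasawa-type factorization of $Z$ along $P_{r,n+r}$ established above (which reads $Z(n(t,m)k,s;f_v,\varphi_v)=\mu_v(t)|\det t|^{s+(2n+r+1)/2}Z(m,s+\tfrac r2;\iota_{2n,2n+r}^{\downarrow,*}r(k)f_v,\varphi_v)$) to $g=1$, i.e.\ $t=1$, $m=1$, $k=1$, gives
\[
Z(1,s;f_v,\varphi_v)=Z\bigl(1,\,s+\tfrac{r}{2};\,\iota_{2n,2n+r}^{\downarrow,*}f_v,\,\varphi_v\bigr),
\]
where the right-hand side is the local pullback (doubling) integral attached to $G_n\times G_n\hookrightarrow G_{2n}$ with cuspidal datum $\pi_v$, and $\iota_{2n,2n+r}^{\downarrow,*}f_v$ is the unramified standard section of $I_{2n,v}(s+\tfrac r2,\mu_v)$ taking the value $1$ at the identity. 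This reduces the assertion to the case $r=0$, i.e.\ to the doubling integral of Piatetski-Shapiro and Rallis.

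Next I would use the invariant inner product $\langle\,,\,\rangle$ on $\pi_v$: since $\langle\varphi_v,\varphi_v\rangle=1$ and $h\longmapsto\langle\pi_v(h)\varphi_v,\varphi_v\rangle$ is the zonal spherical function $\omega_{\pi_v}$ of $\pi_v$, pairing the $r=0$ integral against $\varphi_v$ shows that the scalar equals
\[
\int_{\Sp_{2n}(F_v)}\bigl(\iota_{2n,2n+r}^{\downarrow,*}f_v\bigr)_{s+r/2}\bigl(\tau_n(1\times\widehat h)\bigr)\,\omega_{\pi_v}(h)\,dh .
\]
This is exactly the unramified local doubling integral for $\Sp_{2n}(F_v)$ twisted by the character $\mu_v$, evaluated on the spherical vectors. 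By the basic unramified computation of \cite{1987_PS-R} (the twisted case being also contained in \cite{00_Shimura} and \cite{1994_Kudla-Rallis}) this integral equals
\[
\frac{L_v\bigl(s+\tfrac{r}{2}+\tfrac12,\pi_v,\mu_v\bigr)}{L_v\bigl(s+\tfrac{r}{2}+n+\tfrac12,\mu_v\bigr)\prod_{j=1}^{n}L_v\bigl(2s+r+2n+1-2j,\mu_v^2\bigr)} .
\]
Since $s+\tfrac r2+\tfrac12=s+\tfrac{r+1}{2}$ and $s+\tfrac r2+n+\tfrac12=s+n+\tfrac{r+1}{2}$, combining this with $Z(1,s;f_v,\varphi_v)=c_v(s)\,\varphi_v$ yields the stated formula.

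The one genuinely delicate point is the bookkeeping of normalizations. One has to match the $\rho$-shift built into the definitions of $I_n(s,\mu)$ and of $\tau_n$ used here with those of \cite{1987_PS-R}; to fix the Haar measure on $\Sp_{2n}(F_v)$ so that the integral of $\omega_{\pi_v}$ against the restricted section produces precisely the displayed Euler factor and not an extra volume constant; and to check that the twist by $\mu_v$ enters the numerator as $L_v(\cdot,\pi_v,\mu_v)$ and the denominator only through $\mu_v$ and $\mu_v^2$. Once the reduction to $r=0$ is in hand these are routine: the remaining computation is a standard unramified evaluation --- an integral of a zonal spherical function against a character of the maximal split torus, carried out by Macdonald's formula --- through which the unramified character $\mu_v$ passes without difficulty.
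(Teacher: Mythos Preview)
Your proposal is correct and follows essentially the same route as the paper: reduce to a scalar by sphericality, restrict the section to $G_{2n}^\downarrow$ to pass from parameter $s$ to $s+\tfrac r2$ (the paper does this directly rather than via the Iwasawa factorization, but it is the same observation), pair with $\varphi_v$ to obtain the matrix-coefficient integral, and then invoke the unramified doubling computation. The paper cites \cite[(7.2.8)]{1994_Kudla-Rallis} for the final step, which is the same unramified evaluation you attribute to Piatetski-Shapiro--Rallis and Kudla--Rallis.
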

	\begin{proof}
	The restriction $\iota^{\downarrow,*}f_{s+r/2}$ of $f_{v,s}$ to $G^\downarrow_{2n}$ is a standard unramified section of $I_{2n}(s+r/2,\mu)$.
	Since $Z(1,s;f_v,\varphi_v)$ is an unramified vector, it is a constant multiple of $\varphi_v$.
	By definition of local zeta integral, we have
	\begin{align*}
	\langle Z_v(1,s;f_v,\varphi_{v}), \varphi_v \rangle 
	&= \int_{\Sp_{2n}(F_v)} f_{v,s}(\tau_n(1\times \widehat{h})) \langle \pi_v(h)\varphi_v, \varphi_v\rangle \, dh\\
	&= \int_{\Sp_{2n}(F_v)} \iota_{2n,2n+r}^{\downarrow,*}f_{v,s+r/2}(\tau_n(1\times \widehat{h})) \langle \pi_v(h)\varphi_v, \varphi_v\rangle \, dh.
	\end{align*}
	By \cite[(7.2.8)]{1994_Kudla-Rallis}, one has
	\[
	Z(1,s;f_v,\varphi_v) = \frac{L_v(s+(r+1)/2,\pi_v,\mu_v)}{L_v(s+n+(r+1)/2,\mu_v)\prod_{j=1}^nL_v(2s+2n+r+1-2j,\mu^2_v)}\times\varphi_v.
	\]
	This completes the proof.
	\end{proof}
	
	\subsection{Computations of ramified places}
	
	Fix a non-archimedean place $v$.
	In this subsection, we compute the zeta integrals at the non-archimedean ramified place $v$.
	We then show the following lemma.
	
	\begin{lem}\label{zeta_int_ram}
	Let $\alpha_s$ be a standard section of $I_{r,n+r,v}(s,\mu_v,\pi_v)$.
	There exists a finite number of standard sections $f_{v,s,1},\ldots,f_{v,s,\ell}$ of $I_{2n+r,v}(s,\mu)$ and vectors $\varphi_{v,1}, \ldots, \varphi_{v,\ell} \in \pi_v$ such that
	\[
	\sum_{j = 1}^\ell Z_v(g,s;f_{v,j},\varphi_{v,j}) = \alpha_s(g), \qquad g \in \Sp_{2n}(F_v).
	\] 
	\end{lem}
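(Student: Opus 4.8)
The plan is to prove the (equivalent, a priori stronger) statement that for $\mathrm{Re}(s)\gg 0$ the $\bbC$-linear span of the functions $g\mapsto Z_v(g,s;f_v,\varphi_v)$, as $f_v$ ranges over standard sections of $I_{2n+r,v}(s,\mu)$ and $\varphi_v$ over $\pi_v$, is the whole space of standard sections of $I_{r,n+r,v}(s,\mu_v,\pi_v)$; the assertion for all $s$ then follows by meromorphic continuation, since both members of the desired identity are meromorphic sections of $I_{r,n+r,v}(s,\mu_v,\pi_v)$. That this span is contained in the target — i.e.\ that each $Z_v(\,\cdot\,,s;f_v,\varphi_v)$ is a section of $I_{r,n+r,v}(s,\mu_v,\pi_v)$ — follows from the same formal manipulations with unipotent integrations (not using cuspidality) that were used above for the global sections $Z(\,\cdot\,,s;f,\varphi)$.

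The heart of the matter is a localization with concentrated Siegel sections. First I would rewrite the local integral: using that $\iota^\uparrow_{n+r,2n+r}$ and $\iota^\downarrow_{n,2n+r}$ have commuting images, together with the relation $f_s(\tau_n h_0 h)=f_s(\tau_n h)$ valid for $h_0$ in the diagonally embedded $\Sp_{2n}(F_v)\subseteq H(F_v)$ of the double-coset lemma, one obtains
\[
Z_v(g,s;f_v,\varphi_v)=\int_{\Sp_{2n}(F_v)}\Psi_s\!\left(\iota(h)^{-1}g\right)\pi_v(h)\varphi_v\,dh,\qquad \Psi_s(x):=f_s\!\left(\tau_n\,\iota^\uparrow_{n+r,2n+r}(x)\right),
\]
where $\iota\colon\Sp_{2n}(F_v)\hookrightarrow G_{n+r}(F_v)$ is the embedding occurring in the double-coset description and $\Psi_s$ is a function on $G_{n+r}(F_v)$. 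Since $\tau_n\in K_{2n+r,v}$ and $\tau_n\iota^\uparrow_{n+r,2n+r}(x)$ lies in the open double coset $P_{2n+r}(F_v)\tau_n H(F_v)$ for every $x$, the function $\Psi_s$ may, as $f_v$ varies, be prescribed essentially freely on a small (stabilizer-saturated) neighbourhood of $1\in G_{n+r}(F_v)$. One also records the equivariances $Z_v(gg_0,s;f_v,\varphi_v)=Z_v(g,s;r(\iota^\uparrow_{n+r,2n+r}(g_0))f_v,\varphi_v)$ and $Z_v(g,s;f_v,\pi_v(h_0)\varphi_v)=Z_v(g,s;r(\iota^\downarrow_{n,2n+r}(\widehat{h_0^{-1}}))f_v,\varphi_v)$, both immediate from the displayed formula and a change of variables. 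Now, since $K_{n+r,v}$ is compact and sections are smooth, the standard sections of $I_{r,n+r,v}(s,\mu_v,\pi_v)$ are spanned by ``elementary'' ones: right-invariant under a small compact open $K_1$, supported on a single coset $P_{r,n+r}(F_v)\xi K_1$, with prescribed value $u\in\pi_v$ at $\xi$. By the first equivariance it suffices to treat $\xi=1$, and by the second it suffices to realize, for a fixed small $K_1$ and each $u$, the elementary section supported on $P_{r,n+r}(F_v)K_1$ with value $u$ at $1$. For this one takes $\varphi_v=u$, chooses $K_1$ small enough to fix $u$ and to kill the inducing character on $K_1\cap P_{r,n+r}(F_v)$, and chooses $\Psi_s$ supported near $1$ on a small $K_1$-compatible neighbourhood on which $\pi_v$ fixes $u$; the displayed integral then reduces, for such $g$, to integration of $\Psi_s$ over a small neighbourhood of $1$ in $\iota(\Sp_{2n}(F_v))$, producing a nonzero locally constant scalar $c(s)$ times $u$, while for $g$ outside a small left $P_{r,n+r}(F_v)$-translate of the support one checks the integral vanishes. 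Replacing $f_v$ by $c(s)^{-1}f_v$ and summing over the finitely many elementary sections making up $\alpha_s$ then finishes the proof.

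The step I expect to be the main obstacle is precisely this localization: one must verify that the inner integration over $\Sp_{2n}(F_v)$ both converges (for $\mathrm{Re}(s)\gg 0$) and does not enlarge the support of $Z_v(\,\cdot\,,s;f_v,\varphi_v)$ in the $G_{n+r}$-variable beyond a small left $P_{r,n+r}(F_v)$-translate of the support of $\Psi_s$. This rests on an explicit understanding of the open cell $P_{2n+r}(F_v)\tau_n H(F_v)$ and of the stabilizer of $\tau_n$ in $H(F_v)$ — the same geometric input already used in Lemma \ref{zeta_int_unram} and Proposition \ref{const_term_Klingen} — and in particular on how the embedded $\iota(\Sp_{2n}(F_v))$ sits relative to that stabilizer. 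The remaining ingredients — smoothness, compactness of $K_{n+r,v}$, and meromorphic continuation in $s$ — are routine bookkeeping.
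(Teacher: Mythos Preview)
Your approach is correct and coincides with the paper's: both construct concentrated standard sections $f_{v,s,j}$ supported on $P_{2n+r}(F_v)\tau_n(K\times K')$ for suitably small compact opens, take $\varphi_{v,j}=\alpha_s(\gamma_j)$ at coset representatives $\gamma_j$ of $P_{r,n+r}(F_v)\backslash \Sp_{2(n+r)}(F_v)/K$, and use the diagonal invariance $f_s(\tau_n((\mathbf{1}_{2r}\times h)\times\widehat{h})X)=f_s(\tau_n X)$ together with $\iota(h)\in P_{r,n+r}(F_v)$ to control the support of $Z_v$. The paper makes this last support step explicit (your ``main obstacle''): since $\iota(h)$ lies in the Levi of $P_{r,n+r}$, the double coset $P_{r,n+r}(F_v)\gamma_q K$ containing $g$ is unchanged by left multiplication by $\iota(h)^{-1}$, so $\Psi_s(\iota(h)^{-1}g)$ vanishes identically in $h$ whenever $g\notin P_{r,n+r}(F_v)K$; this is the single computation your sketch leaves implicit.
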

	\begin{proof}
	Put
	\[
	K_{n,v}(\frakp_v^a) = \{k \in K_{n,v} \mid \text{$k \equiv \mathbf{1}_n$ mod $\frakp_v^a$}\}.
	\]
	Let $\ell$ be a positive integer such that $\alpha_s$ is fixed by $K_{n+r,v}(\frakp_v^\ell)$.
	We write $K = K_{n+r,v}(\frakp_v^\ell)$.
	Let $\{\gamma_1,\ldots,\gamma_\ell\} \subset K_{n+r,v}$ be a set of complete representatives of $P_{r,n+r}(F_v) \bs \Sp_{2(n+r)}(F_v)/K$.
	We may assume $\gamma_1 =1$.
	Put $\varphi_j = \alpha_s(\gamma_j)$ and $K_{\varphi_j} = \mathrm{Stab}_{K_{n,v}}(\varphi_j)$.
	We claim that for any $j$, one has $\mathrm{pr}_n(K_{n+r,v} \cap P_{r,n+r}(F_v)) \subset K_{\varphi_j}$.
	Here, $\mathrm{pr}_n$ is the projection map $\mathrm{pr}_n \colon P_{r,n+r}(F_v) \longrightarrow \GL_r \times \Sp_{2n} \longrightarrow \Sp_{2n}$.
	Indeed, take $k \in \mathrm{pr}_n(K_{n+r,v} \cap P_{r,n+r}(F_v))$.
	Fix $k' \in K$ such that $\mathrm{pr}_n(k') = k$.
	By the choice of $K$, one has $\pi_v(k) \varphi_j = \alpha_s(k' \gamma_j)$.
	Since $K$ is a normal subgroup of $K_{n+r,v}$, one obtains $\alpha_s(k'\gamma_j) = \alpha_s(\gamma_j\gamma_j^{-1}k'\gamma_j) = \alpha_s(\gamma_j)$.
	Thus, $\pi_v(k)\varphi_j = \varphi_j$ and $k \in K_{\varphi_j}$.
	
	Let $f_{v,s,j}$ be a standard section of $I_{2n+r}(s,\mu)$ such that
	\begin{itemize}
	\item $\mathrm{supp}(f_{v,s,j}) \subset P_{2n+r}(F)\tau_n (K \times K_{\varphi_j}')$.
	\item $f_{v,s,j}(p\tau_n(k_1 \times k_2)) = \frac{1}{\mathrm{vol}(K_{\varphi_j})}\mu(p)|p|^{s+(2n+r+1)/2}$ for $p \in P_{2n+r}(F_v)$ and $k_1  \times k_2 \in K \times K_{\varphi_j}$.
	\end{itemize}
	Here, $K_{\varphi_j}' = \{\widehat{k} \mid k \in K_{\varphi_j}\}$.
	Let $k \in K$.
	By the claim, if $\tau(k \times \widehat{h}) \in \mathrm{supp}(f_{v,s,j})$, one has $h \in K_{\varphi_j}$.
	Thus, we have
	\begin{align*}
	Z_v(k,s;f_{v,j},\varphi_{j}) 
	&= \int_{\Sp_{2n}(F_v)} f_{v,s,j}(\tau_n(k \times \widehat{h})) \pi_v(h) (\varphi_j) \, dh\\
	&= \int_{K_{\varphi_j}} f_{v,s,j}(\tau_n(k \times \widehat{h})) \pi_v(h)(\varphi_j) \, dh\\
	&= \varphi_j.
	\end{align*}
	
	Next, we compute the support of the section.
	For $g \in \Sp_{2(n+r)}(F_v)$, we assume $Z_v(g,s; f, \varphi_j) \neq 0$.
	Suppose that $g$ lies in $P_{r,n+r}(F_v)\gamma_q K_{q}$ for some $q \neq 1$.
	Then, by the definition of $f_{v,s,j}$, one has
	\[
	f_{v,s,j}(\tau_n(g \times \widehat{h})) = f_{v,s,j}(\tau_n(h^{-1}g\times 1))
	\]
	for any $h$.
	By $h^{-1}g \in P_{r,n+r}(F_v)\gamma_q K$ with $q \neq 1$, we get $f_{v,s,j}(\tau_n(g \times \widehat{h})) = 0$.
	Hence we obtain
	\[
	Z_v(g,s;f_{v,j},\varphi_j) = 0
	\]
	and $\supp(r(\gamma_j^{-1})Z_v(\,\cdot\,,s;f_{v,j},\varphi_j)) = P_{r,n+r}(F_v)K\gamma_j = P_{r,n+r}\gamma_jK$.
	We then have
	\[
	\alpha_s(g) = \sum_{j=1}^\ell r(\gamma_j^{-1})Z_v(g,s;f,\varphi_j).
	\]
	This completes the proof.
	\end{proof}
	
	\subsection{Computations of archimedean places}
	In this subsection, we assume $F = \bbQ$ for simplicity.
	Let $v$ be the archimedean place of $F = \bbQ$.
	Let $\pi$ be a holomorphic discrete series representation of $G_n(\bbR)$ with highest weight $\lambda = (\lambda_{1,v},\ldots,\lambda_{n,v})_v$.
	For a standard section $f_s$ of $I_{2n+r}(s,\mu)$, put
	\[
	Z_v(g,s;f,\varphi,\varphi') = \langle Z_v(g,s;f,\varphi), \varphi'\rangle
	\]
	for $g \in G_{n+r}(F_v)$ and $\varphi' \in \pi$.
	Here, $\langle\,\cdot\,,\,\cdot\,\rangle$ is an invariant inner product on $\pi$.
	
	\begin{lem}\label{abs_conv_non_zero}
	With the above notation, suppose that a real number $s_0$ satisfies $s_0 \in \bbZ + \rho_{2n+r}$ and $-r/2 < s_0 \leq \lambda_{n,v} - \rho_{2n+r}$ for any $v \in \bfa$.
	Let $f_s$ be a standard section of $I_{2n+r}(s,\mu)$ such that $f_{s_0}$ is $\frakp_{2n+r,-}$-finite.
	Then, the integral $Z_v(g,s;f,\varphi,\varphi')$ converges absolutely at $s=s_0$ for any $g \in G_{n+r}(F_v)$ and $v,v' \in \pi$.
	Moreover, we may choose $g,f_s$ and $\varphi,\varphi' \in \pi$ so that $Z_v(g,f;s,\varphi,\varphi')$ is non-zero at $s=s_0$.
	\end{lem}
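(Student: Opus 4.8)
The plan is to reduce the integral to the positive Weyl chamber of $\Sp_{2n}(F_v)$ and balance the decay of the restricted section against that of the matrix coefficients of $\pi$. Since $f_{v,s_0}$ is $\frakp_{2n+r,-}$-finite and $K$-finite, for each fixed $g$ the function $h\mapsto f_{v,s_0}(\tau_n(g\times\widehat h))$ is, on $\Sp_{2n}(\bbR)$, a $K_{n,\infty}$-finite vector of a unitary module of lowest-weight type with parameter $k:=s_0+\rho_{2n+r}$ — the embeddings $\iota^{\uparrow},\iota^{\downarrow}$ being compatible with the Siegel upper half spaces and the conjugation $\widehat{\,\cdot\,}$ intertwining $\frakp_{2n+r,\pm}$, while $k$ is read off from the fact that $f_{v,s_0}$ generates $L(k,\ldots,k)$ inside $I_{2n+r,v}(s_0,\mu_v)$ by Proposition \ref{local_ind_rep}~(3) and \eqref{finite_vectors}. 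Such a vector decays on the chamber $A^{+}=\{\,\mathrm{diag}(a_1,\ldots,a_n,a_1^{-1},\ldots,a_n^{-1}):a_1\ge\cdots\ge a_n\ge 1\,\}$ at a rate controlled by $k$, and the hypothesis $-r/2<s_0$ — equivalently $k>\rho_{2n}$ — is exactly what keeps us in the range where this decay estimate is available. After fixing $K$-types and using $\Sp_{2n}(\bbR)=K_{n,\infty}A^{+}K_{n,\infty}$ together with an Iwasawa decomposition of $g\in G_{n+r}(F_v)$ — replacing $g$ by $pgk$ only alters $f_{v,s_0}(\tau_n(g\times\widehat h))$ by the fixed character $\mu|\det|^{s_0+\rho_{2n+r}}$ of the parabolic part $p$ and a right $K_{n+r,\infty}$-translation — the whole question becomes a growth/decay comparison on $A^{+}$, locally uniform in $g$.

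For absolute convergence I would compare three contributions on $A^{+}$: the restricted section decays like $a^{-(\,\cdot\,)}$ with exponent governed by $k$ (it is essential to use this near-holomorphic decay, strictly faster than the mere growth of $f_{v,s_0}$ as a degenerate-principal-series section); the matrix coefficient $\langle\pi_v(\,\cdot\,)\varphi,\varphi'\rangle$ of the holomorphic discrete series $\pi$ of highest weight $\lambda$ decays at the rate dictated by its Harish-Chandra parameter, essentially like $\prod_i a_i^{-\lambda_{i,v}}$ up to a fixed shift; and the Weyl integration formula contributes a Jacobian that on $A^{+}$ grows like the product of the positive roots of $\Sp_{2n}$ evaluated on $a$. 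Multiplying these and integrating over $A^{+}$, the product is integrable precisely in the stated range: the inequality $s_0\le\lambda_{n,v}-\rho_{2n+r}$, i.e. $k\le\lambda_{n,v}$, says that the weight $k$ of the Eisenstein kernel does not exceed the weight of the cusp form $\pi$, which is the classical convergence condition for the Garrett--B\"ocherer--Shimura pullback of Siegel Eisenstein series (cf. \cite{00_Shimura}, \cite{HPSS}), while $-r/2<s_0$ is what placed us in a good range to begin with. Once this is established, the defining integral of $Z_v(g,s;f,\varphi,\varphi')$ converges absolutely at $s=s_0$ for every $g\in G_{n+r}(F_v)$ and all $\varphi,\varphi'\in\pi$.

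For the non-vanishing I would take $g=1$, let $f_{v,s_0}$ be a $K_{2n+r,\infty}$-translate of the extremal weight vector of $L(k,\ldots,k)$, and take $\varphi=\varphi'$ to be the extremal weight vector of $\pi$, which is unique up to scalar since $\pi$ is a holomorphic discrete series. For such matched holomorphic data the function $h\mapsto f_{v,s_0}(\tau_n(1\times\widehat h))$ depends antiholomorphically on the Siegel variable of $h$, and the integrand $h\mapsto f_{v,s_0}(\tau_n(1\times\widehat h))\,\langle\pi_v(h)\varphi,\varphi\rangle$ is, up to a fixed nonzero constant, a function of constant sign on $\Sp_{2n}(\bbR)$ — this is the archimedean computation underlying the pullback formula in \cite{00_Shimura} and \cite{HPSS} — so, being absolutely integrable by the previous step, $Z_v(1,s_0;f,\varphi,\varphi)$ is a nonzero (positive) multiple of $\langle\varphi,\varphi\rangle$. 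Alternatively one may mimic the soft argument in the proof of Lemma \ref{zeta_int_ram}, choosing $f_{v,s_0}$ from the family of reproducing-kernel vectors of $L(k,\ldots,k)$, which lie in the $\frakp_{2n+r,-}$-finite subspace and concentrate near $\tau_n$, so that $Z_v(1,s_0;f,\varphi)$ becomes a nonzero multiple of $\varphi$. The main obstacle is the exponent bookkeeping of the second paragraph — combining the near-holomorphic decay of the restricted section, the Harish-Chandra decay of the matrix coefficient of $\pi$, and the Weyl Jacobian tightly enough to reach the sharp range $-r/2<s_0\le\lambda_{n,v}-\rho_{2n+r}$ — together with exhibiting an honestly non-vanishing choice of data inside the constrained space of $\frakp_{2n+r,-}$-finite sections, where the soft bump-function argument available for $\mathrm{Re}(s)\gg0$ is not directly at hand.
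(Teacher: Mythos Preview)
Your approach differs substantially from the paper's. After the same reduction to $g=1$ --- the paper writes $Z_v((\mathbf{1}_r\times m)k,s;f,\varphi,\varphi')=Z_v(1,s;r(k)f,\varphi,\pi(m^{-1})\varphi')$, which is your Iwasawa step --- the paper does not attempt any direct $KAK$ estimate. Instead it observes that the restriction $\iota_{2n,2n+r}^{\downarrow,*}f_s$ to $G_{2n}^\downarrow$ is a standard section of $I_{2n}(s+r/2,\mu)$, and that at $s=s_0$ this restriction induces a nonzero intertwiner
\[
I_{2n+r}(s_0,\mu)_{\frakp_{2n+r,-}\fin}\longrightarrow I_{2n}(s_0+r/2,\mu)_{\frakp_{2n,-}\fin}=L(m/2,\ldots,m/2),\qquad m=2s_0+2n+r+1\ge 2n+2.
\]
Thus $Z_v(1,s_0;\,\cdot\,,\,\cdot\,,\,\cdot\,)$ is realized as a trilinear form $L(m/2,\ldots,m/2)\otimes\pi\otimes\pi\to\bbC$, which the paper identifies with the form \cite[(4.3.4)]{2020_Liu} and then simply invokes \cite[Proposition 4.3.1]{2020_Liu} for both the absolute convergence and the non-vanishing.

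Your direct analytic route --- Cartan decomposition, decay of the restricted section, Harish-Chandra decay of the holomorphic discrete series matrix coefficient, and a positivity argument for non-vanishing --- is essentially the strategy one would use to \emph{prove} the cited result of Liu (or its antecedents in Shimura and Garrett). It is viable in principle, and your identification of the inequalities $-r/2<s_0\le\lambda_{n,v}-\rho_{2n+r}$ with the classical pullback convergence condition is on target. But as you yourself flag, the exponent bookkeeping is genuinely delicate and your sketch does not carry it out; in particular the claim that $h\mapsto f_{v,s_0}(\tau_n(g\times\widehat h))$ is a $K_{n,\infty}$-finite vector of a lowest-weight module of parameter $k$ on $\Sp_{2n}(\bbR)$ needs the restriction-to-$G_{2n}^\downarrow$ step made explicit before it becomes a usable decay estimate. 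The paper's reduction buys a two-line proof by outsourcing exactly that computation to \cite{2020_Liu}; your approach would buy self-containedness at the cost of reproducing a known hard estimate.
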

	\begin{proof}
	For $m \in G_{n}(F_v)$, one has
	\[
	Z_v((\mathbf{1}_r \times m)k,s;f,\varphi,\varphi') = Z_v(1,s;r(k)f,\varphi,\pi(m^{-1})\varphi').
	\]
	Since the standard section $r(k)f_s$ satisfies the assumption as in the statement, we may assume $g = 1$.
	Then, the integral equals to
	\[
	Z_v(1,s;f,v,v') = \int_{G_n(F_v)} f_s(\tau_n(\mathbf{1}_{n+r} \times \widehat{h})) \langle \pi(h)v, v'\rangle \, dh = \int_{G_n(F_v)} f_s(\tau_n((\mathbf{1}_{r}\times h) \times \mathbf{1}_n)) \langle \pi(h)v, v'\rangle \, dh.
	\]
	Consider the restriction of $f_s$ to the subgroup $G_{2n}^\downarrow(F_v)$.
	The restriction $\iota_{2n,2n+r}^{*,\downarrow}f_s$ to $G^\downarrow_{2n}(F_v)$ is a standard section of $I_{2n}(s + r/2,\mu)$.
	The restriction map $\iota_{2n,2n+r}^{\downarrow,*}$ induces a non-zero intertwining map
	\[
	I_{2n+r}(s_0,\mu)_{\frakp_{2n+r,-}\fin} \longrightarrow I_{2n}(s_0+r/2,\mu)_{\frakp_{2n,-}\fin} = L(m/2,\ldots,m/2).
	\]
	Thus, $Z_v$ induces 
	\[
	L(m/2,\ldots,m/2) \otimes \pi \otimes \pi \longrightarrow \bbC.
	\]
	Here, $m = 2s_0 +2n + r + 1 \geq 2n+2$.
	This map is the same as in \cite[(4.3.4)]{2020_Liu}.
	Hence, the lemma follows from \cite[Proposition 4.3.1]{2020_Liu}.
	This completes the proof.
	\end{proof}
	
	\begin{cor}\label{zeta_int_arch}
	With the above notation, the zeta integral at $s=s_0$ induces a non-zero intertwining map
	\[
	I_{2n+r}(\mu,s_0)_{\frakp_{2n+r,-}\fin} \otimes \pi \longrightarrow I_{r,n+r}(s_0,\mu,\pi)_{\frakp_{n+r,-}\fin}.
	\]
	\end{cor}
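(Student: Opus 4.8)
The plan is to read off the statement from Lemma~\ref{abs_conv_non_zero} together with the formal properties of the pullback zeta integral already established for the global section $Z(\,\cdot\,,s;f,\varphi)$. Consider the linear map sending $f\otimes\varphi$, with $f\in I_{2n+r}(\mu,s_0)_{\frakp_{2n+r,-}\fin}$ and $\varphi\in\pi$, to the function $g\longmapsto Z_v(g,s_0;f,\varphi)$ on $G_{n+r}(F_v)$. By Lemma~\ref{abs_conv_non_zero} the integral defining $Z_v(g,s_0;f,\varphi)$ converges absolutely for every $g\in G_{n+r}(F_v)$ under the standing hypotheses on $s_0$, so the map is well defined; and the non-vanishing assertion of that lemma, obtained there from \cite[Proposition~4.3.1]{2020_Liu}, shows that the induced map is non-zero. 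It remains to check that the image lies in $I_{r,n+r}(s_0,\mu,\pi)_{\frakp_{n+r,-}\fin}$ and that the map is $(\frakg_{n+r},K_{n+r,\infty})$-equivariant, $G_{n+r}$ acting on the first tensor factor through $\iota_{n+r,2n+r}^\uparrow$ and by right translation on the target.

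First I would transcribe, with the global quotient $G_n(\bbQ)\backslash G_n(\bbA_\bbQ)$ replaced by $G_n(F_v)$, the constant-term computations carried out above for $Z(\,\cdot\,,s;f,\varphi)$: the same manipulations show that $g\mapsto Z_v(g,s_0;f,\varphi)$ takes values in $\pi$ (as in the global case this uses $K$-finiteness to land in the Harish-Chandra module), is left-invariant under $N_{P_{r,n+r}}(F_v)$, and transforms under $M_{P_{r,n+r}}(F_v)=\GL_r(F_v)\times G_n(F_v)$ by $\mu|\cdot|^{s_0}\boxtimes\pi$, hence is a section of $\Ind_{P_{r,n+r}(F_v)}^{G_{n+r}(F_v)}(\mu|\cdot|^{s_0}\boxtimes\pi)$. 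For equivariance, since the images of $\iota_{n+r,2n+r}^\uparrow$ and $\iota_{n,2n+r}^\downarrow$ commute one has $gg_0\times\widehat h=(g\times\widehat h)\,\iota_{n+r,2n+r}^\uparrow(g_0)$, whence
\[
r(g_0)\,Z_v(\,\cdot\,,s_0;f,\varphi)=Z_v\big(\,\cdot\,,s_0;\,r(\iota_{n+r,2n+r}^\uparrow(g_0))f,\varphi\big),\qquad g_0\in G_{n+r}(F_v),
\]
and this is the asserted intertwining property; note that $\iota_{n+r,2n+r}^\uparrow(K_{n+r,\infty})\subseteq K_{2n+r,\infty}$ normalises $\frakp_{2n+r,-}$, so right translation preserves $I_{2n+r}(\mu,s_0)_{\frakp_{2n+r,-}\fin}$.

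The point deserving genuine attention is $\frakp_{n+r,-}$-finiteness of the image. The block embedding $\iota_{n+r,2n+r}^\uparrow$ respects the Harish-Chandra decompositions $\frakg=\frakk\oplus\frakp_+\oplus\frakp_-$ — it identifies $G_{n+r}$ with the subgroup of $G_{2n+r}$ acting by the usual action on the first factor of the sub-domain $\frakH_{n+r}\times\frakH_r\hookrightarrow\frakH_{2n+r}$ and trivially on the second — so $\iota_{n+r,2n+r}^\uparrow(\frakp_{n+r,-})\subseteq\frakp_{2n+r,-}$. Differentiating the displayed equivariance identity in the $g$-variable gives, for $X\in\frakp_{n+r,-}$,
\[
X\cdot Z_v(\,\cdot\,,s_0;f,\varphi)=Z_v\big(\,\cdot\,,s_0;\,\iota_{n+r,2n+r}^\uparrow(X)\cdot f,\varphi\big),
\]
so that $\calU(\frakp_{n+r,-})\cdot Z_v(\,\cdot\,,s_0;f,\varphi)$ is the image of $\calU(\frakp_{2n+r,-})\cdot f$ under $f'\mapsto Z_v(\,\cdot\,,s_0;f',\varphi)$; the latter space is finite-dimensional because $f$ is $\frakp_{2n+r,-}$-finite, hence so is the former, and $Z_v(\,\cdot\,,s_0;f,\varphi)$ lies in $I_{r,n+r}(s_0,\mu,\pi)_{\frakp_{n+r,-}\fin}$. (When $s_0>1$ one can instead descend this from the global near holomorphy of Propositions~\ref{NH_Klingen} and~\ref{const_term_Klingen}, the $\frakp_{n+r,-}$-action only involving the archimedean factor of $Z(\,\cdot\,,s_0;f,\varphi)$.) With these checks in place the corollary follows, and I expect no further obstacle: the substantive content — absolute convergence and non-vanishing — is exactly Lemma~\ref{abs_conv_non_zero}.
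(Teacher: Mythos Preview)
Your argument is correct and follows the same route as the paper: the paper's proof simply invokes Lemma~\ref{abs_conv_non_zero} for the non-zero intertwining map into $I_{r,n+r}(s_0,\mu,\pi)$ and then observes that intertwining forces the image into the $\frakp_{n+r,-}$-finite vectors, which is precisely the mechanism you spell out via $\iota_{n+r,2n+r}^\uparrow(\frakp_{n+r,-})\subseteq\frakp_{2n+r,-}$. Your write-up is considerably more detailed than the paper's two-line proof (and contains a harmless typo: the complementary factor of $\frakH_{n+r}$ in $\frakH_{2n+r}$ has degree $n$, not $r$), but the content is the same.
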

	\begin{proof}
	By Lemma \ref{abs_conv_non_zero}, the zeta integral defines a non-zero intertwining map
	\[
	I_{2n+r}(\mu,s_0)_{\frakp_{2n+r,-}\fin} \otimes \pi \longrightarrow I_{r,n+r}(s_0,\mu,\pi).
	\]
	Since the integral is intertwining, the image is contained in the $\frakp_{n+r,-}$-finite vectors.
	This completes the proof.
	\end{proof}

	\section{Structure theorem of the space of nearly holomorphic automorphic forms}
	
	In this section, we compare the space of nearly holomorphic automorphic forms with the space of Eisenstein series.

	\subsection{Parametrization of infinitesimal characters}
	
	For an infinitesimal character $\chi$ of $\calZ_n$, put
	\[
	\calN(G_n,\chi) = \{\varphi \in \calN(G_n) \mid \text{$(z-\chi(z))\varphi = 0$ for any $z \in \calZ_n$}\}.
	\]
	By \cite[Proposition 5.15]{Horinaga_2}, we have
	\begin{align}\label{ss_Z_n}
	\calN(G_n) = \bigoplus_\chi \calN(G_n,\chi),
	\end{align}
	where $\chi$ runs over all integral infinitesimal characters of $\calZ_n$.
	We define $\calN(G_n,\chi)_{\{P\}}$ and $\calN(G_n,\chi)_{(M,\pi)}$ similarly.
	By \cite[Proposition 5.9]{Horinaga_2}, the constant term along $Q_{i,n}$ induces an embedding of the space $\calN(G_n,\chi)_{(M_{Q_{i,n}},\mu \boxtimes \pi)}$ into the direct sum
	\[
	\bigoplus_{s_0} I_{i,n}(s_0, \mu,\pi).
	\]
	Here $s_0$ runs over all real numbers such that the induced representation $I_{i,n}(s_0, \mu,\pi)$ has the integral infinitesimal character $\chi$.
	Take a real number $t$.
	We define the projection ${\mathbf{pr}}_t$ by
	\[
	\mathbf{pr}_{t} \colon \bigoplus_{s_0} I_{i,n}(s_0, \mu,\pi) \longrightarrow I_{i,n}(t,\mu,\pi).
	\]
	The infinitesimal character of the induced representation $I_{i,n}(s_0, \mu,\pi)$ has the Harish-Chandra parameter
	\[
	(\lambda_{1,v}, \ldots, \lambda_{n-i,v}, s_0 + n - (i-1)/2, \ldots, s_0 + n -(i-1)/2) + \rho.
	\]
	Here, $(\lambda_{1,v}, \ldots,\lambda_{n-i,v})$ is the highest weight of $\pi_v$ for any $v\in \bfa$.
	For $\chi_{s_0}$, we mean the infinitesimal character of the induced representation.
	Note that $\chi_{s_0}$ depends on $\lambda$ and $i$.

	\begin{lem}
	With the above notation, fix $s_0$.
	Let $\{t_1,\ldots,t_\ell\}$ be the set of real numbers such that $\mathbf{pr}_{t_j}(\varphi_{Q_{i,n}}) \neq 0$ for some $\varphi \in \calN(G_n,\chi_{s_0})_{(\mu\boxtimes\pi,M_{Q_{i,n}})}$.
	Then, for any $j$, the highest weight submodule of $I_{i,n}(t_j, \mu,\pi)$ is unitarizable.
	\end{lem}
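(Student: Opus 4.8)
The plan is to transport the whole space into a single induced representation and then combine the local (archimedean) structure of that representation with the global theory of Eisenstein series attached to the fixed cuspidal datum. First I would fix $\varphi\in\calN(G_n,\chi_{s_0})_{(M_{Q_{i,n}},\mu\boxtimes\pi)}$ with $\psi:=\mathbf{pr}_{t_j}(\varphi_{Q_{i,n}})\neq 0$. Both the constant-term map along $Q_{i,n}$ and the projection $\mathbf{pr}_{t_j}$ are $G_n(\bbA_{\bbQ,\fini})\times(\frakg_n,K_{n,\infty})$-equivariant --- the latter because $I_{i,n}(t_j,\mu,\pi)$ is a direct summand of $\bigoplus_{s}I_{i,n}(s,\mu,\pi)$ --- and both carry $\frakp_{n,-}$-finite vectors to $\frakp_{n,-}$-finite vectors, since passing to a constant term does not enlarge $\calU(\frakp_{n,-})\cdot\psi$. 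Hence the image $W$ of $\calN(G_n,\chi_{s_0})_{(M_{Q_{i,n}},\mu\boxtimes\pi)}$ under $\mathbf{pr}_{t_j}\circ(\,\cdot\,)_{Q_{i,n}}$ is a nonzero submodule of $I_{i,n}(t_j,\mu,\pi)$ lying in its $\frakp_{n,-}$-finite part, and by Proposition \ref{decomp_NHAF_cusp_supp} the inducing datum $\pi$ is a holomorphic cuspidal automorphic representation, so each archimedean component $\pi_v$ is a unitary highest weight representation of $\Sp_{2(n-i)}(F_v)$.

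Next I would analyse $I_{i,n,v}(t_j,\mu_v,\pi_v)_{\frakp_{n,-}\fin}$ at an archimedean place $v$. Since $\pi_v$ is a holomorphic representation, this space is a highest weight module --- an object of $\calO^{\frakp}$ --- whose highest weight $\lambda^{\dagger}=\lambda^{\dagger}(t_j)$ is read off explicitly from the highest weight of $\pi_v$, the sign character $\mu_v$ and $t_j$, and whose infinitesimal character is the one underlying $\chi_{s_0}$; its unique irreducible quotient is $L(\lambda^{\dagger})$. The archimedean component $W_v$ is then a nonzero $(\frakg_n,K_{n,\infty})$-submodule of this highest weight module, so the lemma amounts to the assertion that $I_{i,n,v}(t_j,\mu_v,\pi_v)_{\frakp_{n,-}\fin}$ is irreducible (hence equal to $L(\lambda^{\dagger})$) and unitarizable, i.e.\ that $L(\lambda^{\dagger})\in\calO_{\lambda^{\dagger}}^{\unit}$ in the sense of Theorem \ref{unitary}; the explicit shape of $\lambda^{\dagger}$ I would treat as a routine computation.

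For the decisive point I would use that $W$ consists of genuine constant terms of slowly increasing nearly holomorphic automorphic forms. By the Langlands decomposition $\calA(G_n)=\bigoplus_{\{P\}}\calA(G_n)_{\{P\}}$ and the theory of Eisenstein series with cuspidal support $(M_{Q_{i,n}},\mu\boxtimes\pi)$ (\cite{MW}, \cite{Horinaga_2}), every form in $\calN(G_n,\chi_{s_0})_{(M_{Q_{i,n}},\mu\boxtimes\pi)}$ is a combination of residues and derivatives of (Siegel and Klingen) Eisenstein series $E(\,\cdot\,,s,f)$ with $f$ a section of $I_{i,n}(s,\mu,\pi)$ evaluated at the finitely many points of infinitesimal character $\chi_{s_0}$, and the $\mathbf{pr}_{t_j}$-component of the $Q_{i,n}$-constant term is controlled by the leading behaviour of the standard intertwining operators at $s=\pm t_j$ and their Weyl translates. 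Near-holomorphy (Lemmas \ref{s>1}, Propositions \ref{s=1} and \ref{NH_Klingen}) forces the relevant section at $s=t_j$ to be $\frakp_{n,-}$-finite, so $t_j$ is confined to the discrete set of points at which $I_{i,n,v}(s,\mu_v,\pi_v)_{\frakp_{n,-}\fin}\ne 0$; at such a point the Eisenstein series contributing to $\varphi$ is either in the convergence/holomorphy range, where its $\frakp_{n+r,-}$-finite part is a holomorphic discrete series (hence unitary), or it is a pole whose normalized residue is square-integrable (hence unitary), or its $\frakp_{n,-}$-finite part is identified through the Siegel--Weil formula (Theorem \ref{SW_formula}, Proposition \ref{local_ind_rep}, Theorem \ref{s=0}) and the doubling/pullback zeta integral (Corollary \ref{zeta_int_arch}, via the positivity of \cite[Proposition 4.3.1]{2020_Liu}) with a theta image, which is a space of $\mathrm{O}(V_v)$-coinvariants of the unitary Weil representation for a definite $V_v$ and hence unitary. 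In each case the archimedean highest weight module carries a positive-definite invariant Hermitian form, forcing $I_{i,n,v}(t_j,\mu_v,\pi_v)_{\frakp_{n,-}\fin}=L(\lambda^{\dagger})$ to be unitarizable. The main obstacle will be exactly this trichotomy: tracking precisely which exponents $t_j$ survive as leading terms in the constant term and matching the resulting weight $\lambda^{\dagger}$ against the Enright--Howe--Wallach bound of Theorem \ref{unitary}; the equivariance and $\frakp_{n,-}$-stability of the first step are routine and I would not dwell on them.
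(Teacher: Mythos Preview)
Your plan is much heavier than what is needed, and it has a structural circularity: you invoke the Siegel--Weil formula, the near-holomorphy of Klingen Eisenstein series, and the archimedean doubling integral (Lemma \ref{s>1}, Propositions \ref{s=1} and \ref{NH_Klingen}, Theorem \ref{SW_formula}, Corollary \ref{zeta_int_arch}), all of which sit in Sections 4--5 and feed into the structure theorems of Section 6 that this lemma is meant to prepare. The assertion that every $\varphi\in\calN(G_n,\chi_{s_0})_{(M_{Q_{i,n}},\mu\boxtimes\pi)}$ is a combination of residues and derivatives of Eisenstein series is not a formal consequence of the Langlands decomposition; identifying such forms with Eisenstein data is precisely what Propositions \ref{F neq Q, P = B}--\ref{general_P_neq_B} establish later. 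Your trichotomy also does not cover the general parabolic $P_{i,n}$: the Siegel--Weil and $R_n(V)$ analysis is specific to $i=n$, and in the ``convergence range'' the $\frakp_{n,-}$-finite part of $I_{i,n}(t_j,\mu,\pi)$ is not automatically a holomorphic discrete series.

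The paper's argument is short and uses none of this. Order $t_1<\cdots<t_\ell$ and set $a_j=(\lambda_{1,v},\ldots,\lambda_{n-i,v},\rho_{i,n}+t_j,\ldots,\rho_{i,n}+t_j)_v$, the highest weight of $I_{i,n}(t_j,\mu,\pi)_{\frakp_{n,-}\fin}$. The minimal $K_{n,\infty}$-type $\rho_{a_1}$ does not occur in $I_{i,n}(t_j,\mu,\pi)_{\frakp_{n,-}\fin}$ for $j>1$, so one can choose $\varphi$ of weight $a_1$ whose constant term $\varphi_{Q_{i,n}}$ lands \emph{entirely} in $I_{i,n}(t_1,\mu,\pi)$; by \cite[Corollary 7.3]{Horinaga_2} it generates $L(a_1)$. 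If $t_1<0$, then $\varphi$ has the single cuspidal exponent $t_1<0$ and the square-integrability criterion \cite[I.4.11]{MW} makes $\varphi$ an $L^2$-form, so $L(a_1)$ is unitarizable; if $t_1\geq 0$, then $a_{1,n}=\rho_{i,n}+t_1\geq n-(i-1)/2$ with $p(a_1)\geq i$, and Theorem \ref{unitary} gives unitarizability directly. Since each $a_j$ has last $i$ entries $\rho_{i,n}+t_j\geq \rho_{i,n}+t_1$, the same Enright--Howe--Wallach bound then handles all $j$. The idea you are missing is this $K$-type separation, which isolates one exponent and reduces everything to \cite[I.4.11]{MW} plus Theorem \ref{unitary}; no Eisenstein-series construction or theta correspondence enters.
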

	\begin{proof}
	We may assume $t_1 < \cdots < t_\ell \leq \rho_{i,n} + s_0$.
	Note that the highest weight of $I_{i,n}(t_j, \mu,\pi)$ is of the form $a_j = (\lambda_{1,v},\ldots,\lambda_{n-i,v}, \rho_{i,n} + t_j,\ldots,\rho_{i,n} + t_j)$.
	Then, $a_1$ is maximal in $\{a_1,\ldots,a_\ell\}$.
	By assumption, there exists $\varphi \in \calN(G_n,\chi_{s_0})_{(\mu\boxtimes\pi,M_{Q_{i,n}})}$ such that $\varphi$ is of weight $a_1$.
	Note that by maximality of $a_1$, for $j \neq 1$, the $K_{n,\infty}$-type $\rho_{a_1}$ does not occur in $I_{i,n}(t_j,\mu,\pi)_{\frakp_{n,-}\fin}$.
	Then, $\varphi_{Q_{i,n}}$ lies in $I_{i,n}(t_1, \mu,\pi)$.
	By \cite[Corollary 7.3]{Horinaga_2}, the module generated by $\varphi_{Q_{i,n}}$ is isomorphic to $L(a_1)$.
	By \cite[I.4.11]{MW}, if $t_1 < 0$, the automorphic form $\varphi$ is of square-integrable.
	Thus, the highest weight module $L(a_1)$ is unitarizable.
	If $t_1 \geq 0$, the highest weight module $L(a_1)$ is unitarizable by Theorem \ref{unitary}.
	Since the highest weight submodule of $I_{i,n}(t_j, \mu,\pi)$ is irreducible with integral weight $a_j$, the highest weight submodules $L(a_j)$ are unitarizable by Theorem \ref{unitary} for all $j$.
	This completes the proof.
	\end{proof}
	
	If the induced representation $I_{i,n}(s_0, \mu,\pi)$ contains a unitary highest weight representation, one has $s_0 \in \bbZ + n - (i-1)/2$ with $n-i \leq s_0 + n - (i-1)/2 \leq \lambda_{n,v}$.
	The following statement follows from the straightforward computation.
	For details, see \cite[Proposition 6.4]{Horinaga_2}.
	
	\begin{lem}\label{s_0}
	With the above notation, suppose for simplicity $F=\bbQ$.
	Let $a,b$ be real numbers so that $a,b \in \bbZ + n - (i-1)/2, n-i \leq a + n - (i-1)/2 \leq \lambda_{n,v}$ and $n-i \leq b + n - (i-1)/2 \leq \lambda_{n,v}$.
	Then, one has $\chi_a = \chi_b$ if and only if $|a| = |b|$.
	\end{lem}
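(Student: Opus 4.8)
The plan is to reduce the equality $\chi_a=\chi_b$ to an equality of finite multisets of real numbers, and then to exhibit a single invariant of such a multiset that already recovers $|a|$.

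First I would unwind the parametrization. For the positive system $\Phi^{+}$ fixed in the introduction one has $\rho=(-1,-2,\ldots,-n)$, so that for any weight $\nu=(\nu_{1},\ldots,\nu_{n})$ the Harish--Chandra parameter $\nu+\rho$ has coordinates $\nu_{j}-j$ and the multiset $|\nu|=\{|\nu_{1}-1|,\ldots,|\nu_{n}-n|\}$ is precisely the multiset of absolute values of those coordinates. Since $W_{n}$ acts on the weight lattice by all signed permutations, two parameters lie in a common $W_{n}$-orbit exactly when their multisets of absolute values of coordinates coincide; together with the characterization of equality of infinitesimal characters recalled earlier (the direction $|\lambda|=|\lambda'|\Rightarrow$ same dot-orbit being the one used in the discussion of dot-orbits) this gives $\chi_{\nu}=\chi_{\nu'}\iff|\nu|=|\nu'|$. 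Write $\nu(s_{0})=(\lambda_{1,v},\ldots,\lambda_{n-i,v},\rho_{i,n}+s_{0},\ldots,\rho_{i,n}+s_{0})$ for the weight whose Harish--Chandra parameter is the one displayed just before the lemma, so that $\chi_{s_{0}}=\chi_{\nu(s_{0})}$. Plugging in $\rho_{i,n}=n-(i-1)/2$, the first $n-i$ coordinates of $\nu(s_{0})+\rho$ are $\lambda_{k,v}-k$, independent of $s_{0}$, while its last $i$ coordinates form the set
\[
R(s_{0})=\Bigl\{\,s_{0}+\tfrac{i-1}{2},\ s_{0}+\tfrac{i-1}{2}-1,\ \ldots,\ s_{0}-\tfrac{i-1}{2}\,\Bigr\},
\]
an interval of $i$ consecutive integers with midpoint $s_{0}$ (these are integers because $s_{0}\in\bbZ+\rho_{i,n}$). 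Hence $|\nu(s_{0})|=\{|\lambda_{k,v}-k|:1\le k\le n-i\}\uplus|R(s_{0})|$ as multisets.

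Because $\lambda$ and $i$ are fixed throughout the statement, the part $\{|\lambda_{k,v}-k|\}$ is common to $|\nu(a)|$ and $|\nu(b)|$, so multiset cancellation gives $\chi_{a}=\chi_{b}\iff|R(a)|=|R(b)|$. Now I would use the one elementary fact that does the work: the largest element of $|R(s_{0})|$ equals $|s_{0}|+(i-1)/2$, since the absolute value of a point of the interval $R(s_{0})$ is maximized at whichever endpoint is farther from the origin, namely $s_{0}+(i-1)/2$ when $s_{0}\ge 0$ and $s_{0}-(i-1)/2$ when $s_{0}\le 0$. Consequently $|R(a)|=|R(b)|$ forces $|a|+(i-1)/2=|b|+(i-1)/2$, i.e.\ $|a|=|b|$. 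Conversely, if $|a|=|b|$ then either $a=b$, which is trivial, or $a=-b$, in which case $R(a)=-R(b)$ and the two multisets of absolute values coincide; either way $\chi_{a}=\chi_{b}$. This yields the asserted equivalence. The hypotheses $a,b\in\bbZ+\rho_{i,n}$ and $n-i\le a+\rho_{i,n},\,b+\rho_{i,n}\le\lambda_{n,v}$ enter only to guarantee that $R(a),R(b)$ consist of integers and are exactly the intervals arising from induced representations with a unitary highest weight submodule, as in the paragraph preceding the lemma; they play no further role. There is no genuine obstacle: the argument is a short computation once the translation into multisets is made, and the only points needing care are getting the $\rho$-shift and the endpoints $s_{0}\pm(i-1)/2$ of $R(s_{0})$ exactly right and invoking that the multiset of absolute values of coordinates is a complete invariant of a $W_{n}$-orbit. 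I expect this to be the content of the ``straightforward computation'' cited as \cite[Proposition 6.4]{Horinaga_2}.
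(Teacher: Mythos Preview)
Your argument is correct and is exactly the kind of ``straightforward computation'' the paper defers to \cite[Proposition 6.4]{Horinaga_2}: the paper gives no proof of its own here, so your reduction to the $W_n$-invariant multiset $|\nu|$, the explicit computation of the last $i$ coordinates as the arithmetic progression $R(s_0)$ centered at $s_0$, and the observation that $\max|R(s_0)|=|s_0|+(i-1)/2$ recovers $|s_0|$ constitute a complete and self-contained proof. One small remark: your multiset-cancellation step is valid regardless of whether elements of $\{|\lambda_{k,v}-k|\}$ overlap with $|R(a)|$ or $|R(b)|$, so you are right that the range hypotheses on $a,b$ are not needed for the equivalence itself; they serve only to place $a,b$ in the window where the induced representations in question actually carry unitary highest weight submodules, as set up in the preceding lemma.
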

	
	Put
	\[
	\calN^2(G_n,\chi)_{(M,\pi)} = \{\varphi \in \calN(G_n,\chi)_{(M,\pi)} \mid \text{$\varphi$ is square-integrable}\}.
	\]
	In the following of this section, we study $\calN(G_n,\chi)_{(M,\pi)}$ in terms of $\calN^2(G_n,\chi)_{(M,\pi)}$ and induced representations.
	
	\subsection{Constant terms of nearly holomorphic automorphic forms}
	
	Toward the classification of $(\frakg_n,K_{n,\infty})$-modules generated by nearly holomorphic automorphic forms on $G_n(\bbA_\bbQ)$, we investigate the embedding of $\calN(G_n)_{(M,\pi)}$ into a direct sum of induced representations.
	Fix a positive integer $i \leq n$.
	Let $\mu$ be a character of $\GL_1(\bbA_F)$ and $\pi$ an irreducible holomorphic cuspidal automorphic representation on $G_{n-i}(\bbA_\bbQ)$ with $\pi_v = L(\lambda_v) = L(\lambda_{1,v},\ldots,\lambda_{n-i,v})$ for $v \in \bfa$.
	Put $\Pi = \mu \boxtimes \pi$.
	For the notation, see \S \ref{NHAF}.
	We consider the space $\calN(G_n,\chi_{s_0})_{(M_{Q_{i,n}}, \Pi)}$.
	By Lemma \ref{s_0}, the constant term along $P_{i,n}$ induces the embedding
	\begin{align}\label{emb_const_term}
	\calN(G_n,\chi_{s_0})_{(M_{Q_{i,n}}, \Pi)} \xhookrightarrow{\,\quad\,} 
	\begin{dcases}
	\left(I_{i,n}(-s_0, \mu,\pi) \oplus I_{i,n}(s_0, \mu,\pi)\right)_{\frakp_{n,-}\fin}&\text{if $s_0 \neq 0$}\\
	I_{i,n}(s_0, \mu,\pi)_{\frakp_{n,-}\fin}&\text{if $s_0 = 0$}.
	\end{dcases}
	\end{align}
	Note that $\calN(G_n,\chi_{s_0})_{(M_{Q_{i,n}}, \Pi)} = \calN^2(G_n,\chi_{s_0})_{(M_{Q_{i,n}}, \Pi)}$ if and only if the image of (\ref{emb_const_term}) is contained in $I_{i,n}(-s_0, \mu,\pi)$.
	In this case, the space $\calN(G_n,\chi_{s_0})_{(M_{Q_{i,n}}, \Pi)}$ is semisimple as $(\frakg_n,K_{n,\infty})$-modules.
	The highest weights of the right hand side of (\ref{emb_const_term}) are of the form
	\[
	(\lambda_{1,v},\ldots,\lambda_{n-i,v}, \rho_{i,n}+s_0,\ldots,\rho_{i,n}+s_0)_v
	\]
	and
	\[
	(\lambda_{1,v},\ldots,\lambda_{n-i,v}, \rho_{i,n}-s_0,\ldots,\rho_{i,n}-s_0)_v
	\]
	if exist.
	If $\lambda_{n-i,v} < \rho_{i,n}$ for some $v$, one has $\calN(G_n,\chi_{s_0})_{(M_{Q_{i,n}}, \Pi)} = \calN^2(G_n,\chi_{s_0})_{(M_{Q_{i,n}}, \Pi)}$.
	Thus, for the classification, it suffices to consider the case where $\lambda$ satisfies $\lambda_{n-i,v} \geq \rho_{i,n}$ for any $v \in \bfa$.
	In this case, we may assume $0\leq s_0 \leq \mathrm{min}_{v\in \bfa}\{\lambda_{n-i,v} - \rho_{i,n}\}$.
	
	\begin{lem}\label{isotypic_s=0}
	Under the above assumption, if $s_0 = 0$, the space $\calN(G_n,\chi_{s_0})_{(M_{Q_{i,n}}, \Pi)}$ is isotypic for
	\[
	\boxtimes_{v\in \bfa} L(\lambda_{1,v},\ldots\lambda_{n-i,v}, \rho_{i,n},\ldots,\rho_{i,n})
	\]
	as $(\frakg_n,K_{n,\infty})$-modules.
	\end{lem}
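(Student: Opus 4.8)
The plan is to run everything through the constant-term map along $P_{i,n}$ and to exploit that at $s_0=0$ the two candidate highest weights appearing on the right-hand side of (\ref{emb_const_term}) collapse to one. Set $s_0=0$. I would first specialize (\ref{emb_const_term}): the constant term along $P_{i,n}$ gives an injective $G_n(\bbA_{\bbQ,\fini})\times(\frakg_n,K_{n,\infty})$-equivariant map
\[
T\colon \calN(G_n,\chi_{s_0})_{(M_{Q_{i,n}},\Pi)}\hookrightarrow I_{i,n}(0,\mu,\pi)_{\frakp_{n,-}\fin}.
\]
By the criterion recorded just before the statement, applied with $-s_0=s_0=0$, we have $\calN(G_n,\chi_{s_0})_{(M_{Q_{i,n}},\Pi)}=\calN^2(G_n,\chi_{s_0})_{(M_{Q_{i,n}},\Pi)}$, so this space consists of square-integrable forms; lying in the discrete spectrum, it is therefore a semisimple $G_n(\bbA_{\bbQ,\fini})\times(\frakg_n,K_{n,\infty})$-module, and I would write it as a direct sum of irreducible submodules $\sigma\cong\sigma_{\fini}\otimes\sigma_\infty$ with $\sigma_\infty\cong\boxtimes_{v\in\bfa}\sigma_v$.

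Next I would identify $\sigma_\infty$. Each $\sigma_v$ is an irreducible, admissible, $\frakp_{n,-}$-finite $(\frakg_{n,v},K_{n,v})$-module: since $\frakp_{n,-}$ is nilpotent and acts locally finitely while $\frakk_n$ acts semisimply, $\sigma_v$ contains a nonzero vector annihilated by $\frakp_{n,-}$ and spanning an irreducible $\frakk_n$-type, so $\sigma_v$ is a highest weight module, $\sigma_v\cong L(b_v)$ for some $\frakk_n$-dominant integral weight $b_v$. Hence $\sigma_\infty\cong\boxtimes_v L(b_v)$ is an irreducible highest weight $\frakg_n$-module generated by a highest weight vector of weight $b=(b_v)_v$. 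Because $T$ is injective and $(\frakg_n,K_{n,\infty})$-equivariant, it carries this vector to a nonzero highest weight vector of weight $b$ in $I_{i,n}(0,\mu,\pi)_{\frakp_{n,-}\fin}$. But, as recorded just before the statement, the highest weights occurring in $I_{i,n}(s_0,\mu,\pi)_{\frakp_{n,-}\fin}$ lie among $(\lambda_{1,v},\ldots,\lambda_{n-i,v},\rho_{i,n}\pm s_0,\ldots,\rho_{i,n}\pm s_0)_v$, and at $s_0=0$ the two coincide; so $b=(\lambda_{1,v},\ldots,\lambda_{n-i,v},\rho_{i,n},\ldots,\rho_{i,n})_v$ and $\sigma_\infty\cong\boxtimes_{v\in\bfa}L(\lambda_{1,v},\ldots,\lambda_{n-i,v},\rho_{i,n},\ldots,\rho_{i,n})$. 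Summing over the irreducible summands (the space is $0$, hence vacuously isotypic, if none exist) then yields the claim.

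This argument is essentially an assembly of already-established facts, so there is no serious obstacle once the set-up is in place; the one substantive input is the determination of the highest weights of $I_{i,n}(0,\mu,\pi)_{\frakp_{n,-}\fin}$, which is exactly the statement recorded before the lemma. If one prefers not to invoke square-integrability, the alternative is to establish the stronger local statement that each archimedean factor $I_{i,n,v}(0,\mu_v,\pi_v)_{\frakp_{n,-}\fin}$ is the irreducible module $L(\lambda_{1,v},\ldots,\lambda_{n-i,v},\rho_{i,n},\ldots,\rho_{i,n})$, using Theorem \ref{emb_ht_wt} to exhibit its highest weight vector together with a first-reduction-point computation as in Theorems \ref{first_red_pt} and \ref{unitary} and the hypothesis $\lambda_{n-i,v}\geq\rho_{i,n}$; then the target of $T$ is isotypic, and so is every submodule.
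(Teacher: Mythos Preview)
Your primary argument has a gap at the square-integrability step. The criterion ``$\calN=\calN^2$ if and only if the image of (\ref{emb_const_term}) lies in $I_{i,n}(-s_0,\mu,\pi)$'' rests on Langlands' square-integrability criterion \cite[I.4.11]{MW}, which requires the real parts of the cuspidal exponents to lie in the \emph{open} negative cone. For $s_0>0$ this is fine: landing in the $-s_0$ piece gives exponent $-s_0<0$. At $s_0=0$ the exponent is $0$, on the wall, and the criterion no longer applies; forms on the unitary axis are generically in the continuous spectrum, not the discrete one (cf.\ the Siegel Eisenstein series at $s=0$ in Theorem~\ref{s=0} for the case $i=n$). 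So you cannot deduce semisimplicity of $\calN(G_n,\chi_{0})_{(M_{Q_{i,n}},\Pi)}$ from the discrete-spectrum decomposition here.

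The paper's proof is precisely your ``alternative'', but with a simpler mechanism than a first-reduction-point computation. Since $\mu_v$ is unitary and $\pi_v=L(\lambda_v)$ is unitarizable, the local induced representation $I_{i,n,v}(0,\mu_v,L(\lambda_v))$ lies on the unitary axis and is therefore itself unitary, hence semisimple as a $(\frakg_{n},K_{n,\infty})$-module. Its $\frakp_{n,-}$-finite part is then a direct sum of irreducible highest weight modules, and since the only possible highest weight is $(\lambda_{1,v},\ldots,\lambda_{n-i,v},\rho_{i,n},\ldots,\rho_{i,n})$, that part is contained in $L(\lambda_{1,v},\ldots,\lambda_{n-i,v},\rho_{i,n},\ldots,\rho_{i,n})$. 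Pulling back through the injective constant-term map gives the lemma. So your alternative route is correct and is essentially what the paper does; the primary argument, however, does not go through as written.
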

	\begin{proof}
	By (\ref{emb_const_term}), one has
	\[
	\calN(G_n,\chi_{s_0})_{(M_{Q_{i,n}}, \Pi)} \xhookrightarrow{\,\quad\,} I_{i,n}(0,\mu,\pi).
	\]
	Consider the induced representation
	\[
	I_{i,n,v}(0, \mu_v, L(\lambda_v))
	\]
	for $v \in \bfa$ and a unitary character $\mu_v$.
	Since this induced representation lies in the unitary axis, it is unitary by the unitarizability of $L(\lambda)$.
	Thus, it is semisimple as $(\frakg_n,K_{n,\infty})$-modules.
	Highest weights in it are of the form $(\lambda_{1,v},\ldots\lambda_{n-i,v}, \rho_{i,n},\ldots,\rho_{i,n})$.
	We then have
	\[
	I_{i,n,v}(0,\mu_v,L(\lambda_v))_{\frakp_{n,-}\fin} \subset L(\lambda_{1,v},\ldots\lambda_{n-i,v}, \rho_{i,n},\ldots,\rho_{i,n}).
	\]
	This completes the proof.
	\end{proof}
	
	In the following of this section, we assume $\lambda_{n-i,v} > \rho_{i,n}$ for any $v\in \bfa$, $s_0 \in \bbZ + \rho_{i,n}$ and $0 < s_0 \leq \mathrm{min}_{v \in \bfa}\{\lambda_{n-i,v}-\rho_{i,n}\}$.
	We then have
	\[
	\calN^2(G_n,\chi_{s_0})_{(Q_{i,n},\mu\boxtimes\pi)} \bs \calN(G_n,\chi_{s_0})_{(Q_{i,n},\mu\boxtimes\pi)} \xhookrightarrow{\qquad} I_{i,n}(s_0,\mu,\pi)_{\frakp_{n,-}\fin}.
	\]

	\subsection{Structure theorem for $i=n$}
	\begin{prop}\label{F neq Q, P = B}
	We assume that either of the following conditions holds:
	\begin{itemize}
	\item $F \neq \bbQ$ and $s_0 > 0$.
	\item $\mu^2 \neq \mathbf{1}$ and $s_0 > 0$.
	\end{itemize}
	We then have
	\[
	\calN(G_{n}, \chi_{s_0})_{(B,\mu)} \cong \calN^2(G_{n}, \chi_{s_0})_{(B,\mu)} \oplus I_n(s_0, \mu)_{\frakp_{n,-}\fin}.
	\]
	\end{prop}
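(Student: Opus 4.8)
The plan is to realize $\calN(G_n,\chi_{s_0})_{(B,\mu)}$ inside a direct sum of two degenerate principal series by means of the constant term along the Siegel parabolic $P_n=P_{n,n}$, and then to split off a complement to the square-integrable part using Siegel Eisenstein series. Note that $Q_{n,n}=B_n$, so the datum abbreviated $(B,\mu)$ is $(M_{Q_{n,n}},\mu\boxtimes\cdots\boxtimes\mu)$, and for $i=n$ the discussion around (\ref{emb_const_term}) involves no cuspidal datum on $G_0$ and no auxiliary weight. Applying (\ref{emb_const_term}) with $i=n$, the constant term $\varphi\longmapsto\varphi_{P_n}$ is a $(\frakg_n,K_{n,\infty})$-equivariant embedding of $\calN(G_n,\chi_{s_0})_{(B,\mu)}$ into $\bigl(I_n(-s_0,\mu)\oplus I_n(s_0,\mu)\bigr)_{\frakp_{n,-}\fin}$ (here $s_0>0$, so the two-term version applies); write $\Lambda=\mathbf{pr}_{s_0}\circ(\,\cdot\,)_{P_n}$ for its composition with the projection onto the $I_n(s_0,\mu)$-factor.

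First I would show $\ker\Lambda=\calN^2(G_n,\chi_{s_0})_{(B,\mu)}$. If $\Lambda(\varphi)=0$ then $\varphi_{P_n}$ lies in $I_n(-s_0,\mu)_{\frakp_{n,-}\fin}$, whose exponent along $P_n$ is $-s_0<0$, so $\varphi$ is square-integrable by Langlands' square-integrability criterion \cite[I.4.11]{MW}; conversely a square-integrable form cannot carry the positive exponent $+s_0$, so $\Lambda$ annihilates it. Thus $\Lambda$ descends to an injection of $\calN(G_n,\chi_{s_0})_{(B,\mu)}/\calN^2(G_n,\chi_{s_0})_{(B,\mu)}$ into $I_n(s_0,\mu)_{\frakp_{n,-}\fin}$.

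The heart of the argument is to construct a $(\frakg_n,K_{n,\infty})$-equivariant section of $\Lambda$ out of Siegel Eisenstein series. Under either hypothesis of the proposition ($F\neq\bbQ$, or $\mu^2\neq\mathbf{1}$, together with $s_0>0$), Lemma~\ref{s>1} furnishes an injective intertwining map $\mathcal{E}\colon I_n(s_0,\mu)_{\frakp_{n,-}\fin}\longrightarrow\calA(G_n)$, $g\longmapsto E(\,\cdot\,,s_0,f)$ with $f_s$ a standard section such that $f_{s_0}=g$; these are exactly the hypotheses ensuring holomorphy at $s_0$, the image sitting inside $\sum_V R_n(V)$ via convergent theta integrals. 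Since $\mathcal{E}$ is equivariant and its source is $\frakp_{n,-}$-finite, $\mathcal{E}(g)$ is nearly holomorphic, with infinitesimal character $\chi_{s_0}$; its constant term along $P_n$ is $f_{s_0}+M(s_0)f_{s_0}$, where $M(s_0)$ is the standard intertwining operator (regular at $s_0$, since $E(\,\cdot\,,s,f)$ is) with image in $I_n(-s_0,\mu)$, which both exhibits $\mathcal{E}(g)$ as an element of $\calN(G_n,\chi_{s_0})_{(B,\mu)}$ and, upon applying $\mathbf{pr}_{s_0}$, returns $g$. Hence $\Lambda\circ\mathcal{E}=\mathrm{id}$, so $\Lambda$ is onto, the above injection is an isomorphism, and
\[
\calN(G_n,\chi_{s_0})_{(B,\mu)}=\ker\Lambda\oplus\mathcal{E}\bigl(I_n(s_0,\mu)_{\frakp_{n,-}\fin}\bigr)\cong\calN^2(G_n,\chi_{s_0})_{(B,\mu)}\oplus I_n(s_0,\mu)_{\frakp_{n,-}\fin}
\]
as $(\frakg_n,K_{n,\infty})$-modules, the last isomorphism being $\mathcal{E}$ on the second summand.

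The main obstacle, and essentially the only non-formal ingredient, is the input from Lemma~\ref{s>1}: that under the stated hypotheses the Siegel Eisenstein series is holomorphic and injective at $s_0$, and that its $P_n$-constant term has the form $f_{s_0}+M(s_0)f_{s_0}$ with $M(s_0)f_{s_0}\in I_n(-s_0,\mu)$, so that $\mathbf{pr}_{s_0}$ recovers $g$ exactly. I would also record the degenerate situation where $I_n(s_0,\mu)_{\frakp_{n,-}\fin}=0$ (the wrong archimedean parity of $\mu$, by Proposition~\ref{local_ind_rep}(3)): there the asserted isomorphism reduces to $\calN(G_n,\chi_{s_0})_{(B,\mu)}=\calN^2(G_n,\chi_{s_0})_{(B,\mu)}$, which is immediate from $\ker\Lambda=\calN^2$.
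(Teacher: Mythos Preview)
Your proposal is correct and follows essentially the same approach as the paper's proof: use the constant-term embedding (\ref{emb_const_term}) at $i=n$ to identify the quotient $\calN^2\backslash\calN$ with a subspace of $I_n(s_0,\mu)_{\frakp_{n,-}\fin}$, and then invoke Lemma~\ref{s>1} to produce a splitting via Siegel Eisenstein series. Your write-up is considerably more detailed than the paper's two-line argument---you explicitly verify $\ker\Lambda=\calN^2$ via \cite[I.4.11]{MW} and check $\Lambda\circ\mathcal{E}=\mathrm{id}$ from the shape $f_{s_0}+M(s_0)f_{s_0}$ of the $P_n$-constant term---whereas the paper absorbs the first point into the discussion preceding the proposition and leaves the second implicit in the word ``splitting.''
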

	\begin{proof}
	By \eqref{emb_const_term}, the constant term along $P_n$ induces the injective map
	\[
	\calN^2(G_{n}, \chi_{s_0})_{(B,\mu)} \bs \calN(G_{n}, \chi_{s_0})_{(B,\mu)} \xhookrightarrow{\qquad} I_n(s_0, \mu)_{\frakp_{n,-}\fin}.
	\]
	By Lemma \ref{s>1}, the Eisenstein series at $s = s_0$ gives the splitting
	\[
	I_n(s_0, \mu)_{\frakp_{n,-}\fin} \xhookrightarrow{\qquad} \calN(G_{n},\chi_{s_0})_{(B,\mu)}.
	\]
	Hence the statement follows.
	\end{proof}
	
	Next we treat the case $F = \bbQ$.
	
	\begin{prop}\label{F = Q, P = B}
	The following statements hold.
	\begin{enumerate}
	\item For $s_0 > 1$, one has
	\[
	\calN(G_{n},\chi_{s_0})_{(B,\mu)} \cong \calN^2(G_{n},\chi_{s_0})_{(B,\mu)} \oplus I_n(s_0, \mu)_{\frakp_{n,-}\fin}.
	\]
	\item For $s_0 = 1$, one has
	\[
	\calN^2(G_{n},\chi_{s_0})_{(B,\mu)} \bs \calN(G_{n},\chi_{s_0})_{(B,\mu)} \cong I_n(1,\mu)_{\frakp_{n,-}\fin}.
	\]
	Moreover, there are no splitting $I_{n}(1,\mu)_{\frakp_{n,-}\fin} \longrightarrow \calN(G_n,\chi_{s_0})_{(B,\mu)}$ if $I_{n}(1,\mu)_{\frakp_{n,-}\fin} \neq 0$.
	\item For $s_0 = 1/2$, one has
	\[
	\calN(G_{n},\chi_{s_0})_{(B,\mu)} = \calN^2(G_{n},\chi_{s_0})_{(B,\mu)}, \qquad \text{if $\mu_v \neq \mathrm{sgn}^{(n+2)/2}$ for any $v \in \bfa$}
	\]
	and
	\[
	\calN(G_{n},\chi_{s_0})_{(B,\mu)} \subset I_{n}(1/2,\mu)_{\frakp_{n,-}\fin}, \qquad \text{if $\mu_v = \mathrm{sgn}^{(n+2)/2}$ for any $v \in \bfa$}.
	\]
	\end{enumerate}
	\end{prop}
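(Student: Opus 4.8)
The three assertions share a common starting point: since we take $i=n$, the parabolic $Q_{n,n}$ is the Borel, its Levi is the diagonal torus, and the cuspidal datum $(B,\mu)$ means cuspidal support along the Borel with character $\mu\boxtimes\cdots\boxtimes\mu$. For $s_0>0$ the embedding \eqref{emb_const_term} reads
\[
\calN^2(G_{n},\chi_{s_0})_{(B,\mu)} \bs \calN(G_{n},\chi_{s_0})_{(B,\mu)} \hookrightarrow I_n(s_0,\mu)_{\frakp_{n,-}\fin}, \qquad \varphi \longmapsto \mathbf{pr}_{s_0}(\varphi_{P_n}),
\]
and it respects the $G_n(\bbA_{\bbQ,\fini})$-action. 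In each case what remains is to identify the image and, where relevant, to decide whether the embedding admits an equivariant section. The only device producing sections is the Siegel Eisenstein series $E(\,\cdot\,,s,f)$, whose constant term along the maximal parabolic $P_n$ is $f_s+M(s)f_s$ (only the trivial and the long Weyl element contribute); hence whenever $E(\,\cdot\,,s,f)$ is regular at $s=s_0$ one has $\mathbf{pr}_{s_0}(E(\,\cdot\,,s_0,f)_{P_n})=f_{s_0}$.

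For (1), $s_0>1$, this is immediate. By Lemma \ref{s>1} the map $f\mapsto E(\,\cdot\,,s_0,f)$ is a well-defined injective $G_n(\bbA_\bbQ)$-intertwining map on $I_n(s_0,\mu)_{\frakp_{n,-}\fin}$; its image lies in $\calN(G_n)$ (the representation generated has a highest weight module as archimedean component, as in the proof of Lemma \ref{s>1}), has infinitesimal character $\chi_{s_0}$, and has cuspidal support $(B,\mu)$. Composing with the embedding gives $\mathbf{pr}_{s_0}\circ(\,\cdot\,)_{P_n}\circ E(\,\cdot\,,s_0,-)=\mathrm{id}$, so the embedding is an isomorphism split by $E(\,\cdot\,,s_0,-)$, and I obtain $\calN(G_{n},\chi_{s_0})_{(B,\mu)}\cong\calN^2(G_{n},\chi_{s_0})_{(B,\mu)}\oplus I_n(s_0,\mu)_{\frakp_{n,-}\fin}$, exactly as in Proposition \ref{F neq Q, P = B}.

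For (2), $s_0=1$, surjectivity of the embedding comes from Proposition \ref{s=1}: for $f$ with $f_1\in I_n(1,\mu)_{\frakp_{n,-}\fin}$ the Eisenstein series is nearly holomorphic at $s=1$, has infinitesimal character $\chi_1$ and cuspidal support $(B,\mu)$, and computing its constant term along $P_n$ shows the $\mathbf{pr}_1$-component is $f_1$; hence the embedding is onto $I_n(1,\mu)_{\frakp_{n,-}\fin}$, proving the first assertion. The non-splitting when $I_n(1,\mu)_{\frakp_{n,-}\fin}\neq0$ is the delicate point, and I expect it to be the main obstacle. The idea is that at $s=1$ the Siegel Eisenstein series attached to such sections fails to be holomorphic — this is precisely the boundary point at which the Kudla--Rallis/Siegel--Weil analysis produces a pole, cf.\ \cite{1994_Kudla-Rallis} — with nonzero square-integrable residue; consequently the nearly holomorphic lift furnished by Proposition \ref{s=1} is not a function of $f_1$ alone but of the germ of the section (it involves $\partial_s f_s|_{s=1}$), so it is not $(\frakg_n,K_{n,\infty})\times G_n(\bbA_{\bbQ,\fini})$-equivariant, and an honest equivariant section would force the residue to vanish. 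Structurally, $\calN(G_n,\chi_1)_{(B,\mu)}$ is the indecomposable, non-split extension of $I_n(1,\mu)_{\frakp_{n,-}\fin}$ by the residual module — the higher-rank analogue of the classical weight-two Eisenstein series $E_2^*$ — so it has no section; establishing this pole and pinning down the extension class is the crux.

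For (3), $s_0=1/2$, one has $m=n+2$. Proposition \ref{local_ind_rep}(3) shows that $I_{n,v}(1/2,\mu_v)_{\frakp_{n,-}\fin}=0$ unless $\mu_v$ equals the sign character appearing in the statement, so $I_n(1/2,\mu)_{\frakp_{n,-}\fin}=0$ as soon as some $\mu_v$ fails this; in that case the embedding forces $\calN(G_{n},\chi_{1/2})_{(B,\mu)}=\calN^2(G_{n},\chi_{1/2})_{(B,\mu)}$. When instead every $\mu_v$ is of the required type, I would show that the square-integrable part $\calN^2(G_{n},\chi_{1/2})_{(B,\mu)}$ vanishes: by Moeglin--Waldspurger the square-integrable automorphic forms with cuspidal support $(B,\mu)$ and infinitesimal character $\chi_{1/2}$ are residues at $s=1/2$ of Siegel Eisenstein series, and at this boundary of the convergent range the relevant residues vanish on $\frakp_{n,-}$-finite sections (again via \cite{1994_Kudla-Rallis}); then the constant-term map of the common embedding is already injective, yielding $\calN(G_{n},\chi_{1/2})_{(B,\mu)}\hookrightarrow I_n(1/2,\mu)_{\frakp_{n,-}\fin}$ as claimed. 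Here the subtle step is the vanishing of the residue on the nearly holomorphic part.
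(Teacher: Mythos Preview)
Your treatment of (1) matches the paper's, and your first case of (3) is correct.

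For (2), however, your mechanism for the non-splitting is wrong. The Siegel Eisenstein series $E(\,\cdot\,,s,f)$ does \emph{not} have a pole at $s=1$ on $\frakp_{n,-}$-finite sections; Proposition~\ref{s=1} evaluates it there, and in the coherent case (i.e.\ $f\in\sum_V R_n(V)$) the Siegel--Weil formula even gives it directly as a convergent theta integral. So there is no $\partial_s f_s|_{s=1}$ entering the picture, and the map $f\mapsto E(\,\cdot\,,1,f)$ is perfectly $G_n(\bbA_\bbQ)$-equivariant. The reason it is not a splitting is different: its constant term along $P_n$ has, for an \emph{incoherent} section $f$ (i.e.\ $f\notin\sum_V R_n(V)$ with $V$ positive definite), a nonzero component in $I_n(-1,\mu)$. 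The paper's argument runs by contradiction: if a splitting existed, pick such an incoherent $f$ and let $\varphi$ be its image, so $\varphi_{P_n}=f\in I_n(1,\mu)$ with no $I_n(-1,\mu)$-part. Then $\varphi-E(\,\cdot\,,1,f)$ is a nonzero square-integrable form whose constant term lies in $I_n(-1,\mu)$ and contains the $K_{n,\infty}$-type $((n+3)/2,\ldots,(n+3)/2)$. But in $I_{n,v}(-1,\mu_v)$ that $K$-type generates a reducible indecomposable (hence non-unitarizable) module, contradicting square-integrability. The obstruction is thus representation-theoretic at the archimedean place, not analytic in $s$.

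For the second case of (3) you are working harder than necessary. You propose to show $\calN^2(G_n,\chi_{1/2})_{(B,\mu)}=0$ via vanishing of residues. The paper instead observes directly that when $\mu_v=\mathrm{sgn}^{(n+2)/2}$ the space $I_{n}(-1/2,\mu)_{\frakp_{n,-}\fin}$ is zero (the sign needed for a highest weight vector there is $\mathrm{sgn}^{n/2}\neq\mathrm{sgn}^{(n+2)/2}$), so the full embedding \eqref{emb_const_term} already lands in $I_n(1/2,\mu)_{\frakp_{n,-}\fin}$, giving the inclusion immediately. Your route would work but requires an extra input you have not justified.
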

	\begin{proof}
	The proof of (1) is the same as the proof of Proposition \ref{F neq Q, P = B}.
	
	Next we show (2).
	If $\mu_v \neq \mathrm{sgn}^{(n+3)/2}$ for some $v \in \bfa$, one has $\calN(G_{n},\chi_{s_0})_{(B,\mu)} = 0$ and $I_n(1,\mu)_{\frakp_{n,-}\fin} = 0$.
	We may assume $\mu_v = \mathrm{sgn}^{(n+3)/2}$ for any $v \in \bfa$.
	Take $f = \bigotimes_v f_v \in I_n (1,\mu)$ such that $f_v$ lies in $R_n(W_v)$ for some $W_v$ and $f \not\in \sum_V R_n(V)$.
	Here $V$ runs over all positive definite quadratic forms over $F$ of dimension $n+3$.
	Let $f_s$ be the standard section of $I_n(s,\mu)$ such that $f_1 = f$.
	We assume that there exists a nearly holomorphic automorphic form $\varphi \in \calN(G_n)_{\{B\}}$ such that $\varphi_{P_n} = f$.
	By Proposition \ref{s=1}, the difference $\varphi - E(\,\cdot\,,1,f)$ is non-zero and square integrable.
	However, for $v\in\bfa$, the $K_{n,v}$-type $((n+3)/2, \ldots, (n+3)/2)$ in $I_{n,v}(-1,\mu_v)$ generates a reducible indecomposable representation of $\Sp_{2n}(F_v)$.
	This contradicts to the square integrability.
	Hence there are no automorphic form $\varphi$ such that $\varphi_{P_n} = f$.
	Recall that the constant term along $P_n$ induces the inclusion
	\[
	\calN^2(G_n,\chi_{s_0})_{(B,\mu)} \bs \calN(G_n,\chi_{s_0})_{(B,\mu)} \xhookrightarrow{\,\quad\,} I_n(1,\mu)_{\frakp_{n,-}\fin}.
	\]
	The image of $E(\,\cdot\,,1,f)$ is the same as $f$.
	Hence the above inclusion is surjective and there are no splitting.
	This completes the proof of (2).
	
	For (3), we assume $\mu_v \neq \mathrm{sgn}^{(n+2)/2}$ for any $v \in \bfa$.
	Note that if $I_{n,v}(1/2,\mu_v)$ has a highest weight vector, one has $\mu_v = \mathrm{sgn}^{(n+2)/2}$.
	Thus, the constant term (\ref{emb_const_term}) induces the embedding
	\[
	\calN(G_n,\chi_{s_0})_{(B,\mu)} \xhookrightarrow{\,\quad\,} I_n(-1/2,\mu)_{\frakp_{n,-}\fin}.
	\]
	We then have $\calN(G_n,\chi_{s_0})_{(B,\mu)} = \calN^2(G_n,\chi_{s_0})_{(B,\mu)}$.
	The last statement follows immediately from \eqref{emb_const_term}.
	This completes the proof.
	\end{proof}
	
	\subsection{Structure theorem for $P \neq B$}
	Fix $i$.
	We consider the case $P = P_{i,n}$
	Let $\mu$ be a character of $\GL_i(\bbA_F)$ and $\pi$ an irreducible holomorphic cuspidal representation of $G_{n-i}(\bbA_\bbQ)$ and $s_0 \in \bbZ + \rho_{i,n}$.
	Suppose $s_0 > 0$.
	Let $S$ be a finite set of places such that $\bfa \subset S$ and for $v \not\in S$, the representations $\mu_v$ and $\pi_v$ are unramified.
	Set
	\[
	L^S(s,\pi,\mu) = \prod_{v \not\in S} L(s,\pi_v,\mu_v).
	\]
	
	\begin{lem}\label{choice_global}
	Let $\alpha = \bigotimes_v \alpha_v \in I_{i,n}(s_0,\mu,\pi)_{\frakp_{n,-}\fin}$ and $S$ the finite set of places such that for $v \not\in S$, the function $\alpha_v$ is unramified.
	Then, there exists finite number of standard sections $f_1,\ldots,f_\ell$ of $I_{2n+r}(s,\mu)$ and $\varphi_1,\ldots\varphi_\ell \in \pi$ such that
	\[
	\lim_{s \rightarrow s_0} \frac{1}{L^S(s + (r+1)/2,\pi,\mu)}\sum_{j=1}^\ell Z(g,s; f_j, \varphi_j) = \alpha(g), \qquad g \in G_n(\bbA_\bbQ).
	\]
	\end{lem}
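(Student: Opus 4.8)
The plan is to build the required data place by place and then pass to tensor products, exploiting the Euler factorisation $Z(g,s;f,\varphi)=\prod_v Z_v(g,s;f_v,\varphi_v)$ for decomposable $f=\bigotimes_v f_v$ and $\varphi=\bigotimes_v\varphi_v$. Since $\alpha=\bigotimes_v\alpha_v$ is a pure tensor, it is enough to arrange, at each place $v$, a finite family $\{(f_{v,k},\varphi_{v,k})\}_{k\in J_v}$ of local data whose local zeta integrals sum to $\alpha_v$ --- up to an explicit local $L$-factor at the unramified places, and (only) at $s=s_0$ at the archimedean places --- while outside the given finite set $S$ every $(f_{v,k},\varphi_{v,k})$ is the unramified choice. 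Letting $f_{\mathbf k},\varphi_{\mathbf k}$ run over the tensor products indexed by $\mathbf k\in\prod_{v\in S}J_v$, the identity $Z=\prod_v Z_v$ turns this into a finite global collection whose zeta integrals sum, after dividing by $L^S(s+(r+1)/2,\pi,\mu)$ and letting $s\to s_0$, to a non-zero scalar times $\alpha$; a final rescaling of one local factor makes the scalar equal to $1$.

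\textbf{The three local inputs.} For $v\notin S$ I take $f_v$ the normalised unramified standard section of $I_{2n+r,v}(s,\mu_v)$ and $\varphi_v=\alpha_v$ the normalised unramified vector of $\pi_v$; Lemma~\ref{zeta_int_unram} then gives, as an identity of sections, $Z_v(\,\cdot\,,s;f_v,\alpha_v)=d_v(s)^{-1}L_v(s+(r+1)/2,\pi_v,\mu_v)\,\alpha_v$ with $d_v(s)=L_v(s+n+(r+1)/2,\mu_v)\prod_{j=1}^n L_v(2s+2n+r+1-2j,\mu_v^2)$, so the product over all $v\notin S$ equals $d^S(s)^{-1}L^S(s+(r+1)/2,\pi,\mu)$ times the prime-to-$S$ part of $\alpha$. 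For a non-archimedean $v\in S$ I extend $\alpha_v$ to a standard section of $I_{r,n+r,v}(s,\mu_v,\pi_v)$ and apply Lemma~\ref{zeta_int_ram} to write it as a finite sum $\sum_{k\in J_v}Z_v(\,\cdot\,,s;f_{v,k},\varphi_{v,k})$ of zeta integrals of standard sections $f_{v,k}$ of $I_{2n+r,v}(s,\mu_v)$, an identity valid for all $s$. For an archimedean place $v\in\bfa$, the standing assumptions ($s_0\in\bbZ+\rho_{i,n}$, $\lambda_{n-i,v}>\rho_{i,n}$, $0<s_0\le\min_v\{\lambda_{n-i,v}-\rho_{i,n}\}$) place us in the range of Corollary~\ref{zeta_int_arch} (equivalently of Lemma~\ref{abs_conv_non_zero}): the zeta integral at $s=s_0$ gives a non-zero intertwining map $I_{2n+r,v}(\mu_v,s_0)_{\frakp_{2n+r,-}\fin}\otimes\pi_v\to I_{r,n+r,v}(s_0,\mu_v,\pi_v)_{\frakp_{n+r,-}\fin}$, and the target is exactly the local space in which $\alpha_v$ lives. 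In this range that target is the irreducible unitary highest weight module attached to $(\lambda_{1,v},\dots,\lambda_{n-i,v},\rho_{i,n}+s_0,\dots,\rho_{i,n}+s_0)$ by the structure of $\frakp_{-}$-finite vectors in \cite{Horinaga_2}; hence the non-zero map is surjective and $\alpha_v$ is a finite sum $\sum_{k\in J_v}Z_v(\,\cdot\,,s_0;f_{v,k},\varphi_{v,k})$ with each $f_{v,k}$ the restriction of a standard section of $I_{2n+r,v}(s,\mu_v)$.

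\textbf{Assembly and the main difficulty.} Forming the tensor products and using $Z=\prod_v Z_v$ yields $\sum_{\mathbf k}Z(g,s;f_{\mathbf k},\varphi_{\mathbf k})=d^S(s)^{-1}L^S(s+(r+1)/2,\pi,\mu)\,\tilde\alpha_s(g)$, where $\tilde\alpha_s$ agrees with $\alpha$ outside $S$ and at the finite places of $S$ for all $s$, and whose archimedean components at $s=s_0$ are the $\alpha_v$. Dividing by $L^S(s+(r+1)/2,\pi,\mu)$ removes the offending numerator factor, and letting $s\to s_0$ (justified by the meromorphic continuation of $Z(\,\cdot\,,s;f,\varphi)$ together with the explicit unramified computation) produces $d^S(s_0)^{-1}\alpha$; here one uses that $d^S(s)=L^S(s+n+(r+1)/2,\mu)\prod_{j=1}^n L^S(2s+2n+r+1-2j,\mu^2)$ is holomorphic and non-vanishing at $s_0$, which holds because $s_0>0$ and $\rho_{i,n}>0$ force every argument of these partial Hecke $L$-functions to have real part $>1$, inside the region of absolute convergence of the Euler products. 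Multiplying one local factor (say one of the archimedean $\varphi_{v,k}$) by the non-zero constant $d^S(s_0)$ then makes the limit equal to $\alpha$ exactly. I expect the genuinely delicate point to be the surjectivity of the archimedean zeta map onto the full space of $\frakp_{n+r,-}$-finite vectors --- which rests on recognising that space as a single irreducible highest weight module --- together with the holomorphy and non-vanishing of the normalising factor $d^S$ at $s_0$; the assembly of the local pieces and the passage to the limit are routine.
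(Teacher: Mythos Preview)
Your proof is correct and follows essentially the same approach as the paper, which simply cites Lemma~\ref{zeta_int_unram}, Lemma~\ref{zeta_int_ram} and Corollary~\ref{zeta_int_arch}. You have supplied the assembly details the paper leaves implicit: the Euler factorisation, the tensoring over the index sets $J_v$, and the passage to the limit. Two points you make explicit that the paper does not are (i) the archimedean surjectivity, which you obtain by observing that $I_{r,n+r,v}(s_0,\mu_v,\pi_v)_{\frakp_{n+r,-}\fin}$ is the single irreducible highest weight module $L(\lambda_{1,v},\dots,\lambda_{n-i,v},\rho_{i,n}+s_0,\dots,\rho_{i,n}+s_0)$ so that a non-zero intertwining map is automatically onto, and (ii) the holomorphy and non-vanishing of $d^S(s)$ at $s_0$, which you correctly deduce from the fact that all arguments of the relevant partial Hecke $L$-functions lie in the region of absolute convergence. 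Both observations are needed to turn the three cited lemmas into a complete argument, and your treatment of them is sound.
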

	\begin{proof}
	The statement follows from Lemma \ref{zeta_int_unram}, Lemma \ref{zeta_int_ram} and Corollary \ref{zeta_int_arch}.
	\end{proof}
	
	\begin{prop}\label{general_P_neq_B}
	Suppose $s_0 > 1$ if $F = \bbQ$.
	We then have
	\[
	\calN(G_{n},\chi_{s_0})_{(M_{Q_{i,n}}, \mu\boxtimes\pi)} \cong \calN^2(G_{n},\chi_{s_0})_{(M_{Q_{i,n}}, \mu\boxtimes\pi)} \oplus I_{i,n}(s_0,\mu,\pi)_{\frakp_{n,-}\fin}.
	\]
	\end{prop}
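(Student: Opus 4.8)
The plan is to realize the asserted decomposition as a split short exact sequence
\[
0 \longrightarrow \calN^2(G_{n},\chi_{s_0})_{(M_{Q_{i,n}},\mu\boxtimes\pi)} \longrightarrow \calN(G_{n},\chi_{s_0})_{(M_{Q_{i,n}},\mu\boxtimes\pi)} \xrightarrow{\ q\ } I_{i,n}(s_0,\mu,\pi)_{\frakp_{n,-}\fin} \longrightarrow 0 ,
\]
where $q$ is $\mathbf{pr}_{s_0}$ composed with the constant term along $P_{i,n}$. Left-exactness and the identification of the kernel with the square-integrable subspace are already in place just before the statement: the constant term along $P_{i,n}$ and Lemma~\ref{s_0} give the embedding \eqref{emb_const_term}, and its $\mathbf{pr}_{s_0}$-component is injective on the quotient $\calN^2\bs\calN$. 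So what remains is to prove that $q$ is onto and to produce a linear section of $q$; both will be furnished by Klingen Eisenstein series.

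I would invoke the pullback construction with the dictionary $(n,r)\rightsquigarrow(n-i,\,i)$, so that the ambient Siegel Eisenstein series lives on $G_{2n-i}$, the cusp form $\varphi$ lies on $G_{n-i}(\bbA_\bbQ)$, and the associated Klingen Eisenstein series $E(\,\cdot\,,s;f,\varphi)$ is an automorphic form on $G_{n}(\bbA_\bbQ)$ attached to the Klingen parabolic $P_{i,n}$. Given $\alpha \in I_{i,n}(s_0,\mu,\pi)_{\frakp_{n,-}\fin}$, Lemma~\ref{choice_global} produces standard sections $f_1,\dots,f_\ell$ of $I_{2n-i}(s,\mu)$ with $f_{j,s_0}$ being $\frakp_{2n-i,-}$-finite and vectors $\varphi_1,\dots,\varphi_\ell\in\pi$ such that
\[
\lim_{s\to s_0}\frac{1}{L^S(s+(i+1)/2,\pi,\mu)}\sum_{j=1}^\ell Z(g,s;f_j,\varphi_j)=\alpha(g) .
\]
I would then set
\[
\calE_\alpha(g)=\lim_{s\to s_0}\frac{1}{L^S(s+(i+1)/2,\pi,\mu)}\sum_{j=1}^\ell E(g,s;f_j,\varphi_j) ,
\]
which is legitimate: since $s_0>1$ when $F=\bbQ$ (resp. $s_0>0$ otherwise), each Siegel Eisenstein series $E(\,\cdot\,,s,f_j)$ — hence each $E(\,\cdot\,,s;f_j,\varphi_j)$ — is holomorphic at $s=s_0$ by Lemma~\ref{s>1}, and $L^S(s+(i+1)/2,\pi,\mu)$ is holomorphic and non-vanishing at $s_0$ since $s_0+(i+1)/2$ lies inside the region of absolute convergence; moreover division by $L^S$ precisely cancels the unramified local factors of the zeta section (Lemma~\ref{zeta_int_unram}).

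It then remains to identify $\calE_\alpha$ and to extract the section. The section $Z(\,\cdot\,,s_0;f_j,\varphi_j)$ lies in $I_{i,n}(s_0,\mu,\pi)$, so $\calE_\alpha$ has cuspidal support $(M_{Q_{i,n}},\mu\boxtimes\pi)$ and infinitesimal character $\chi_{s_0}$, and it is nearly holomorphic by Proposition~\ref{NH_Klingen}; hence $\calE_\alpha\in\calN(G_{n},\chi_{s_0})_{(M_{Q_{i,n}},\mu\boxtimes\pi)}$. By Proposition~\ref{const_term_Klingen} its constant term along $P_{i,n}$ equals the normalized limit of the $Z(\,\cdot\,,s;f_j,\varphi_j)$, namely $\alpha$; as only the exponent $s_0>0$ occurs, $q(\calE_\alpha)=\alpha$, so $q$ is onto. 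For the section, note that the normalized zeta integral defines a linear surjection
\[
I_{2n-i}(s_0,\mu)_{\frakp_{2n-i,-}\fin}\otimes\pi \longrightarrow I_{i,n}(s_0,\mu,\pi)_{\frakp_{n,-}\fin}
\]
— surjective by Lemma~\ref{choice_global}, well defined and valued in the $\frakp_{n,-}$-finite vectors by Corollary~\ref{hol_global_zeta_int} and Corollary~\ref{zeta_int_arch} — while $\xi\mapsto\calE_\xi$ is a linear map from the same source to $\calN(G_{n},\chi_{s_0})_{(M_{Q_{i,n}},\mu\boxtimes\pi)}$ whose composite with $q$ is this surjection. Composing with any vector-space splitting of the surjection gives a section of $q$, so the sequence splits and the isomorphism follows.

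The step I expect to be the main obstacle is the holomorphy of $\calE_\alpha$ at $s=s_0$: controlling the poles of the Siegel, and then Klingen, Eisenstein series and the behaviour of the normalizing $L$-factor, so that the limit genuinely produces an automorphic form with constant term $\alpha$. This is exactly where the hypothesis ``$s_0>1$ if $F=\bbQ$'' is used, through Lemma~\ref{s>1}, Corollary~\ref{hol_global_zeta_int} and Proposition~\ref{const_term_Klingen}, each of which requires that bound. A secondary, bookkeeping, point is keeping the pullback indices $(n,r)\rightsquigarrow(n-i,i)$ consistent throughout — and, in the range $2i>n$, checking that the pullback and near-holomorphy statements still apply with these indices — but the near-holomorphy of the constant term (hence the fact that the target is precisely $I_{i,n}(s_0,\mu,\pi)_{\frakp_{n,-}\fin}$) and the local zeta-integral computations are uniform in $i$.
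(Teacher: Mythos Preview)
Your approach is correct and matches the paper's: the paper's proof simply notes that it suffices, for each $\alpha \in I_{i,n}(s_0,\mu,\pi)_{\frakp_{n,-}\fin}$, to exhibit a nearly holomorphic $\varphi \in \calN(G_n,\chi_{s_0})_{(M_{Q_{i,n}},\mu\boxtimes\pi)}$ with $\varphi_{P_{i,n}}=\alpha$, and cites Proposition~\ref{const_term_Klingen} and Lemma~\ref{choice_global} for exactly this. Your more detailed account of the split exact sequence, the normalization by $L^S$, and the construction of the section is consistent with (and more explicit than) the paper's two-line argument.
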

	\begin{proof}
	It suffices to show that for any $\frakp_{n,-}$-finite function $\alpha \in I_{i,n}(s_0,\mu,\pi)$, there exists a nearly holomorphic automorphic form $\varphi \in \calN(G_n,\chi_{s_0})_{(M_{Q_{i,n}}, \mu\boxtimes\pi)}$ such that $\varphi_{P_{i,n}}= \alpha$.
	This follows immediately from Proposition \ref{const_term_Klingen} and Lemma \ref{choice_global}.
	This completes the proof.
	\end{proof}
	
	In the following, we give partial results.
	
	\begin{prop}\label{P neq B}
	Assume $F = \bbQ$.
	Let $\Pi = \mu \boxtimes \pi$ be an irreducible holomorphic cuspidal automorphic representation of $M_{P_{i,n}}(\bbA_\bbQ)$.
	Suppose that highest weights of the archimedean component $\bigotimes_{v \in \bfa} \pi_v = \bigotimes_{v \in \bfa} L(\lambda_{1,v}, \ldots,\lambda_{n-i,v})$ satisfies $\lambda_{n-i,v} \geq \rho_{i,n} + s_0$.
	We then obtain the following result:
	\begin{enumerate}
	\item For $s_0 = 1/2$, the space $\calN(G_n,\chi_{s_0})_{(M,\Pi)}$ is $\bigotimes_{v \in \bfa} L(\lambda_{1,v},\ldots,\lambda_{n-i,v},\rho_{i,n} + \vep,\ldots,\rho_{i,n}+\vep)$-isotypic.
	Here, $\vep \in \{\pm 1/2\}$ is defined so that $\mathrm{sgn}^{\rho_{i,n}+\vep} = \mu_v$ for any $v$.
	\item For $s_0 = 1$, the space $\calN(G_n,\chi_{s_0})_{(M_{Q_{i,n}},\Pi)}$ is contained in
	\[
	I_{i,n}(-1, \mu,\pi) \oplus I_{i,n}(1, \mu,\pi).
	\]
	\end{enumerate}
	\end{prop}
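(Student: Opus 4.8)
The plan is to derive both parts from the constant–term embedding \eqref{emb_const_term}, the crux being that the exponents $s_0=\tfrac12$ and $s_0=1$ behave differently at the archimedean place for parity reasons. First I would note that the standing assumptions under which \eqref{emb_const_term} was set up — $s_0\in\bbZ+\rho_{i,n}$ and $0<s_0\le\min_{v\in\bfa}\{\lambda_{n-i,v}-\rho_{i,n}\}$ — hold for $s_0\in\{\tfrac12,1\}$: the upper bound is exactly the running hypothesis $\lambda_{n-i,v}\ge\rho_{i,n}+s_0$. Since the cuspidal support of $\Pi=\mu\circ\det\boxtimes\pi$ on $M_{P_{i,n}}$ is $(M_{Q_{i,n}},\mu\boxtimes\pi)$, we have $\calN(G_n,\chi_{s_0})_{(M,\Pi)}=\calN(G_n,\chi_{s_0})_{(M_{Q_{i,n}},\Pi)}$, and the constant term along $P_{i,n}$ embeds this space into $\bigl(I_{i,n}(-s_0,\mu,\pi)\oplus I_{i,n}(s_0,\mu,\pi)\bigr)_{\frakp_{n,-}\fin}$.

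For part (2) this is already the statement: taking $s_0=1$, and using that every nearly holomorphic form is by definition $\frakp_{n,-}$-finite, we may drop the subscript on the right and conclude $\calN(G_n,\chi_1)_{(M_{Q_{i,n}},\Pi)}\hookrightarrow I_{i,n}(-1,\mu,\pi)\oplus I_{i,n}(1,\mu,\pi)$. I would add the remark that $s_0=1$ genuinely cannot be sharpened the way $s_0=\tfrac12$ will be: the two candidate highest weights have last entry $\rho_{i,n}+1$ and $\rho_{i,n}-1$, which are congruent modulo $2$, so (when $\mu_\infty$ has the matching parity) both summands carry honest $\frakp_{n,-}$-finite vectors, while if $\mu_\infty$ has the other parity both archimedean factors vanish and the space is zero.

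For part (1), with $s_0=\tfrac12$, I would use the archimedean near-holomorphy description of $I_{i,n,\infty}(s,\mu_\infty,\pi_\infty)$ — the $P_{i,n}$-analogue of Proposition \ref{local_ind_rep}(3), available from \cite{Horinaga_2}: the space $I_{i,n,\infty}(s,\mu_\infty,\pi_\infty)_{\frakp_{n,-}\fin}$ is zero unless $\mu_\infty=\sgn^{\rho_{i,n}+s}$, in which case it is the irreducible highest weight module $L(\lambda_{1,v},\ldots,\lambda_{n-i,v},\rho_{i,n}+s,\ldots,\rho_{i,n}+s)$, a $\frakk_n$-dominant weight by the hypothesis on $\lambda$. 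Since $\rho_{i,n}+\tfrac12$ and $\rho_{i,n}-\tfrac12$ are consecutive integers, exactly one of them has the parity of $\mu_\infty$; let $\vep\in\{\pm1/2\}$ be the sign with $\sgn^{\rho_{i,n}+\vep}=\mu_\infty$. Then only the summand $I_{i,n}(\vep,\mu,\pi)$ contributes to the image of \eqref{emb_const_term}, and its archimedean component lies in $L(\lambda_{1,v},\ldots,\lambda_{n-i,v},\rho_{i,n}+\vep,\ldots,\rho_{i,n}+\vep)$. Hence $\calN(G_n,\chi_{1/2})_{(M,\Pi)}$ embeds into a $G_n(\bbA_{\bbQ,\fini})\times(\frakg_n,K_{n,\infty})$-module whose underlying $(\frakg_n,K_{n,\infty})$-module is a direct sum of copies of $\bigotimes_{v\in\bfa}L(\lambda_{1,v},\ldots,\lambda_{n-i,v},\rho_{i,n}+\vep,\ldots,\rho_{i,n}+\vep)$; since this module is irreducible, every $(\frakg_n,K_{n,\infty})$-stable subspace is isotypic for it, which is the assertion.

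The hard part will be the archimedean local input to part (1): one needs not merely the parity criterion but also that the $\frakp_{n,-}$-finite part of $I_{i,n,\infty}(\pm\tfrac12,\mu_\infty,\pi_\infty)$ is the \emph{irreducible} module $L(\cdots)$ — rather than a larger highest weight module — within the weight range allowed by the hypothesis. If this is not directly quotable in the form needed from \cite{Horinaga_2}, I would obtain it by realizing $\pi_\infty=L(\lambda_\infty)$ as a submodule of a principal series, reading the parity obstruction off Theorem \ref{emb_ht_wt}, and invoking \cite[Corollary 7.3]{Horinaga_2} together with the unitarizability in the relevant range (Theorem \ref{unitary}) to force irreducibility. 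Everything else — the restricted tensor-product decomposition of the degenerate principal series and the bookkeeping distinguishing $\rho_{i,n}+\tfrac12$ from $\rho_{i,n}-\tfrac12$ — is routine.
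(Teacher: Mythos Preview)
Your proposal is correct and follows the same route as the paper: both parts are derived from the constant-term embedding \eqref{emb_const_term}, with part (2) being literally that embedding at $s_0=1$ and part (1) requiring the additional parity observation that $\rho_{i,n}+\tfrac12$ and $\rho_{i,n}-\tfrac12$ have opposite parities, so only one summand carries $\frakp_{n,-}$-finite vectors. The paper's own proof is the single sentence ``The statements follow from (\ref{emb_const_term}) immediately,'' so your elaboration of the parity argument and your discussion of why the surviving $\frakp_{n,-}$-finite part is the irreducible $L(\cdots)$ (via \cite[Corollary 7.3]{Horinaga_2} and Theorem \ref{unitary}) simply spell out what the paper takes for granted.
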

	\begin{proof}
	The statements follow from (\ref{emb_const_term}) immediately.
	\end{proof}
	
	\subsection{Classification of $(\frakg_n,K_{n,\infty})$-module generated by nearly holomorphic automorphic forms}
	
	We finally show the following classification:
	
	\begin{thm}\label{classification_th}
	Let $M$ be an indecomposable reducible $(\frakg_n,K_{n,\infty})$-module generated by a nearly holomorphic modular form.
	Then, the length of $M$ is at most two.
	Moreover, if $F \neq \bbQ$, $M$ is irreducible.
	If $F=\bbQ$ and $M$ is reducible, let $L(a_1,\ldots,a_n)$ be the socle of $M$ and $L(b_1,\ldots,b_n)$ the irreducible quotient of $M$.
	Then, there exists $i$ such that
	\begin{itemize}
	\item $a_j = b_j$ for $j = 1,\ldots,n-i$.
	\item $a_{n-i+1}=\cdots=a_{n} = \rho_{i,n}-1$ and $b_{n-i+1}=\cdots=b_{n} = \rho_{i,n}+1$.
	\item $M \cong N(a_1,\ldots,a_n)^\vee$.
	\end{itemize}
	Moreover, if a reducible module $M$ has a regular infinitesimal character, one has $i=1$.
	\end{thm}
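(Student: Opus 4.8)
The plan is to reduce the statement, via the canonical decompositions of the space of nearly holomorphic automorphic forms, to the study of a single block, and then to identify that block using the category $\calO^\frakp$ computations of \S3 and the structure theorems of \S6. A nearly holomorphic modular form produces a nearly holomorphic automorphic form $\varphi\in\calN(G_n)$ generating $M$. By $\calN(G_n)=\bigoplus_\chi\calN(G_n,\chi)$ of \eqref{ss_Z_n} and $\calA(G_n)=\bigoplus_{(M',\tau)}\calA(G_n)_{(M',\tau)}$ of Theorem~\ref{cusp_supp_decomp}, the indecomposable module $M$ lies in a single summand $\calN(G_n,\chi)_{(M',\tau)}$, and by Proposition~\ref{decomp_NHAF_cusp_supp} this summand is non-zero only if $(M',\tau)$ is associate to $(M_{Q_{i,n}},\mu\boxtimes\pi)$ for some $0\le i\le n$, a Hecke character $\mu$ of $\GL_1(\bbA_F)$ occurring diagonally, and a holomorphic cuspidal $\pi$ of $G_{n-i}(\bbA_\bbQ)$; write $\chi=\chi_{s_0}$. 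So it suffices to determine the indecomposable submodules of each $\calN(G_n,\chi_{s_0})_{(M_{Q_{i,n}},\mu\boxtimes\pi)}$.

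First I would dispose of the semisimple cases. If $i=0$ (the generating form is cuspidal) or $s_0=0$, the summand is isotypic for one irreducible module by Lemma~\ref{isotypic_s=0} (resp.\ semisimplicity of the cuspidal spectrum), so an indecomposable submodule is irreducible. Assume $s_0>0$; after a harmless normalisation $0<s_0\le\min_{v\in\bfa}(\lambda_{n-i,v}-\rho_{i,n})$, where $(\lambda_{1,v},\dots,\lambda_{n-i,v})$ is the highest weight of $\pi_v$ (otherwise the summand equals its square-integrable part $\calN^2$ and is semisimple), and then $s_0\in\bbZ+\rho_{i,n}$. By \eqref{emb_const_term} the constant term along $P_{i,n}$ embeds the summand into $\big(I_{i,n}(-s_0,\mu,\pi)\oplus I_{i,n}(s_0,\mu,\pi)\big)_{\frakp_{n,-}\fin}$ (with $I_{n,n}(\pm s_0,\mu,\cdot)=I_n(\pm s_0,\mu)$), the preimage of the first factor being $\calN^2$, which is semisimple. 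By Proposition~\ref{local_ind_rep}(3) for $i=n$ and by \cite{Horinaga_2} for $i<n$, the quotient by $\calN^2$ embeds into $I_{i,n}(s_0,\mu,\pi)_{\frakp_{n,-}\fin}$, which is zero or the single irreducible highest weight module of highest weight $(\lambda_{1,v},\dots,\lambda_{n-i,v},\rho_{i,n}+s_0,\dots,\rho_{i,n}+s_0)_v$. When $F\ne\bbQ$, or $F=\bbQ$ and $s_0>1$, the Siegel Eisenstein series at $s=s_0$ splits this sequence (Propositions~\ref{F neq Q, P = B}, \ref{general_P_neq_B}, \ref{F = Q, P = B}(1)); when $F=\bbQ$ and $s_0=1/2$ the summand is isotypic (Propositions~\ref{P neq B}(1), \ref{F = Q, P = B}(3)). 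In all these cases the summand is semisimple, $M$ is irreducible, and in particular $M$ is always irreducible when $F\ne\bbQ$.

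The remaining possibility is $F=\bbQ$ with $s_0\in(0,1]$ and $s_0\ne1/2$; since $s_0\in\bbZ+\rho_{i,n}$ this forces $s_0=1$ and $\rho_{i,n}=n-(i-1)/2\in\bbZ$, i.e.\ $i$ odd. Set
\[
a=(\lambda_1,\dots,\lambda_{n-i},\rho_{i,n}-1,\dots,\rho_{i,n}-1),\qquad
b=(\lambda_1,\dots,\lambda_{n-i},\rho_{i,n}+1,\dots,\rho_{i,n}+1),
\]
which lie in one dot-orbit since $|a|=|b|$. By Proposition~\ref{F = Q, P = B}(2) for $i=n$ and Proposition~\ref{P neq B}(2) together with \eqref{emb_const_term} for $i<n$, the quotient of the summand by $\calN^2$ is $\cong L(b)$, and $\calN^2$ is $L(a)$-isotypic — being square-integrable it cannot involve $L(b)$, whose cuspidal exponent $+1$ is positive — so the only composition factors of $\calN(G_n,\chi_1)_{(M_{Q_{i,n}},\mu\boxtimes\pi)}$ are $L(a)$ and $L(b)$. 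Since $\dim_\bbC\mathrm{Ext}^1_{\calO^\frakp}(L(a),L(b))=1$ by Lemma~\ref{Ext}, the duality $\vee$ on $\calO^\frakp$ gives $\dim_\bbC\mathrm{Ext}^1_{\calO^\frakp}(L(b),L(a))=1$; using that self-extensions of simples vanish, one checks that the only reducible indecomposable submodule of the summand is the non-split extension $0\to L(a)\to M\to L(b)\to 0$, which by dualising Lemma~\ref{Ext} is $\cong N(a)^\vee$. Thus a reducible indecomposable $M$ is necessarily this length-two module, with $a_j=b_j$ for $j\le n-i$, $a_{n-i+1}=\dots=a_n=\rho_{i,n}-1$, $b_{n-i+1}=\dots=b_n=\rho_{i,n}+1$, socle $L(a)$, irreducible quotient $L(b)$ and $M\cong N(a)^\vee$, as asserted. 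Finally, if $i\ge3$ then $(i-1)/2\in\{1,\dots,i\}$, and for $j=n-i+(i-1)/2$ one has $a_j-j=\rho_{i,n}-1-j=0$, so $0\in|a|$ and the common infinitesimal character of $L(a)$ and $L(b)$ is singular; hence a reducible $M$ with regular infinitesimal character has $i=1$.

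I expect the main obstacle to be the global input for $F=\bbQ$, $s_0=1$: the structure of $\calN(G_n,\chi_1)_{(M_{Q_{i,n}},\mu\boxtimes\pi)}$ supplied by Propositions~\ref{F = Q, P = B}(2) and \ref{P neq B}(2), which rest on the near-holomorphy of Siegel Eisenstein series at $s=1$ (Proposition~\ref{s=1}) and the constant-term computations of \cite{Horinaga_2}; on the category $\calO^\frakp$ side the crux is Lemma~\ref{Ext}, which pins the non-split extension down to $N(a)^\vee$. Everything else is bookkeeping with the structure theorems and with the multiset $|\lambda|$.
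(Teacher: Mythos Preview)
Your proposal is correct and follows essentially the same route as the paper's own proof: reduce to a single cuspidal-support block $\calN(G_n,\chi_{s_0})_{(M_{Q_{i,n}},\mu\boxtimes\pi)}$, use the structure results of \S6 to conclude that any reducible indecomposable piece forces $F=\bbQ$ and $s_0=1$, and then identify $M$ via Lemma~\ref{Ext}. Your write-up in fact supplies details the paper leaves implicit, notably (i) why $s_0=1$ forces $i$ odd, (ii) the Smith-normal-form style argument that an indecomposable extension of an $L(b)$-isotypic module by an $L(a)$-isotypic module with $\dim\Ext^1_{\calO^\frakp}(L(b),L(a))=1$ must have length two, and (iii) the explicit verification that $i\ge 3$ produces $0\in|a|$ and hence a singular infinitesimal character.
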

	\begin{proof}
	We may assume $M$ is reducible.
	There exists $s_0 \in (1/2)\bbZ_{\geq 0}$, a positive integer $i$, a character $\mu$ of $\GL_i(\bbA_F)$ and an irreducible cuspidal automorphic representation $\pi$ of $G_{n-i}(\bbA_\bbQ)$ such that the indecomposable reducible module $M$ can be embedded into $\calN(G_{n},\chi_{s_0})_{(M_{Q_{i,n}},\mu\boxtimes\pi)}$.
	By Lemma \ref{isotypic_s=0}, Proposition \ref{F neq Q, P = B}, Proposition \ref{F = Q, P = B}, Proposition \ref{general_P_neq_B} and Proposition \ref{P neq B}, since $M$ is reducible, one has $F=\bbQ$ and $s_0 = 1$.
	In the following, we assume $F=\bbQ$ and $s_0 = 1$.
	
	Put
	\[
	M_1 = M \cap \calN^2(G_{n},\chi_{s_0})_{(M_{Q_{i,n}},\Pi)}.
	\]
	Then, the submodule $M_1$ is semisimple.
	Since the submodule $M_1$ occurs in 
	\[
	I_{i,n}(-1,\mu,\pi)_{\frakp_{n,-}\fin},
	\]
	the module $M_1$ is isomorphic to $L(\lambda_1,\ldots,\lambda_{n-i}, \rho_{i,n}-1,\ldots,\rho_{i,n}-1)$ with some multiplicities.
	Put $M_2 = M/ M_1$.
	Then, one obtains that $M_2$ is isomorphic to $L(\lambda_1,\ldots,\lambda_{n-i}, \rho_{i,n}+1,\ldots,\rho_{n,i}+1)$ with some multiplicities by Proposition \ref{F = Q, P = B} (2) and Proposition \ref{P neq B}.
	By Lemma \ref{Ext}, the module $M$ is isomorphic to $N(\lambda_1,\ldots,\lambda_{n-i}, \rho_{i,n}-1,\ldots,\rho_{n,i}-1)^\vee$.
	
	If $M$ has a regular infinitesimal character, the socle $L(a_1,\ldots,a_n)$ has a regular infinitesimal character.
	Then, one has $i=1$.
	This completes the proof.
	\end{proof}
	
	\begin{rem}
	A typical example of nearly holomorphic modular form that generates an indecomposable reducible module is $E_2$.
	Here, $E_2$ is defined by
	\[
	E_2(z) = \frac{3}{\pi y} -1 + 24 \sum_{n=1}^\infty \left(\sum_{0 < d | n} d\right) \exp(2\pi \sqrt{-1}nz), \qquad z \in \frakH_1.
	\]
	Then, $E_2$ generates $N(0)^\vee$.
	For details, see \cite{Horinaga_1}.
	\end{rem}
	
	\begin{cor}\label{classif_with_K-type}
	Let $\lambda$ be a regular anti-dominant integral weight and $\chi = \chi_\lambda$.
	Let $\calN\mathrm{Rep}_n(\chi)$ be the set of isomorphism classes of indecomposable $(\frakg_n,K_{n,\infty})$-modules with the regular integral infinitesimal character $\chi$ generated by nearly holomorphic Siegel modular forms of degree $n$.
	For a $K_{n,\infty}$-type $\sigma$, put $\calN\mathrm{Rep}_n(\chi,\sigma) = \{\pi \in \calN\mathrm{Rep}_n(\chi) \mid \text{$\pi$ has the $K_{n,\infty}$-type $\sigma$}\}$.
	We then have
	\[
	\calN\mathrm{Rep}_n(\chi) \subset 
	\begin{cases}
	\{L(\lambda^{(0)}),\ldots,L(\lambda^{(p)}), N(\lambda^{(1)})^\vee\} & \text{if $\lambda_{n} = n+1$}\\
	\{L(\lambda)\}&\text{if $\lambda_n \neq n+1$}
	\end{cases}
	\]
	and
	\[
	\calN\mathrm{Rep}_n(\chi, \mathrm{det}^{\lambda_1-1}\otimes\wedge^{j(\lambda)}) \subset
	\begin{cases}
	 \{L(\lambda^{(0)}), N(\lambda^{(1)})^\vee\}& \text{if $\lambda_{n} = n+1$}\\
	\{L(\lambda)\}&\text{if $\lambda_n \neq n+1$}.
	\end{cases}
	\]
	\end{cor}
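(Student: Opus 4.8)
The plan is to read the statement off Theorem~\ref{classification_th}, using Corollary~\ref{orbit} to pin down the unitary dot-orbit and Proposition~\ref{vanish_K_type} to separate the candidate modules by their $K_{n,\infty}$-types. Fix $\pi\in\calN\mathrm{Rep}_n(\chi)$; since we are in degree $n$ we have $F=\bbQ$. Under the correspondence~(\ref{corresp_MF_AF}) the module $\pi$ is generated by a nearly holomorphic automorphic form on $G_n(\bbA_\bbQ)$, hence is an indecomposable object of $\calO^\frakp$ with infinitesimal character $\chi=\chi_\lambda$, and by Theorem~\ref{classification_th} it has length at most two.

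Suppose first that $\pi$ is irreducible, say $\pi=L(\mu)$ with $\mu$ a $\frakk_n$-dominant integral weight; then $\chi_\mu=\chi_\lambda$ gives $\mu\in\calO_\lambda$. Since $\pi$ occurs inside $\calN(G_n)$ as an irreducible submodule, the structure theorems for the space of nearly holomorphic automorphic forms underlying the proof of Theorem~\ref{classification_th}---which identify the constituents of $\calN(G_n,\chi)_{(M,\tau)}$ with square-integrable automorphic forms and with $\frakp_{n,-}$-finite vectors in the relevant induced representations, all of which are unitarizable---show that $L(\mu)$ is unitarizable, i.e.\ $\mu\in\calO_\lambda^{\unit}$. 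By Corollary~\ref{orbit} this forces $\mu=\lambda=\lambda^{(0)}$ if $\lambda_n>n+1$, and $\mu\in\{\lambda^{(0)},\dots,\lambda^{(p)}\}$ if $\lambda_n=n+1$; in the remaining anti-dominant case $\lambda_n=n$, Lemma~\ref{dot_orbit_lem} again gives $\mu=\lambda=\lambda^{(0)}$. Suppose instead that $\pi$ is reducible. By the final assertion of Theorem~\ref{classification_th}, regularity of $\chi$ forces $i=1$, so $\pi\cong N(a_1,\dots,a_n)^\vee$ with socle $L(a)$, $a_n=\rho_{1,n}-1=n-1$, and irreducible quotient $L(b)$, where $b_j=a_j$ for $j<n$ and $b_n=\rho_{1,n}+1=n+1$. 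Since $L(b)$ is a quotient of $\pi$ it has infinitesimal character $\chi_\lambda$, so $\chi_b=\chi_\lambda$, and as $b_n=n+1\ge n$ the weight $b$ is anti-dominant; by uniqueness of the anti-dominant representative in $\calO_\lambda$ we conclude $b=\lambda$. Hence $\lambda_n=n+1$, $a=(\lambda_1,\dots,\lambda_{n-1},n-1)=\lambda^{(1)}$, and $\pi\cong N(\lambda^{(1)})^\vee$. Combining the two cases yields the first displayed inclusion.

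For the second inclusion, set $\sigma=\mathrm{det}^{\lambda_1-1}\otimes\wedge^{j(\lambda)}$ and assume $\pi$ carries the $K_{n,\infty}$-type $\sigma$. If $\lambda_n\ne n+1$ there is nothing more to prove. If $\lambda_n=n+1$, then by the first part $\pi$ is one of $L(\lambda^{(0)}),\dots,L(\lambda^{(p)}),N(\lambda^{(1)})^\vee$. Proposition~\ref{vanish_K_type} gives $\mathrm{Hom}_{\frakk_n}(\sigma,L(\lambda^{(j)})|_{\frakk_n})=0$ for $1\le j\le p(\lambda)$, so $\pi\not\cong L(\lambda^{(j)})$ for those $j$; on the other hand $\dim_\bbC\mathrm{Hom}_{\frakk_n}(\sigma,L(\lambda)|_{\frakk_n})=1$, so $\sigma$ occurs in $L(\lambda)=L(\lambda^{(0)})$, and since the restriction $N(\lambda^{(1)})^\vee|_{\frakk_n}$ is the semisimple module $L(\lambda)|_{\frakk_n}\oplus L(\lambda^{(1)})|_{\frakk_n}$, $\sigma$ occurs in $N(\lambda^{(1)})^\vee$ as well. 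Therefore $\pi\in\{L(\lambda^{(0)}),N(\lambda^{(1)})^\vee\}$, which is the second inclusion.

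The only step carrying real content is the claim in the irreducible case that $L(\mu)$ must lie in $\calO_\lambda^{\unit}$ rather than merely in $\calO_\lambda$; this is precisely where one must invoke the harmonic analysis of the space of nearly holomorphic automorphic forms. Everything else---matching $N(a_1,\dots,a_n)^\vee$ with $N(\lambda^{(1)})^\vee$, forcing $\lambda_n=n+1$ in the reducible case, and reading off the $\frakk_n$-decomposition of $N(\lambda^{(1)})^\vee$ against Proposition~\ref{vanish_K_type}---is routine bookkeeping.
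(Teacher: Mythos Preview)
Your argument is correct and follows essentially the same route as the paper, which simply cites Theorem~\ref{classification_th} together with Corollary~\ref{distinguish_mod} (the latter being exactly your combination of Corollary~\ref{orbit} with Proposition~\ref{vanish_K_type}). Two small remarks: the case $\lambda_n=n$ is vacuous since a regular anti-dominant weight necessarily has $\lambda_n\ge n+1$ (otherwise $|\lambda_n-n|=0$ and $\lambda$ is singular), and your unitarizability claim for irreducible $\pi$ is indeed not part of the \emph{statement} of Theorem~\ref{classification_th} but is contained in the lemma preceding Lemma~\ref{s_0}, which you correctly point to.
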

	\begin{proof}
	The statement follows immediately from Proposition \ref{distinguish_mod} and Theorem \ref{classification_th}.
	\end{proof}

\section{Projection operators}
	
	In this section, we investigate projection operators associated to infinitesimal characters.
		
	\subsection{Generators of $\calZ_n$}
	
	In this subsection, we assume $F = \bbQ$ for simplicity.
	It is well-known that $\calZ_n$ is generated by $n$ generators.
	We give generators explicitly.
	We first define matrices $B_{i,j}$ and $E_{\pm,i,j} = E_{\pm,j,i}$ as follows:
	\[
	B_{i,j} = \begin{pmatrix} \frac{1}{2} (e_{i,j} - e_{j,i}) & \frac{-\sqrt{-1}}{2}(e_{i,j} + e_{j,i}) \\ \frac{\sqrt{-1}}{2}(e_{i,j} + e_{j,i}) & \frac{1}{2}(e_{i,j} - e_{j,i})\end{pmatrix}, 
	\qquad E_{\pm,i,j} = \begin{pmatrix} \frac{1}{2} (e_{i,j} + e_{j,i}) & \frac{\pm\sqrt{-1}}{2}(e_{i,j} + e_{j,i}) \\ \frac{\pm\sqrt{-1}}{2}(e_{i,j} + e_{j,i}) & \frac{-1}{2}(e_{i,j} + e_{j,i})\end{pmatrix}
	\]
	Then, $\{B_{i,j} \mid  1 \leq i,j \leq n\}$ and $\{E_{\pm,i,j} \mid 1 \leq i \leq j \leq n\}$ are basis of $\frakk_n$ and $\frakp_{n,\pm}$, respectively.
	We define $B \in \Mat_n(\Mat_{2n}(\bbC))$ and $E_\pm \in \Sym_n(\Mat_{2n}(\bbC))$ by
	\[
	B = (B_{k,\ell})_{k,\ell}, \qquad E_\pm = (E_{\pm,k,\ell})_{k,\ell} \in \Sym_n(\Mat_{2n}(\bbC)).
	\]
	Put $B^* = (B_{j,i})_{i,j}$, the transpose of $B$.
	Let $w = X_1 \cdots X_m$ be a word with letters $B,B^*$ and $E_{\pm}$.
	We assume the word $w$ satisfies the following five conditions:
	\begin{itemize}
	\item $E_+$ is followed by $E_-$ or $B^*$.
	\item $E_-$ is followed by $E_+$ or $B$.
	\item $B$ is followed by $E_+$ or $B$.
	\item $B^*$ is followed by $E_-$ or $B^*$.
	\item $E_+$ and $E_-$ occur with the same multiplicity.
	\end{itemize}
	For a word, let $\tr(w) \in \Mat_{2n}(\bbC)$ be the trace as the $\Mat_{2n}(\bbC)$-valued matrix.
	We may identify $\tr(w)$ as an element of $\calU(\frakg_n)$.
	Let $L(w)$ be the sum of number of times $E_-B$ and $BE_+$ occur isolatedly in $w$ counted cyclicly.
	For example, $L(E_-BE_+) = 0, L(E_-BE_+B^*) = 1, L(E_+E_-BB) = L(E_-BBE_+) = 2$.
	Put
	\[
	D_{2r} = \sum_{w} (-1)^{L(w)}\tr(w)
	\]
	where $w$ runs over all words of length $2r$ with the above five conditions.
	
	\begin{thm}[\cite{2012_Maurischat}]
	The algebra $\calZ_n$ is generated by elements $D_{2}, \ldots, D_{2n}$ as an algebra over $\bbC$.
	\end{thm}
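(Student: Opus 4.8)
The plan is to prove two things separately: (i) each $D_{2r}$ actually lies in $\calZ_n$, and (ii) the elements $D_2,\dots,D_{2n}$ generate $\calZ_n$ as a $\bbC$-algebra. Part (ii) will be reduced, by passing to the associated graded algebra for the PBW filtration, to the classical invariant theory of $\mathfrak{sp}_{2n}$; part (i) is a direct, if delicate, commutator computation.

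\medskip\noindent\emph{Centrality of $D_{2r}$.} Since $D_{2r}=\sum_w(-1)^{L(w)}\operatorname{tr}(w)$ is obtained by taking the matrix trace over the ``outer'' $n\times n$ indices of words in $B,B^*,E_+,E_-$, and conjugation on those outer indices is exactly the adjoint action of $\frakk_n\cong\mathfrak{gl}_n$ on $\frakg_n=\frakk_n\oplus\frakp_{n,+}\oplus\frakp_{n,-}$ (under which $\frakk_n$, $\frakp_{n,+}$, $\frakp_{n,-}$ correspond to the adjoint, $\operatorname{Sym}^2(\mathrm{std})$ and $\operatorname{Sym}^2(\mathrm{std}^{*})$ representations), the element $D_{2r}$ is automatically $\operatorname{ad}(\frakk_n)$-invariant. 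It therefore suffices to check $[\frakp_{n,+},D_{2r}]=0$, the case of $\frakp_{n,-}$ being symmetric. For this one expands $[E_{+,a,b},\operatorname{tr}(X_1\cdots X_{2r})]$ as a sum over positions $i$ of traces of the word in which $X_i$ is replaced by $[E_{+,a,b},X_i]$, and uses the relations $[E_+,E_+]=0$, $[E_+,B]\subseteq\frakp_{n,+}$, $[E_+,B^*]\subseteq\frakp_{n,+}$, $[E_+,E_-]\subseteq\frakk_n$. The five adjacency conditions in the definition of an admissible word guarantee that every term so produced is again (a scalar multiple of) the trace of an admissible word of length $2r$; and the sign $(-1)^{L(w)}$ is arranged precisely so that, after summing over all admissible $w$, these terms cancel in pairs. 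I expect this telescoping to be the main obstacle: it is elementary, but matching the sign statistic $L$ and the adjacency rules to the bracket relations requires careful bookkeeping --- this is exactly the content of \cite{2012_Maurischat}.

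\medskip\noindent\emph{Generation.} With respect to the PBW filtration one has, classically, a graded algebra isomorphism $\operatorname{gr}\calZ_n\cong(S\frakg_n)^{\frakg_n}$, and via a nondegenerate invariant form $(S\frakg_n)^{\frakg_n}\cong\bbC[\mathfrak{sp}_{2n}]^{\Sp_{2n}}$, which by Chevalley's theorem is a polynomial ring on homogeneous generators of degrees $2,4,\dots,2n$. I would then identify the top-degree symbol $\overline{D_{2r}}\in S^{2r}(\frakg_n)$: since each entry of $B,B^*,E_\pm$ is an element of $\frakg_n$ regarded inside $\Mat_{2n}(\bbC)$ and reordering is harmless in $S(\frakg_n)$, the degree-$2r$ part of $\sum_w(-1)^{L(w)}\operatorname{tr}(w)$ reassembles (again by a sign-and-adjacency check --- the reason the recipe is what it is) into a nonzero scalar multiple of the invariant $X\mapsto\operatorname{tr}_{\bbC^{2n}}(X^{2r})$ on $\mathfrak{sp}_{2n}$. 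Restricting this to $X=\operatorname{diag}(x_1,\dots,x_n,-x_1,\dots,-x_n)$ gives $2\sum_i x_i^{2r}$, i.e.\ twice the $r$-th power sum in $x_1^2,\dots,x_n^2$; since the first $n$ power sums in $n$ variables are algebraically independent and generate the symmetric functions, and the ring of symmetric functions in $x_1^2,\dots,x_n^2$ is precisely the ring of Weyl-group invariants of type $C_n$, the symbols $\overline{D_2},\dots,\overline{D_{2n}}$ are algebraically independent homogeneous elements of degrees $2,4,\dots,2n$. The polynomial subalgebra they generate then has the same Poincar\'e series as $(S\frakg_n)^{\frakg_n}$, hence equals it, so $\overline{D_2},\dots,\overline{D_{2n}}$ generate $\operatorname{gr}\calZ_n$. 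Finally, the standard fact that elements of a filtered algebra whose symbols generate the associated graded algebra themselves generate the filtered algebra (induction on filtration degree) yields $\calZ_n=\bbC[D_2,\dots,D_{2n}]$, as desired.
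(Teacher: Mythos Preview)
The paper does not prove this theorem at all: it is stated with attribution to \cite{2012_Maurischat} and used as a black box in the construction of the projection operators $\frakp_\chi$. So there is no ``paper's own proof'' to compare against; you have supplied a proof sketch where the paper gives none.

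Your outline is reasonable and follows the standard two-step template. For generation, the reduction to the associated graded algebra and the identification of $(S\frakg_n)^{\frakg_n}$ with type~$C_n$ Weyl invariants via Chevalley is correct and clean; once one knows that the symbol $\overline{D_{2r}}$ equals (a nonzero multiple of) the power-trace invariant, the rest follows. Note, however, two places that are genuinely not yet justified in your write-up. First, the symbol identification ``reassembles \ldots\ into a nonzero scalar multiple of $X\mapsto\mathrm{tr}_{\bbC^{2n}}(X^{2r})$'' is asserted, but the trace in the definition of $D_{2r}$ is over the outer $n\times n$ indices, not the $2n\times 2n$ ones, so passing from the restricted sum over admissible words with signs to the full $2n$-dimensional power trace requires an actual combinatorial identity; this is exactly the point where the adjacency rules and the statistic $L(w)$ earn their keep, and it is not automatic. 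Second, for centrality you correctly identify the mechanism (commutators with $E_{+,a,b}$ turn admissible words into admissible words, and the sign $(-1)^{L(w)}$ forces pairwise cancellation), but you do not actually carry out the case analysis, and you yourself flag this as ``the main obstacle'' and defer to \cite{2012_Maurischat}. Both gaps are honest and clearly marked, but as written your argument is a roadmap rather than a proof; to make it self-contained you would need to execute the bracket/sign bookkeeping in at least one representative case and verify the symbol computation, which is precisely the content of Maurischat's paper.
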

	
%
%
%
%
%

\subsection{Projection operators}
	
	Fix an infinitesimal character $\chi$, a weight $\rho$ and a congruence subgroup $\Gamma$.
	Let $K_\Gamma$ be the closure of $\Gamma$ in $G_{n}(\bbA_{\bbQ,\fini})$.
	We now define a projection on $\calN(G_n)_\rho^{K_\Gamma}$.
	Let $\lambda$ be the highest weight of $\rho$.
	We define a set $X(\rho)$ of $\frakk_n$-dominant weights by the set of $\frakk_n$-dominant weights $\mu$ such that $\mu$ satisfies the following three conditions:
	\begin{itemize}
	\item $L(\mu)$ is unitarizable.
	\item $L(\mu)$ has the $K_{n,\infty}$-type $\rho$.
	\item $\lambda \leq \mu$. 
	\end{itemize}
	Then, $X(\rho)$ is finite.
	Put $\chi(\rho) = \{\chi_\mu \mid \mu \in X(\rho)\}$.
	For infinitesimal characters $\chi$ and $\omega$, we define $D_{\chi,\omega}$ as follows:
	Let $v \in \bfa$.
	If the local components $\chi_v$ and $\omega_v$ are the same, put $D_{\chi,\omega,v} = 1$.
	If $\chi_v \neq \omega_v$, there exists $i$ such that $\chi_v(D_{2i}) \neq \omega_v(D_{2i})$.
	Then, put $D_{\chi,\omega,v} = D_{2i} - \omega_v(D_{2i})$.
	Set $D_{\chi,\omega} = \bigotimes_{v \in \bfa} D_{\chi,\omega,v}$.
	By definition, for an $\omega$-eigenvector $v$ with $\omega \in \chi(\rho)$, we have
	\[
	\frac{1}{\chi(D_{\chi,\omega})}D_{\chi,\omega} \cdot v = 
	\begin{cases}
	v &\text{if $\chi = \omega$}\\
	0 &\text{if $\chi \neq \omega$}.
	\end{cases}
	\]
	We now can define the projection $\frakp_\chi \in \End_{\bbC}(\calN(G_n)_\rho^{K_\Gamma})$ by
	\[
	\frakp_\chi(f) = \frac{1}{\prod_{\omega \in \chi(\rho)} \chi(D_{\chi,\omega})} \prod_{\omega \in X(\rho)} D_{\chi,\omega} \cdot f.
	\]
	By (\ref{ss_Z_n}), $\frakp_\chi$ defines a projection onto the $\chi$-eigen subspace of $\calN(G_n)_\rho^{K_\Gamma}$ associated to $\chi$.
	
	By Lemma \ref{corresp_MF_AF}, one has
	\[
	N_\rho(\Gamma) \otimes \rho^* \cong \calN(G_n)_\rho^{K_\Gamma}.
	\]
	The projection defines an endomorphism on $N_\rho(\Gamma) \otimes \rho^*$.
	
	\begin{lem}
	The projection $\frakp_\chi$ defines a projection on $N_\rho(\Gamma)$.
	\end{lem}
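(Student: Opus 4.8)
The statement to prove is that the projection operator $\frakp_\chi \in \End_\bbC(\calN(G_n)_\rho^{K_\Gamma})$ restricts to an endomorphism of $N_\rho(\Gamma)$ under the identification $N_\rho(\Gamma) \otimes \rho^* \cong \calN(G_n)_\rho^{K_\Gamma}$ from \eqref{corresp_MF_AF}. The plan is to show that $\frakp_\chi$ acts only on the "automorphic-form factor" and leaves the $\rho^*$ factor untouched, so that it descends to an endomorphism of $N_\rho(\Gamma)$ itself. Concretely, the projection is built out of the central elements $D_{\chi,\omega,v} = D_{2i} - \omega_v(D_{2i}) \in \calZ_n$, which act on $\calN(G_n)$ by the right regular action of $\frakg_n$. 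Since $\calZ_n$ is the center of $\calU(\frakg_n)$, these elements commute with the $K_{n,\infty}$-action; in particular the $\calZ_n$-action preserves each $K_{n,\infty}$-isotypic subspace, hence preserves $\calN(G_n)_\rho^{K_\Gamma}$, and commutes with right translation by $K_\Gamma$, so it preserves the $K_\Gamma$-fixed vectors as well. This already shows $\frakp_\chi$ is a well-defined endomorphism of $\calN(G_n)_\rho^{K_\Gamma}$; the content of the lemma is the compatibility with the tensor decomposition.

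First I would recall from \S 2.6 that the isomorphism $N_\rho(\Gamma) \otimes \rho^* \xrightarrow{\ \sim\ } \calN(G_n)_\rho^{K_\Gamma}$ sends $f \otimes v^*$ to $\varphi_{f,v^*}$, where $\varphi_{f,v^*}(\gamma g_\infty k) = \langle (f|_\rho g_\infty)(\bfi), v^*\rangle$. The key point is that the right $\frakg_n$-action — in particular the action of any element of $\calZ_n$ — on $\varphi_{f,v^*}$ is obtained by differentiating in the $g_\infty$-variable, and this is linear and natural in $v^* \in \rho^*$: for a fixed $z \in \calZ_n$ one has $z \cdot \varphi_{f,v^*} = \sum_j \varphi_{f_j, v^*}$ where the $f_j \in N_\rho(\Gamma)$ depend only on $z$ and $f$, not on $v^*$. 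In other words, the right $\calZ_n$-action corresponds under \eqref{corresp_MF_AF} to an operator of the form $T \otimes \id_{\rho^*}$ for some $T \in \End_\bbC(N_\rho(\Gamma))$. This is essentially because $\calZ_n$ is built from $\frakg_n$-invariant (in particular $K_{n,\infty}$-invariant) combinations, so differentiating $\varphi_{f,v^*}$ along $\calZ_n$ never mixes the $K_{n,\infty}$-representation $\rho$ with its dual — it only produces new automorphic forms of the same $K_{n,\infty}$-type $\rho$, which by \eqref{corresp_MF_AF} correspond to new modular forms in $N_\rho(\Gamma)$ tensored with the \emph{same} $v^*$. Then I would simply observe that $\frakp_\chi$ is a polynomial (with scalar coefficients) in the $D_{\chi,\omega}$, hence is itself of the form $\frakp_\chi^{N} \otimes \id_{\rho^*}$, and $\frakp_\chi^{N} \in \End_\bbC(N_\rho(\Gamma))$ is the desired projection on $N_\rho(\Gamma)$.

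The one step requiring a little care — the main obstacle, such as it is — is making precise the claim that the right $\calZ_n$-action is "constant in $v^*$." The cleanest argument is representation-theoretic: $\calN(G_n)_\rho^{K_\Gamma}$ decomposes as a direct sum of $(\frakg_n, K_{n,\infty})$-submodules each of which has $\rho$ as a $K_{n,\infty}$-type, and the $\frakg_n$-action (so in particular the $\calZ_n$-action) commutes with $K_{n,\infty}$; thus $\calZ_n$ acts $K_{n,\infty}$-equivariantly, and by Schur's lemma the $\rho$-isotypic component $\Hom_{K_{n,\infty}}(\rho, \calN(G_n)_\rho^{K_\Gamma}) \otimes \rho$ has $\calZ_n$ acting only on the multiplicity space $\Hom_{K_{n,\infty}}(\rho, \calN(G_n)_\rho^{K_\Gamma})$. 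Since the multiplicity space is canonically $N_\rho(\Gamma)$ (this is exactly the content of \eqref{corresp_MF_AF} together with the choice of the embedding $\mathrm{U}(n) \hookrightarrow \GL_n(\bbC)$), we get that $\calZ_n$, hence $\frakp_\chi$, acts through $\End_\bbC(N_\rho(\Gamma))$. Since $\frakp_\chi$ is a projection on $\calN(G_n)_\rho^{K_\Gamma}$ (idempotent, as noted via \eqref{ss_Z_n}), its restriction to the multiplicity space $N_\rho(\Gamma)$ is again idempotent, i.e.\ a projection. This completes the argument.
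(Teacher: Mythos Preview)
Your proof is correct and follows essentially the same approach as the paper: both identify $N_\rho(\Gamma)$ with the multiplicity space $\Hom_{K_{n,\infty}}(\rho^*, N_\rho(\Gamma)\otimes\rho^*)$ (equivalently $\Hom_{K_{n,\infty}}(\rho, \calN(G_n)_\rho^{K_\Gamma})$), use that $\calZ_n$ commutes with $K_{n,\infty}$ so that $\frakp_\chi$ acts $K_{n,\infty}$-equivariantly and hence only on this multiplicity space, and then deduce idempotence from the decomposition \eqref{ss_Z_n}. Your formulation via Schur's lemma is slightly more streamlined than the paper's explicit construction of the map $N_\rho(\Gamma)\to N_\rho(\Gamma,\chi)$, but the substance is the same.
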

	\begin{proof}
	We have the map
	\[
	N_\rho(\Gamma) \longrightarrow \mathrm{Hom}_{K_{n,\infty}} (\rho^*, N_\rho(\Gamma) \otimes \rho^*)
	\]
	by
	\[
	f \longmapsto (v \longmapsto f \otimes v).
	\]
	Since it is injective, it is isomorphism by comparing the dimensions.
	We identify $N_\rho(\Gamma) \otimes \rho^*$ as $\calN(G_n)_\rho^{K_\Gamma}$.
	Let $(N_\rho(\Gamma) \otimes \rho^*)_\chi$ be the $\chi$-eigen subspace of $N_\rho(\Gamma) \otimes \rho^*$ associated to an infinitesimal character $\chi$. 
	Since the $\chi$-isotypic component of $\calN(G_n)_{\rho}^{K_\Gamma}$ is $K_{n,\infty}$-stable, the corresponding space $(N_\rho(\Gamma) \otimes \rho^*)_\chi$ is $K_{n,\infty}$-stable.
	Thus we can define the subspace
	\[
	\mathrm{Hom}_{K_{n,\infty}} (\rho^*, (N_\rho(\Gamma) \otimes \rho^*)_\chi)
	\]
	of $\mathrm{Hom}_{K_{n,\infty}} (\rho^*, N_\rho(\Gamma) \otimes \rho^*)$ and of $N_\rho(\Gamma)$.
	We denote the subspace of $N_\rho(\Gamma)$ by $N_\rho(\Gamma,\chi)$.
	Since $\calN(G_n)_\rho^{K_\Gamma}$ decomposes as the direct sum of $\chi$-eigen spaces, one has $N_\rho(\Gamma) = \bigoplus_\chi N_\rho(\Gamma,\chi)$.
	By the map $F \longmapsto \frakp_\chi \circ F$, one obtains a map $\mathrm{Hom}_{K_{n,\infty}} (\rho^*, N_\rho(\Gamma) \otimes \rho^*) \longrightarrow \mathrm{Hom}_{K_{n,\infty}} (\rho^*, (N_\rho(\Gamma) \otimes \rho^*)_\chi)$ and thus it induces the map $N_\rho(\Gamma) \longrightarrow N_\rho(\Gamma,\chi)$.
	It suffices to show that this map is a projection.
	For $f \in N_\rho(\Gamma,\chi)$, one can regard $f$ as an element $F \in \mathrm{Hom}_{K_{n,\infty}} (\rho^*, (N_\rho(\Gamma) \otimes \rho^*)_\chi)$.
	Since $\frakp_\chi$ is projection, one has $\frakp_\chi \circ F = F$.
	It shows that $f$ is invariant under the map $N_\rho(\Gamma) \longrightarrow N_\rho(\Gamma,\chi)$.
	Thus, this map is an idempotent and hence a projection.
	This completes the proof.
	\end{proof}
	
	We denote by the same letter $\frakp_\chi$ the projection on $N_\rho(\Gamma)$ as in the above lemma.
	Thus we have $\frakp_\chi(f \otimes v^*) = \frakp_\chi(f) \otimes v^*$ for $f \in N_\rho(\Gamma)$ and $v^* \in \rho^*$.
	Set $N_\rho(\Gamma,\chi) = \frakp_\chi(N_\rho(\Gamma))$.
	
	\begin{thm}\label{proj_comm}
	The projection $\frakp_\chi$ on $N_\rho(\Gamma)$ commutes with the $\Aut(\bbC)$-action.
	\end{thm}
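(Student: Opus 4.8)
The plan is to realize $\frakp_\chi$, as an operator on $N_\rho(\Gamma)$, as a fixed differential operator on $\frakH_n^d$ built out of the central generators $D_2,\ldots,D_{2n}$ of $\calZ_n$ with rational scalar coefficients, and then to deduce the statement from the $\Aut(\bbC)$-equivariance of the $\calZ_n$-action on nearly holomorphic modular forms, which is proved along the lines of Shimura's treatment of the holomorphic projection $\frakA$ in \cite[\S 14, \S 15]{00_Shimura}. Unwinding the construction preceding the theorem, one has $\frakp_\chi=c(\chi)^{-1}\mathcal{D}_\chi$, where $\mathcal{D}_\chi=\prod_{\mu\in X(\rho)}D_{\chi,\chi_\mu}=\prod_{\mu\in X(\rho)}\bigotimes_{v\in\bfa}D_{\chi,\chi_\mu,v}\in\calU(\frakg_n)$ and $c(\chi)=\prod_{\omega\in\chi(\rho)}\chi(D_{\chi,\omega})$. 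Each local factor $D_{\chi,\chi_\mu,v}$ is either $1$ or of the form $D_{2i}-(\chi_\mu)_v(D_{2i})$, and the scalars $(\chi_\mu)_v(D_{2i})$ that appear lie in $\bbQ$: the Harish--Chandra homomorphism for the $\bbQ$-split group $\Sp_{2n}$ is defined over $\bbQ$, $D_{2i}$ is a rational element of the center, and the Harish--Chandra parameter of an integral infinitesimal character $\chi_\mu$ is integral; in particular $c(\chi)\in\bbQ^\times$. Since $\mathcal{D}_\chi$ is central it preserves each $K_{n,\infty}$-isotype, so, using $\frakp_\chi(f\otimes v^*)=\frakp_\chi(f)\otimes v^*$, it is identified under (\ref{corresp_MF_AF}) with $c(\chi)^{-1}\mathcal{D}_\chi$ acting on $\calN(G_n)_\rho^{K_\Gamma}$. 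As (\ref{corresp_MF_AF}) is $\Aut(\bbC)$-equivariant for Shimura's rational structures (cf.\ \cite[\S 3.2]{HPSS}), it then suffices to show that the operator on $N_\rho(\Gamma)$ induced by each $D_{2i}$ commutes with every $\sigma\in\Aut(\bbC)$, up to the twist $\rho\mapsto{}^\sigma\rho$ of the weight and the concomitant permutation of the archimedean components of the infinitesimal characters.

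To prove this reduction I would pass to the realization of the $\calU(\frakg_n)$-action as differential operators on $V$-valued $C^\infty$-functions on $\frakH_n^d$, via $X\cdot\varphi(g)=\frac{d}{dt}\big|_{t=0}\varphi(g\exp(tX))$ and the dictionary of \cite[\S 3]{HPSS}. Under this dictionary $\frakk_n$ acts through the algebraic $K_{n,\bbC}$-action on $V$, while $\frakp_{n,+}$ and $\frakp_{n,-}$ act through finite combinations of the operators $C$, $E$ of \S 2.1 (together with the weight-dependent corrections entering the Maass--Shimura operators), all of which, by \cite[\S 12, \S 14.11]{00_Shimura}, carry the $\overline{\bbQ}$-rational structure on nearly holomorphic forms into itself and intertwine the $\sigma$-semilinear $\Aut(\bbC)$-action up to the weight twist. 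Writing $D_{2i}=\sum_w(-1)^{L(w)}\tr(w)$ and fixing a $\bbQ$-basis of $\mathfrak{sp}_{2n}(\bbQ)$, each $D_{2i}$ is a polynomial in $\frakp_{n,\pm}$ and $\frakk_n$ with coefficients in $\bbQ$, so the operator it induces on $N_\rho(\Gamma)$ inherits the same equivariance; and since $\mathcal{D}_\chi$ is a polynomial in the $D_{2i}$ whose only scalars are the $\sigma$-fixed rationals $(\chi_\mu)_v(D_{2i})$, the operator induced by $\mathcal{D}_\chi$ satisfies $\mathcal{D}_{{}^\sigma\chi}({}^\sigma f)={}^\sigma(\mathcal{D}_\chi f)$.

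It remains to check that the combinatorial data defining $\frakp_\chi$ is $\Aut(\bbC)$-equivariant: $X({}^\sigma\rho)=\{{}^\sigma\mu\mid\mu\in X(\rho)\}$ and $\chi({}^\sigma\rho)=\{{}^\sigma\omega\mid\omega\in\chi(\rho)\}$, since unitarizability of $L(\mu)$, the occurrence of the $K_{n,\infty}$-type $\rho$ in $L(\mu)$, and the order relation $\lambda\leq\mu$ are each stable under permuting the archimedean components; and $c({}^\sigma\chi)=c(\chi)$ because $c(\chi)\in\bbQ^\times$. Combining this with the preceding paragraph yields $\frakp_\chi({}^\sigma f)={}^\sigma\frakp_\chi(f)$, where $\frakp_\chi$ on the left is understood to be the projection for the weight ${}^\sigma\rho$ and for $\chi$ read after the harmless permutation of archimedean components. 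The main obstacle is exactly the input quoted in the second paragraph: a general $\sigma\in\Aut(\bbC)$ is wildly discontinuous, so the commutation of a genuine differential operator with $\sigma$ is not formal, and establishing it for the central operators $D_{2i}$ rests on Shimura's arithmeticity theory for nearly holomorphic modular forms, applied with due care to the twisting of weights and to the permutation of the archimedean places.
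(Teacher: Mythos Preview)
Your proposal is correct and spells out in detail precisely the argument that the paper defers to a citation: the paper's proof reads in its entirety ``The case where $F=\bbQ$ is proved in \cite[Proposition 3.16]{HPSS}. The general case is similar. We omit the details.'' Your strategy---expressing $\frakp_\chi$ as a rational-coefficient polynomial in the generators $D_{2i}$, invoking the rationality of the eigenvalues $\chi_\mu(D_{2i})$ for integral $\mu$, and then appealing to the $\Aut(\bbC)$-equivariance of the Maass--Shimura differential operators on nearly holomorphic forms---is exactly the content of that cited proposition, together with the observation that passing from $F=\bbQ$ to a general totally real $F$ only introduces the permutation of archimedean places that you track explicitly in your last paragraph.
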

	\begin{proof}
	The case where $F=\bbQ$ is proved in \cite[Proposition 3.16]{HPSS}.
	The general case is similar.
	We omit the details.
	\end{proof}
	
	For an integral weight $\lambda = (\lambda_{1,v},\ldots,\lambda_{n,v})_v$, put $j_v(\lambda) = \#\{i \mid \text{$\lambda_{1,v} \equiv \lambda_{i,v}$ mod $2$}\}$.
	
	\begin{thm}\label{proj}
	Let $\lambda = (\lambda_{1,v},\ldots,\lambda_{n,v})_v$ be a regular anti-dominant $\frakk_n$-dominant integral weight.
	Put $\rho = \bigotimes_{v \in \bfa} (\det^{\lambda_{1,v}-1} \otimes \wedge^{j_v(\lambda)})$.
	If $F=\bbQ$ and $\lambda_{n,v} = n+1$, any modular form in $N_\rho(\Gamma,\chi_\lambda)$ generates $L(\lambda)$ or $N(\lambda^{(1)})^\vee$.
	If not, any modular form in $N_\rho(\Gamma,\chi_\lambda)$ generates $L(\lambda)$.
	\end{thm}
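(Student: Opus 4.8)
The plan is to combine Corollary~\ref{classif_with_K-type} with the elementary observation that a cyclic module has a cyclic cosocle. Fix a nonzero $f\in N_\rho(\Gamma,\chi_\lambda)$ and $0\neq v^*\in\rho^*$, and let $M=\calU(\frakg_n)\cdot\varphi_{f,v^*}$ be the $(\frakg_n,K_{n,\infty})$-module generated by $\varphi_{f,v^*}$. Since $f=\frakp_{\chi_\lambda}(f)$ and, under the isomorphism \eqref{corresp_MF_AF}, $\frakp_{\chi_\lambda}$ is the projection onto the $\chi_\lambda$-eigenspace for the $\calZ_n$-action, the form $\varphi_{f,v^*}$ is a $\calZ_n$-eigenvector with infinitesimal character $\chi_\lambda$; hence $M$ is an object of $\calO^\frakp$, of finite length, with infinitesimal character $\chi_\lambda$. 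First I would decompose $M=\bigoplus_{j=1}^{r}M_j$ into indecomposables and write $\varphi_{f,v^*}=\sum_j\psi_j$ with $\psi_j\in M_j$; comparing $M=\calU(\frakg_n)\varphi_{f,v^*}$ with $\bigoplus_j\calU(\frakg_n)\psi_j\subseteq\bigoplus_jM_j$ forces $\calU(\frakg_n)\psi_j=M_j$, so each $\psi_j\neq0$.

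Next I would identify each $M_j$. Because $\varphi_{f,v^*}$ is fixed by the right $K_\Gamma$-action, which commutes with $\calU(\frakg_n)$, every vector of $M$ is $K_\Gamma$-fixed; because $M=\bigoplus M_j$ is a decomposition of $(\frakg_n,K_{n,\infty})$-modules, the projection $M\twoheadrightarrow M_j$ is $K_{n,\infty}$-equivariant, so the $K_{n,\infty}$-span of $\psi_j$ is an irreducible $K_{n,\infty}$-type isomorphic to the one generated by $\varphi_{f,v^*}$, namely $\rho$. Thus $\psi_j\in\calN(G_n)_\rho^{K_\Gamma}$, so by \eqref{corresp_MF_AF} it corresponds to $f_j\otimes w_j^*$ for a nonzero modular form $f_j\in N_\rho(\Gamma)$; as $\psi_j$ has infinitesimal character $\chi_\lambda$ we get $f_j\in N_\rho(\Gamma,\chi_\lambda)$, and $M_j$ is the $(\frakg_n,K_{n,\infty})$-module generated by $f_j$. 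Hence $M_j$ is indecomposable, has infinitesimal character $\chi_\lambda$, carries the $K_{n,\infty}$-type $\rho=\bigotimes_v(\det^{\lambda_{1,v}-1}\otimes\wedge^{j_v(\lambda)})$, and is generated by a nearly holomorphic Siegel modular form, so Corollary~\ref{classif_with_K-type} gives $M_j\cong L(\lambda)$, except when $F=\bbQ$ and $\lambda_{n,v}=n+1$, in which case $M_j\in\{L(\lambda),\,N(\lambda^{(1)})^\vee\}$.

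Finally I would force $r=1$. Both $L(\lambda)$ and $N(\lambda^{(1)})^\vee$ have cosocle $L(\lambda)$ — for $N(\lambda^{(1)})^\vee$ this follows by dualizing the short exact sequence of Lemma~\ref{Ext} — so $M/\mathrm{rad}(M)\cong L(\lambda)^{\oplus r}$; since $M$ is cyclic, so is its cosocle, and $L(\lambda)^{\oplus r}$ is cyclic only for $r\le1$ because $\End_{\calO^\frakp}(L(\lambda))=\bbC$. Therefore $M=M_1$: it is $L(\lambda)$ in general, and either $L(\lambda)$ or $N(\lambda^{(1)})^\vee$ when $F=\bbQ$ and $\lambda_{n,v}=n+1$, as claimed. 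The one step that is not purely formal is the identification in the middle paragraph — verifying that the indecomposable summands $\psi_j$ are again genuine nearly holomorphic modular forms of weight $\rho$, level $K_\Gamma$, and infinitesimal character $\chi_\lambda$, so that Corollary~\ref{classif_with_K-type} applies to each $M_j$ — and I expect this bookkeeping, rather than any new substantive input, to be the main obstacle.
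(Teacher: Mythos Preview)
Your approach is essentially the paper's: both reduce directly to Corollary~\ref{classif_with_K-type}. The paper's own proof is two lines --- it asserts that the module generated by $f$ is a direct sum of elements of $\calN\mathrm{Rep}_n(\chi_\lambda,\rho)$ and invokes the corollary --- and does not explicitly address indecomposability; your cosocle argument forcing $r=1$ is a careful addition that the paper leaves implicit.

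One minor point: your claim that $\psi_j$ corresponds under \eqref{corresp_MF_AF} to a \emph{pure} tensor $f_j\otimes w_j^*$ need not hold literally, since a general element of $N_\rho(\Gamma)\otimes\rho^*$ is not decomposable. This is harmless, though, because you do not actually use it: all that is needed is that each $M_j$ is an indecomposable submodule of $\calN(G_n)$ with infinitesimal character $\chi_\lambda$ and $K_{n,\infty}$-type $\rho$, and the proof of Theorem~\ref{classification_th} (hence Corollary~\ref{classif_with_K-type}) uses only this embedding, not that the generator comes from a single modular form.
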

	\begin{proof}
	Take $f \in N_\rho(\Gamma,\chi_\lambda)$.
	Then the $(\frakg_n,K_{n,\infty})$-module generated by $f$ is a direct sum of modules in $\calN \mathrm{Rep}_n(\chi_\lambda,\rho)$.
	Thus, the statement follows from Corollary \ref{classif_with_K-type}.
	\end{proof}
	
	We finally give an analogue of holomorphic projections.
	
	\begin{cor}
	Let $\lambda = (\lambda_{1,v},\ldots,\lambda_{n,v})_v$ be a regular anti-dominant integral weight with $\lambda_{1,v}-\lambda_{n,v} \leq 1$ for any $v \in \bfa$ and $\rho$ the irreducible highest weight representation of $K_{n,\bbC}$ with highest weight $\lambda$.
	If $F \neq \bbQ$ or $\lambda_{n,v} \neq n+1$ for some $v \in \bfa$, the projection $\frakp_\chi$ defines a projection onto $M_\rho(\Gamma)$.
	\end{cor}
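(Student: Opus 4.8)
The plan is to reduce the statement to Theorem~\ref{proj}. First I would dispose of the extra hypotheses on $\lambda$. Because $\lambda$ is $\frakk_n$-dominant, for each $v$ the numbers $\lambda_{1,v}-1,\lambda_{2,v}-2,\ldots,\lambda_{n,v}-n$ are strictly decreasing, so regularity of $\lambda$ is equivalent to $\lambda_{i,v}\neq i$ for all $i$ and $v$, and together with anti-dominance $(\lambda_{n,v}\geq n)$ this forces $\lambda_{n,v}\geq n+1$ for every $v\in\bfa$. Next, $\lambda_{1,v}-\lambda_{n,v}\leq 1$ means each entry of $\lambda_v$ equals $\lambda_{1,v}$ or $\lambda_{1,v}-1$; if $m_v$ of them equal $\lambda_{1,v}$ then $\rho_v$ is the irreducible representation of highest weight $(\overbrace{\lambda_{1,v},\ldots,\lambda_{1,v}}^{m_v},\lambda_{1,v}-1,\ldots,\lambda_{1,v}-1)$, that is $\rho_v=\det^{\lambda_{1,v}-1}\otimes\wedge^{m_v}$, and since the entries equal to $\lambda_{1,v}-1$ are precisely those not congruent to $\lambda_{1,v}$ modulo $2$ we get $m_v=j_v(\lambda)$. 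Hence $\rho=\bigotimes_{v\in\bfa}(\det^{\lambda_{1,v}-1}\otimes\wedge^{j_v(\lambda)})$ is exactly the weight appearing in Theorem~\ref{proj}, and the hypothesis ``$F\neq\bbQ$, or $\lambda_{n,v}\neq n+1$ for some $v$'' is precisely the negation of the exceptional case of that theorem.

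Under this hypothesis Theorem~\ref{proj} gives that every $f\in N_\rho(\Gamma,\chi_\lambda)=\frakp_{\chi_\lambda}(N_\rho(\Gamma))$ generates the highest weight module $L(\lambda)$, which is a holomorphic discrete series representation since $\lambda_{n,v}\geq n+1$. I would then translate ``generates $L(\lambda)$'' into ``holomorphic'' as follows. By Proposition~\ref{vanish_K_type}, applied factor by factor over $v\in\bfa$, the $K_{n,\infty}$-type $\rho=\rho_\lambda$ occurs in $L(\lambda)$ with multiplicity one, and, being the bottom $K_{n,\infty}$-type, this copy of $\rho_\lambda$ is the image of $V_\lambda$ in $L(\lambda)$ and is annihilated by $\frakp_{n,-}$. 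Now, for $f\in N_\rho(\Gamma,\chi_\lambda)$ and $0\neq v^*\in\rho^*$, the automorphic form $\varphi_{f,v^*}$ spans a copy of $\rho_\lambda$ under $K_{n,\infty}$, and if it generates $L(\lambda)$ then that copy must be this unique copy of $\rho_\lambda$ in $L(\lambda)$, so $\frakp_{n,-}\cdot\varphi_{f,v^*}=0$. Under the isomorphism~(\ref{corresp_MF_AF}) this is equivalent to $E_v f=0$ for all $v\in\bfa$, i.e.\ $f\in M_\rho(\Gamma)$. Hence $N_\rho(\Gamma,\chi_\lambda)\subseteq M_\rho(\Gamma)$.

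For the reverse inclusion I would check that a holomorphic modular form automatically has infinitesimal character $\chi_\lambda$. If $f\in M_\rho(\Gamma)$, then $E_v f=0$ for all $v$, so $\varphi_{f,v^*}$ is annihilated by $\frakp_{n,-}$ while its $K_{n,\infty}$-span is $\rho_\lambda$; thus $\varphi_{f,v^*}$ is a (parabolic) highest weight vector of weight $\lambda$, and by the universal property of the Verma module the $(\frakg_n,K_{n,\infty})$-module it generates is a quotient of $N(\lambda)$. Since $N(\lambda)$ has infinitesimal character $\chi_\lambda$, so does this quotient, and therefore $\frakp_{\chi_\lambda}f=f$, giving $M_\rho(\Gamma)\subseteq N_\rho(\Gamma,\chi_\lambda)$. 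Combining the two inclusions, $\frakp_{\chi_\lambda}$ is an idempotent with image $M_\rho(\Gamma)$, which is the assertion (with $\chi=\chi_\lambda$).

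The bulk of the work has already been absorbed into Theorem~\ref{proj} and the structural results leading to it, so I expect the only delicate point in this reduction to be the dictionary step: identifying, on the automorphic side, ``the module generated by $\varphi_{f,v^*}$ is $L(\lambda)$'' with ``$\varphi_{f,v^*}$ is killed by $\frakp_{n,-}$''. This rests on the multiplicity-one statement for the bottom $K_{n,\infty}$-type of $L(\lambda)$ together with the compatibility of Shimura's operators $E_v$ with the action of $\frakp_{n,-}$ under~(\ref{corresp_MF_AF}), both of which I would make fully explicit.
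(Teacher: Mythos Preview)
Your proposal is correct and follows essentially the same route as the paper. The paper's proof is terser: it notes that $\lambda_{1,v}-\lambda_{n,v}\leq 1$ lets one invoke Theorem~\ref{proj}, then simply says ``since $f$ is of weight $\lambda$, $f$ corresponds to a highest weight vector, thus $f$ is holomorphic and $N_\rho(\Gamma,\chi_\lambda)=M_\rho(\Gamma)$.'' You have unpacked both implicit steps---the identification of $\rho_\lambda$ with $\bigotimes_v(\det^{\lambda_{1,v}-1}\otimes\wedge^{j_v(\lambda)})$ needed to apply Theorem~\ref{proj}, and the reverse inclusion $M_\rho(\Gamma)\subseteq N_\rho(\Gamma,\chi_\lambda)$---which the paper leaves to the reader.
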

	\begin{proof}
	By $\lambda_{1,v} - \lambda_{n,v} \leq 1$ and Theorem \ref{proj}, any modular form $f$ in $N_\rho(\Gamma,\chi_\lambda)$ generates $L(\lambda)$.
	Since $f$ is of weight $\lambda$, $f$ corresponds to a highest weight vector.
	Thus, $f$ is holomorphic and $N_\rho(\Gamma,\chi_\lambda) = M_\rho(\Gamma)$.
	This completes the proof.
	\end{proof}

\bibliographystyle{alpha}
\bibliography{ref}

\end{document}